\def\ThesisYear{2013}
\newtheorem{theorem}{Theorem}
\newtheorem{corollary}[theorem]{Corollary}
\newtheorem{lemma}[theorem]{Lemma}
\newtheorem{proposition}[theorem]{Proposition}
\newtheorem{definition}[theorem]{Definition}
\newtheorem{example}[theorem]{Example}
\newtheorem{remark}[theorem]{Remark}
\def\CP{\clearpage{\thispagestyle{empty}\cleardoublepage}}
\newcommand{\N}{\mathbb{N}}
\newcommand{\Chapter}[2]{\chapter[#1]{#1\\[1ex]\Large#2}}
\def\a{\alpha}
\def\ba{{\bm \alpha}}
\def\b{\beta}
\def\t{\tau}
\def\x{{\bf x}}
\def\u{{\bf u}}
\def\l{{\bm\lambda}}
\def\LD{{_0D_t^\a}}
\def\LDa{{_aD_t^\a}}
\def\LDai{{_aD_{t_i}^\a}}
\def\LDaik{{_aD_{t_{i+k}}^\a}}
\def\LDC{{_a^CD_t^\a}}
\def\RDC{{_t^CD_b^\a}}
\def\RD{{_tD_b^\a}}
\def\LDz{{_0D_t^{0.5}}}
\def\LI{{_aI_t^\a}}
\def\RI{{_tI_b^\a}}
\def\LD{{_aD_t^\a}}
\def\RD{{_tD_b^\a}}
\def\RDone{{_tD_1^\a}}
\def\LIz{{_0I_t^{0.5}}}
\def\LHI{{_a\mathcal{I}_t^\a}}
\def\RHI{{_t\mathcal{I}_b^\a}}
\def\LHD{{_a\mathcal{D}_t^\a}}
\def\RHD{{_t\mathcal{D}_b^\a}}
\def\LHIHz{{_1\mathcal{I}_t^{0.5}}}
\def\LHDHz{{_1\mathcal{D}_t^{0.5}}}
\def\RDone{{_tD_1^\a}}
\def\GLa{{^{GL}_{\phantom{1}a}D_t^\a}}
\def\GLb{{^{GL}_{\phantom{1}t}D_b^\a}}
\def\sGLa{{^{sGL}_{\phantom{1}a}D_t^\a}}
\def\w{\left(\omega_k^\a\right)}
\def\wh{\left(\omega_k^{0.5}\right)}
\def\RIT{{_tI_T^{1-\a}}}
\def\LCD{{^C_aD_t^\a}}
\def\RDT{{_tD_T^\a}}
\renewcommand\part{%
  \if@openright
    \cleardoublepage
  \else
    \clearpage
  \fi
  \thispagestyle{empty}% 
  \if@twocolumn
    \onecolumn
    \@tempswatrue
  \else
    \@tempswafalse
  \fi
  \null\vfil
  \secdef\@part\@spart}
\begin{document}
%--------------
\fancyhf{}

\fancyhead[RO]{\small\slshape\nouppercase\rightmark}
\fancyhead[LE]{\small\slshape\nouppercase\leftmark} \fancyfoot[C]{\thepage}

\TitlePage
  \HEADER{\BAR\FIG{\includegraphics[height=60mm]{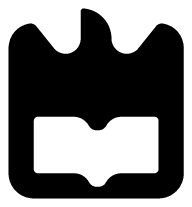}}}
         {\ThesisYear}
    \TITLE{Shakoor Pooseh}
          {M\'{e}todos Computacionais no C\'{a}lculo das Varia\c{c}\~{o}es e Controlo \'{O}ptimo Fraccionais
          \vspace{5mm}\newline
          Computational Methods in the Fractional Calculus of Variations and Optimal Control}
          \EndTitlePage
\titlepage\ 
\endtitlepage

\TitlePage
  \HEADER{}{\ThesisYear}
  \TITLE{Shakoor Pooseh}
          {M\'{e}todos Computacionais no C\'{a}lculo das Varia\c{c}\~{o}es e Controlo \'{O}ptimo Fraccionais
          \vspace{5mm}\newline
          Computational Methods in the Fractional Calculus of Variations and Optimal Control}
\vspace*{15mm}
\TEXT{}
{Tese de Doutoramento apresentada \`{a} Universidade de Aveiro para cumprimento 
dos requisitos necess\'{a}rios \`{a} obten\c{c}\~{a}o do grau de Doutor em Matem\'{a}tica, 
Programa Doutoral em Matem\'{a}tica e Aplica\c{c}\~{o}es, PDMA 2009--2013, da Universidade de Aveiro 
e Universidade do Minho realizada sob a orienta\c{c}\~{a}o cient\'{\i}fica do 
Prof. Doutor Delfim Fernando Marado Torres, Professor Associado com Agrega\c{c}\~{a}o 
do Departamento de Matem\'{a}tica da Universidade de Aveiro e do Prof. Doutor Ricardo Miguel Moreira de Almeida, 
Professor Auxiliar do Departamento de Matem\'{a}tica da Universidade de Aveiro.}
\EndTitlePage
\titlepage\ \endtitlepage

\TitlePage
  \HEADER{}{\ThesisYear}
  \TITLE{Shakoor Pooseh}
          {M\'{e}todos Computacionais no C\'{a}lculo das Varia\c{c}\~{o}es e Controlo \'{O}ptimo Fraccionais
          \vspace{5mm}\newline
          Computational Methods in the Fractional Calculus of Variations and Optimal Control}
  \vspace*{15mm}
  \TEXT{}
{Ph.D. thesis submitted to the University of Aveiro in fulfilment of the requirements 
for the degree of Doctor of Philosophy in Mathematics, Doctoral Programme in Mathematics 
and Applications 2009--2013, of the University of Aveiro and University of Minho, 
under the supervision of Professor Delfim Fernando Marado Torres, Associate Professor 
with Habilitation and tenure of the Department of Mathematics of University of Aveiro 
and Professor Ricardo Miguel Moreira de Almeida, Assistant Professor 
of the Department of Mathematics of University of Aveiro.}
\EndTitlePage
\titlepage\ \endtitlepage

\TitlePage
  \vspace*{55mm}
  \TEXT{\textbf{o j\'uri\newline}}
       {}
  \TEXT{Presidente}
       {\textbf{Prof. Doutor Artur Manuel Soares da Silva}\newline {\small
        Professor Catedr\'{a}tico da Universidade de Aveiro}}
  \vspace*{5mm}

  \TEXT{}
       {\textbf{Prof. Doutor St\'{e}phane Louis Clain}\newline {\small
        Professor Associado com Agrega\c{c}\~{a}o da Escola de Ci\^{e}ncias da Universidade do Minho}}
  \vspace*{5mm}

  \TEXT{}
       {\textbf{Prof. Doutor Manuel Duarte Ortigueira}\newline {\small
        Professor Associado com Agrega\c{c}\~{a}o da Faculdade de Ci\^{e}ncias e Tecnologia
        da Universidade Nova de Lisboa}}
  \vspace*{5mm}

  \TEXT{}
       {\textbf{Prof. Doutor Delfim Fernando Marado Torres}\newline {\small
        Professor Associado com Agrega\c c\~ao da Universidade de Aveiro (Orientador)}}
  \vspace*{5mm}

  \TEXT{}
       {\textbf{Prof. Doutora Erc\'{\i}lia Cristina Costa e Sousa}\newline {\small
        Professora Auxiliar da Faculdade de Ci\^{e}ncias e Tecnologia da Universidade de Coimbra}}
  \vspace*{5mm}

  \TEXT{}
       {\textbf{Prof. Doutora Maria Lu\'{\i}sa Ribeiro dos Santos Morgado}\newline {\small
        Professora Auxiliar da Faculdade de Ci\^{e}ncias e Tecnologia da Universidade de
        Tr\'{a}s-os-Montes e Alto Douro}}
  \vspace*{5mm}

  \TEXT{}
       {\textbf{Prof. Doutor Ricardo Miguel Moreira de Almeida}\newline {\small
        Professor Auxiliar da Universidade de Aveiro (Coorientador)}}
\EndTitlePage
\titlepage\ \endtitlepage
\TitlePage
  \vspace*{55mm}
  \TEXT{\textbf{agradecimentos}}
       {
       Esta tese de doutoramento \'{e} o resultado da colabora\c{c}\~{a}o de muitas pessoas. Primeiro de tudo,
       Delfim F. M. Torres, o meu orientador, que me ajudou muito ao longo dos \'{u}ltimos anos,
       proporcionando uma din\^{a}mica e amig\'{a}vel atmosfera, prop\'{\i}cia \`{a} investiga\c{c}\~{a}o. Foi tamb\'{e}m
       grande sorte da minha parte ter Ricardo Almeida como co-orientador, agindo n\~{a}o s\'{o} nesse
       qualidade, mas tamb\'{e}m como um amigo e colega. Devo a estas duas pessoas muito de aluno
       para orientador.\\
       A actividade cient\'{\i}fica s\'{o} \'{e} poss\'{\i}vel por um efectivo apoio financeiro, que a Funda\c{c}\~{a}o
       Portuguesa para a Ci\^{e}ncia e a Tecnologia (FCT), me forneceu atrav\'{e}s da bolsa de doutoramento
       SFRH/BD/33761/2009, no \^{a}mbito do Programa Doutoral em Matem\'{a}tica e Aplica\c{c}\~{o}es (PDMA) das
       Universidades de Aveiro e Minho. Al\'{e}m do apoio financeiro da FCT, ter sido membro do Grupo
       de Teoria Matem\'{a}tica dos Sistemas e Controlo do Centro de Investiga\c{c}\~{a}o e Desenvolvimento em
       Matem\'{a}tica e Aplica\c{c}\~{o}es (CIDMA) teve um papel fundamental, que aqui real\c{c}o.\\
       A todos os que tiveram um efeito sobre minha vida de estudante de doutoramento, professores,
       funcion\'{a}rios e amigos, gostaria de expressar os meus sinceros agradecimentos. \`{A} minha fam\'{\i}lia
       e esposa, fundamentais como suporte mental e moral, compreens\~{a}o e toler\^{a}ncia, pelo muito que
       me ajudaram e por suportarem algum tipo de v\'{\i}cio em trabalho e ego\'{\i}smo.
       }
\vfil
\vspace{2cm}
\TEXT{}{\begin{center}
\includegraphics[scale=0.1]{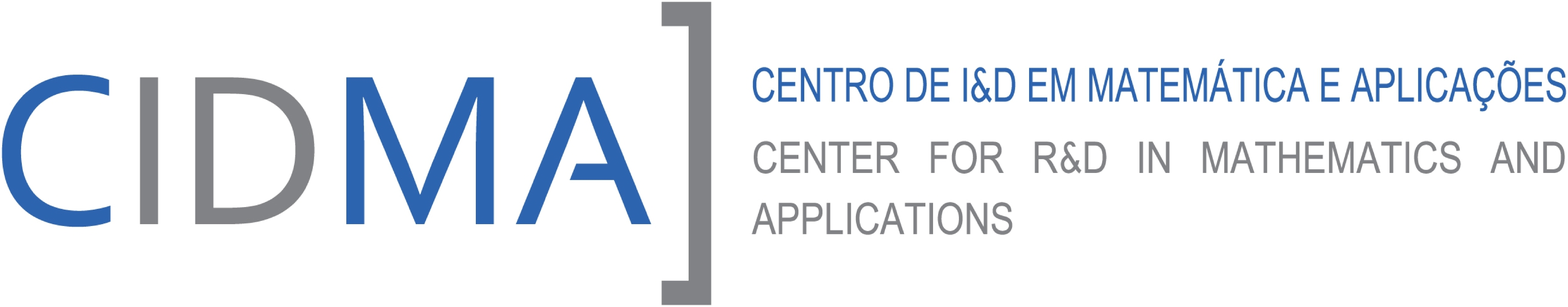}\\[20pt]
\includegraphics[scale=0.2]{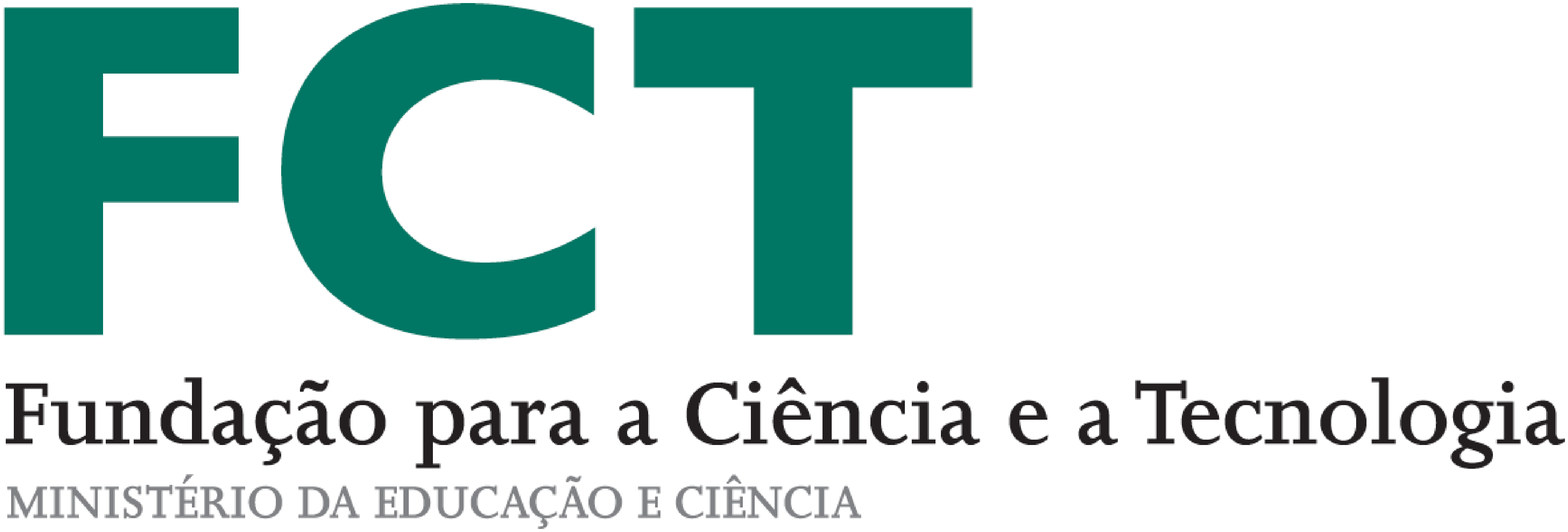}
\end{center}}

\EndTitlePage
\titlepage\ \endtitlepage

\TitlePage
  \vspace*{55mm}
  \TEXT{\textbf{acknowledgements}}
       {
       This thesis is the result of a collaboration of many people. First of all, Delfim F. M. Torres,
       my supervisor, that helped me a lot through these years by providing a dynamic, yet friendly,
       atmosphere for research. A great luck of mine is also having Ricardo Almeida, acting not only
       as my co-advisor, but also as a friend and colleague. I owe these two people much than a student
       to supervisors.\\
       A scientific activity is only possible by a good financial support, which the Portuguese
       Foundation for Science and Technology (FCT)-- {\it Funda\c{c}\~{a}o para a Ci\^{e}ncia e a
       Tecnologia}-- provided me through the Ph.D. fellowship SFRH/BD/33761/2009, within
       \emph{Doctoral Program in Mathematics and Applications} (PDMA) of Universities of Aveiro and
       Minho. Besides financial support, a good research team has a crucial role, which is well
       provided by CIDMA, {\it Center for Research and Development in Mathematics and Applications},
       that I deeply appreciate.\\
       Together with all who had an effect on my studentship life, from teachers and staff to friends,
       I would like to express grateful thanks to my family and my wife whom mental and moral supports,
       understanding, and tolerance helped me a lot; let me to be some kind of workaholic,
       ignorant and selfish.
       }
\vfil
\vspace{2cm}
\TEXT{}{\begin{center}
\includegraphics[scale=0.1]{CIDMA.eps}\\[20pt]
\includegraphics[scale=0.2]{FCT.eps}
\end{center}}

\EndTitlePage
\titlepage\ \endtitlepage

\TitlePage
  \vspace*{55mm}
\TEXT{\textbf{palavras-chave}}
       {Optimiza\c{c}\~{a}o e controlo, c\'{a}lculo fraccional, c\'{a}lculo das varia\c{c}\~{o}es fraccional,
       controlo \'{o}ptimo fraccional, condi\c{c}\~{o}es necess\'{a}rios de optimalidade, m\'{e}todos directos,
       m\'{e}todos indirectos, aproxima\c{c}\~{a}o num\'{e}rica, estima\c{c}\~{a}o de erros, equa\c{c}\~{o}es
       diferenciais fraccionais. }
\TEXT{}\\
\TEXT{\textbf{resumo}}
       {O c\'{a}lculo das varia\c{c}\~{o}es e controlo \'{o}ptimo fraccionais s\~{a}o generaliza\c{c}\~{o}es
        das correspondentes teorias cl\'{a}ssicas, que permitem formula\c{c}\~{o}es e modelar
        problemas com derivadas e integrais de ordem arbitr\'{a}ria. Devido \`{a} car\^{e}ncia
        de m\'{e}todos anal\'{\i}ticos para resolver tais problemas fraccionais, t\'{e}cnicas
        num\'{e}ricas s\~{a}o desenvolvidas. Nesta tese, investigamos a aproxima\c{c}\~{a}o
        de operadores fraccionais recorrendo a s\'{e}ries de derivadas de ordem
        inteira e diferen\c{c}as finitas generalizadas. Obtemos majorantes para o erro
        das aproxima\c{c}\~{o}es propostas e estudamos a sua efici\^{e}ncia. M\'{e}todos directos
        e indirectos para a resolu\c{c}\~{a}o de problemas variacionais fraccionais s\~{a}o
        estudados em detalhe. Discutimos tamb\'{e}m condi\c{c}\~{o}es de optimalidade para
        diferentes tipos de problemas variacionais, sem e com restri\c{c}\~{o}es, e para
        problemas de controlo \'{o}ptimo fraccionais. As t\'{e}cnicas num\'{e}ricas introduzidas
        s\~{a}o ilustradas recorrendo a exemplos.}
  \TEXT{}
       {}
\EndTitlePage
\titlepage\ \endtitlepage

\TitlePage
  \vspace*{55mm}
\TEXT{\textbf{keywords}}
       {Optimization and control, fractional calculus, fractional calculus of variations,
        fractional optimal control, fractional necessary optimality conditions, direct methods,
        indirect methods, numerical approximation, error estimation, fractional differential equations.
       }
\TEXT{}\\
 \TEXT{\textbf{abstract}}
       {The fractional calculus of variations and fractional optimal control are generalizations of the
       corresponding classical theories, that allow problem modeling and formulations with arbitrary
       order derivatives and integrals. Because of the lack of analytic methods to solve such fractional
       problems, numerical techniques are developed. Here, we mainly investigate the approximation of
       fractional operators by means of series of integer-order derivatives and generalized finite
       differences. We give upper bounds for the error of proposed approximations and study their
       efficiency.  Direct and indirect methods in solving fractional variational problems are studied
       in detail. Furthermore, optimality conditions are discussed for different types of unconstrained
       and constrained variational problems and for fractional optimal control problems. The introduced
       numerical methods are employed to solve some illustrative examples.
\newline\newline
\textbf{2010 Mathematics Subject Classification: 26A33, 34A08, 65D20, 33F05,
                                                 49K15, 49M25, 49M99.}
}  \EndTitlePage
\titlepage\ \endtitlepage
%============================================================================================

\pagenumbering{roman}
\tableofcontents
\CP
\cleardoublepage

\pagenumbering{arabic}
%================================================================
\chapter*{Introduction}\markboth{\MakeUppercase{Introduction}}{}\addcontentsline{toc}{part}{Introduction}

This thesis is devoted to the study of numerical methods in the calculus of variations 
and optimal control in the presence of fractional derivatives and/or integrals. 
A fractional problem of the calculus of variations and optimal control consists 
in the study of an optimization problem in which, the objective functional 
or constraints depend on derivatives and integrals of arbitrary, real or complex, orders. 
This is a generalization of the classical theory, where derivatives and integrals can only 
appear in integer orders. Throughout this thesis we will call the problems in the calculus 
of variations and optimal control, variational problems. If at least one fractional term exists 
in the formulation, it is called a fractional variational problem.

The theory started in 1996 with the works of Riewe, in order to better describe 
non-conservative systems in mechanics \cite{Riewe,Riewe1}. The subject is now under 
strong development due to its many applications in physics and engineering, providing 
more accurate models of physical phenomena (see, \textrm{e.g.}, 
\cite{Kai,Das,Machado1,VibString,AlD,MyID:179,El-Nabulsi1,El-Nabulsi2,MyID:191,%
gastao:delfim,gasta1,Dorota,MyID:203,Ortigueira}).

In order to provide a better understanding, the classical theory of the calculus 
of variations and optimal control is discussed briefly in the beginning of this 
thesis in Chapter~\ref{Prelaminaries}. Major concepts and notions are presented; 
key features are pointed out and some solution methods are detailed. There are 
two major approaches in the classical theory of calculus of variations to solve problems.
In one hand, using Euler--Lagrange necessary optimality conditions, we can reduce
a variational problem to the study of a differential equation. Hereafter,
one can use either analytical or numerical methods to solve the differential
equation and reach the solution of the original problem (see, e.g., \cite{Kirk}).
This approach is referred as indirect methods in the literature.

On the other hand, we can tackle the functional itself, directly. Direct methods are used
to find the extremizer of a functional in two ways: Euler's finite differences
and Ritz methods. In the Ritz method, we either restrict admissible functions
to all possible linear combinations
$$
x_n(t)=\sum_{i=1}^{n}\a_i\phi_i(t),
$$
with constant coefficients $\a_i$ and a set of known basis functions $\phi_i$,
or we approximate the admissible functions with such combinations. Using $x_n$
and its derivatives whenever needed, one can transform the functional
to a multivariate function of unknown coefficients $\a_i$.
By finite differences, however, we consider the admissible functions
not on the class of arbitrary curves, but only on polygonal curves
made upon a given grid on the time horizon. Using an appropriate
discrete approximation of the Lagrangian, and substituting the integral with a sum,
and the derivatives by appropriate approximations,
we can transform the main problem to the optimization of a function of several parameters:
the values of the unknown function on mesh points (see, e.g., \cite{Elsgolts}).

A historical review of fractional calculus comes next in Chapter \ref{FracCalculus}. 
In general terms, the field that allows us to define integrals and derivatives
of arbitrary real or complex order is called fractional calculus 
and can be seen as a generalization of ordinary calculus.
A fractional derivative of order $\a>0$, when $\a=n$ is an integer, coincides with
the classical derivative of order $n\in\mathbb N$, while
a fractional integral is an \textit{n-fold} integral. The origin 
of fractional calculus goes back to the end of the seventeenth century,
though the main contributions have been made during the last few decades \cite{Sun,Machado}.
Namely it has been proven to be a useful tool in engineering and optimal control problems 
(see, \textrm{e.g.}, \cite{Ref01,Ref02,Ref03,Efe,Li,Shen}). Furthermore, during the last three decades,
several numerical methods have been developed in the field of fractional calculus.
Some of their advantages, disadvantages, and improvements, are given in \cite{sug:r3}.

There are several different definitions of fractional derivatives
in the literature, such as Riemann--Liouville, Gr\"{u}nwald--Letnikov, Caputo, etc. 
They posses different properties: each one of those definitions has its 
own advantages and disadvantages. Under certain conditions, however,
they are equivalent and can be used interchangeably. 
The Riemann--Liouville and Caputo are the most common
for fractional derivatives, and for fractional integrals the usual 
one is the Riemann--Liouville definition.

After some introductory arguments of classical theories for variational problems 
and fractional calculus, the next step is providing the framework that is required 
to include fractional terms in variational problems and is shown 
in Chapter \ref{FracVariational}. In this framework,
the fractional calculus of variations and optimal control are research areas under
strong current development. For the state of the art,
we refer the reader to the recent book \cite{book:frac},
for models and numerical methods we refer to \cite{book:Baleanu}.

A fractional variational problem consists in finding the extremizer
of a functional that depends on fractional derivatives and/or integrals
subject to some boundary conditions and possibly some extra constraints.
As a simple example one can consider the following minimization problem:
\begin{equation}
\label{MainProblem}
\begin{gathered}
J[x(\cdot)]=\int_a^b L(t, x(t), \LDa x(t))dt \longrightarrow \min,\\
x(a)=x_a, \quad x(b)=x_b,
\end{gathered}
\end{equation}
that depends on the left Riemann--Liouville derivative,
$\LDa$. Although this has been a common formulation for a fractional variational problem, 
the consistency of fractional operators and the initial conditions is questioned by many authors. 
For further readings we refer to \cite{OrtigueiraIC,OrtigueiraSiLS,Trigeassou} and references therein.

An Euler--Lagrange equation for this problem has been derived first in \cite{Riewe,Riewe1} 
(see also \cite{AgrawalForm}). A generalization of the problem to include fractional integrals, 
the transversality conditions and many other aspects can be found in the literature of recent years.  
See \cite{AlD,Atan,book:frac} and references therein. Indirect methods for fractional variational 
problems have a vast background in the literature and can be considered a well studied subject: see
\cite{AgrawalForm,APTIndInt,Atan,Gastao,Jelicic,Klimek,MyID:227,Riewe1}
and references therein that study different variants of the problem and discuss
a bunch of possibilities in the presence of fractional terms, Euler--Lagrange
equations and boundary conditions. With respect to results on fractional variational calculus 
via Caputo operators, we refer the reader to 
\cite{AGRA1,Ankara:Ric,Almeida,Gastao,Malinowska,MyID:163,MyID:207} and references therein.

Direct methods, however, to the best of our knowledge,
have got less interest and are not well studied. A brief introduction
of using finite differences has been made in \cite{Riewe}, which can be regarded
as a predecessor to what we call here an Euler-like direct method.
A generalization of Leitmann's direct method can be found in \cite{AlD},
while \cite{Lotfi} discusses the Ritz direct method for optimal control problems
that can easily be reduced to a problem of the calculus of variations.

It is well-known that for most problems involving fractional operators,
such as fractional differential equations or fractional variational problems,
one cannot provide methods to compute the exact solutions analytically.
Therefore, numerical methods
are being developed to provide tools for solving such problems.
Using the Gr\"{u}nwald--Letnikov approach, it is convenient to approximate
the fractional differentiation operator, $D^\a$, by generalized finite differences.
In \cite{Podlubny} some problems have been solved by this approximation.
In \cite{Kai1} a predictor-corrector method is presented that converts an initial value problem
into an equivalent Volterra integral equation,
while \cite{Kumar} shows the use of numerical methods
to solve such integral equations. A good survey on numerical methods
for fractional differential equations can be found in \cite{Ford}.

A numerical scheme to solve fractional Lagrange problems
has been presented in \cite{AgrawalNum}. The method is based on approximating
the problem to a set of algebraic equations using some basis functions. 
See Chapter \ref{Survey} for details.
A more general approach can be found in \cite{Tricaud} that uses
the Oustaloup recursive approximation of the fractional derivative,
and reduces the problem to an integer-order (classical) optimal control problem.
A similar approach is presented in \cite{Jelicic}, using an expansion
formula for the left Riemann--Liouville fractional derivative
developed in \cite{Atan0,Atan2}, to establish a new scheme to solve fractional differential equations.

The scheme is based on an expansion formula
for the Riemann--Liouville fractional derivative.
Here we introduce a generalized version of this expansion, in Chapter~\ref{AppFracDer}, 
that results in an approximation, for left Riemann--Liouville derivative, of the form
\begin{equation}
\label{expanMomIntro}
\LDa x(t)\approx A(t-a)^{-\a}x(t)+B(t-a)^{1-\a}\dot{x}(t)
-\sum_{p=2}^N C(\a,p)(t-a)^{1-p-\a}V_p(t),
\end{equation}
with
\begin{equation*}
\left\{
\begin{array}{l}
\dot{V}_p(t)=(1-p)(t-a)^{p-2}x(t)\\
V_p(a)=0,
\end{array}
\right.
\end{equation*}
where $p=2,\ldots,N$, and the coefficients $A=A(\a,N)$, $B(\a,N)$ and $C(\a,p)$ 
are real numbers depending on $\a$ and $N$. The number $N$ is the order of approximation.
Together with a different expansion formula that has been used
to approximate the fractional Euler--Lagrange equation in \cite{Atan},
we perform an investigation of the advantages and disadvantages
of approximating fractional derivatives by these expansions.
The approximations transform fractional derivatives into finite sums
containing only derivatives of integer order \cite{PATFracDer}.

We show the efficiency of such approximations to evaluate fractional derivatives
of a given function in closed form. Moreover, we discuss the possibility
of evaluating fractional derivatives of discrete tabular data.
The application to fractional differential equations
is also developed through some concrete examples.

The same ideas are extended to fractional integrals in Chapter~\ref{AppFracInt}.
Fractional integrals appear in many different contexts,
\textrm{e.g.}, when dealing with fractional variational problems
or fractional optimal control \cite{VibString,APTIndInt,gasta1,Malinowska,Dorota}.
Here we obtain a simple and effective approximation for
fractional integrals. We obtain decomposition formulas for the left
and right fractional integrals of functions of class $C^n$  \cite{PATFracInt}.

In this PhD thesis we also consider the Hadamard
fractional integral and fractional derivative \cite{PATHad}. Although the definitions
go back to the works of Hadamard in 1892 \cite{Hadamard}, this type of operators
are not yet well studied and much exists to be done. For related works
on Hadamard fractional operators, see \cite{Butzer,Butzer2,Katugampola,Kilbas2,Kilbas3,Qian}.

An error analysis is given for each approximation whenever needed. These approximations are studied 
throughout some concrete examples. In each case we try to analyze problems
for which the analytic solution is available, so we can compare the exact and the approximate solutions. 
This approach gives us the ability of measuring the accuracy of each method. 
To this end, we need to measure how close we get to the exact solutions. 
We can use the $2$-norm for instance, and define
an error function $E[x(\cdot),\tilde{x}(\cdot)]$ by
\begin{equation}\label{ErrorDef}
E=\|x(\cdot)-\tilde{x}(\cdot)\|_2
=\left(\int_a^b [x(t)-\tilde{x}(t)]^2dt\right)^{\frac{1}{2}},
\end{equation}
where $x(\cdot)$ is defined on a certain interval $[a,b]$.

Before getting into the usage of these approximations for fractional variational problems, 
we introduce an Euler-like discrete method, and a discretization of the first variation 
to solve such problems in Chapter \ref{Direct}. The finite differences approximation 
for integer-order derivatives is  generalized to derivatives of arbitrary order
and gives rise to the Gr\"{u}nwald--Letnikov fractional derivative.
Given a grid on $[a,b]$ as $a=t_0,t_1,\ldots,t_n=b$, where $t_i=t_0+ih$ for some $h>0$, 
we approximate the left Riemann--Liouville  derivative as
\begin{equation*}
\LDai x(t_i)\simeq \frac{1}{h^\a} \sum_{k=0}^{i}\w x(t_i-kh),
\end{equation*}
where $\w=(-1)^k\binom{\a}{k}=\frac{\Gamma(k-\a)}{\Gamma(-\a)\Gamma(k+1)}$.
The method follows the same procedure as in the classical theory. Discretizing 
the functional by a quadrature rule, integer-order derivatives by finite differences  
and substituting fractional terms by corresponding generalized finite differences, 
results in a system of algebraic equations. Finally, one gets approximate values 
of state and control functions on a set of discrete points \cite{PATDisDir}.

A different direct approach for classical problems has been introduced in \cite{Gregory1,Gregory2}. 
It uses the fact that the first variation of a functional must vanish along an extremizer. That is,
if $x$ is an extremizer of a given variational functional $J$, the first variation of $J$
evaluated at $x$, along any variation $\eta$, must vanish. This means that
\begin{equation*}
J'[x,\eta]=\int_a^b\left[\frac{\partial L}{\partial x}(t,x(t),\LDa x(t))\eta(t)
+\frac{\partial L}{\partial \LDa x}(t,x(t),\LDa x(t))\LDa \eta(t)\right]\,dt=0.
\end{equation*}
With a discretization on time horizon and a quadrature for this integral, we obtain a system
of algebraic equations. The solution to this system gives an approximation 
to the original problem \cite{PATDisVar}.

Considering indirect methods in Chapter \ref{Indirect}, we transform the fractional 
variational problem into an integer-order problem. The job is done by substituting 
the fractional term by the corresponding approximation in which only integer-order 
derivatives exist. The resulting classic problem, which is considered as the 
approximated problem, can be solved by any available method in the literature. 
If we substitute the approximation \eqref{expanMomIntro} for the fractional term 
in \eqref{MainProblem}, the outcome is an integer-order constrained variational problem
\begin{eqnarray*}
J[x(\cdot)]&\approx &\int_a^b L\left(t, x(t),\frac{Ax(t)}{(t-a)^{\a}}
+\frac{B\dot{x}(t)}{(t-a)^{\a-1}}-\sum_{p=2}^N \frac{C(\a,p)V_p(t)}{(t-a)^{p+\a-1}}\right)dt\\
&=&\int_a^b L'\left(t, x(t), \dot{x}(t), V_2(t), \ldots,V_N(t)\right)dt\longrightarrow \min\\
\end{eqnarray*}
subject to
$$
\left\{
           \begin{array}{l}
           \dot{V}_p(t)=(1-p)(t-a)^{p-2}x(t)\\
           V_p(a)=0,
           \end{array}
           \right.
$$
with $p=2,\ldots,N$.
Once we have a tool to transform a fractional variational problem into an integer-order one, 
we can go further to study more complicated problems. As a first candidate, we study 
fractional optimal control problems with free final time in Chapter~\ref{FreeTime}. 
The problem is stated in the following way:
\begin{equation*}
J[x,u,T]=\int_a^T L(t,x(t),u(t))\,dt+\phi(T,x(T))\longrightarrow \min,
\end{equation*}
subject to the control system
\begin{equation*}
M \dot{x}(t) + N\;\LCD x(t) = f(t,x(t),u(t)),
\end{equation*}
and the initial boundary condition
\begin{equation*}
x(a)=x_a,
\end{equation*}
with $(M,N)\not=(0,0)$, and $x_a$ a fixed real number.
Our goal is to generalize previous works
on fractional optimal control problems
by considering the end time $T$ free and the dynamic control system
involving integer and fractional order derivatives.
First, we deduce necessary optimality conditions for this new problem with free end-point. 
Although this could be the beginning of the solution procedure, the lack of techniques 
to solve fractional differential equations prevent further progress. Another approach consists
in using the approximation methods mentioned above,
thereby converting the original problem into
a classical optimal control problem that
can be solved by standard computational techniques \cite{PATFree}.

In the 18th century, Euler considered the problem of optimizing functionals
depending not only on some unknown function $x$ and some derivative of $x$,
but also on an antiderivative of $x$ (see \cite{fraser}). Similar problems have been
recently investigated in \cite{Gregory}, where Lagrangians containing
higher-order derivatives and optimal control problems are considered.
More generally, it has been shown that the results of \cite{Gregory}
hold on an arbitrary time scale \cite{Nat}.
Here, in Chapter \ref{IndInt}, we study such problems within the framework of fractional calculus.
Minimize the cost functional
\begin{equation*}
J[x]=\int_a^b L(t,x(t),{^C_aD_t^\alpha}x(t),{_aI_x^\beta}x(t),z(t))dx,
\end{equation*}
where the variable $z$ is defined by
$$
z(t)=\int_a^t l(\t,x(\t),{^C_aD_\t^\alpha}x(\t),{_aI_\t^\beta}x(\t))d\t,
$$
subject to the boundary conditions
\begin{equation*}
x(a)=x_a \quad \mbox{and} \quad x(b)=x_b.
\end{equation*}
Our main contribution is an extension of the results presented
in \cite{AGRA1,Gregory} by considering Lagrangians containing an antiderivative,
that in turn depends on the unknown function, a fractional integral, and a Caputo
fractional derivative.

Transversality conditions are studied, where the variational functional $J$ depends also on the terminal 
time $T$, $J[x,T]$. We also consider isoperimetric problems with integral constraints of the same type. 
Fractional problems with holonomic constraints are considered and the situation when the Lagrangian 
depends on higher order Caputo derivatives is studied. Other aspects such as the Hamiltonian formalism, 
sufficient conditions of optimality under suitable convexity assumptions on the Lagrangian, 
and numerical results with illustrative examples are described in detail \cite{APTIndInt}.

\CP
%================================================================
\part{Synthesis}
\CP
%================================================================
\chapter{The calculus of variations and optimal control}
\label{Prelaminaries}

In this part we review the basic concepts that have essential role in the understanding 
of the second and main part of this dissertation. Starting with the notion of the calculus 
of variations, and without going into details, we recall the optimal control theory as well 
and point out its variational approach together with main concepts, definitions, and some 
important results from the classical theory. A brief historical introduction to the fractional 
calculus is given afterwards. At the same time, we introduce the theoretical framework 
of the whole work, fixing notations and nomenclature. At the end, the calculus of variations 
and optimal control problems involving fractional operators are discussed as fractional variational problems.

% ---------------------------------------------

\section{The calculus of variations}

Many authors trace the origins of the calculus of variations back to the ancient times, 
the time of Dido, Queen of Carthage. \index{Queen Dido}Dido's problem had an intellectual 
nature. The question is to lie as much land as possible within a bull's hide. 
Queen Dido intelligently cut the hide into thin strips and no one knows if she encircled 
the land using the line she made off the strips. As it is well-known nowadays, thanks 
to the modern calculus of variations, the solution to Dido's problem is a circle \cite{Kirk}. 
Aristotle (384--322 B.C) expresses a common belief in his \emph{Physics} that nature follows 
the easiest path that requires the least amount of effort. This is the main idea behind 
many challenges to solve real-world problems \cite{Berdichevsky}.

% ---------------------------------------------

\subsection{From light beams to the Brachistochrone problem}

Fermat believed that ``{\it nature operates by means and ways that are easiest and fastest}'' \cite{Goldstine}. 
Studying the analysis of refractions, he used Galileo's reasoning on falling objects and claimed that in this case nature 
does not take the shortest path, but the one which has the least traverse time. Although the solution to this problem 
does not use variational methods, it has an important role in the solution 
of the most critical problem and the birth of the calculus of variations.

Newton also considered the problem of motion in a resisting medium, which is indeed 
a shape optimization problem. This problem is a well-known and well-studied example 
in the theory of the calculus of variations and optimal control nowadays \cite{Plakhov,Silva,Gouveia}. 
Nevertheless, the original problem, posed by Newton, was solved by only using calculus.

In 1796-1797, John Bernoulli challenged the mathematical world 
to solve a problem that he called the Brachistochrone \index{Brachistochrone}problem:
\begin{quote}\it
If in a vertical plane two points A and B are given, then it is required to
specify the orbit AMB of the movable point M, along which it, starting
from A, and under the influence of its own weight, arrives at B in the
shortest possible time. So that those who are keen of such matters will
be tempted to solve this problem, is it good to know that it is not, as
it may seem, purely speculative and without practical use. Rather it
even appears, and this may be hard to believe, that it is very useful also
for other branches of science than mechanics. In order to avoid a hasty
conclusion, it should be remarked that the straight line is certainly the
line of shortest distance between A and B, but it is not the one which is
traveled in the shortest time. However, the curve AMB, which I shall
disclose if by the end of this year nobody else has found it, is very well
known among geometers \cite{Sussmann}.
\end{quote}
It is not a big surprise that several responses came to this challenge. 
It was the time of some of the most famous mathematical minds. Solutions 
from John and Jakob Bernoulli were published in May 1797 together 
with contributions by Tschrinhaus and l'Hopital and a note from Leibniz. 
Newton also published a solution without a proof. Later on, other variants 
of this problem have been discussed by James Bernoulli.

% ---------------------------------------------

\subsection{Contemporary mathematical formulation}

Having a rich history, mostly dealing with physical problems, the calculus of variations 
is nowadays an outstanding field with a strong mathematical formulation. Roughly speaking, 
the calculus of variations is the optimization of functionals.
\begin{definition}[Functional]
\label{FuncDef}
A functional \index{Functional!} $J[\cdot]$ is a rule of correspondence, 
from a vector space into its underlying scalar field, that assigns 
to each function $x(\cdot)$ in a certain class $\Omega$ a unique number.
\end{definition}
The domain of a functional, $\Omega$ in Definition~\ref{FuncDef}, 
is a class of functions. Suppose that $x(\cdot)$ is a positive continuous 
function defined on the interval $[a,b]$. The area under 
$x(\cdot)$ can be defined as a functional, i.e.,
$$
J[x]=\int_a^b x(t)dt
$$
is a functional that assigns to each function the area under its curve. 
Just like functions, for each functional, $J[\cdot]$, 
one can define its increment, $\Delta J$.
\begin{definition}[See, e.g., \cite{Kirk}]
Let $x$ be a function and $\delta x$ be its \index{Variation}variation. 
Suppose also that the functional $J$ is defined for $x$ and $x+\delta x$. 
The increment of the functional\index{Functional! Increment of} 
$J$ with respect to $\delta x$ is
$$
\Delta J := J[x+\delta x]-J[x].
$$
\end{definition}
Using the notion of the increment of a functional we define its variation. 
The increment of $J$ can be written as
$$
\Delta J[x,\delta x]= \delta J[x,\delta x]+g(x,\delta x).\parallel \delta x \parallel,
$$
where $\delta J$ is linear in $\delta x$ and
$$
\lim_{\parallel \delta x \parallel\rightarrow 0}g(x,\delta x)=0.
$$
In this case the functional $J$ is said to be differentiable\index{Functional! Differentiable} 
on $x$ and $\delta J$ is its variation evaluated for the function $x$.

Now consider all functions in a class $\Omega$ for which the functional $J$  is defined. 
A function $x^*$ is a relative extremum of $J$ if its increment 
has the same sign for functions sufficiently close to $x^*$, i.e.,
$$
\exists \epsilon >0\, \forall x\in\Omega\,:\, \| x-x^*\| < \epsilon 
\Rightarrow J(x)-J(x^*)\geq 0 \,\vee\, J(x)-J(x^*)\leq 0.
$$
Note that for a relative minimum the increment is non-negative 
and non-positive for the relative maximum.

In this point, \textit{the fundamental theorem of the calculus of variations}\index{Fundamental! theorem} 
is used as a necessary condition to find a relative extreme point.
\begin{theorem}[See, e.g., \cite{Kirk}]
Let $J[x(\cdot)]$ be a differentiable functional defined in $\Omega$. Assume also that the members 
of $\Omega$ are not constrained by any boundaries. Then the variation of $J$, 
for all admissible variations of $x$, vanishes on an extremizer $x^*$.
\end{theorem}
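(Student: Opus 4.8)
The plan is to prove the fundamental theorem of the calculus of variations: if $J[x(\cdot)]$ is a differentiable functional on an unconstrained class $\Omega$, then its variation $\delta J[x^*,\delta x]$ vanishes at any extremizer $x^*$ for all admissible variations $\delta x$. I would argue by contradiction. Suppose $x^*$ is a (say) relative minimum but that there is some admissible variation $\delta x$ with $\delta J[x^*,\delta x]\neq 0$. The idea is to exploit the decomposition of the increment, $\Delta J[x^*,\delta x]=\delta J[x^*,\delta x]+g(x^*,\delta x)\,\|\delta x\|$ with $g\to 0$ as $\|\delta x\|\to 0$, together with the linearity of $\delta J$ in its second argument, to produce variations along which the increment changes sign arbitrarily close to $x^*$, contradicting the definition of relative extremum.

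First I would reduce to a one-parameter family: since $\Omega$ has no boundary constraints, for a fixed admissible $\delta x$ and all sufficiently small real $\varepsilon$ the function $x^*+\varepsilon\,\delta x$ lies in $\Omega$ and $J$ is defined on it. Writing the increment for the scaled variation $\varepsilon\,\delta x$ and using linearity of $\delta J$, we get
\begin{equation*}
\Delta J[x^*,\varepsilon\,\delta x]=\varepsilon\,\delta J[x^*,\delta x]+g(x^*,\varepsilon\,\delta x)\,|\varepsilon|\,\|\delta x\|.
\end{equation*}
The leading term is linear in $\varepsilon$ and hence changes sign with $\varepsilon$, while the remainder is $o(\varepsilon)$ as $\varepsilon\to 0$.

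Next I would make the sign argument precise. Set $c:=\delta J[x^*,\delta x]\neq 0$. Choose $\varepsilon$ small enough that $|g(x^*,\varepsilon\,\delta x)|\,\|\delta x\|<\tfrac12|c|$; this is possible because $g(x^*,\varepsilon\,\delta x)\to 0$ as $\varepsilon\to 0$. Then for such $\varepsilon$ the sign of $\Delta J[x^*,\varepsilon\,\delta x]$ equals the sign of $\varepsilon c$. Taking $\varepsilon>0$ and $\varepsilon<0$ (both admissible and both giving functions within any prescribed $\epsilon$-neighbourhood of $x^*$ once $|\varepsilon|$ is small) yields increments of opposite sign. This contradicts the hypothesis that $x^*$ is a relative extremum, where the increment must keep a constant sign in a neighbourhood. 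Hence $\delta J[x^*,\delta x]=0$, and since $\delta x$ was an arbitrary admissible variation, the variation of $J$ vanishes at $x^*$.

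The main obstacle, and the only place needing care, is handling the remainder term uniformly: one must make sure that the $g$-term genuinely can be dominated by the linear term, i.e.\ that $g(x^*,\varepsilon\,\delta x)\to 0$ when we shrink the \emph{scale} $\varepsilon$ with $\delta x$ fixed — this is exactly the content of the differentiability hypothesis applied along the ray $\{\varepsilon\,\delta x\}$, since $\|\varepsilon\,\delta x\|=|\varepsilon|\,\|\delta x\|\to 0$. The unconstrained nature of $\Omega$ is what guarantees both $\varepsilon>0$ and $\varepsilon<0$ are allowed, so that we really can compare increments on both sides; without it one would only get a one-sided inequality (the Euler--Lagrange situation with boundary effects). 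Everything else is bookkeeping with the definitions of increment, variation, and relative extremum already given in the excerpt.
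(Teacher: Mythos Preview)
Your proof is correct and is essentially the standard textbook argument (precisely the one in Kirk, to which the statement is attributed). Note, however, that the paper itself does not supply a proof of this theorem: it is stated with a citation to \cite{Kirk} and used as background, so there is no ``paper's own proof'' to compare against. Your contradiction argument via the one-parameter family $x^*+\varepsilon\,\delta x$, exploiting linearity of $\delta J$ and the $o(\varepsilon)$ remainder, is exactly the classical route and is fully rigorous as written.
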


Many problems in the calculus of variations are included in a general problem 
of optimizing a definite integral of the form
\begin{equation}
\label{Funct}
J[x(\cdot)]=\int_a^b L(t, x(t),  \dot{x}(t))dt,
\end{equation}
within a certain class, e.g., the class of continuously differentiable functions. 
In this formulation, the function $L$ is called the \index{Lagrangian}Lagrangian 
and supposed to be twice continuously differentiable. The points $a$ and $b$ 
are called boundaries, or the initial and terminal points, respectively. 
The optimization is usually interpreted as a minimization or a maximization. 
Since these two processes are related, that is, $\max G=-\min -G $, 
in a theoretical context we usually discuss the minimization problem.

The problem is to find a function $x(\cdot)$ with certain properties that gives a minimum 
value to the functional $J$. The function is usually assumed to pass through prescribed points, 
say $x(a)=x_a$ and/or $x(b)=x_b$. These are called the \index{Boundary conditions}boundary conditions. 
Depending on the boundary conditions a variational problem\index{Variational problem!} can be classified as:
\begin{description}
\item [Fixed end points:] the conditions at both end points are given,
$$x(a)=x_a,\qquad x(b)=x_b. 
$$
\item [Free terminal point:] the value of the function 
at the initial point is fixed and it is free at the terminal point,
$$
x(a)=x_a. 
$$
\item [Free initial point:] the value of the function 
at the terminal point is fixed and it is free at the initial point,
$$
x(b)=x_b. 
$$
\item [Free end points:]  both end points are free.
\item [Variable end points:] one point and/or the other 
is required to be on a certain set, e.g., a prescribed curve.
\end{description}
Sometimes the function $x(\cdot)$ is required to satisfy some constraints.\index{Isoperimetric problem}
Isoperimetric problems are a class of constrained variational problems 
for which the unknown function is needed to satisfy an integral of the form
$$
\int_a^b G(t, x(t),  \dot{x}(t))dt=K
$$
in which $K\in \mathbb{R}$ has a fixed given value.

A variational problem\index{Variational problem!} can also be subjected to a dynamic constraint. 
In this setting, the objective is to find an optimizer $x(\cdot)$ for the functional $J$ 
such that an ordinary differential equation is fulfilled, i.e.,
$$
\dot{x}(t)=f(t,x(t)),\quad t\in [a,b].
$$

% ---------------------------------------------

\subsection{Solution methods}
\label{ClassisDir}

The aforementioned mathematical formulation allows us to derive optimality conditions 
for a large class of problems. The Euler--Lagrange necessary optimality condition 
is the key feature of the calculus of variations. This condition was introduced 
first by Euler in around 1744. Euler used a geometrical insight and 
finite differences\index{Finite! differences} approximations of derivatives 
to derive his necessary condition. Later, on 1755, Lagrange ended at the same result 
using analysis alone. Indeed Lagrange's work was the reason that Euler called 
this field the calculus of variations \cite{Goldstine}.

% ---------------------------------------------

\subsubsection{Euler--Lagrange equation}
\index{Euler--Lagrange equation}

Let $x(\cdot)$ be a scalar function in $C^2[a,b]$, i.e., it has a continuous first 
and second derivatives on the fixed interval $[a,b]$. Suppose that the Lagrangian $L$ 
in \eqref{Funct} has continuous first and second partial derivatives with respect 
to all of its arguments. To find the extremizers of $J$ one can use the fundamental 
theorem of the calculus of variations: the first variation of the 
functional\index{Variation! of a functional} must vanish on the extremizer. 
By the increment of a functional we have
\begin{eqnarray*}
\Delta J&=& J[x+\delta x]-J[x]\\
        &=&\int_a^b L(t, x+\delta x,  
        \dot{x}+\delta \dot{x})dt-\int_a^b L(t, x,  \dot{x})dt.
\end{eqnarray*}
The first integrand is expanded in a Taylor series and the terms up to the first order 
in $\delta x$ and $\delta \dot{x}$ are kept. 
Finally, combining the integrals, gives the variation $\delta J$ as
$$
\delta J[x, \delta x]=\int_a^b \left( 
\left[ \frac{\partial L}{\partial x}(t,x,\dot{x}) \right]\delta x
+\left[ \frac{\partial L}{\partial \dot{x}}(t,x,\dot{x}) \right]\delta \dot{x}\right)dt.
$$
One can now \index{Integration by parts!}integrate by parts 
the term containing $\delta \dot{x}$ to obtain
$$
\delta J[x, \delta x]=\left[ \frac{\partial L}{\partial \dot{x}}(t,x,\dot{x}) \right]
\delta x\bigg|_a^b+\int_a^b \left( \left[ \frac{\partial L}{\partial x}(t,x,\dot{x}) \right]
-\frac{d}{dt}\left[ \frac{\partial L}{\partial \dot{x}}(t,x,\dot{x}) \right]\right)\delta x\,dt.
$$
Depending on how the boundary conditions are specified, we have different necessary conditions. 
In the very simple form when the problem is in the fixed end-points form, $\delta x(a)=\delta x(b)=0$, 
the terms outside the integral vanish. For the first variation to be vanished one has
$$
\int_a^b \left(  \frac{\partial L}{\partial x}(t,x,\dot{x})
-\frac{d}{dt}\left[ \frac{\partial L}{\partial \dot{x}}(t,x,\dot{x}) \right]\right)\delta x\,dt=0.
$$
According to the \textit{fundamental lemma of the calculus of variations}\index{Fundamental! lemma} 
(see, e.g., \cite{Brunt}), if a function $h(\cdot)$ is continuous and
$$
\int_a^b h(t)\eta(t)dt=0,
$$
for every function $\eta(\cdot)$ that is continuous in the interval $[a,b]$, then $h$ must be 
zero everywhere in the interval $[a,b]$. Therefore, the Euler--Lagrange necessary optimality 
condition\index{Euler--Lagrange equation}, that is an ordinary differential equation, reads to
$$
 \frac{\partial L}{\partial x}(t,x,\dot{x})
-\frac{d}{dt}\left[ \frac{\partial L}{\partial \dot{x}}(t,x,\dot{x}) \right]=0,
$$
when the boundary conditions are given at both end-points. For free end-point problems 
the so-called transversality conditions are added to the Euler--Lagrange equation 
(see, e.g., \cite{MalinowskaNatur}).
\begin{definition}
Solutions to the Euler--Lagrange equation are called extremals for $J$ defined by \eqref{Funct}.
\end{definition}

The necessary condition for optimality can also be derived using the classical method of perturbing 
the extremal and using the Gateaux derivative. The Gateaux differential 
or Gateaux derivative\index{Gateaux derivative} is a generalization of the concept of directional derivative:\\
$$
dF(x;\eta)=\lim_{\epsilon\rightarrow 0}\frac{F(x+\epsilon\eta)-F(x)}{\epsilon}
=\frac{d}{d\epsilon}F(x+\epsilon\eta)\Big|_{\epsilon=0}.
$$

Let $x^*(\cdot)\in C^2[a,b]$ be the extremal and $\eta(\cdot)\in C^2[a,b]$ 
be such that $\eta(a)=\eta(b)=0$. Then for sufficiently small values of $\epsilon$, 
form the family of curves $x^*(\cdot)+\epsilon\eta(\cdot)$. All of these curves reside 
in a neighborhood of $x^*$ and are admissible functions, i.e., they are in the class 
$\Omega$ and satisfy the boundary conditions. We now construct the function
\begin{equation}
\label{JEps}
j(\epsilon)=\int_a^b L(t, x^*(t)+\epsilon\eta(t),  \dot{x}^*(t)
+ \epsilon\dot{\eta}(t))dt, \quad -\delta<\epsilon <\delta.
\end{equation}
Due to the construction of the function $j(\epsilon)$,  the extremum is achieved 
for $\epsilon=0$. Therefore, it is necessary that the first derivative 
of $j(\epsilon)$ vanishes for $\epsilon=0$, i.e.,
$$
\frac{dj(\epsilon)}{d\epsilon}\bigg|_{\epsilon=0}=0.
$$
Differentiating \eqref{JEps} with respect to $\epsilon$, we get
$$
\frac{dj(\epsilon)}{d\epsilon}
= \int_a^b \left(\left[\frac{\partial L}{\partial x}(t,x^*+\epsilon\eta,\dot{x}^*+ \epsilon\dot{\eta}) \right]\eta
+\left[ \frac{\partial L}{\partial \dot{x}}(t,x^*+\epsilon\eta,\dot{x}^*+ \epsilon\dot{\eta}) \right]\dot{\eta}\right)dt.
$$
Setting $\epsilon=0$, we arrive at the formula
$$
\int_a^b \left(\left[\frac{\partial L}{\partial x}(t,x^*,\dot{x}^*) \right]\eta
+\left[ \frac{\partial L}{\partial \dot{x}}(t,x^*,\dot{x}^*) \right]\dot{\eta}\right)dt,
$$
which gives the Euler--Lagrange condition after making an integration by parts and applying the fundamental lemma.

The solution to the Euler--Lagrange equation, if exists, is an extremal for the variational problem. 
Except for simple problems, it is very difficult to solve such differential equations in a closed form. 
Therefore, numerical methods are employed for most practical purposes.

% ---------------------------------------------

\subsubsection{Numerical methods}

A variational problem can be solved numerically in two different ways: by indirect 
or direct methods\index{Direct method}. Constructing the Euler--Lagrange equation 
and solving the resulting differential equation is known to be the indirect method\index{Indirect method}.

There are two main classes of direct methods. On one hand, we specify a discretization scheme 
by choosing a set of mesh points on the horizon of interest, say $a=t_0,t_1,\ldots,t_n=b$ 
for $[a,b]$. Then we use some approximations for derivatives in terms of the unknown function 
values at $t_i$ and using an appropriate quadrature, the problem is transformed to a finite 
dimensional optimization. This method is known as Euler's method\index{Euler method} in the literature. 
Regarding Figure~\ref{EulerMethod}, the solid line is the function that we are looking for, 
nevertheless, the method gives the polygonal dashed line as an approximate solution.
\begin{figure}[!htp]
\begin{center}
\setlength\fboxrule{0pt}
\fbox{\includegraphics[scale=1]{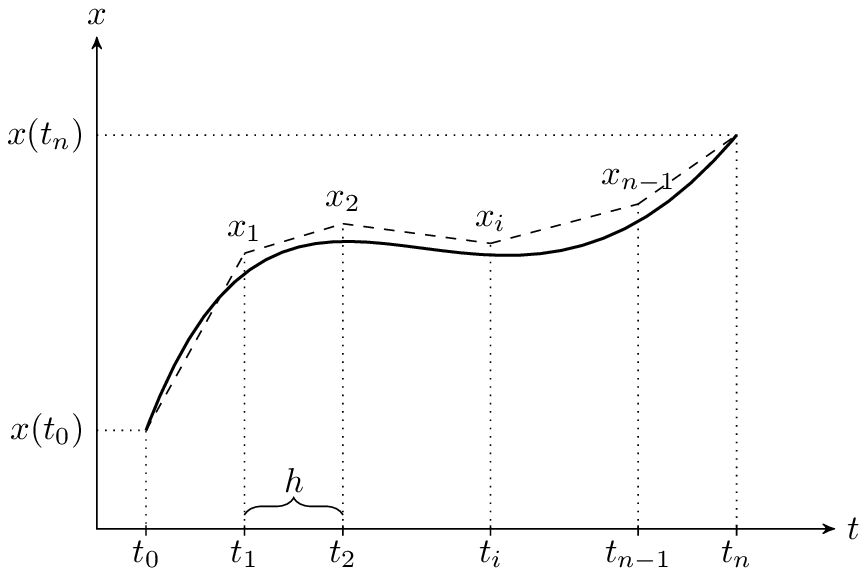}}
\end{center}
\caption{Euler's finite differences\index{Finite! differences} method.}\label{EulerMethod}
\end{figure}

On the other hand, there is the Ritz method\index{Ritz method}, that has an extension 
to functionals of several independent variables which is called Kantorovich's method\index{Kantorovich method}. 
We assume that the admissible  functions can be expanded in some kind of series, e.g. power or Fourier's series, of the form
$$
x(t)=\sum_{k=0}^\infty a_k \phi_k(t).
$$
Using a finite number of terms in the sum as an approximation, and some sort of quadrature again, 
the original problem can be transformed to an equivalent optimization problem for $a_k$, $k=0,1,\ldots,n$.

% ---------------------------------------------

\section{Optimal control theory}

Optimal control theory is a well-studied subject. Many papers and textbooks present the field very well, 
see \cite{Kirk,Bryson,Pontryagin}. Nevertheless, we introduce some basic concepts without going into details. 
Our main purpose is to review the variational approach to optimal control theory and clarify its connection 
to the calculus of variations. This provides a background for our later investigations on fractional 
variational problems. The formulation is presented for vector functions, $\x=(x_1,\ldots,x_n)$, 
to emphasize the possibility of such functions. This is also valid, and is easy to adapt, for the calculus of variations.

% ---------------------------------------------

\subsection{Mathematical formulation}

Mathematically speaking, the notion of control is highly connected to dynamical systems. 
A dynamical system is usually formulated using a system of ordinary or partial differential equations. 
In this thesis, dealing only with ordinary derivatives, we consider the dynamics as
$$
 \left\{
 \begin{array}{l}
 \dot{\x}= f(t,\x),\\
 \x(t_0)=\x_0,
 \end{array}\right.
$$
where $\x=(x_1,\ldots,x_n)$, the state of the system, is a vector function, 
$t_0\in \mathbb{R}$, $\x_0\in \mathbb{R}^n$ and $f:\mathbb{R}^{n+1}\to \mathbb{R}^{n}$ are given.

In order to affect the behavior of a system, e.g., a real-life physical system used in technology, 
one can introduce control parameters to the system. 
A controlled system also can be described by a system of ODEs,
$$
 \left\{
 \begin{array}{l}
 \dot{\x}= f(t,\x,\u),\\
 \x(t_0)=\x_0,
 \end{array}\right.
$$
in which $\u\in\Omega\subseteq\mathbb{R}^m$ is the control parameter or variable. 
The control parameters can also be time-varying, i.e., $\u=\u(t)$. In this case 
$f:\mathbb{R}^{n+m+1}\to \mathbb{R}^{n}$ is supposed to be continuous with respect 
to all of its arguments and continuously differentiable with respect to $\x=(x_1,\ldots,x_n)$.

In an optimal control problem\index{Optimal! control}, the main objective is to determine the control 
parameters in a way that certain optimality criteria are fulfilled. In this thesis we consider 
problems in which a functional of the form
$$
J[\x(\cdot),\u(\cdot)]=\int_a^b L(t, \x(t),\u(t)) dt
$$
should be optimized. Therefore, a typical optimal control problem is formulated as
\begin{align*}
 &J[\x(\cdot),\u(\cdot)]=\int_a^b L(t, \x(t),\u(t)) dt\longrightarrow \min\\
& s.t. \quad \left\{
 \begin{array}{l}
 \dot{\x}(t)= f(t,\x(t),\u(t))\\
 \x(t_0)=\x_0,
 \end{array}\right.
\end{align*}
where the state $\x$ and the control $\u$ are assumed to be unbounded.
This formulation can also be considered as a framework for both optimal control and the calculus of variations.  
Let $\dot{x}(t)=u(t)$. Then the optimization of \eqref{Funct} becomes
\begin{align*}
&J[x(\cdot)]=\int_a^b L(t, x(t),u(t)) dt\longrightarrow \min\\
& s.t. \quad\left\{
\begin{array}{l}
\dot{x}(t)= u(t)\\
x(t_0)=x_0,
\end{array}\right.
\end{align*}
that is an optimal control problem. On one hand, we can apply aforementioned direct methods\index{Direct method}. 
On the other hand, indirect methods\index{Indirect method} consist in using Lagrange multipliers in a variational 
approach to obtain the Euler--Lagrange equations. The dynamics is considered as a constraint for a variational 
problem\index{Variational problem!} and is added into the functional. The so-called augmented functional\index{Functional! Augmented} 
is then achieved, that is, the functional
\begin{equation*}
 J_a[\x(\cdot),\u(\cdot)]=\int_a^b \left[L(t, \x(t),\u(t))-\l(t)^T(\dot{\x}(t)-f(t,\x(t),\u(t)))\right]\, dt
\end{equation*}
is treated subject to the boundary conditions.

% ---------------------------------------------

\subsection{Necessary optimality conditions}

Although the Euler--Lagrange  equations are derived by usual ways, 
e.g., Section~\ref{ClassisDir}, it is common and useful to define 
the Hamiltonian\index{Hamiltonian} function by
$$
H(t,\x,\u,\l)=L(t,\x,\u)+\l^T[f(t,\x,\u)].
$$
Then the necessary optimality conditions read as
\begin{equation*}
\left\{
\begin{array}{l}
\displaystyle\dot{\x}(t)= \frac{\partial H}{\partial\l}(t,\x(t),\u(t),\l(t)),\\[8pt]
\displaystyle\dot{\l}(t)= -\frac{\partial H}{\partial\x}(t,\x(t),\u(t),\l(t)),\\[8pt]
\displaystyle\hspace{5mm}0= \frac{\partial H}{\partial\u}(t,\x(t),\u(t),\l(t)).
\end{array}\right.
\end{equation*}
It is possible to consider a function $\phi(b,\x(b))$ in the objective functional, which makes the cost 
functional dependent on the time and state variables at the terminal point. This can be treated easily 
by some more calculations. Also one can discuss different end-points conditions 
in the same way as we did for the calculus of variations.

% ---------------------------------------------

\subsection{Pontryagin's minimum principle}

Roughly speaking, unbounded control is an essential assumption to use variational methods freely 
and to obtain the resulting necessary optimality conditions. In contrast, if there is a bound on control, 
$\delta u$ can no more vary freely. Therefore, the fact that $\delta J$ must vanish on a extremal 
is of no use. Nevertheless, special variations can be defined and used to prove that 
for $u^*$ to be an extremal, it is necessary that
$$
H(t,x^*,u^*+\delta u,\lambda^*)\geq H(t,x^*,u^*,\lambda^*),
$$
for all admissible $\delta u$ \cite{Pontryagin}. That is, an optimal control $u^*$ 
is a global minimizer of the Hamiltonian for a control system. This condition 
is known as Pontryagin's minimum principle\index{Minimum Principle}. It is worthwhile 
to note that the condition that the partial derivative of the Hamiltonian with respect 
to control $u$ must vanish on an optimal control is a necessary condition for the minimum principle:
$$
\frac{\partial H}{\partial u}(t,x^*,u^*,\lambda^*)=0.
$$

\CP
%================================================================
\chapter{Fractional Calculus}
\label{FracCalculus}

In the early ages of modern differential calculus, right after the introduction of $\frac{d}{dt}$ 
for the first derivative, in a letter dated 1695, l'Hopital asked Leibniz the meaning 
of $\frac{d^\frac{1}{2}}{dt^\frac{1}{2}}$, the derivative of order $\frac{1}{2}$ \cite{Miller}. 
The appearance of $\frac{1}{2}$ as a fraction gave the name fractional calculus to the study 
of derivatives, and integrals, of any order, real or complex.

There are several different approaches and definitions in fractional calculus for derivatives 
and integrals of arbitrary order. Here we give a historical progress of the theory of fractional 
calculus that includes all we need throughout this thesis. We mostly follow the notation used 
in the books \cite{Kilbas,Samko}. Before getting into the details of the theory, we briefly outline 
the definitions of some special functions that are used in the definitions of fractional derivatives 
and integrals, or appear in some manipulation, e.g., solving fractional differential and integral equations.

% ------------------------------------------

\section{Special functions}

Although there are many special functions that appear in fractional calculus, in this thesis only 
a few of them are encountered. The following definitions are introduced together with some properties.
\begin{definition}[Gamma function]
\index{Gamma function}
The Euler integral of the second kind
$$
\Gamma(z)=\int_0^\infty t^{z-1}e^{-t}dt, \qquad \Re(z)>0,
$$
is called the gamma function.
\end{definition}
The gamma function has an important property, $\Gamma(z+1)=z\Gamma(z)$ and hence $\Gamma(z)=z!$ 
for $z\in\N$, that allows us to extend the notion of factorial to real numbers.
For further properties of this special function we refer the reader to \cite{Andrews}.

\begin{definition}[Mittag--Leffler function]\index{Mittag--Leffler function}
Let $\a > 0$. The function $E_\a$ defined by
$$
E_\a(z)=\sum_{j=0}^\infty \frac{z^j}{\Gamma(\a j+1)},
$$
whenever the series converges, is called the one parameter Mittag--Leffler function.
The two-parameter Mittag--Leffler function with parameters $\a, \beta >0$ is defined by
$$
E_{\a,\beta}(z)=\sum_{j=0}^\infty \frac{z^j}{\Gamma(\a j+\beta)}.
$$
\end{definition}
The Mittag--Leffler function is a generalization of exponential series 
and coincides with the series expansion of $e^z$ for $\a=1$.

% ------------------------------------------

\section{A historical review}

Attempting to answer the question of l'Hopital, Leibniz tried to explain 
the possibility of the derivative of order $\frac{1}{2}$. He also quoted that 
``{\it this will lead to a paradox with very useful consequences}''. During 
the next century the question was raised again by Euler (1738), expressing 
an interest to the calculation of fractional order derivatives.

The nineteenth century has witnessed much effort in the field. In 1812, Laplace 
discussed non-integer derivatives of some functions that are representable by integrals. 
Later, in 1819, Lacriox generalized  $\frac{d^n}{dt^n}t^n$ to $\frac{d^\frac{1}{2}}{dt^\frac{1}{2}}t$. 
The first challenge of making a definition for arbitrary order derivatives comes from Fourier in 1822, with
$$
\frac{d^\a}{dt^\a}x(t)=\frac{1}{2\pi}\int_{-\infty}^\infty x(\t)d\t
\int_{-\infty}^\infty p^{\a}\cos\left[p(t-\t)+\frac{1}{2}\a\pi\right]dp.
$$
He derived this definition from the integral representation of a function $x(\cdot)$. 
An important step was taken by Abel in 1823. Solving the \textit{Tautochrone} problem, 
he worked with integral equations of the form
$$
\int_0^t (t-\t)^{-\a}x(\t)d\t=k.
$$
Apart from a multiplicative factor, the left hand side of this equation resembles the modern 
definitions of fractional derivatives. Almost ten years later the first definitions 
of fractional operators appeared in the works of Liouville (1832), and has been contributed 
by many other mathematicians like Peacock and Kelland (1839), and  Gregory (1841). Finally, 
starting from 1847, Riemann dedicated some works on fractional integrals that led to the 
introduction of Riemann--Liouville fractional derivatives and integrals by Sonin in 1869.

\begin{definition}[Riemann--Liouville fractional integral]
\index{Riemann--Liouville fractional integral! Left}
\index{Riemann--Liouville fractional integral! Right}
Let $x(\cdot)$ be an integrable function in $[a,b]$ and $\a>0$.
\begin{itemize}
\item The left Riemann--Liouville fractional integral of order $\alpha$ is given by
$$
\LI x(t)=\frac{1}{\Gamma(\alpha)}
\int_a^t (t-\tau)^{\alpha-1}x(\tau)d\tau,
\quad t\in [a,b].
$$
\item The right Riemann--Liouville fractional integral
of order $\alpha$ is given by
$$
\RI x(t)=\frac{1}{\Gamma(\alpha)}\int_t^b (\tau-t)^{\a-1}x(\tau)d\tau,
\quad t\in [a,b].
$$
\end{itemize}
\end{definition}

\begin{definition}[Riemann--Liouville fractional derivative]
\index{Riemann--Liouville fractional derivative! Left}
\index{Riemann--Liouville fractional derivative! Right}
Let $x(\cdot)$ be an absolutely continuous function in $[a,b]$, $\a>0$, and $n=[\a]+1$.
\begin{itemize}
\item The left Riemann--Liouville fractional derivative of order $\alpha$ is given by
\begin{equation*}
\LD x(t)=\frac{1}{\Gamma(n-\a)}\left(\frac{d}{dt}\right)^n
\int_a^t (t-\t)^{n-1-\a}x(\t)d\t, \hspace{1cm}t\in [a,b].
\end{equation*}
\item
The right Riemann--Liouville fractional derivative of order $\a$ is given by
$$
\RD x(t)=\frac{1}{\Gamma(n-\a)}\left (-\frac{d}{dt}\right)^n
\int_t^b (\t-t)^{n-1-\a}x(\t)d\t, \hspace{1cm}t\in [a,b].
$$
\end{itemize}
\end{definition}
These definitions are easily derived from generalizing the Cauchy's 
$n$-fold integral formula. Substituting $n$ by $\a$ in
\begin{eqnarray*}
I^n x(t)&=& \int_0^t\int_0^{t_{n-1}}\ldots\int_0^{t_{1}}x(t_0)dt_0dt_1\ldots dt_{n-1}\\
        &=& \frac{1}{(n-1)!}\int_0^t(t-\t)^{n-1}x(\t)d\t,
\end{eqnarray*}
and using the gamma function, $\Gamma(n)=(n-1)!$, leads to
\begin{eqnarray*}
I^\a x(t)= \frac{1}{\Gamma(\a)}\int_0^t(t-\t)^{\a-1}x(\t)d\t.
\end{eqnarray*}
For the derivative, one has $D^\a x(t)=D^n I^{n-\a} x(t)$.

The next important definition is a generalization of the definition of higher 
order derivatives and appeared in the works of Gr\"{u}nwald (1867) and Letnikov (1868).

In classical theory, given a derivative of certain order $x^{(n)}$, 
there is a finite difference\index{Finite! differences} approximation of the form
\begin{equation*}
x^{(n)}(t)=\lim_{h\rightarrow 0^+} \frac{1}{h^n} \sum_{k=0}^n(-1)^k\binom{n}{k}x(t-kh),
\end{equation*}
where $\binom{n}{k}$  is the binomial coefficient\index{Binomial! coefficient}, that is,
\begin{equation*}
\binom{n}{k}=\frac{n(n-1)(n-2)\cdots (n-k+1)}{k!},\quad n,k \in \mathbb{N}.
\end{equation*}
The Gr\"{u}nwald--Letnikov definition of fractional derivative 
is a generalization of this formula to derivatives of arbitrary order.

\begin{definition}[Gr\"{u}nwald--Letnikov derivative]
\index{Gr\"{u}nwald--Letnikov fractional derivative! Left}
\index{Gr\"{u}nwald--Letnikov fractional derivative! Right}
Let $0<\a<1$ and $\binom{\a}{k}$ be the generalization 
of binomial coefficients to real numbers, that is,
$$
\binom{\a}{k}=\frac{\Gamma(\a+1)}{\Gamma(k+1)\Gamma(\a-k+1)},
$$
where $k$ and $\a$ can be any integer, real or complex, 
except that $\a\notin\{-1,-2,-3,\ldots\}$.
\begin{itemize}
\item The left Gr\"{u}nwald--Letnikov fractional derivative is defined as
\begin{equation}\label{LGLdef}
\GLa x(t)=\lim_{h\rightarrow 0^+} \frac{1}{h^\a} \sum_{k=0}^\infty(-1)^k\binom{\a}{k}x(t-kh).
\end{equation}
\item The right Gr\"{u}nwald--Letnikov derivative is
\begin{equation}\label{RGLdef}
\GLb x(t)=\lim_{h\rightarrow 0^+} \frac{1}{h^\a} \sum_{k=0}^\infty(-1)^k\binom{\a}{k}x(t+kh).
\end{equation}
\end{itemize}
\end{definition}

The series in \eqref{LGLdef} and \eqref{RGLdef}, the Gr\"{u}nwald--Letnikov definitions, 
converge absolutely and uniformly if $x(\cdot)$ is bounded. The infinite sums, 
backward differences for left and forward differences for right derivatives 
in the Gr\"{u}nwald--Letnikov definitions of fractional derivatives, reveal 
that the arbitrary order derivative of a function at a time $t$ depends 
on all values of that function in $(-\infty,t]$ and $[t,\infty)$ for left 
and right derivatives, respectively. This is due to the non-local property 
of fractional derivatives.
\begin{remark}
Equations \eqref{LGLdef} and \eqref{RGLdef} need to be consistent in closed 
time intervals and we need the values of $x(t)$ outside the interval $[a,b]$. 
To overcome this difficulty, we can take
\begin{equation}
x^*(t)=\left\{
\begin{array}{ll}
x(t)& t\in [a,b],\\\nonumber
0 & t\notin [a,b].\nonumber
\end{array}\right.\nonumber
\end{equation}
Then we assume $\GLa x(t)=\GLa x^*(t)$ and $\GLb x(t)=\GLb x^*(t)$ for $t\in [a,b]$.
\end{remark}

These definitions coincide with the definitions of Riemann--Liouville derivatives.
\begin{proposition}[See \cite{Podlubny}]
Let $0< \a<n$, $n\in \mathbb{N}$ and $x(\cdot)\in C^{n-1}[a,b]$. Suppose that $x^{(n)}(\cdot)$ 
is integrable on $[a,b]$. Then for every $\a$ the  Riemann--Liouville derivative exists 
and coincides with the Gr\"{u}nwald--Letnikov derivative:
\begin{eqnarray}
\LDa x(t)&=&\sum_{i=0}^{n-1}\frac{x^{(i)}(a)(t-a)^{i-\a}}{\Gamma(1+i-\a)}+
\frac{1}{\Gamma(n-\a)}\int_a^t(t-\t)^{n-1-\a}x^{(n)}(\t)d\t\nonumber\\
&=&\GLa x(t).\nonumber
\end{eqnarray}
\end{proposition}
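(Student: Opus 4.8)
The plan is to establish the formula
\[
\LDa x(t)=\sum_{i=0}^{n-1}\frac{x^{(i)}(a)(t-a)^{i-\a}}{\Gamma(1+i-\a)}
+\frac{1}{\Gamma(n-\a)}\int_a^t(t-\t)^{n-1-\a}x^{(n)}(\t)\,d\t
\]
by starting from the Riemann--Liouville definition and repeatedly integrating by parts, and then to identify the resulting expression with the Gr\"unwald--Letnikov derivative. First I would write out $\LDa x(t)=\frac{1}{\Gamma(n-\a)}\left(\frac{d}{dt}\right)^n\int_a^t(t-\t)^{n-1-\a}x(\t)\,d\t$ from the definition given above, with $n=[\a]+1$ so that $0\le n-1-\a$ and the kernel is integrable. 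The key preliminary step is to reduce the order of the outer derivative: I would integrate by parts $n$ times in the integral $\int_a^t(t-\t)^{n-1-\a}x(\t)\,d\t$, each time differentiating the polynomial-type kernel $(t-\t)^{n-1-\a}$ (with respect to $\t$) and integrating $x^{(j)}(\t)$. Each integration by parts produces a boundary term at $\t=a$ of the form $x^{(i)}(a)(t-a)^{\text{(power)}}$ up to Gamma-function constants (the term at $\t=t$ vanishes because the kernel power stays positive until the last step), plus a remaining integral with one more derivative on $x$ and the kernel raised by one power. Tracking the constants via $\Gamma(z+1)=z\Gamma(z)$ and then applying the classical differentiation rule $\left(\frac{d}{dt}\right)^n\left[(t-a)^{m}\right]$ to the boundary terms and to the remaining $\int_a^t(t-\t)^{2n-1-\a}x^{(n)}(\t)\,d\t$-type term, and simplifying, yields exactly the displayed decomposition — this is the content of the first equality.

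For the second equality, $\LDa x(t)=\GLa x(t)$, I would invoke the convergence of the Gr\"unwald--Letnikov series noted after its definition (absolute and uniform convergence for bounded $x$, here guaranteed by $x\in C^{n-1}$ together with integrability of $x^{(n)}$) and compute the limit $\lim_{h\to0^+}h^{-\a}\sum_{k=0}^{\infty}(-1)^k\binom{\a}{k}x(t-kh)$ directly. The standard route is to expand $x(t-kh)$ using a Taylor expansion with integral remainder about $t=a$ (or about $t$), substitute into the sum, and use the generating-function identity $\sum_{k}(-1)^k\binom{\a}{k}z^k=(1-z)^\a$ together with known asymptotics of the coefficients $(-1)^k\binom{\a}{k}=\frac{\Gamma(k-\a)}{\Gamma(-\a)\Gamma(k+1)}\sim \frac{k^{-\a-1}}{\Gamma(-\a)}$; summing against the mesh and passing $h\to0^+$ turns the sums into Riemann integrals, and after matching the finitely many singular $k$-terms to the boundary contributions $\sum_{i=0}^{n-1}\frac{x^{(i)}(a)(t-a)^{i-\a}}{\Gamma(1+i-\a)}$ one recovers precisely the right-hand side of the first equality. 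Alternatively, and more cheaply, I would appeal to the already-stated Gr\"unwald--Letnikov convergence and the known agreement of $\GLa$ with $D^n I^{n-\a}$ on sufficiently smooth functions, so that $\GLa x=D^n(\LIza^{\,n-\a}x)=\LDa x$, and then the first equality is just the smooth reformulation of $\LDa x$.

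The main obstacle is the second step: controlling the interchange of the limit $h\to0^+$ with the infinite Gr\"unwald--Letnikov sum and justifying that the tail of the series contributes the integral term while the first few (singular) coefficients $(-1)^k\binom{\a}{k}$ for small $k$ contribute the boundary terms $x^{(i)}(a)(t-a)^{i-\a}/\Gamma(1+i-\a)$. This requires the hypothesis $x\in C^{n-1}[a,b]$ with $x^{(n)}$ integrable in an essential way — it is exactly what is needed for the Taylor-with-integral-remainder expansion and for dominated convergence of the rescaled sums. The first step, the $n$-fold integration by parts, is routine but bookkeeping-heavy; care is needed only in verifying that no boundary term at $\t=t$ survives before the final step (this uses $n-1-\a>-1$, i.e. that the kernel power is $>-1$, so that each intermediate kernel power is nonnegative) and in getting every $\Gamma$-constant right.
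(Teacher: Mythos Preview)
The paper does not give its own proof of this Proposition: it is stated with the attribution ``[See \cite{Podlubny}]'' and no argument follows, so there is nothing in the paper to compare your attempt against. Your plan is essentially the standard textbook route (and indeed the one in Podlubny), and the later proof of Theorem~\ref{thm:oft} in the paper uses exactly the integration-by-parts identity you are aiming at as its starting point, so your first step is fully in line with how the paper itself manipulates these objects.

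One small slip worth fixing: in your description of the repeated integration by parts you say you are ``differentiating the polynomial-type kernel $(t-\tau)^{n-1-\alpha}$ and integrating $x^{(j)}(\tau)$'', but the outcome you then record --- the kernel power \emph{raised} by one and $x$ carrying \emph{one more} derivative --- is the result of doing the opposite: antidifferentiating the kernel and differentiating $x$. Concretely, with $u'=(t-\tau)^{m}$ and $v=x^{(j)}(\tau)$ you get $u=-(t-\tau)^{m+1}/(m+1)$ and the new integrand involves $x^{(j+1)}$. Your boundary-term and remainder bookkeeping is otherwise correct (the $\tau=t$ contribution vanishes precisely because the kernel power stays positive until the last step), so this is only a wording issue, not a mathematical one.
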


Another type of fractional operators, that is investigated in this thesis, 
is the Hadamard type operators introduced in 1892.

\begin{definition}[Hadamard fractional integral]
\index{Hadamard fractional integral! Left}
\index{Hadamard fractional integral! Right}
Let $a,b$ be two real numbers with $0<a<b$ and $x:[a,b]\to\mathbb R$.
\begin{itemize}
\item The left Hadamard fractional integral of order $\a>0$ is defined by
$$
\LHI x(t)=\frac{1}{\Gamma(\alpha)}\int_a^t 
\left(\ln\frac{t}{\tau}\right)^{\alpha-1}\frac{x(\tau)}{\tau}d\tau, \hspace{1cm}t\in (a,b).
$$
\item The right Hadamard fractional integral of order $\a>0$ is defined by
$$
\RHI x(t)=\frac{1}{\Gamma(\alpha)}\int_t^b 
\left(\ln\frac{\tau}{t}\right)^{\a-1}\frac{x(\tau)}{\tau}d\tau, \hspace{1cm}t\in (a,b).
$$
\end{itemize}
\end{definition}

When $\a=m$ is an integer, these fractional integrals are \textit{m-fold} integrals:
$$
{_a\mathcal{I}_t^m} x(t)=\int_a^t \frac{d\tau_1}{\tau_1}\int_a^{\tau_1} 
\frac{d\tau_2}{\tau_2}\ldots \int_a^{\tau_{m-1}} \frac{x(\tau_m)}{\tau_m}d\tau_m
$$
and
$$
{_t\mathcal{I}_b^m}x(t)=\int_t^b \frac{d\tau_1}{\tau_1}\int_{\tau_1}^b 
\frac{d\tau_2}{\tau_2}\ldots \int_{\tau_{m-1}}^b \frac{x(\tau_m)}{\tau_m}d\tau_m.
$$

\begin{definition}[Hadamard fractional derivative] 
\index{Hadamard fractional derivative! Left}
\index{Hadamard fractional derivative! Right}
For fractional derivatives, we also consider the left and right derivatives. 
For $\a>0$ and $n=[\a]+1$.
\begin{itemize}
\item The left Hadamard fractional derivative of order $\a$  is defined by
$$
\LHD x(t)=\left(t\frac{d}{dt}\right)^n\frac{1}{\Gamma(n-\alpha)}\int_a^t 
\left(\ln\frac{t}{\tau}\right)^{n-\alpha-1}\frac{x(\tau)}{\tau}d\tau, \hspace{1cm}t\in (a,b).
$$
\item The right Hadamard fractional derivative of order $\a$  is defined by
$$
\RHD x(t)=\left(-t\frac{d}{dt}\right)^n\frac{1}{\Gamma(n-\alpha)}\int_t^b 
\left(\ln\frac{\tau}{t}\right)^{n-\a-1}\frac{x(\tau)}{\tau}d\tau, \hspace{1cm}t\in (a,b).
$$
\end{itemize}
\end{definition}
When $\a=m$ is an integer, we have
$$
{_a\mathcal{D}_t^m} x(t)=\left(t\frac{d}{dt}\right)^m x(t)
\mbox{ and } {_t\mathcal{D}_b^m}x(t)=\left(-t\frac{d}{dt}\right)^m x(t).
$$

Finally, we recall another definition, the Caputo derivatives, that are believed 
to be more applicable in practical fields such as engineering and physics.
In spite of the success of Riemann--Liouville approach in theory, some difficulties 
arise in practice where initial conditions need to be treated for instance in fractional 
differential equations. Such conditions for Riemann--Liouville case have no clear 
physical interpretations \cite{Podlubny}. The following definition was proposed 
by Caputo in 1967. Caputo's fractional derivatives are, however, 
related to Riemann--Liouville definitions.

\begin{definition}[Caputo's fractional derivatives]
\index{Caputo fractional derivative! Left}
\index{Caputo fractional derivative! Right}
Let $x(\cdot)\in AC[a,b]$ and $\a>0$ with $n=[\a]+1$.
\begin{itemize}
\item The left Caputo fractional derivative of order $\a$ is given by
\begin{equation*}
\LDC x(t)=\frac{1}{\Gamma(n-\a)}\int_a^t 
(t-\t)^{n-1-\alpha}x^{(n)}(\t)d\t, \hspace{1cm}t\in [a,b].
\end{equation*}
\item
The right Caputo fractional derivative of order $\a$ is given by
$$
\RDC x(t)=\frac{(-1)^n}{\Gamma(n-\a)} 
\int_t^b (\t-t)^{n-1-\a}x^{(n)}(\t)d\t, \hspace{1cm}t\in [a,b].
$$
\end{itemize}
\end{definition}

These fractional integrals and derivatives define a rich calculus.
For details see the books \cite{Kilbas,Miller,Samko}. Here we just recall
some useful properties for our purposes.

% ------------------------------------------

\section{The relation between Riemann--Liouville and Caputo derivatives}

For $\a>0$ and $n=[\a]+1$, the Riemann--Liouville and Caputo 
derivatives are related by the following formulas:
\begin{equation*}
\LD x(t)={_a^CD_t^\a}x(t) 
+\sum_{k=0}^{n-1}\frac{x^{(k)}(a)}{\Gamma (k+1-\a)}(t-a)^{k-\a}
\end{equation*}
and
\begin{equation*}
\RD x(t)={_t^CD_b^\a}x(t) 
+\sum_{k=0}^{n-1}\frac{x^{(k)}(b)}{\Gamma (k+1-\a)}(b-t)^{k-\a}.
\end{equation*}
In some cases the two derivatives coincide,
$$
_aD_t^\alpha x={_a^CD_t^\alpha} x, 
\hspace{1cm}when~~x^{(k)}(a)=0,\quad k=0,\ldots,n-1,
$$
$$
_tD_b^\beta x={_t^CD_b^\beta} x, 
\hspace{1cm}when~~x^{(k)}(b)=0,\quad k=0,\ldots,n-1.
$$

% ------------------------------------------

\section{Integration by parts}
\index{Integration by parts! Fractional}

Formulas of integration by parts have an important role in the proof 
of Euler--Lagrange necessary optimality conditions.

\begin{lemma}[cf. \cite{Kilbas}]
Let $\alpha > 0,~p,~q~\geq1$ and $\frac{1}{p}+\frac{1}{q}\leq 1
+\alpha~$($p\neq 1~and~q\neq 1$ in the case where $\frac{1}{p}+\frac{1}{q}= 1+\alpha$).

\noindent (i) If $\varphi \in L_p(a,b)~and~\psi\in L_q(a,b)$, then
$$
\int_a^b \varphi (t){_aI_t^\alpha}\psi(t)dt
=\int_a^b \psi (t){_tI_b^\alpha}\varphi(t)dt.
$$
(ii) If $g \in {_tI_b^\alpha}(L_p)~and~f\in {_aI_t^\alpha}(L_q)$, then
$$
\int_a^b g(t)\LD f(t)dt
=\int_a^b f(t){_tD_b^\alpha}g(t)dt,
$$
where the space of functions ${_tI_b^\alpha}(L_p)$ and ${_aI_t^\alpha}(L_q)$ 
are defined for $\alpha > 0$ and $1\leq p\leq\infty$ by
$$
{_aI_t^\alpha}(L_p):=\{ f: f={_aI_t^\alpha}\varphi,~~ \varphi\in L_p(a,b)\}
$$
and
$$
{_tI_b^\alpha}(L_p):=\{ f: f={_tI_b^\alpha}\varphi,~~ \varphi\in L_p(a,b)\}.
$$
\end{lemma}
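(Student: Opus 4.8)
The plan is to prove part (i) directly by Fubini's theorem and then derive part (ii) as a consequence. For part (i), I would write out both sides using the definition of the Riemann--Liouville integrals:
\[
\int_a^b \varphi(t)\,{_aI_t^\alpha}\psi(t)\,dt
=\frac{1}{\Gamma(\alpha)}\int_a^b \varphi(t)\left(\int_a^t (t-\tau)^{\alpha-1}\psi(\tau)\,d\tau\right)dt.
\]
The region of integration in the $(t,\tau)$-plane is the triangle $\{a\le \tau\le t\le b\}$. Interchanging the order of integration turns the inner limits into $\tau\le t\le b$, so the expression becomes
\[
\frac{1}{\Gamma(\alpha)}\int_a^b \psi(\tau)\left(\int_\tau^b (t-\tau)^{\alpha-1}\varphi(t)\,dt\right)d\tau
=\int_a^b \psi(\tau)\,{_tI_b^\alpha}\varphi(\tau)\,d\tau,
\]
which is the claimed identity. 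The only real content is justifying the swap: I would invoke the Fubini--Tonelli theorem, and this is exactly where the hypotheses $\varphi\in L_p$, $\psi\in L_q$ with $\frac1p+\frac1q\le 1+\alpha$ (and the strictness condition in the boundary case) are needed — they guarantee that $(t,\tau)\mapsto |\varphi(t)|\,(t-\tau)^{\alpha-1}|\psi(\tau)|$ is integrable over the triangle, via a Hölder estimate applied together with the fact that the kernel $(t-\tau)^{\alpha-1}$ lies in the appropriate $L_r$ space of the triangle. Checking this integrability carefully — tracking the exponents so that the kernel's singularity at $t=\tau$ is tamed by the available integrability of $\varphi$ and $\psi$ — is the main obstacle, and the condition $\frac1p+\frac1q\le 1+\alpha$ is precisely what makes the bookkeeping work out.

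For part (ii), I would use the definitions of the spaces: since $f\in {_aI_t^\alpha}(L_q)$, write $f={_aI_t^\alpha}\varphi$ with $\varphi\in L_q(a,b)$, and similarly $g={_tI_b^\alpha}\psi$ with $\psi\in L_p(a,b)$. The key facts are the inversion relations ${_aD_t^\alpha}\,{_aI_t^\alpha}\varphi=\varphi$ and ${_tD_b^\alpha}\,{_tI_b^\alpha}\psi=\psi$, which hold for functions in these image spaces. Then
\[
\int_a^b g(t)\,{_aD_t^\alpha}f(t)\,dt=\int_a^b g(t)\,\varphi(t)\,dt
=\int_a^b {_tI_b^\alpha}\psi(t)\cdot\varphi(t)\,dt,
\]
and applying part (i) (with the roles of $\varphi$ and $\psi$ as in that statement) this equals $\int_a^b \psi(t)\,{_aI_t^\alpha}\varphi(t)\,dt=\int_a^b \psi(t)\,f(t)\,dt=\int_a^b {_tD_b^\alpha}g(t)\cdot f(t)\,dt$, which is the desired formula. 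I would need to confirm that the integrability hypotheses of part (i) are met by $\psi\in L_p$ and $\varphi\in L_q$ — they are, by the same $\frac1p+\frac1q\le 1+\alpha$ condition — and that the inversion identities are available; both are standard and may be cited from \cite{Kilbas}. Thus the entire lemma reduces to one Fubini argument plus the mapping properties of the Riemann--Liouville operators.
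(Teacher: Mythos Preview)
The paper does not actually prove this lemma; it is stated with the citation ``cf.\ \cite{Kilbas}'' and no proof is given, as the result is a standard fact from the fractional calculus literature (specifically, the Kilbas--Srivastava--Trujillo monograph). Your proposed argument --- Fubini over the triangle for part (i), then the inversion identities ${_aD_t^\alpha}\,{_aI_t^\alpha}=\mathrm{id}$ and ${_tD_b^\alpha}\,{_tI_b^\alpha}=\mathrm{id}$ on the appropriate image spaces for part (ii) --- is exactly the standard proof one finds in that reference, and it is correct as outlined. The only point worth tightening is the justification of Fubini in the borderline case $\frac1p+\frac1q=1+\alpha$ with $p,q>1$: there the kernel estimate alone does not give absolute integrability directly, and the usual route is to invoke the Hardy--Littlewood boundedness of ${_aI_t^\alpha}:L_q\to L_{q^*}$ with $1/q^*=1/q-\alpha$ (valid precisely when $q>1$), which then pairs against $\varphi\in L_p$ by H\"older since $1/p+1/q^*=1$. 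You have correctly identified that this is where the strictness condition on $p,q$ enters, so the plan is sound.
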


For Caputo fractional derivatives,
$$
\displaystyle\int_{a}^{b} g(t)\cdot {_a^C D_t^\alpha}f(t)dt
=\displaystyle\int_a^b f(t)\cdot {_t D_b^\alpha} g(t)dt+\sum_{j=0}^{n-1}
\left[{_tD_b^{\alpha+j-n}}g(t) \cdot f^{(n-1-j)}(t)\right]_a^b
$$
(see, \textrm{e.g.}, \cite[Eq. (16)]{MR2345467}).
In particular, for $\alpha\in(0,1)$ one has
\begin{equation}
\label{eq:frac:IBP}
\int_{a}^{b}g(t)\cdot {_a^C D_t^\alpha}f(t)dt
=\int_a^b f(t)\cdot {_t D_b^\alpha} g(t)dt
+\left[{_tI_b^{1-\alpha}}g(t) \cdot f(t)\right]_a^b.
\end{equation}
When $\alpha \rightarrow 1$, ${_a^C D_t^\alpha} = \frac{d}{dt}$,
${_t D_b^\alpha} = - \frac{d}{dt}$, ${_tI_b^{1-\alpha}}$
is the identity operator, and \eqref{eq:frac:IBP} gives
the classical formula of integration by parts.

%--------------------------------------------------------

\CP
%================================================================
\chapter{Fractional variational problems}
\label{FracVariational}

A fractional problem of the calculus of variations and optimal control consists 
in the study of an optimization problem, in which the objective functional 
or constraints depend on derivatives and/or integrals of arbitrary, 
real or complex, orders. This is a generalization of the classical theory, 
where derivatives and integrals can only appear in integer orders.

% --------------------------------------

\section{Fractional calculus of variations and optimal control}

Many generalizations of the classical calculus of variations and optimal control 
have been made, to extend the theory to the field of fractional variational 
and fractional optimal control. A simple fractional variational 
problem\index{Variational problem! Fractional} consists in finding 
a function $x(\cdot)$ that minimizes the functional
\begin{equation}
\label{Functional}
J[x(\cdot)]=\int_a^b L(t, x(t), \LD x(t))dt,
\end{equation}
where $\LD$ is the left Riemann--Liouville fractional derivative\index{Riemann--Liouville fractional derivative! Left}. 
Typically, some boundary conditions are prescribed as $x(a)=x_a$ and/or $x(b)=x_b$. Classical techniques have been adopted 
to solve such problems. The Euler--Lagrange equation for a Lagrangian of the form $L(t, x(t), \LD x(t),\RD x(t))$ 
has been derived in \cite{AgrawalForm}. Many variants of necessary conditions of optimality have been studied. 
A generalization of the problem to include fractional integrals, i.e., $L=L(t,{_aI_t^{1-\a}}x(t),\LD x(t))$, 
the transversality conditions of fractional variational problems and many other aspects can be found in the literature 
of recent years. See \cite{Almeida1,AlD,Atan,Riewe,Riewe1} and references therein. Furthermore, it has been shown that 
a variational problem with fractional derivatives can be reduced to a classical problem using an approximation 
of the Riemann--Liouville fractional derivatives in terms of a finite sum, 
where only derivatives of integer order are present \cite{Atan}.

On the other hand, fractional optimal control problems\index{Fractional! optimal control} usually appear in the form of
$$
J[x(\cdot)]=\int_a^b L(t, x(t), u(t))dt\longrightarrow \min\nonumber\\
$$
subject to
$$
 \left\{
\begin{array}{l}
\LD x(t)=f(t,x(t),u(t))\\
x(a)=x_a,~x(b)=x_b,
\end{array}
\right.
$$
where an optimal control $u(\cdot)$ together with an optimal trajectory $x(\cdot)$ are required 
to follow a fractional dynamics\index{Fractional! dynamics} and, at the same time, optimize 
an objective functional. Again, classical techniques are generalized to derive necessary optimality conditions. 
Euler--Lagrange equations have been introduced, e.g., in \cite{AgrawalNum}. A Hamiltonian formalism 
for fractional optimal control problems can be found in \cite{Ozlem} that exactly follows 
the same procedure of the regular optimal control theory, i.e., those with only integer-order derivatives.

%--------------------------------------------------------

\section{A general formulation}

The appearance of fractional terms of different types, derivatives and integrals, 
and the fact that there are several definitions for such operators, makes it difficult 
to present a typical problem to represent all possibilities. Nevertheless, 
one can consider the optimization of functionals of the form
\begin{equation}\label{GenForm}
J[\x(\cdot)]=\int_a^b L(t, \x(t), D^\ba \x(t))dt
\end{equation}
that depends on the fractional derivative, $D^\ba$, in which $\x(\cdot)=(x_1(\cdot),\ldots,x_n(\cdot))$ 
is a vector function, $\ba=(\a_1,\ldots,\a_n)$ and $\a_i$, $i=1,\ldots,n$ are arbitrary real numbers. 
The problem can be with or without boundary conditions. Many settings of fractional variational 
and optimal control problems can be transformed into the optimization of \eqref{GenForm}. 
Constraints that usually appear in the calculus of variations and are always present in optimal control 
problems can be included in the functional using Lagrange multipliers\index{Lagrange multipliers}. 
More precisely, in presence of dynamic constraints as fractional differential equations, we assume 
that it is possible to transform such equations to a vector fractional 
differential equation\index{Fractional! differential equation} of the form
\begin{equation*}
D^\ba \x(t)=f(t, \x(t)).
\end{equation*}
In this stage, we introduce a new variable $\l=(\lambda_1,\lambda_2,\ldots,\lambda_n)$ 
and consider the optimization of
\begin{equation*}
J[\x(\cdot)]=\int_a^b \left[L(t, \x(t), D^\ba \x(t))-\l(t)D^\ba \x(t)+\l(t)f(t, \x(t))\right]dt
\end{equation*}

When the problem depends on fractional integrals, $I^\a$, a new variable can be defined 
as $z(t)=I^\a x(t)$. Recall that $D^\a I^\a x=x$, see \cite{Kilbas}. The equation
\begin{equation*}
D^\a z(t)=D^\a I^\a x(t)=x(t),
\end{equation*}
can be regarded as an extra constraint to be added to the original problem. However, 
problems containing fractional integrals can be treated directly to avoid the complexity 
of adding an extra variable to the original problem. 
Interested readers are addressed to \cite{AlD,PATFracInt}.

Throughout this thesis, by a fractional variational problem\index{Variational problem! Fractional}, 
we mainly consider the following one variable problem with given boundary conditions:
$$
J[x(\cdot)]=\int_a^b L(t, x(t), D^\a x(t))dt\longrightarrow \min
$$
subject to
$$
 \left\{
\begin{array}{l}
x(a)=x_a\\x(b)=x_b.
\end{array}
\right.
$$
In this setting, $D^\a$ can de replaced by any fractional operator that is available in literature, 
say, Riemann--Liouville, Caputo, Gr\"{u}nwald--Letnikov, Hadamard and so forth. The inclusion of a constraint 
is done by Lagrange multipliers. The transition from this problem to the general one, 
equation \eqref{GenForm}, is straightforward and is not discussed here.

%--------------------------------------------------------

\section{Fractional Euler--Lagrange equations}

Many generalizations to the classical calculus of variations have been made in recent years, 
to extend the theory to the field of fractional variational problems. 
As an example, consider the following minimizing problem:
\begin{eqnarray*}
J[x(\cdot)]&=&\int_a^b L(t, x(t), \LD x(t))dt\longrightarrow \min\nonumber\\
&s.t.&~x(a)=x_a,~x(b)=x_b,
\end{eqnarray*}
where $x(\cdot)\in AC[a,b]$ and $L$ is a smooth function of $t$.

Using the classical methods we can obtain the following theorem as the necessary 
optimality condition for the fractional calculus of variations.
\begin{theorem}[cf. \cite{AgrawalForm}]
\label{agr}
Let $J[x(\cdot)]$ be a functional of the form
\begin{eqnarray}\label{obj}
J[x(\cdot)]=\int_a^b L(t, x(t), \LD x(t))dt,
\end{eqnarray}
defined on the set of functions $x(\cdot)$ which have continuous left and right 
Riemann--Liouville derivatives of order $\alpha$ in $[a, b]$, and satisfy the boundary 
conditions $x(a)=x_a$ and $x(b)=x_b$. A necessary condition for $J[x(\cdot)]$ to have 
an extremum for a function $x(\cdot)$ is that $x(\cdot)$ satisfy the following Euler--Lagrange 
equation\index{Euler--Lagrange equation! Fractional}:
$$
\frac{\partial L}{\partial x}+{_tD_b^\alpha}\left(\frac{\partial L}{\partial \LD x}\right)=0.
$$
\end{theorem}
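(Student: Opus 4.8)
The plan is to mimic the classical derivation of the Euler--Lagrange equation, replacing the ordinary integration by parts by the fractional integration by parts formula for Riemann--Liouville operators stated earlier in the excerpt. First I would let $x(\cdot)$ be an extremizer and introduce a one-parameter family of admissible variations $x(\cdot)+\epsilon\eta(\cdot)$, where $\eta(\cdot)$ is an arbitrary function with continuous left and right Riemann--Liouville derivatives of order $\alpha$ satisfying $\eta(a)=\eta(b)=0$, so that the perturbed curves still meet the boundary conditions. Since $\LD$ is a linear operator, $\LD(x+\epsilon\eta)=\LD x+\epsilon\,\LD\eta$, and I would form
\begin{equation*}
j(\epsilon)=\int_a^b L\bigl(t, x(t)+\epsilon\eta(t), \LD x(t)+\epsilon\,\LD\eta(t)\bigr)\,dt,
\end{equation*}
which by hypothesis attains an extremum at $\epsilon=0$, hence $j'(0)=0$.

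Next I would differentiate under the integral sign (justified by the smoothness of $L$ and continuity of the derivatives involved) and set $\epsilon=0$ to obtain the first variation
\begin{equation*}
\int_a^b\left(\frac{\partial L}{\partial x}(t,x,\LD x)\,\eta(t)+\frac{\partial L}{\partial \LD x}(t,x,\LD x)\,\LD\eta(t)\right)dt=0.
\end{equation*}
The key step is to move the operator $\LD$ off of $\eta$ and onto $\partial L/\partial \LD x$. This is exactly what the fractional integration by parts formula from the excerpt provides: under the appropriate regularity assumptions,
\begin{equation*}
\int_a^b \frac{\partial L}{\partial \LD x}\,\LD\eta(t)\,dt=\int_a^b \eta(t)\,{_tD_b^\alpha}\!\left(\frac{\partial L}{\partial \LD x}\right)dt,
\end{equation*}
where the boundary terms drop out because $\eta(a)=\eta(b)=0$ (in the Riemann--Liouville case part (ii) of the integration by parts lemma has no boundary contributions, so this is immediate).

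Substituting this back gives
\begin{equation*}
\int_a^b\left(\frac{\partial L}{\partial x}+{_tD_b^\alpha}\!\left(\frac{\partial L}{\partial \LD x}\right)\right)\eta(t)\,dt=0
\end{equation*}
for every admissible $\eta$. Finally I would invoke the fundamental lemma of the calculus of variations (stated in the excerpt) to conclude that the integrand factor in parentheses vanishes identically on $[a,b]$, which is precisely the asserted fractional Euler--Lagrange equation. The main obstacle, and the point requiring care rather than cleverness, is the bookkeeping around the integration by parts: one must check that $\partial L/\partial \LD x$ and $\eta$ lie in the function classes (${_tI_b^\alpha}(L_p)$ and ${_aI_t^\alpha}(L_q)$, or their analogues) for which the fractional formula is valid, and confirm that the boundary conditions on $\eta$ genuinely kill all the endpoint terms; differentiation under the integral sign and the applicability of the fundamental lemma in the fractional setting are routine given the stated smoothness hypotheses.
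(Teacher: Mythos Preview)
Your proposal is correct and follows essentially the same approach as the paper: perturb the extremizer by $x+\epsilon\eta$ with $\eta(a)=\eta(b)=0$, use linearity of $\LD$, differentiate $j(\epsilon)$ at $\epsilon=0$, apply the fractional integration by parts formula to transfer $\LD$ from $\eta$ onto $\partial L/\partial \LD x$, and conclude via the fundamental lemma. Your remarks about verifying the function-class hypotheses for the integration by parts lemma are a welcome bit of extra care that the paper itself leaves implicit.
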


\begin{proof}
Assume that $x^*(\cdot)$ is the optimal solution\index{Optimal! solution}. 
Let $\epsilon\in \mathbb{R}$ and define a family of functions
$$
x(t)=x^*(t)+\epsilon \eta (t)
$$
which satisfy the boundary conditions\index{Boundary conditions}. 
So one should have $\eta (a)=\eta (b)=0$.\\
Since $_aD_t^\alpha$ is a linear operator, it follows that
$$
\LD x(t)=\LD x^*(t)+\epsilon\; \LD\eta (t).
$$
Substituting in \eqref{obj} we find that for each $\eta(\cdot)$
$$
j(\epsilon)=\int_a^bL(t, x^*(t)+\epsilon\eta(t), \LD x^*(t)+\epsilon\;\LD\eta(t))dt
$$
is a function of $\epsilon$ only. Note that $j(\epsilon)$ has an extremum at $\epsilon=0$. 
Differentiating with respect to $\epsilon$ (the Gateaux derivative)\index{Gateaux derivative} we conclude that
$$
\frac{dj}{d\epsilon}\Big|_{\epsilon=0}=\int_a^b \left(\frac{\partial L}{\partial x}\eta
+\frac{\partial L}{\partial \LD x}\LD\eta\right)dt.
$$
The above equation is also called the variation of $J[x(\cdot)]$ along 
$\eta(\cdot)$\index{Variation! of a functional}. For $j(\epsilon)$ 
to have an extremum it is necessary that $\frac{dj}{d\epsilon}\Big|_{\epsilon=0}=0$, 
and this should be true for any admissible $\eta(\cdot)$. Thus,
$$
\int_a^b \left(\frac{\partial L}{\partial x}\eta
+\frac{\partial L}{\partial \LD x}\LD\eta\right)dt=0
$$
for all $\eta(\cdot)$ admissible. Using the formula of integration 
by parts\index{Integration by parts! Fractional} 
on the second and third terms one has
$$
\int_a^b \left[\frac{\partial L}{\partial x}
+{_tD_b^\alpha}\left(\frac{\partial L}{\partial \LD x}\right)\right]\eta\, dt=0
$$
for all $\eta(\cdot)$ admissible. The result follows immediately 
by the fundamental lemma\index{Fundamental! lemma} of the calculus of variations, 
since $\eta$ is arbitrary and $L$ is continuous.
\end{proof}

Generalizing Theorem~\ref{agr} for the case when $L$ depends on several functions, 
i.e., $\x(t)=(x_1(t),\ldots,x_n(t))$ or it includes derivatives of different orders, i.e.,
$$
D^\a \x(t)=(D^{\a_1}x_1(t),\ldots,D^{\a_n}x_n(t)),
$$
is straightforward.

%--------------------------------------------------------

\section{Solution methods}

There are two main approaches to solve variational, including optimal control, problems.\index{Variational problem!} 
On one hand, there are the direct methods.\index{Direct method} In a branch of direct methods, 
the problem is discretized on the interested time interval using discrete values of the unknown function, 
finite differences\index{Finite! differences} for derivatives and finally a quadrature rule for the integral. 
This procedure transforms the variational problem, a dynamic optimization problem, to a static multi-variable 
optimization problem. Better accuracies are achieved by refining the underlying mesh size. 
Another class of direct methods uses function approximation through a linear combination of the elements 
of a certain basis, e.g., power series. The problem is then transformed to the determination 
of the unknown coefficients. To get better results in this sense, is the matter 
of using more adequate or higher order function approximations.

On the other hand, there are the indirect methods.\index{Indirect method} Those transform 
a variational problem to an equivalent differential equation by applying some necessary 
optimality conditions. Euler--Lagrange equations and Pontryagin's minimum principle\index{Minimum Principle} 
are used in this context to make the transformation process. Once we solve the resulting differential equation, 
an extremum for the original problem is reached. Therefore, to reach better results using indirect methods, 
one has to employ powerful integrators. It is worth, however, to mention here that numerical methods 
are usually used to solve practical problems.

These two classes of methods have been generalized to cover fractional problems. 
That is the essential subject of this PhD thesis.

\CP
%================================================================
\Chapter{State of the art}{A short survey on the numerical methods 
for solving fractional variational problems}
\label{Survey}

As it is mentioned earlier, the fractional calculus of variations started with the works 
of Riewe, \cite{Riewe,Riewe1}, in the last years of 1990s. Later, the notion of fractional 
optimal control appeared in the works of Agrawal \cite{AgrawalNum} and Frederico 
and Torres \cite{gasta1}. It is not surprising that the numerical achievements 
in these fields is at an early stage. In this chapter we shall review some recent papers 
which can be classified in direct or indirect methods.

The first effort to solve a fractional optimal control problem numerically 
was made in 2004 by Agrawal \cite{AgrawalNum}. The problem under consideration consists 
in finding an optimal control $u(\cdot)$, which minimizes the functional
$$
J[x,u]=\int_0^1 F(t,x,u)dt,
$$
while it is assumed to satisfy a given dynamic constraint of the form
$$
\LD x(t)=G(t,x,u)
$$
subject to the boundary condition
$$
x(0)=x_0.
$$
The Euler--Lagrange equation can be derived by using a Lagrange multiplier\index{Lagrange multipliers}, 
$\lambda(\cdot)$ \cite{gasta1}. The necessary optimality condition reads to
$$
\left\{\begin{array}{l}
\LD x(t)=G(t,x,u)\\[5pt]
\displaystyle\RDone \lambda(t)=\frac{\partial F}{\partial x}+\lambda \frac{\partial G}{\partial x}\\[8pt]
\displaystyle\hspace{12mm}0=\frac{\partial F}{\partial u}+\lambda \frac{\partial G}{\partial u}
\end{array}\right.,\qquad \left\{\begin{array}{l}
x(0)=x_0\\
\lambda(1)=0.
\end{array}\right.
$$
The paper \cite{AgrawalNum} uses a Ritz method\index{Ritz method} by approximating $x(\cdot)$ 
and $\lambda(\cdot)$ using shifted Legendre polynomials\index{Shifted Legendre polynomials}, i.e.,
$$
x(t)\approx\sum_{j=1}^m c_j P_j(t), \qquad \lambda(t)\approx\sum_{j=1}^m c_j P_j(t).
$$
The shifted Legendre polynomials are explicitly given by
$$
P_n(t)=(-1)^n \sum_{k=0}^n \binom{n}{k}\binom{n+k}{k}(-x)^k.
$$

One can use the orthogonality of Legendre polynomials and the fact that their fractional derivatives 
are available in closed forms. This method, after some calculus operations and simplifications, 
leads to a system of $2m+2$ equations in $2m+2$ unknowns. Approximate solutions to the problem 
then is achieved in terms of linear combinations of the shifted Legendre polynomials.

The same idea has been tried later by several authors. This is done by either using different 
approximations in terms of other basis functions or a different class of variational problems, 
say in the problem formulation or in the fractional term that appears.

Approximating $x(\cdot)$, $u(\cdot)$  and $\lambda(\cdot)$ by multiwavelets is an example 
of a new version of this method. In \cite{Lotfi} the Caputo fractional derivative is used 
in the constraint and another functional is considered. Other aspects like some properties 
of Legendre polynomials and the convergence also are covered in this work.

Another slightly different approach is the use of the so-called multiwavelet collocation 
that has been introduced in \cite{Yousefi}. The method is based on the approximations
\begin{align*}
&x(t)\approx\sum_{i=0}^{2^k-1}\sum_{j=0}^{M}(t-a) cx_{ij}\psi_{ij}(t)+x_0,\\
&u(t)\approx\sum_{i=0}^{2^k-1}\sum_{j=0}^{M} cu_{ij}\psi_{ij}(t),\\
&\lambda(t)\approx\sum_{i=0}^{2^k-1}\sum_{j=0}^{M}(t-a) c\lambda_{ij}\psi_{ij}(t),
\end{align*}
where $t\in[a,b]$ and
$$
\psi_{nm}=\sqrt{2m+1}\frac{2^{k/2}}{\sqrt{b-a}}P_m\left( \frac{2^{k}(t-a)}{{b-a}}-n\right),
\quad \frac{n(t-a)}{{2^k}}+a\leq t<\frac{(n+1)(t-a)}{{2^k}}+a,
$$
with the shifted Legendre polynomials $P_m$. The collocation points $p_i$, 
$1\leq i\leq 2^k(M+1)$, are the roots of  Chebyshev polynomials of degree $2^k(M+1)$. 
The resulting system of algebraic equations is solved to obtain the approximate solutions. 
Although the paper \cite{Yousefi} discusses the general case when $x$ and $u$ are vector functions, 
for the sake of simplicity we outlined it here in one dimension.

A finite element\index{Finite! element} method has been developed in \cite{AgrawalFE}. 
The functional to be minimized has a special form of
\begin{align*}
J[x(\cdot)]&=\int_a^b L(t,x,\LD x) dt\\
&=\int_a^b \left[ \frac{1}{2}A_1(t)(\LD x)^2+A_2(t)(\LD x)x
+\frac{1}{2}A_3(t) x^2+A_4(t)\LD x+A_5(t)x\right]dt.
\end{align*}
The boundary conditions at both end-points are given. In this method, 
the time interval $[a,b]$ is devided into $N$ equally spaced subintervals. 
Let $t_j=a+jh$ where $h=\frac{b-a}{N}$ and $j=0,\ldots,N$. Then the functional is given by
$$
J[x(\cdot)]=\sum_{j=1}^N\int_{t_{j-1}}^{t_j}L(t,x(t),\LD x(t)) dt.
$$
Now one can approximate $x(\cdot)$ over subintervals 
by ``shape'' functions, e.g., splines, as
$$
x(t)=N_j(t)x_{ej},\qquad t\in[t_{j-1},t_j],~j=1,\ldots,N,
$$
and
$$
\LD x(t)=N_j(t)(\LD x)_{ej},\qquad t\in[t_{j-1},t_j],~j=2,\ldots,N,
$$
where $N_j$ is the shape function at the corresponding subinterval, 
and $x_{ej}$ and $(\LD x)_{ej}$ are the nodal values of the unknown function 
and its fractional derivatives. The fractional derivative at each point is also 
approximated using Gr\"{u}nwald--Letnikov definition as an approximation which 
is discussed in Chapter~\ref{Direct}. The remaining process is straightforward.

Another work that is worth to pay attention is the use of a modified Gr\"{u}nwald--Letnikov 
approximation for left and right derivatives to discretize the Euler--Lagrange 
equation \cite{Ozlem}. The approximations are carried out at the central points 
of a certain discretization of the time horizon. Namely, for $a=t_0<t_1<\ldots<t_n=b$,
$$
\LD x(t_{i-1/2})\approx \frac{1}{h^\a} \sum_{k=0}^{i}\w x_{i-j}, \qquad i=1,\ldots,n,
$$
and
$$
\RDone \lambda(t_{i+1/2})\approx \frac{1}{h^\a} \sum_{k=0}^{n-i}\w 
\lambda_{i+j}, \qquad i=n-1,\ldots,0,
$$
where $\w=(-1)^k\binom{\a}{k}=\frac{\Gamma(k-\a)}{\Gamma(-\a)\Gamma(k+1)}$ 
and $x(t_{i-1/2})=(x_{i-1}+x_i)/2$. Solving a system of $2n$ algebraic equations 
in $2n$ unknowns gives the approximate values of the unknown function on mesh points.

Numerical methods, nowadays, are easily implemented on computers, making packages 
and tools to solve problems. Many problems in this thesis have been solved, 
e.g., in MATLAB$^\circledR$, using some predefined routines and solvers. 
The implemented methods are far from being an outstanding and a multipurpose solver. 
They have been designed for special problems and for a relevant problem they may 
need significant modifications.  The only work, to the best of our knowledge, 
directed in the adaptation of the existing toolboxes is \cite{Tricaud}. 
This work uses Oustaloup's approximation formula for fractional derivatives 
and transforms a fractional optimal control problem into a problem in which 
only derivatives of integer order are present. Being a classical problem, 
it can be solved by \textbf{RIOTS-95}, a MATLAB$^\circledR$\index{MATLAB} 
toolbox for optimal control problems\footnote{\url{http://www.schwartz-home.com/RIOTS/}}. 
The problem is to find a control that minimizes the functional
$$
J[u]=G(x(a),x(b))+\int_a^b L(t,x,u)dt
$$
subject to the dynamic control system
$$
\LD x(t)=f(t,x,u),
$$
and the initial condition $x(a)=x_a$. The control may be bounded, $u_{min}\leq u(t)\leq u_{max}$. 
Also other constraints on the boundaries and/or state-control inequality constraints 
may be present. The idea is to use the approximation
$$
\LD x(t)\approx \left\{\begin{array}{l}
\dot{z}=Az+Bu\\
x=Cz+Du,
\end{array}\right.
$$
and transform the problem to the minimization of
$$
J[u]=G(Cz(a)+Du(a),Cz(b)+Du(b))+\int_a^b L(t,Cz+Du,u)dt
$$
such that
$$
\dot{z}(t)=Az+B(f((t,Cz+Du,u)),
$$
and the initial condition
$$
z(a)=\frac{x_a\omega}{C\omega},
$$
where $\omega=[1~~0~~\cdots~~0]^T$. The resulting setting is appropriate as an input for \textbf{RIOTS-95}.

Another approach to benefit the methods and tools of the classical 
theory has been introduced in \cite{Jelicic}. The work is based on an 
approximation formula from \cite{Atan2}, that is improved and discussed 
in a very detailed way throughout our work. 
The control problem to be solved is the following:
$$
J[u]=\int_0^1 L(t,x,u)dt \longrightarrow \min
$$
subject to
$$
\left\{\begin{array}{l}
\dot{x}(t)+k\left(\LD x(t)\right)=f(t,x,u)\\
x(0)=x_0.\end{array}\right.
$$
Using the approximation
\begin{equation*}
\LD x(t)\approx At^{-\a}x(t)-\sum_{p=2}^N C(\a,p)t^{1-p-\a}V_p(t),
\end{equation*}
the problem is transformed into a classic integer-order problem,
$$
J[u]=\int_0^1 L(t,x,u)dt \longrightarrow \min
$$
subject to
$$
\left\{\begin{array}{l}
\dot{x}(t)+k\left(At^{-\a}x(t)-\sum_{p=2}^N C(\a,p)t^{1-p-\a}V_p(t)\right)=f(t,x,u)\\
\dot{V}_p(t)=(1-p)(t-a)^{p-2}x(t)\\
V_p(a)=0, \qquad p=2,\ldots,N\\
x(0)=x_0.\end{array}\right.
$$

\CP
%================================================================
\part{Original Work}
\CP
%================================================================
\chapter{Approximating fractional derivatives}
\label{AppFracDer}

This section is devoted to two approximations for the Riemann--Liouville, Caputo 
and Hadamard derivatives that are referred as fractional operators afterwards. 
We introduce the expansions of fractional operators in terms of infinite sums 
involving only integer-order derivatives. These expansions are then used 
to approximate fractional operators in problems like fractional differential equations, 
fractional calculus of variations, fractional optimal control, etc. In this way, 
one can transform such problems into classical problems. Hereafter, a suitable method, 
that can be found in the classical literature, is employed to find an approximate 
solution for the original fractional problem. Here we focus mainly on the left derivatives 
and the details of extracting corresponding expansions for right derivatives 
are given whenever it is needed to apply new techniques.

%--------------------------------------------------------

\section{Riemann--Liouville derivative}

\subsection{Approximation by a sum of integer-order derivatives}
\index{Expansion formula}

Recall the definition of the left Riemann--Liouville 
derivative\index{Riemann--Liouville fractional derivative! Left} for $\a\in(0,1)$:
\begin{equation}
\label{LeftD}
\LDa x(t)=\frac{1}{\Gamma(1-\a)}\frac{d}{dt}\int_a^t (t-\t)^{-\a}x(\t)d\t.
\end{equation}
The following theorem holds for any function $x(\cdot)$ that is analytic in an interval 
$(c,d)\supset[a,b]$. See \cite{Atan} for a more detailed discussion 
and \cite{Samko}, for a different proof.

\begin{theorem}
\label{ThmIntExp}
Let $(c,d)$, $-\infty<c<d<+\infty$, be an open interval in $\mathbb R$, 
and $[a,b]\subset(c,d)$ be such that for each $t\in[a,b]$ the closed ball 
$B_{b-a}(t)$, with center at $t$ and radius $b-a$, lies in $(c,d)$. 
If $x(\cdot)$ is analytic in $(c,d)$, then
\begin{equation}
\label{ExpIntInf}
\LDa x(t)=\sum_{k=0}^{\infty}\frac{(-1)^{k-1}\a x^{(k)}(t)}{k!(k-\a)
\Gamma(1-\a)}(t-a)^{k-\a}.
\end{equation}
\end{theorem}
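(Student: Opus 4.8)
The plan is to substitute into the definition \eqref{LeftD} the Taylor expansion of $x$ centred at the \emph{upper} limit $t$, integrate the resulting series term by term, differentiate it term by term, and finally re-index and add two series to reach \eqref{ExpIntInf}. All analytic content is concentrated in controlling the tails of the series uniformly in $t$.

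First I would extract the analytic input. Because $x$ is analytic on $(c,d)$ and, for every $t\in[a,b]$, the closed ball $B_{b-a}(t)$ lies in the open set $(c,d)$, a compactness argument gives a radius $R>b-a$ and a constant $C$ such that, for all $t\in[a,b]$, $x$ is represented by its Taylor series about $t$ on a neighbourhood of $B_R(t)$ together with the uniform Cauchy-type bound $|x^{(k)}(t)|\le C\,k!/R^{k}$ (this is precisely the kind of estimate discussed in \cite{Atan,Samko}). In particular, for $\t\in[a,t]$ one has $t-\t\le b-a<R$, so
\[
x(\t)=\sum_{k=0}^{\infty}\frac{x^{(k)}(t)}{k!}(\t-t)^{k}=\sum_{k=0}^{\infty}\frac{(-1)^{k}x^{(k)}(t)}{k!}(t-\t)^{k},
\]
with the partial sums converging uniformly in $\t\in[a,t]$. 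Inserting this into $\int_a^t (t-\t)^{-\a}x(\t)\,d\t$, using integrability of $(t-\t)^{-\a}$ on $[a,t]$ to justify swapping sum and integral, and evaluating $\int_a^t (t-\t)^{k-\a}\,d\t=(t-a)^{k+1-\a}/(k+1-\a)$ (valid since $k+1-\a>0$), I obtain
\[
\int_a^t (t-\t)^{-\a}x(\t)\,d\t=\sum_{k=0}^{\infty}\frac{(-1)^{k}x^{(k)}(t)}{k!(k+1-\a)}(t-a)^{k+1-\a}.
\]

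Next I would differentiate this series in $t$. Writing the $k$-th summand as $f_k(t)$, its derivative splits into a piece carrying $x^{(k+1)}$ and a piece carrying $x^{(k)}$, and the bound $|x^{(k)}(t)|\le C k!/R^{k}$ shows (Weierstrass $M$-test) that $\sum_k f_k'$ converges uniformly on each interval $[a+\delta,b]$, $\delta>0$, the dominating series being a constant times $\sum_k\bigl((b-a)/R\bigr)^{k}<\infty$. Hence for $t\in(a,b]$,
\begin{align*}
\Gamma(1-\a)\,\LDa x(t) &=\sum_{k=0}^{\infty}\frac{(-1)^{k}x^{(k+1)}(t)}{k!(k+1-\a)}(t-a)^{k+1-\a}\\
&\quad+\sum_{k=0}^{\infty}\frac{(-1)^{k}x^{(k)}(t)}{k!}(t-a)^{k-\a}.
\end{align*}
Shifting $k\mapsto k-1$ in the first sum (and $1/(k-1)!=k/k!$) turns it into $\sum_{k\ge1}\tfrac{(-1)^{k-1}k\,x^{(k)}(t)}{k!(k-\a)}(t-a)^{k-\a}$; the second sum contributes the lone singular term $x(t)(t-a)^{-\a}$ at $k=0$ and, for $k\ge1$, combines with the first via
\[
\frac{(-1)^{k-1}k}{k-\a}+(-1)^{k}=(-1)^{k-1}\Bigl(\frac{k}{k-\a}-1\Bigr)=\frac{(-1)^{k-1}\a}{k-\a}.
\]
Since the $k=0$ term of the claimed right-hand side of \eqref{ExpIntInf} is exactly $x(t)(t-a)^{-\a}/\Gamma(1-\a)$, this yields \eqref{ExpIntInf} after dividing by $\Gamma(1-\a)$; every rearrangement is legitimate because, for fixed $t\in(a,b]$, each series in sight is absolutely convergent thanks to $|x^{(k)}(t)|\le C k!/R^{k}$ and $b-a<R$.

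The step I expect to be the real obstacle is the uniform Cauchy estimate on the derivatives $x^{(k)}$: making it rigorous requires interpreting the hypothesis as furnishing a complex tube around $[a,b]$ of width exceeding $b-a$ on which $x$ extends holomorphically and is bounded — real-analyticity on $(c,d)$ alone does not force the Taylor radius at interior points to be that large, and this is exactly the role played by the ball condition $B_{b-a}(t)\subset(c,d)$ (compare \cite{Atan,Samko}). Once that estimate is in hand, the interchange of sum and integral, the term-by-term differentiation, and the combinatorial merging of the two series are all routine.
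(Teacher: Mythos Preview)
Your proof is correct and follows essentially the same route as the paper: Taylor-expand $x(\tau)$ about $t$, interchange sum and integral, differentiate term by term, shift the index in the first resulting series, and combine the two via the identity $\tfrac{(-1)^{k-1}k}{k-\alpha}+(-1)^k=\tfrac{(-1)^{k-1}\alpha}{k-\alpha}$. You are in fact more careful than the paper about the analytic justifications (the paper simply asserts that the interchange is permitted because the terms are analytic), and your closing remark about whether the ball hypothesis really forces a Taylor radius exceeding $b-a$ flags a subtlety that the paper also leaves implicit.
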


\begin{proof}
Since $x(t)$ is analytic in $(c,d)$ and $B_{b-a}(t)\subset(c,d)$ for any 
$\t\in(a,t)$ with $t\in(a,b)$, the Taylor expansion of $x(\t)$ at $t$ 
is a convergent power series, \textrm{i.e.},
$$
x(\t)=x(t-(t-\t))=\sum_{k=0}^{\infty}\frac{(-1)^k x^{(k)}(t)}{k!}(t-\t)^k,
$$
and then by \eqref{LeftD}
\begin{equation}
\label{ExpIntErr}
\LDa x(t)=\frac{1}{\Gamma(1-\a)}\frac{d}{dt}\int_a^t \left((t-\t)^{-\a}
\sum_{k=0}^{\infty}\frac{(-1)^k x^{(k)}(t)}{k!}(t-\t)^k\right)d\t.
\end{equation}
Since $(t-\t)^{k-\a} x^{(k)}(t)$ is analytic, we can interchange integration with summation, so
\begin{eqnarray*}
\LDa x(t)&=& \frac{1}{\Gamma(1-\a)}\frac{d}{dt}\left(\sum_{k=0}^{\infty}
\frac{(-1)^k  x^{(k)}(t)}{k!}\int_a^t (t-\t)^{k-\a}d\t\right)\\
&=& \frac{1}{\Gamma(1-\a)}\frac{d}{dt}\sum_{k=0}^{\infty}\left(
\frac{(-1)^k x^{(k)}(t)}{k!(k+1-\a)}(t-a)^{k+1-\a}\right)\\
&=&\frac{1}{\Gamma(1-\a)}\sum_{k=0}^{\infty}\left(\frac{(-1)^k 
x^{(k+1)}(t)}{k!(k+1-\a)}(t-a)^{k+1-\a}+\frac{(-1)^k x^{(k)}(t)}{k!}(t-a)^{k-\a}\right)\\
&=&\frac{x(t)}{\Gamma(1-\a)}(t-a)^{-\a}\\
&&+\frac{1}{\Gamma(1-\a)}\sum_{k=1}^\infty \left(\frac{(-1)^{k-1}}{(k-\a)(k-1)!}
+\frac{(-1)^k}{k!}\right)x^{(k)}(t)(t-a)^{k-\a}.
\end{eqnarray*}
Observe that
\begin{eqnarray*}
\frac{(-1)^{k-1}}{(k-\a)(k-1)!}+\frac{(-1)^k}{k!}
&=&\frac{k(-1)^{k-1}+k(-1)^k-\a(-1)^k}{(k-\a)k!}\\
&=&\frac{(-1)^{k-1}\a}{(k-\a)k!},
\end{eqnarray*}
since for any $k=0,1,2,\ldots$ we have $k(-1)^{k-1}+k(-1)^{k}=0$. 
Therefore, the expansion formula is reached as required.
\end{proof}

For numerical purposes, a finite number of terms in \eqref{ExpIntInf} is used and one has
\begin{equation}\label{expanInt}
\LDa x(t)\approx\sum_{k=0}^{N}\frac{(-1)^{k-1}\a x^{(k)}(t)}{k!(k-\a)\Gamma(1-\a)}(t-a)^{k-\a}.
\end{equation}

\begin{remark}
With the same assumptions of Theorem~\ref{ThmIntExp}, 
we can expand $x(\t)$ at $t$, where $\t\in(t,b)$,
$$
x(\t)=x(t+(\t-t))=\sum_{k=0}^{\infty}\frac{x^{(k)}(t)}{k!}(\t-t)^k,
$$
and get the following approximation for the right Riemann--Liouville 
derivative:\index{Riemann--Liouville fractional derivative! Right}
\begin{equation*}
\RD x(t)\approx\sum_{k=0}^{N}\frac{-\a x^{(k)}(t)}{k!(k-\a)\Gamma(1-\a)}(b-t)^{k-\a}.
\end{equation*}
\end{remark}

A proof for this expansion is available at \cite{Samko} that uses a similar relation 
for fractional integrals. The proof discussed here, however, 
allows us to extract an error term for this expansion easily.

%--------------------------------------------------------

\subsection{Approximation using moments of a function}
\label{SecAppDer}

By {\it moments}\index{Moment of a function} of a function we have no physical 
or distributive senses in mind. The name comes from the fact that, 
during expansion, the terms of the form
\begin{equation}
\label{defVp}
V_p(t):=V_p(x(t))=(1-p)\int_a^t (\t-a)^{p-2}x(\t) d\t, \quad p\in\mathbb{N}, \t\geq a,
\end{equation}
appear to resemble the formulas of central moments (\textrm{cf.} \cite{Atan2}). 
We assume that $V_p(x(\cdot))$, $p\in \mathbb{N}$, denote the $(p-2)$th 
moment of a function $x(\cdot)\in AC^2[a,b]$.

The following lemma, that is given here without a proof, is the key relation 
to extract an expansion formula\index{Expansion formula} for Riemann--Liouville derivatives.

\begin{lemma}[\textrm{cf.} Lemma 2.12 of \cite{Kai}]
Let $x(\cdot)\in AC[a,b]$ and $0 < \a < 1$. Then the left Riemann--Liouville 
fractional derivative\index{Riemann--Liouville fractional derivative! Left} 
$\LDa x(\cdot)$ exists almost everywhere in $[a,b]$.
Moreover, $\LDa x(\cdot)\in L_p[a,b]$ for $1\leq p<\frac{1}{\a}$ and
\begin{equation}
\label{DecThm}
\LDa x(t)=\frac{1}{\Gamma(1-\a)}\left[ \frac{x(a)}{(t-a)^\a}
+\int_a^t (t-\tau)^{-\a}\dot{x}(\tau)d\tau \right], \qquad t\in (a,b).
\end{equation}
The same argument is valid for the right Riemann--Liouville derivative 
and\index{Riemann--Liouville fractional derivative! Right}
\begin{equation*}
\RD x(t)=\frac{1}{\Gamma(1-\a)}\left[ \frac{x(b)}{(b-t)^\a}
-\int_t^b (\tau-t)^{-\a}\dot{x}(\tau)d\tau \right], \qquad t\in (a,b).
\end{equation*}
\end{lemma}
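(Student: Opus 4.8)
The plan is to reduce the claim to elementary properties of the Riemann--Liouville integral operator, exploiting the absolute continuity of $x$ to isolate the singular part. Write $x(t)=x(a)+{_aI_t^{1}}\dot{x}(t)$, where $\dot{x}\in L_1[a,b]$, and recall that for $\a\in(0,1)$ the definition \eqref{LeftD} reads $\LDa=\frac{d}{dt}\circ{_aI_t^{1-\a}}$. First I would compute ${_aI_t^{1-\a}}x$ by linearity and then differentiate. By the Beta integral, ${_aI_t^{1-\a}}$ applied to the constant $x(a)$ equals $\frac{x(a)}{\Gamma(2-\a)}(t-a)^{1-\a}$, while the semigroup law for fractional integrals (a Fubini argument, valid since $\dot{x}\in L_1$) gives ${_aI_t^{1-\a}}\bigl({_aI_t^{1}}\dot{x}\bigr)={_aI_t^{2-\a}}\dot{x}$. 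Hence
\[
{_aI_t^{1-\a}}x(t)=\frac{x(a)}{\Gamma(2-\a)}(t-a)^{1-\a}+{_aI_t^{2-\a}}\dot{x}(t).
\]

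Differentiating the first summand and using $\Gamma(2-\a)=(1-\a)\Gamma(1-\a)$ yields precisely $\frac{x(a)}{\Gamma(1-\a)}(t-a)^{-\a}$. The delicate part --- which I expect to be the main obstacle --- is differentiating ${_aI_t^{2-\a}}\dot{x}$, since $\dot{x}$ is merely integrable and one cannot naively differentiate under the integral sign. The remedy is to write ${_aI_t^{2-\a}}\dot{x}={_aI_t^{1}}\bigl({_aI_t^{1-\a}}\dot{x}\bigr)$ and observe that the inner function $g:={_aI_t^{1-\a}}\dot{x}$ lies in $L_1[a,b]$: indeed $(t,\tau)\mapsto(t-\tau)^{-\a}$ is integrable on the triangle $a\le\tau\le t\le b$, so Fubini applies and $\|g\|_{L_1}<\infty$. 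Then ${_aI_t^{1}}g$ is absolutely continuous with $\frac{d}{dt}{_aI_t^{1}}g=g$ almost everywhere by the Lebesgue differentiation theorem. Combining the two pieces shows that $\LDa x(t)=\frac{d}{dt}{_aI_t^{1-\a}}x(t)$ exists for a.e.\ $t\in(a,b)$ and equals $\frac{1}{\Gamma(1-\a)}\left[\frac{x(a)}{(t-a)^\a}+\int_a^t(t-\tau)^{-\a}\dot{x}(\tau)d\tau\right]$, which is \eqref{DecThm}.

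For the integrability assertion, both summands of \eqref{DecThm} are dominated by a convolution of $\dot{x}\in L_1$ with the power $s\mapsto s^{-\a}$, and $s^{-\a}\in L_p(0,b-a)$ exactly when $p\a<1$; Young's convolution inequality then gives ${_aI_t^{1-\a}}\dot{x}\in L_p[a,b]$ for $1\le p<1/\a$, while the pure power term $(t-a)^{-\a}$ belongs to the same spaces for the same reason. Hence $\LDa x(\cdot)\in L_p[a,b]$ for $1\le p<1/\a$. Finally, the formula for the right Riemann--Liouville derivative follows from the one just proved by the reflection $t\mapsto a+b-t$, which swaps $\LDa$ with $\RD$, interchanges $x(a)$ and $x(b)$, and changes the sign of $\dot{x}$, thereby producing the minus sign in front of the integral.
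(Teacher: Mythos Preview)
Your argument is correct and complete. In fact, the paper does not supply a proof of this lemma at all: it is stated ``without a proof'' and attributed to Lemma~2.12 of Diethelm's monograph. Your route --- split $x=x(a)+{_aI_t^{1}}\dot{x}$, apply ${_aI_t^{1-\a}}$ termwise, invoke the semigroup law to rewrite ${_aI_t^{1-\a}}{_aI_t^{1}}\dot{x}={_aI_t^{1}}({_aI_t^{1-\a}}\dot{x})$, and then differentiate via the Lebesgue differentiation theorem --- is exactly the standard proof one finds in that reference. The $L_p$ claim via Young's inequality and the reflection argument for $\RD$ are also handled correctly.
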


\begin{theorem}[cf. \cite{Atan2}]\label{ThmAtan}
Let $x(\cdot)\in AC[a,b]$ and $0 < \a < 1$. 
Then the left Riemann--Liouville derivative can be expanded as
\begin{equation}
\label{expanMomInf}
\LDa x(t)=A(\a)(t-a)^{-\a}x(t)+B(\a)(t-a)^{1-\a}\dot{x}(t)
-\sum_{p=2}^{\infty}C(\a,p)(t-a)^{1-p-\a}V_p(t),
\end{equation}
where $V_p(t)$ is defined by \eqref{defVp} and
\begin{eqnarray}\label{Cp}
A(\a)&=&\frac{1}{\Gamma(1-\a)}\left(1+\sum_{p=2}^{\infty}
\frac{\Gamma(p-1+\a)}{\Gamma(\a)(p-1)!}\right),\nonumber\\
B(\a)&=&\frac{1}{\Gamma(2-\a)}\left(1+\sum_{p=1}^{\infty}
\frac{\Gamma(p-1+\a)}{\Gamma(\a-1)p!}\right),\nonumber\\
C(\a,p)&=&\frac{1}{\Gamma(2-\a)\Gamma(\a-1)}\frac{\Gamma(p-1+\a)}{(p-1)!}.
\end{eqnarray}
\end{theorem}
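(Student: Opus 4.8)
The plan is to deduce the expansion from the decomposition formula \eqref{DecThm}, which already isolates the endpoint singularity,
$$
\LDa x(t)=\frac{1}{\Gamma(1-\a)}\left[\frac{x(a)}{(t-a)^{\a}}+\int_a^t(t-\tau)^{-\a}\dot x(\tau)\,d\tau\right],
$$
so that only the weakly singular integral has to be re-expanded. The device is to push the singularity to the endpoint $a$ by writing $(t-\tau)^{-\a}=(t-a)^{-\a}\bigl(1-\tfrac{\tau-a}{t-a}\bigr)^{-\a}$ and to expand the remaining factor in its generalized binomial series $\bigl(1-z\bigr)^{-\a}=\sum_{p\ge 0}\tfrac{\Gamma(\a+p)}{\Gamma(\a)\,p!}\,z^{p}$, convergent for $0\le z<1$, i.e. for all $\tau\in[a,t)$. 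Interchanging sum and integral then gives
$$
\int_a^t(t-\tau)^{-\a}\dot x(\tau)\,d\tau=(t-a)^{-\a}\sum_{p=0}^{\infty}\frac{\Gamma(\a+p)}{\Gamma(\a)\,p!}\,(t-a)^{-p}\int_a^t(\tau-a)^{p}\dot x(\tau)\,d\tau .
$$

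The core of the computation is to handle each moment $\int_a^t(\tau-a)^{p}\dot x(\tau)\,d\tau$ by a suitable sequence of integrations by parts that separates it into a pointwise $x(t)$-part, a pointwise $\dot x(t)$-part (this is where it is natural to strengthen the hypothesis to $x\in AC^{2}[a,b]$ so that $\dot x(t)$ is meaningful), and a part which, through the definition \eqref{defVp} of the moments $V_p$, yields a term proportional to $(t-a)^{1-p-\a}V_p(t)$; the $p=0$ moment equals $x(t)-x(a)$, and its $-x(a)$ cancels exactly against the boundary term of \eqref{DecThm}. Summing the $x(t)$-pieces over $p$ produces the infinite series multiplying $(t-a)^{-\a}x(t)$, summing the $\dot x(t)$-pieces produces the series multiplying $(t-a)^{1-\a}\dot x(t)$, and, after re-indexing $p+1\mapsto p$, the moment pieces assemble into $-\sum_{p\ge2}C(\a,p)(t-a)^{1-p-\a}V_p(t)$. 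A short manipulation of the resulting Gamma-quotients, using $\Gamma(z+1)=z\Gamma(z)$ together with $\Gamma(2-\a)=(1-\a)\Gamma(1-\a)$ and $\Gamma(\a)=(\a-1)\Gamma(\a-1)$, rewrites the three coefficients in the closed forms \eqref{Cp}.

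The main obstacle is rigour in these series manipulations. The binomial series for $(1-z)^{-\a}$ diverges at $z=1$ (since $\a>0$), so the interchange of $\sum$ and $\int$ cannot be justified by uniform convergence on $[a,t]$; the clean route is to work with the truncation of order $N$, carry an explicit remainder integral along (in the same spirit as the proof of Theorem~\ref{ThmIntExp}, where an error term is extracted from the Taylor remainder) and show that it tends to $0$ as $N\to\infty$. Worse, the series defining $A(\a)$ and $B(\a)$ in \eqref{Cp} do not converge term by term; their growth is precisely cancelled by the divergent tail of the $V_p$-sum (this compensation is already visible on $x\equiv\text{const}$, where $A(\a)+\sum_{p\ge2}C(\a,p)=\tfrac{1}{\Gamma(1-\a)}$), so the identity \eqref{expanMomInf} must be read as the limit of its order-$N$ truncations, not as three separately convergent objects. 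Keeping this cancellation under control, and distributing the integrations by parts so that the $x(t)$-, $\dot x(t)$- and $V_p(t)$-contributions come out in exactly the proportions of \eqref{Cp}, is the delicate part; the rest is bookkeeping. The right Riemann--Liouville case follows by the same argument applied to the companion decomposition of $\RD x$ stated in the same lemma.
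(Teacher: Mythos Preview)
Your diagnosis is sharper than the paper's own treatment. The paper does \emph{not} prove Theorem~\ref{ThmAtan}: it attributes the result to \cite{Atan2} and then, in the Remark immediately following, points out that the statement is ill-posed because the series defining $A(\a)$ diverges. The paper's fix is Theorem~\ref{thm:oft}, whose formula \eqref{Gen} keeps the offending $x(t)$-terms inside the infinite sum, paired with the $V_p$-terms, so that no divergent coefficient ever appears in isolation. You spotted exactly this issue and proposed the alternative resolution of reading \eqref{expanMomInf} as the limit of its order-$N$ truncations; your check on constants (that $A(\a)+\sum_{p\ge2}C(\a,p)$ collapses to $1/\Gamma(1-\a)$ by termwise cancellation) is precisely the mechanism behind the paper's regrouping.

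On the technique itself, your route and the paper's proof of Theorem~\ref{thm:oft} share the same ingredients (decomposition \eqref{DecThm}, binomial expansion, repeated integration by parts) but apply them in a different order. You expand $(1-z)^{-\a}$ directly and then integrate by parts each moment $\int_a^t(\tau-a)^p\dot x\,d\tau$; since $(1-z)^{-\a}$ diverges at $z=1$, you are forced into the truncation-plus-remainder argument you describe. The paper instead integrates by parts \emph{first}, passing from \eqref{DecThm} to an integral with kernel $(t-\tau)^{1-\a}$ and integrand $\ddot x$ (this is \eqref{exp2} in the error-analysis section), and only then expands $(1-z)^{1-\a}$, whose binomial series converges uniformly on $[0,1]$ because $1-\a>0$. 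After expansion, the paper integrates by parts back down from $\ddot x$ to $x$. That order sidesteps the endpoint divergence you flagged and yields the clean remainder bound in the error analysis; your order is workable but, as you say, needs the explicit remainder carried along. Both require $x\in AC^2$ rather than the $AC$ stated, which you also caught.
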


\begin{remark}
The proof of Theorem~\ref{ThmAtan} is done by  T.M. Atanackovi\'c
and B. Stankovi\'c \cite{Atan2} but, unfortunately, has a small mistake:
the coefficient $A(\a)$, where we have an infinite sum, is not well defined
since the series diverges.

For a correct formulation and proof see 
our Theorem~\ref{thm:oft} and Remark~\ref{RemarkAtan}.
\end{remark}

The moments\index{Moment of a function} $V_p(t)$, $p=2,3,\ldots$, 
are regarded as the solutions to the following system of differential equations:
\begin{equation}
\label{sysVp}
\left\{
\begin{array}{l}
\dot{V}_p(t)=(1-p)(t-a)^{p-2}x(t)\\
V_p(a)=0, \qquad p=2,3,\ldots .
\end{array}
\right.
\end{equation}

As before, a numerical approximation is achieved by taking only a finite number 
of terms in the series \eqref{expanMomInf}. We approximate the fractional derivative as
\begin{equation}
\label{expanMom}
\LDa x(t)\approx A(t-a)^{-\a}x(t)+B(t-a)^{1-\a}\dot{x}(t)
-\sum_{p=2}^N C(\a,p)(t-a)^{1-p-\a}V_p(t),
\end{equation}
where $A=A(\a,N)$ and $A=B(\a,N)$ are given by
\begin{eqnarray}
A(\a,N)&=&\frac{1}{\Gamma(1-\a)}\left(1+\sum_{p=2}^N
\frac{\Gamma(p-1+\a)}{\Gamma(\a)(p-1)!}\right),\label{A}\\
B(\a,N)&=&\frac{1}{\Gamma(2-\a)}\left(1+\sum_{p=1}^N
\frac{\Gamma(p-1+\a)}{\Gamma(\a-1)p!}\right)\label{B}.
\end{eqnarray}

\begin{remark}
The expansion \eqref{expanMomInf} has been proposed in \cite{Djordjevic} 
and an interesting, yet misleading, simplification has been made in \cite{Atan2}, 
which uses the fact that the infinite series $\sum_{p=1}^{\infty}
\frac{\Gamma(p-1+\a)}{\Gamma(\a-1)p!}$ tends to $-1$ and concludes that $B(\a)=0$ and thus
\begin{equation}
\label{expanAtan}
\LD x(t)\approx A(\a,N)t^{-\a}x(t)-\sum_{p=2}^N C(\a,p)t^{1-p-\a}V_p(t).
\end{equation}
In practice, however, we only use a finite number of terms in the series. Therefore
\begin{equation*}
1+\sum_{p=1}^N\frac{\Gamma(p-1+\a)}{\Gamma(\a-1)p!}\neq 0,
\end{equation*}
and we keep here the approximation in the form of equation \eqref{expanMom} 
\cite{PATFracDer}. To be more precise, the values of $B(\a,N)$ for different choices 
of $N$ and $\a$ are given in Table~\ref{tab}. It shows that even for a large $N$, 
when $\a$ tends to one, $B(\a,N)$ cannot be ignored. In Figure~\ref{BaN}, 
we plot $B(\a,N)$ as a function of $N$ for different values of $\a$.
\begin{table}[!htp]
\center
\begin{tabular}{|c|c c c c c c c|}
\hline
$N$         &    4   &    7   &   15   &   30   &   70   &  120   &  170   \\
\hline
$B(0.1,N)$  & 0.0310 & 0.0188 & 0.0095 & 0.0051 & 0.0024 & 0.0015 & 0.0011 \\
$B(0.3,N)$  & 0.1357 & 0.0928 & 0.0549 & 0.0339 & 0.0188 & 0.0129 & 0.0101 \\
$B(0.5,N)$  & 0.3085 & 0.2364 & 0.1630 & 0.1157 & 0.0760 & 0.0581 & 0.0488 \\
$B(0.7,N)$  & 0.5519 & 0.4717 & 0.3783 & 0.3083 & 0.2396 & 0.2040 & 0.1838 \\
$B(0.9,N)$  & 0.8470 & 0.8046 & 0.7481 & 0.6990 & 0.6428 & 0.6092 & 0.5884 \\
$B(0.99,N)$ & 0.9849 & 0.9799 & 0.9728 & 0.9662 & 0.9582 & 0.9531 & 0.9498 \\
\hline
\end{tabular}
\caption{$B(\a,N)$ for different values of  $\a$ and $N$.}
\label{tab}
\end{table}

\begin{figure}[!ht]
  \begin{center}
    \subfigure{\includegraphics[scale=0.54]{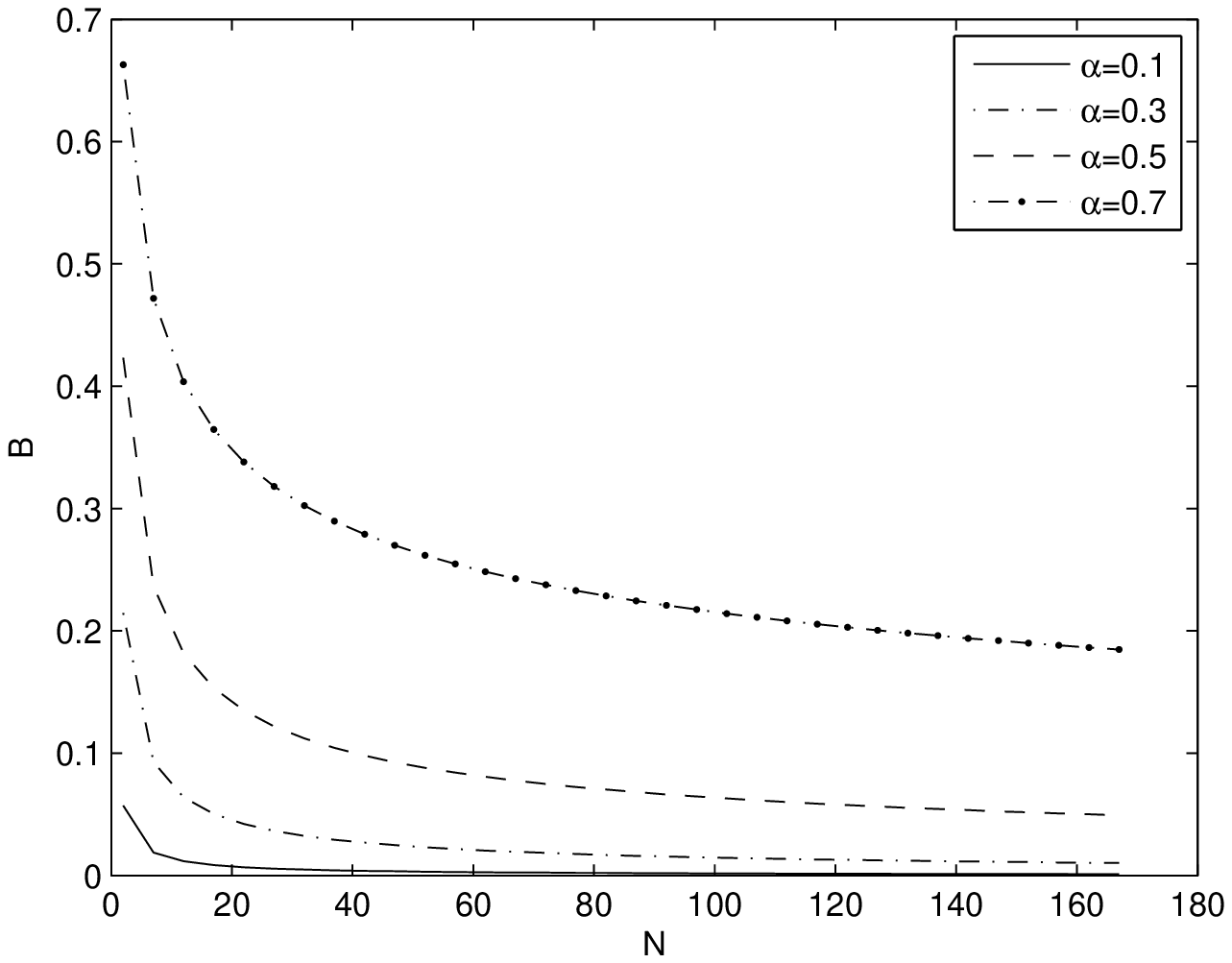}}
    \subfigure{\includegraphics[scale=0.54]{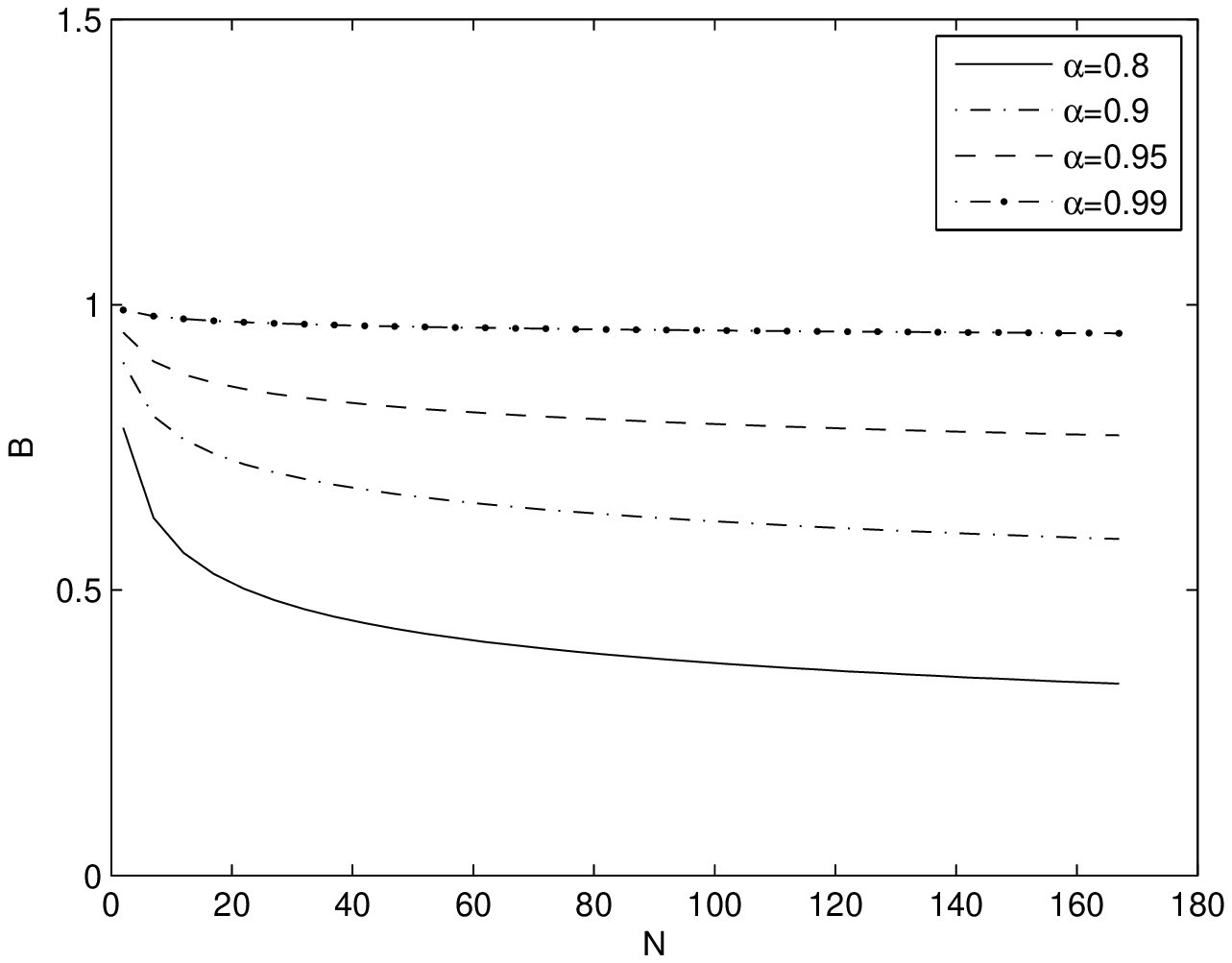}}
  \end{center}
  \caption{$B(\a,N)$ for different values of  $\a$ and $N$.}
  \label{BaN}
\end{figure}
\end{remark}

\begin{remark}
Similar computations give rise to an expansion formula for $\RD$, 
the right Riemann--Liouville fractional derivative:\index{Riemann--Liouville fractional derivative! Right}
\begin{equation}\label{expanMomR}
\RD x(t)\approx A(b-t)^{-\a}x(t)-B(b-t)^{1-\a}\dot{x}(t)-\sum_{p=2}^NC(\a,p)(b-t)^{1-p-\a}W_p(t),
\end{equation}
where
$$
W_p(t)=(1-p)\int_t^b (b-\tau)^{p-2}x(\tau)d\tau.
$$
The coefficients $A=A(\a,N)$ and $B=B(\a,N)$ are the same as \eqref{A} 
and \eqref{B}, respectively, and $C(\a,p)$ is given by \eqref{Cp}.
\end{remark}

\begin{remark}
As stated before, Caputo derivatives are closely related to those of Riemann--Liouville. 
For any function, $x(\cdot)$, and for $\a\in(0,1)$, 
if these two kind of fractional derivatives exist, then we have
$$
\LDC x(t)=\LD x(t)-\frac{x(a)}{\Gamma(1-\a)}(t-a)^{-\a}
$$
and
$$
\RDC x(t)=\RD x(t)-\frac{x(b)}{\Gamma(1-\a)}(b-t)^{-\a}.
$$
Using these relations, we can easily construct approximation formulas for left 
and right Caputo fractional 
derivatives:\index{Caputo fractional derivative! Right}\index{Caputo fractional derivative! Left}
\begin{eqnarray*}
\LDC x(t)&\approx & A(\a,N)(t-a)^{-\a}x(t)+B(\a,N)(t-a)^{1-\a}\dot{x}(t)\\
&&-\sum_{p=2}^N C(\a,p)(t-a)^{1-p-\a}V_p(t)-\frac{x(a)}{\Gamma(1-\a)}(t-a)^{-\a}.
\end{eqnarray*}
\end{remark}
Formula \eqref{expanMomInf} consists of two parts:
an infinite series and two terms including
the first derivative and the function itself.
It can be generalized to contain derivatives of higher-order.

\begin{theorem}
\label{thm:oft}
Fix $n \in \mathbb{N}$ and let $x(\cdot)\in C^n[a,b]$. Then,
\begin{multline}
\label{Gen}
\LDa x(t)=\frac{1}{\Gamma(1-\a)}(t-a)^{-\a}x(t)+\sum_{i=1}^{n-1}
A(\a,i)(t-a)^{i-\a}x^{(i)}(t)\\
+\sum_{p=n}^{\infty}\left[
\frac{-\Gamma(p-n+1+\a)}{\Gamma(-\a)\Gamma(1+\a)(p-n+1)!}(t-a)^{-\a}x(t)
+ B(\a,p)(t-a)^{n-1-p-\a}V_p(t)\right],
\end{multline}
where
\begin{eqnarray*}
A(\a,i)&=&\frac{1}{\Gamma(i+1-\a)}\left[1+\sum_{p
=n-i}^{\infty}\frac{\Gamma(p-n+1+\a)}{\Gamma(\a-i)(p-n+i+1)!}\right],
\quad i = 1,\ldots,n-1,\\
B(\a,p)&=&\frac{\Gamma(p-n+1+\a)}{\Gamma(-\a)\Gamma(1+\a)(p-n+1)!},\\
V_p(t) &=&(p-n+1)\int_a^t (\t-a)^{p-n}x(\t)d\t.
\end{eqnarray*}
\end{theorem}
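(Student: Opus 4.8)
The plan is to derive \eqref{Gen} by a direct computation, which is essentially the argument of \cite{Atan2} carried out with the care its published version lacks. The starting point is the identity that expresses $\LDa$ through an ordinary $n$-th derivative: for $x(\cdot)\in C^n[a,b]$ and $0<\a<1$ (so that $0<\a<1\le n$), the Proposition above relating $\LDa$ and $\GLa$ gives
\begin{equation*}
\LDa x(t)=\sum_{k=0}^{n-1}\frac{x^{(k)}(a)}{\Gamma(k+1-\a)}(t-a)^{k-\a}
+\frac{1}{\Gamma(n-\a)}\int_a^t(t-\t)^{n-1-\a}x^{(n)}(\t)\,d\t .
\end{equation*}
Next I would expand the kernel as a binomial series about $\t=a$,
\begin{equation*}
(t-\t)^{n-1-\a}=(t-a)^{n-1-\a}\left(1-\frac{\t-a}{t-a}\right)^{n-1-\a}
=(t-a)^{n-1-\a}\sum_{p=0}^{\infty}\frac{\Gamma(p-n+1+\a)}{\Gamma(\a-n+1)\,p!}\left(\frac{\t-a}{t-a}\right)^{p},
\end{equation*}
valid for $\t\in(a,t)$, interchange summation and integration, and then, for each $p$, integrate $\int_a^t(\t-a)^p x^{(n)}(\t)\,d\t$ by parts repeatedly until the integrand involves only $x$ itself (and boundary evaluations of $x,\dot x,\dots,x^{(n-1)}$ at $t$ and $a$). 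Regrouping by powers of $(t-a)$ and by the order of differentiation then produces \eqref{Gen} with the stated coefficients; the leftover integrals $\int_a^t(\t-a)^{p-n}x(\t)\,d\t=\tfrac{1}{p-n+1}V_p(t)$, $p\ge n$, become the tail. It turns out that each fully summed coefficient $A(\a,i)$ is in fact $0$ --- as it already is for $n=2$, where $A(\a,1)=B(\a)=0$, see the remark above --- but these terms are retained because in numerical work the series is truncated and the corresponding partial sums do not vanish.

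The steps, in order: (i) the kernel expansion, noting $\binom{n-1-\a}{p}(-1)^p=\Gamma(p-n+1+\a)/[\Gamma(\a-n+1)\,p!]$; (ii) justifying the term-by-term integration --- since $x^{(n)}$ is bounded on $[a,b]$ and, for $n\ge2$, the binomial coefficients are absolutely summable (the exponent $n-1-\a>0$), Fubini/dominated convergence applies at once, whereas for $n=1$ the binomial series diverges as $\t\to t$ and the interchange must instead be argued on $[a,t-\varepsilon]$ with a separate estimate of the remaining integral; (iii) for $p\ge n$, $n$ integrations by parts give the boundary terms $(-1)^{j}\tfrac{p!}{(p-j)!}(t-a)^{p-j}x^{(n-1-j)}(t)$, $j=0,\dots,n-1$ (the contributions at $\t=a$ vanish because $p\ge n$), plus $(-1)^n\tfrac{p!}{(p-n)!}\int_a^t(\t-a)^{p-n}x(\t)\,d\t$; for $0\le p\le n-1$, $p$ integrations by parts give vanishing boundary terms plus $(-1)^p p!\bigl(x^{(n-1-p)}(t)-x^{(n-1-p)}(a)\bigr)$, the term $x^{(n-1-p)}(a)$ being exactly what cancels against the initial sum; (iv) reassembling --- verifying that the $x^{(k)}(a)$ pieces cancel $\sum_{k=0}^{n-1}\tfrac{x^{(k)}(a)}{\Gamma(k+1-\a)}(t-a)^{k-\a}$, identifying the coefficient of $(t-a)^{-\a}x(t)$ as $\tfrac{1}{\Gamma(1-\a)}$ together with the tail pieces, identifying the coefficient of $(t-a)^{i-\a}x^{(i)}(t)$ ($1\le i\le n-1$) as $A(\a,i)$, and packing the surviving $(t-a)^{n-1-p-\a}V_p(t)$ and residual $(t-a)^{-\a}x(t)$ contributions into the single bracketed sum $\sum_{p=n}^{\infty}[\,\cdots\,]$. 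All the coefficient identities needed here reduce to the reflection formula $\Gamma(z)\Gamma(1-z)=\pi/\sin(\pi z)$; in particular $\Gamma(n-\a)\Gamma(\a-n+1)=(-1)^n\Gamma(-\a)\Gamma(1+\a)$ is what converts the combinatorial factors into the stated $A(\a,i)$ and $B(\a,p)$.

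The main obstacle lies in step (iv) together with the related analytic point in step (ii). One must keep the tail grouped bracket by bracket --- pairing each $V_p(t)$ term with its companion $(t-a)^{-\a}x(t)$ term --- because pulling the $x(t)$ pieces out and combining them with $\tfrac{1}{\Gamma(1-\a)}$ produces a divergent series; this is precisely the error in \cite{Atan2}, where the coefficient $A(\a)$ was written with an ill-defined infinite sum. Correspondingly, in the $n=1$ case the term-by-term integration cannot be done naively, since the binomial series for $(1-\tfrac{\t-a}{t-a})^{n-1-\a}$ does not converge at the upper limit, so a genuine estimate using the continuity of $x^{(n)}$ is required. The rest --- the cancellation of all the $x^{(k)}(a)$ boundary terms and the re-summation of the small-$p$ and large-$p$ contributions into the closed forms $A(\a,i)$, $B(\a,p)$ --- is routine but lengthy Gamma-function bookkeeping.
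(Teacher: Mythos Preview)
Your approach shares its essential ingredients with the paper's: start from the representation
\[
\LDa x(t)=\sum_{k=0}^{n-1}\frac{x^{(k)}(a)}{\Gamma(k+1-\a)}(t-a)^{k-\a}
+\frac{1}{\Gamma(n-\a)}\int_a^t(t-\t)^{n-1-\a}x^{(n)}(\t)\,d\t,
\]
expand the kernel by the binomial series, and integrate by parts until only $x$ remains. The organisational difference is that the paper proceeds \emph{iteratively}: after the expansion it splits off the $p=0$ term, integrates the entire remaining tail by parts \emph{once} (reducing $x^{(n)}$ to $x^{(n-1)}$ throughout), regroups, and repeats --- one derivative order at a time --- so no case distinction between small and large $p$ is ever needed. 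Your plan instead performs all integrations by parts for each fixed $p$ and regroups at the end.

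This is where a concrete error slips in. In step~(iii) you assert that for $0\le p\le n-1$ the $p$ integrations by parts yield ``vanishing boundary terms plus $(-1)^p p!\bigl(x^{(n-1-p)}(t)-x^{(n-1-p)}(a)\bigr)$''. The contributions at $\t=a$ do vanish, but those at $\t=t$ do not: you also pick up
\[
\sum_{j=0}^{p-1}(-1)^j\frac{p!}{(p-j)!}(t-a)^{p-j}x^{(n-1-j)}(t),
\]
exactly the same boundary terms as in your large-$p$ formula, only with fewer of them. These are not disposable: for each $i\in\{1,\dots,n-1\}$ the terms with $j=n-1-i$ arising from $p=n-i,\dots,n-1$ supply precisely the finite stretch $n-i\le p\le n-1$ of the infinite sum inside $A(\a,i)$. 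Dropping them you would arrive at $\frac{1}{\Gamma(i+1-\a)}\bigl[1+\sum_{p=n}^{\infty}\cdots\bigr]$ rather than the stated $\sum_{p=n-i}^{\infty}$. Once this is corrected your scheme goes through; the paper's iterative ordering simply sidesteps the hazard.
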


\begin{proof}
Successive integrating by parts in \eqref{DecThm} gives
\begin{eqnarray*}
\LDa x(t)&=&\frac{x(a)}{\Gamma(1-\a)}(t-a)^{-\a}
+\frac{\dot{x}(a)}{\Gamma(2-\a)}(t-a)^{1-\a}
+\cdots+\frac{x^{(n-1)}(a)}{\Gamma(n-\a)}(t-a)^{n-1-\a}\\
& &+\frac{1}{\Gamma(1-\a)}\int_a^t (t-\tau)^{n-1-\a}x^{(n)}(\tau)d\tau.
\end{eqnarray*}
Using the binomial theorem\index{Binomial! theorem}, we expand the integral term as
$$
\int_a^t (t-\tau)^{n-1-\a}x^{(n)}(\tau)d\tau=(t-a)^{n-1-\a}\sum_{p=0}^{\infty}
\frac{\Gamma(p-n+1+\a)}{\Gamma(1-n+\a)p!(t-a)^p}\int_a^t (\tau-a)^px^{(n)}(\tau)d\tau.
$$
Splitting the sum into $p=0$ and $p=1\ldots\infty$,
and integrating by parts the last integral, we get
\begin{eqnarray*}
\LD x(t)&=&\frac{(t-a)^{-\a}}{\Gamma(1-\a)}x(a)+\cdots
+\frac{(t-a)^{n-2-\a}}{\Gamma(n-1-\a)}x^{(n-2)}(a)\\
& &+\frac{(t-a)^{n-1-\a}}{\Gamma(n-\a)}x^{(n-2)}(t)\left[1
+ \sum_{p=1}^\infty\frac{\Gamma(p-n+1+\a)}{\Gamma(-n+1+\a)p!}\right]\\
& &+ \frac{(t-a)^{n-1-\a}}{\Gamma(n-1-\a)} \sum_{p=1}^\infty\frac{\Gamma(p
-n+1+\a)}{\Gamma(-n+2+\a)(p-1)!(t-a)^p}\int_a^t(\tau-a)^{p-1}x^{(n-1)}(\tau)d\tau.\\
\end{eqnarray*}
The rest of the proof follows a similar routine, \textrm{i.e.},
by splitting the sum into two parts, the first term and the rest,
and integrating by parts the last integral until
$x(\cdot)$ appears in the integrand.
\end{proof}

\begin{remark}\label{RemarkAtan}
The series that appear in $A(\a,i)$ is convergent
for all $ i \in \{1,\ldots,n-1\}$. Fix an $i$ and observe that
$$
\sum_{p=n-i}^{\infty}\frac{\Gamma(p-n+1+\a)}{\Gamma(\a-i)(p-n+i+1)!}
=\sum_{p=1}^{\infty}\frac{\Gamma(p+\a-i)}{\Gamma(\a-i)p!}
={_1F_0}(\a-i,1)-1,
$$
where $_1F_0$ stands for a hypergeometric function\index{Hypergeometric function} \cite{Andrews}.
Since $i>\a$, ${_1F_0}(\a-i,1)$ converges by Theorem~2.1.1 of \cite{Andrews}.

In practice we only use finite sums and for $A(\a,i)$ we can easily compute
the truncation error\index{Error! Truncation}. Although this is a partial error, it gives a good
intuition of why this approximation works well.
Using the fact that ${_1F_0}(a,1)=0$ if $a<0$
(\textrm{cf.} Eq. (2.1.6) in \cite{Andrews}), we have
\begin{equation}\label{TruncAai}
\begin{split}
\frac{1}{\Gamma(i+1-\a)}&\sum_{p=N+1}^{\infty}\frac{\Gamma(p
-n+1+\a)}{\Gamma(\a-i)(p-n+i+1)!}\\
&=\frac{1}{\Gamma(i+1-\a)}\left({_1F_0}(\a-i,1)
- \sum_{p=0}^{N-n+i+1}\frac{\Gamma(p+\a-i)}{\Gamma(\a-i)p!}\right)\\
&=\frac{-1}{\Gamma(i+1-\a)}\sum_{p=0}^{N-n+i+1}\frac{\Gamma(p+\a-i)}{\Gamma(\a-i)p!}.
\end{split}
\end{equation}
In Table~\ref{truncError} we give some values for this error,
with $\a=0.5$ and different values for $i$ and $N-n$.
\begin{table}[!ht]
\center
\begin{tabular}{|l|c|c|c|c|c|}\hline
\diaghead{\theadfont Diag Column }%
{~\\$i$}{$N-n$\\~}&
\thead{0}&\thead{5}&\thead{10}&\thead{15}&\thead{20}\\
 \hline
1 &-0.4231&-0.2364&-0.1819&-0.1533& -0.1350\\
 \hline
2 &0.04702&0.009849&0.004663&0.002838&0.001956\\
 \hline
3 &-0.007052&-0.0006566&-0.0001999&-0.00008963&-0.00004890\\
 \hline
4 &0.001007&0.00004690&0.000009517&0.000003201&0.000001397\\
 \hline
\end{tabular}
\caption{The truncation error \eqref{TruncAai} of $A(\a,i)$ for $\a=0.5$, 
that is, $A(\a,i)-A(\a,i,N)$ with $A(\a,i,N)$ given by \eqref{AaiN}.}
\label{truncError}
\end{table}
\end{remark}

\begin{remark}
Using Euler's reflection formula, one can define $B(\a,p)$
of Theorem~\ref{thm:oft} as
$$
B(\a,p)=\frac{-\sin(\pi\a)\Gamma(p-n+1+\a)}{\pi (p-n+1)!}.
$$
\end{remark}

For numerical purposes, only finite sums are taken
to approximate fractional derivatives.
Therefore, for a fixed $n \in \mathbb{N}$ and $N\geq n$, one has
\begin{equation}\label{ApproxDerGeneralcase}
\LDa x(t)\approx \sum_{i=0}^{n-1} A(\a,i,N)(t-a)^{i-\a}x^{(i)}(t)
+\sum_{p=n}^{N} B(\a,p)(t-a)^{n-1-p-\a}V_p(t),
\end{equation}
where
\begin{equation}\label{AaiN}
A(\a,i,N)=\frac{1}{\Gamma(i+1-\a)}\left[1+\sum_{p=2}^{N}\frac{\Gamma(p
-n+1+\a)}{\Gamma(\a-i)(p-n+i+1)!}\right], \quad i = 0,\ldots,n-1,
\end{equation}
\begin{eqnarray*}
B(\a,p)&=&\frac{\Gamma(p-n+1+\a)}{\Gamma(-\a)\Gamma(1+\a)(p-n+1)!},\nonumber\\
V_p(t) &=&(p-n+1)\int_a^t (\t-a)^{p-n}x(\t)d\t.\nonumber
\end{eqnarray*}

Similarly, we can deduce an expansion formula for the right fractional 
derivative.\index{Riemann--Liouville fractional derivative! Right}

\begin{theorem}
Fix $n\in\mathbb N$ and $x(\cdot)\in C^n[a,b]$. Then,
\begin{multline*}
\RD x(t)=\frac{1}{\Gamma(1-\a)}(b-t)^{-\a} x(t)
+\sum_{i=1}^{n-1}A(\alpha,i)(b-t)^{i-\a} x^{(i)}(t)\\
+\sum_{p=n}^\infty\left[ \frac{-\Gamma(p-n+1+\a)}{\Gamma(-\a)\Gamma(1
+\a)(p-n+1)!}(b-t)^{-\a} x(t)+B(\a,p)(b-t)^{n-1-\a-p}W_p(t)\right],
\end{multline*}
where
\begin{eqnarray*}
A(\a,i)&=&\frac{(-1)^i}{\Gamma(i+1-\a)}\left[1+\sum_{p
=n-i}^\infty\frac{\Gamma(p-n+1+\a)}{\Gamma(-i+\a)(p-n+1+i)!}\right],
\quad i = 1,\ldots,n-1,\\
B(\a,p)&=&\frac{(-1)^n\Gamma(p-n+1+\a)}{\Gamma(-\a)\Gamma(1+\a)(p-n+1)!},\\
W_p(t)&=&(p-n+1)\int_t^b (b-\tau)^{p-n}x(\tau)d\tau.
\end{eqnarray*}
\end{theorem}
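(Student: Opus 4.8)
The proof is the mirror image of the one for Theorem~\ref{thm:oft}, obtained by replacing the interval $[a,t]$ by $[t,b]$ and $t-a$ by $b-t$ throughout, and by centring every binomial expansion at the right endpoint $b$; the one genuinely new feature is that each integration by parts now picks up a factor $-1$, since the kernels being differentiated are powers of $\tau-t$ whose boundary contributions survive at the \emph{upper} limit. The plan is to start from the right-derivative decomposition recorded, for $0<\a<1$, in the lemma preceding Theorem~\ref{ThmAtan},
$$
\RD x(t)=\frac{1}{\Gamma(1-\a)}\left[\frac{x(b)}{(b-t)^\a}-\int_t^b (\tau-t)^{-\a}\dot{x}(\tau)\,d\tau\right],
$$
and to integrate by parts $n-1$ more times in the remaining integral. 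This produces the boundary contributions $\sum_{k=0}^{n-1}(-1)^{k}x^{(k)}(b)(b-t)^{k-\a}/\Gamma(k+1-\a)$ together with an integral remainder whose kernel is $(\tau-t)^{n-1-\a}$ and which carries the global sign $(-1)^{n}$.

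Next I would expand that kernel about $\tau=b$ by the binomial theorem, writing $\tau-t=(b-t)-(b-\tau)$ and factoring out $(b-t)^{n-1-\a}$, so that
$$
\int_t^b (\tau-t)^{n-1-\a}x^{(n)}(\tau)\,d\tau=(b-t)^{n-1-\a}\sum_{p=0}^{\infty}\frac{\Gamma(p-n+1+\a)}{\Gamma(1-n+\a)\,p!\,(b-t)^{p}}\int_t^b (b-\tau)^{p}x^{(n)}(\tau)\,d\tau,
$$
the interchange of summation and integration being justified exactly as in the proof of Theorem~\ref{thm:oft}. From here the computation follows the same routine: split off the $p=0$ summand, integrate by parts the integrals with $p\ge 1$ (each such step lowering the order of the surviving derivative of $x$ by one while raising the power of $b-\tau$), and iterate this ``peel off the leading term, then integrate by parts the tail'' procedure $n$ times until $x$ itself appears in the integrands, at which point the functions $W_p(t)=(p-n+1)\int_t^b(b-\tau)^{p-n}x(\tau)\,d\tau$ emerge and the far-endpoint values $x^{(k)}(b)$ get reorganised into values at $t$. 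Regrouping the boundary and series contributions and rewriting the constants through $\Gamma(-\a)\Gamma(1+\a)$ (Euler's reflection formula) yields the stated $A(\a,i)$ and $B(\a,p)$; the factor $(-1)^{i}$ in $A(\a,i)$ and $(-1)^{n}$ in $B(\a,p)$ are exactly the signs accumulated at the first stage. Convergence of the series inside $A(\a,i)$, for each $i\in\{1,\dots,n-1\}$, is obtained verbatim from Remark~\ref{RemarkAtan}: up to the factor $1/\Gamma(\a-i)$ the tail equals ${_1F_0}(\a-i,1)-1$, and ${_1F_0}(\a-i,1)$ converges because $i>\a$.

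The only real difficulty is bookkeeping rather than analysis: keeping the alternating signs consistent through the repeated integrations by parts, checking that the boundary contributions at $\tau=b$ telescope correctly, and verifying that after all the splittings the coefficient multiplying $(b-t)^{-\a}x(t)$ inside the infinite sum collapses to $-\Gamma(p-n+1+\a)/\bigl(\Gamma(-\a)\Gamma(1+\a)(p-n+1)!\bigr)$. No estimate beyond those already used for the left Riemann--Liouville derivative is needed.
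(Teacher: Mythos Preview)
Your proposal is correct and takes essentially the same approach as the paper: the paper's proof consists of the single sentence ``Analogous to the proof of Theorem~\ref{thm:oft},'' and what you have written is precisely a detailed account of how that analogy plays out, including the sign changes coming from integration by parts on $[t,b]$ and the binomial expansion centred at $b$ rather than at $a$.
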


\begin{proof}
Analogous to the proof of Theorem~\ref{thm:oft}.
\end{proof}

%--------------------------------------------------------

\subsection{Numerical evaluation of fractional derivatives}

In \cite{Podlubny} a numerical method to evaluate fractional derivatives
is given based on the Gr\"{u}nwald--Letnikov definition of fractional derivatives.
It uses the fact that for a large class of functions, the Riemann--Liouville
and the Gr\"{u}nwald--Letnikov definitions are equivalent.
We claim that the approximations discussed so far provide a good tool
to compute numerically the fractional derivatives of given functions.
For functions whose higher-order derivatives are easily available,
we can freely choose between approximations \eqref{expanInt} or \eqref{expanMom}.
But in the case that difficulties arise in computing higher-order derivatives,
we choose the approximation \eqref{expanMom} that needs only the values
of the first derivative and function itself. Even if the first derivative
is not easily computable, we can use the approximation given by \eqref{expanAtan}
with large values for $N$ and $\a$ not so close to one. As an example, we compute
$\LD x(t)$, with $\a=\frac{1}{2}$, for $x(t)=t^4$ and $x(t)=e^{2t}$.
The exact formulas of the derivatives are derived from
\begin{equation*}
\LDz(t^n)=\frac{\Gamma(n+1)}{\Gamma(n+1-0.5)}t^{n-0.5}
\quad \text{and} \quad \LDz(e^{\lambda t})
=t^{-0.5}E_{1,1-0.5}(\lambda t),
\end{equation*}
where $E_{\a,\beta}$ is the two parameter
Mittag--Leffler\index{Mittag--Leffler function} function \cite{Podlubny}.
Figure~\ref{EvalInt} shows the results using approximation 
\eqref{expanInt} with error $E$ computed by \eqref{ErrorDef}.
As we can see, the third approximations are reasonably accurate for both cases.
Indeed, for $x(t)=t^4$, the approximation with $N=4$ coincides with
the exact solution because the derivatives of order five and more vanish.
\begin{figure}[!ht]
  \begin{center}
    \subfigure[$\LDz(t^4)$]{\includegraphics[scale=0.54]{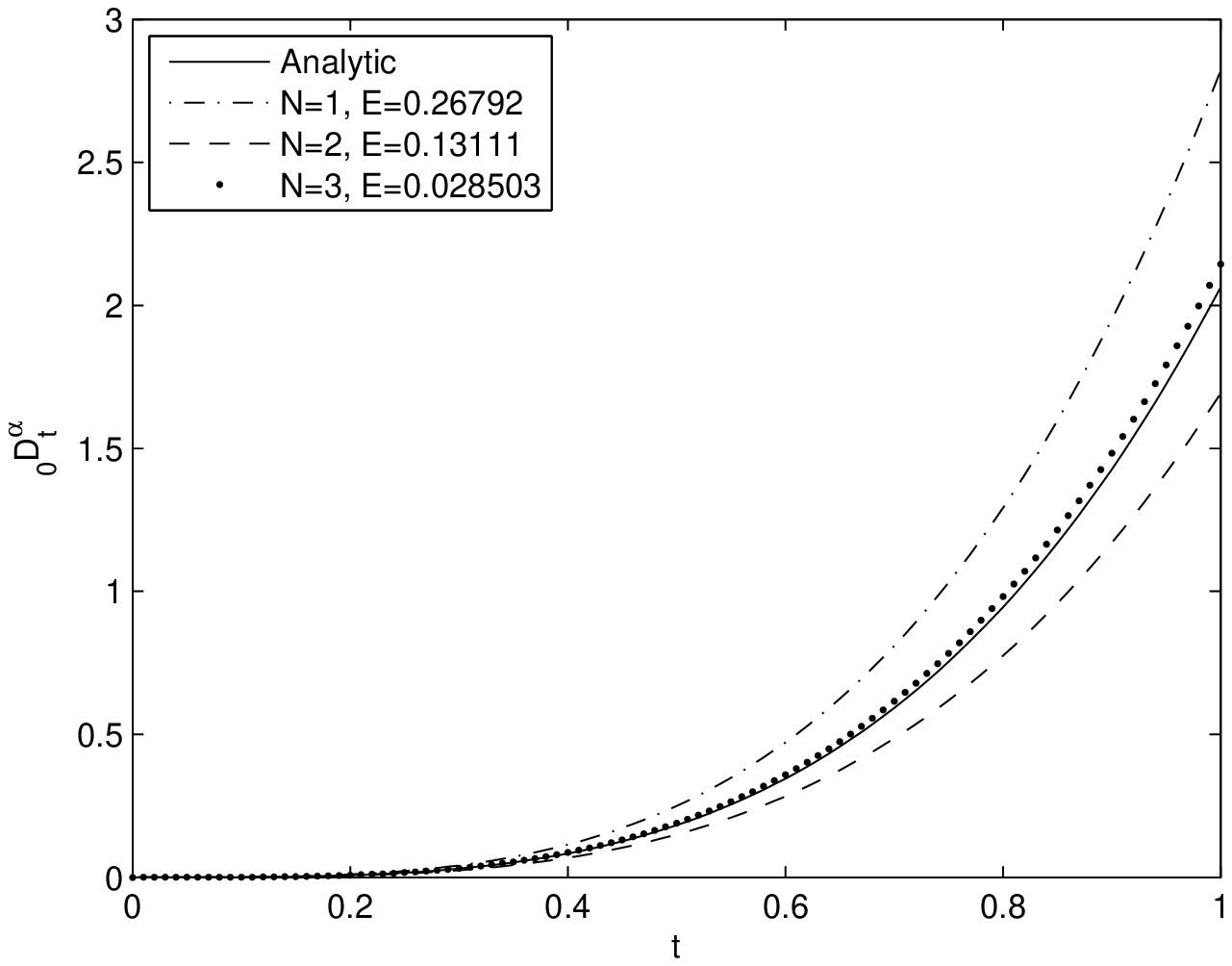}}
    \subfigure[$\LDz(e^{2t})$]{\includegraphics[scale=0.54]{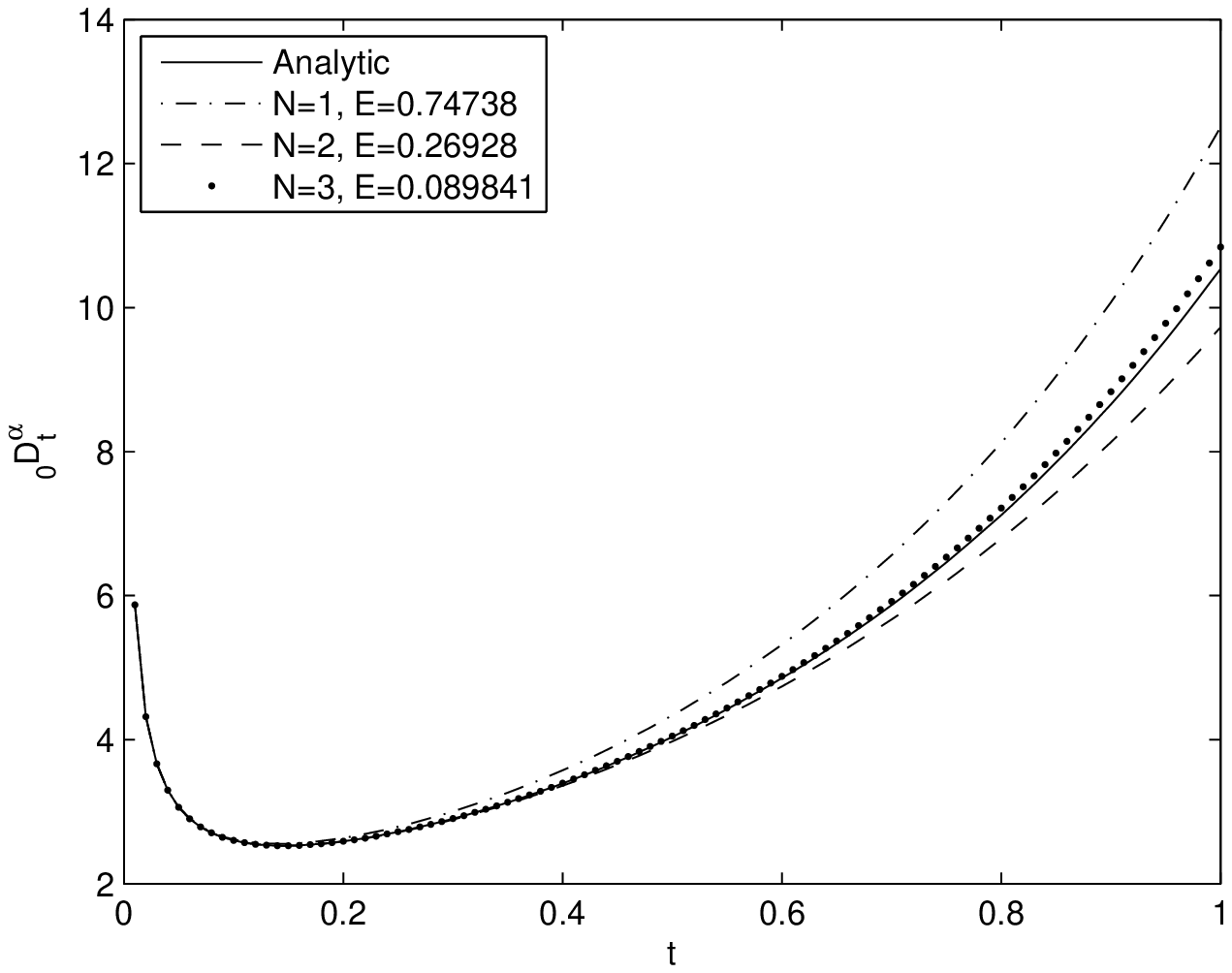}}
  \end{center}
  \caption{Analytic (solid line) {\it versus} numerical approximation \eqref{expanInt}.}
  \label{EvalInt}
\end{figure}
The same computations are carried out using approximation \eqref{expanMom}.
In this case, given a function $x(\cdot)$, we can compute $V_p$ by definition
or integrate the system \eqref{sysVp} analytically or by any numerical integrator.
As it is clear from Figure~\ref{EvalMom},
one can get better results by using larger values of $N$.
\begin{figure}[!ht]
  \begin{center}
    \subfigure[$\LDz(t^4)$]{\includegraphics[scale=0.54]{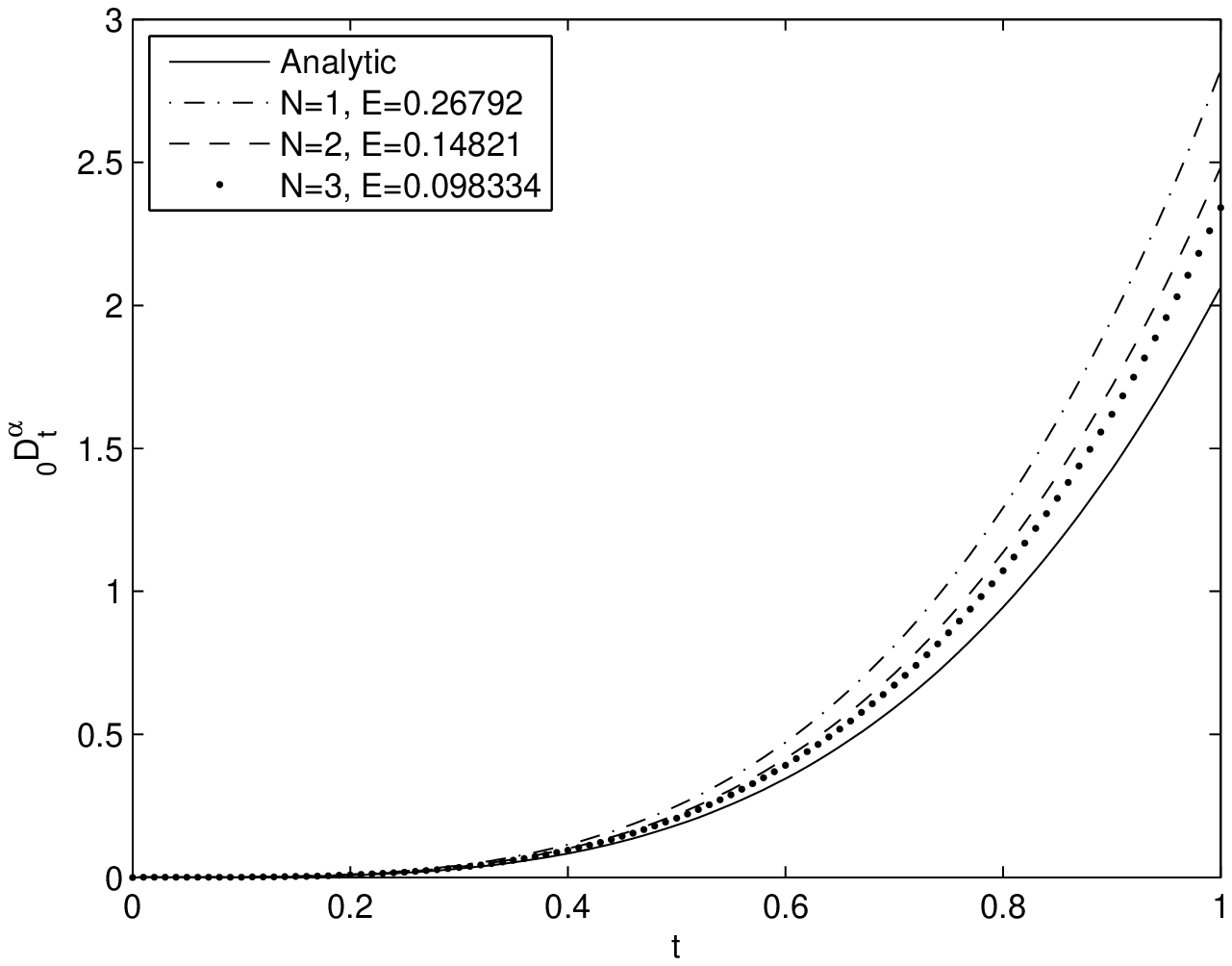}}
    \subfigure[$\LDz(e^{2t})$]{\includegraphics[scale=0.54]{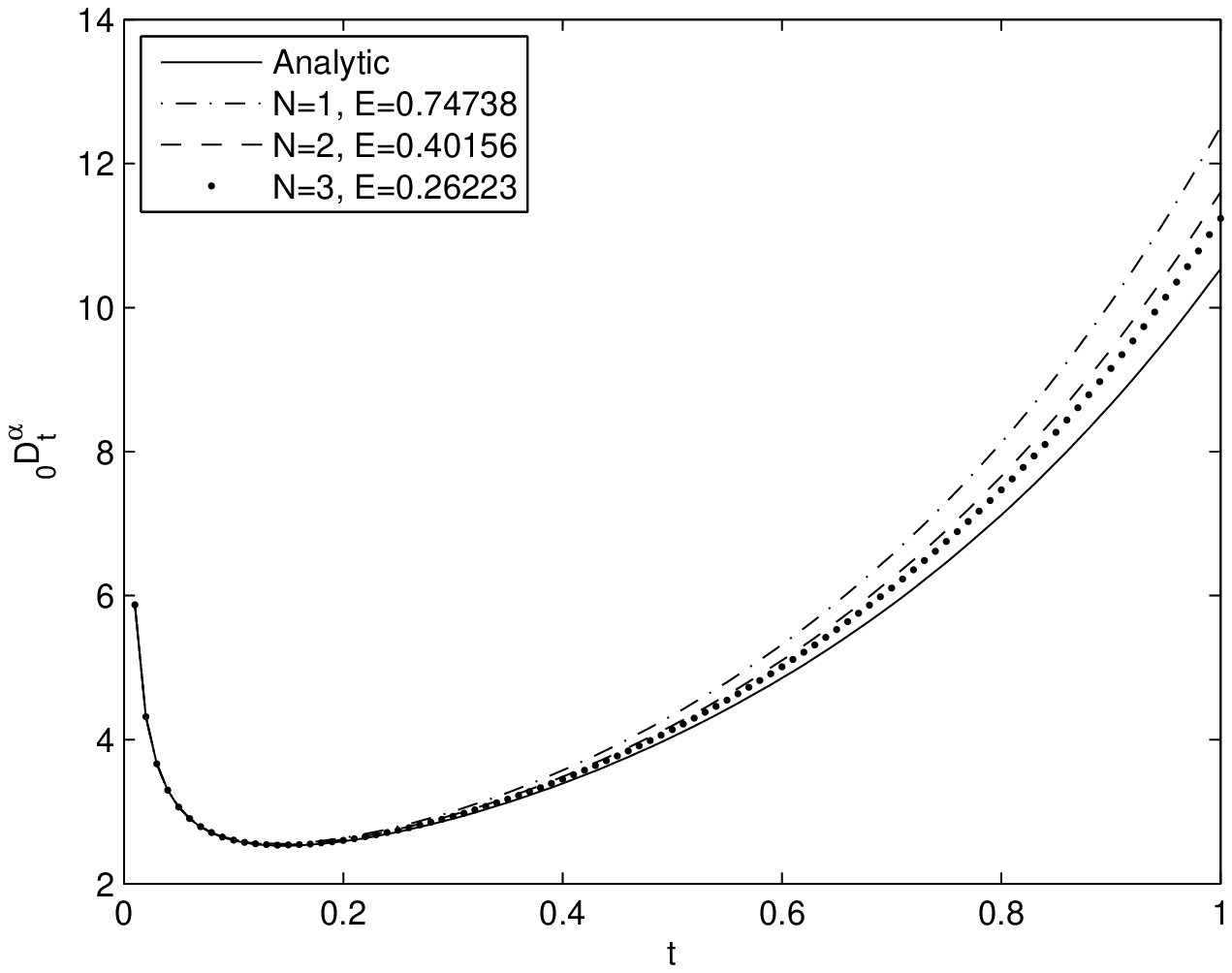}}
  \end{center}
  \caption{Analytic  (solid line) {\it versus} numerical approximation \eqref{expanMom}.}
  \label{EvalMom}
\end{figure}
Comparing Figures~\ref{EvalInt} and \ref{EvalMom}, we find out that the approximation
\eqref{expanInt} shows a faster convergence. Observe that both functions are analytic
and it is easy to compute higher-order derivatives. The approximation \eqref{expanInt}
fails for non-analytic functions as stated in \cite{Atan2}.

\begin{remark}
A closer look to \eqref{expanInt} and \eqref{expanMom} reveals
that in both cases the approximations are not computable
at $a$ and $b$ for the left and right fractional derivatives, respectively.
At these points we assume that it is possible to extend them continuously
to the closed interval $[a,b]$.
\end{remark}

In what follows, we show that by omitting the first derivative
from the expansion, as done in \cite{Atan2}, one may loose
a considerable accuracy in computation. Once again, we compute
the fractional derivatives of $x(t)=t^4$ and $x(t)=e^{2t}$,
but this time we use the approximation given by \eqref{expanAtan}.
Figure~\ref{compareEval} summarizes the results. The expansion
up to the first derivative
gives a more realistic approximation using quite small $N$,
$3$ in this case.
\begin{figure}[!ht]
\begin{center}
\subfigure[$\LDz(t^4)$]{\includegraphics[scale=0.54]{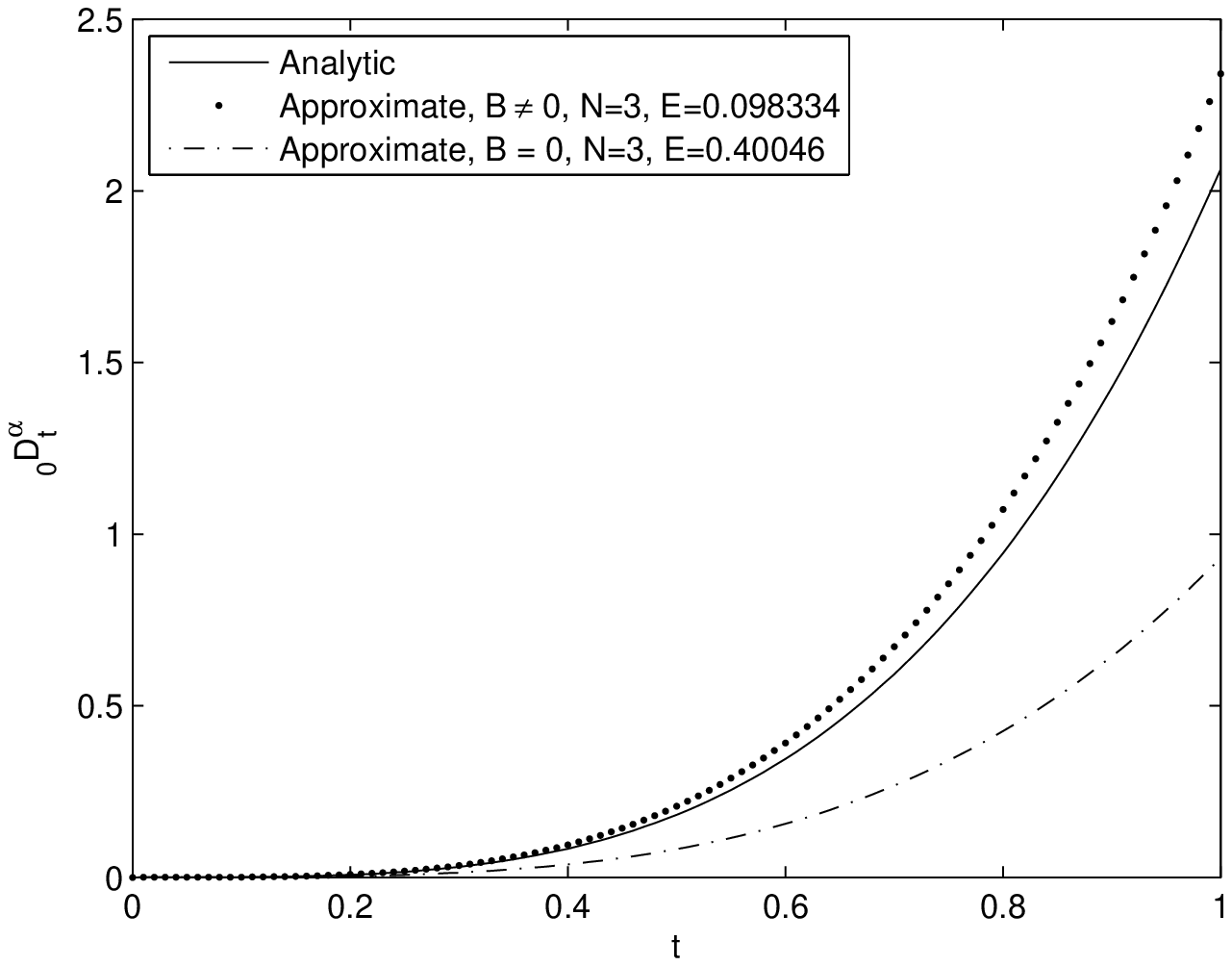}}
\subfigure[$\LDz(e^{2t})$]{\includegraphics[scale=0.54]{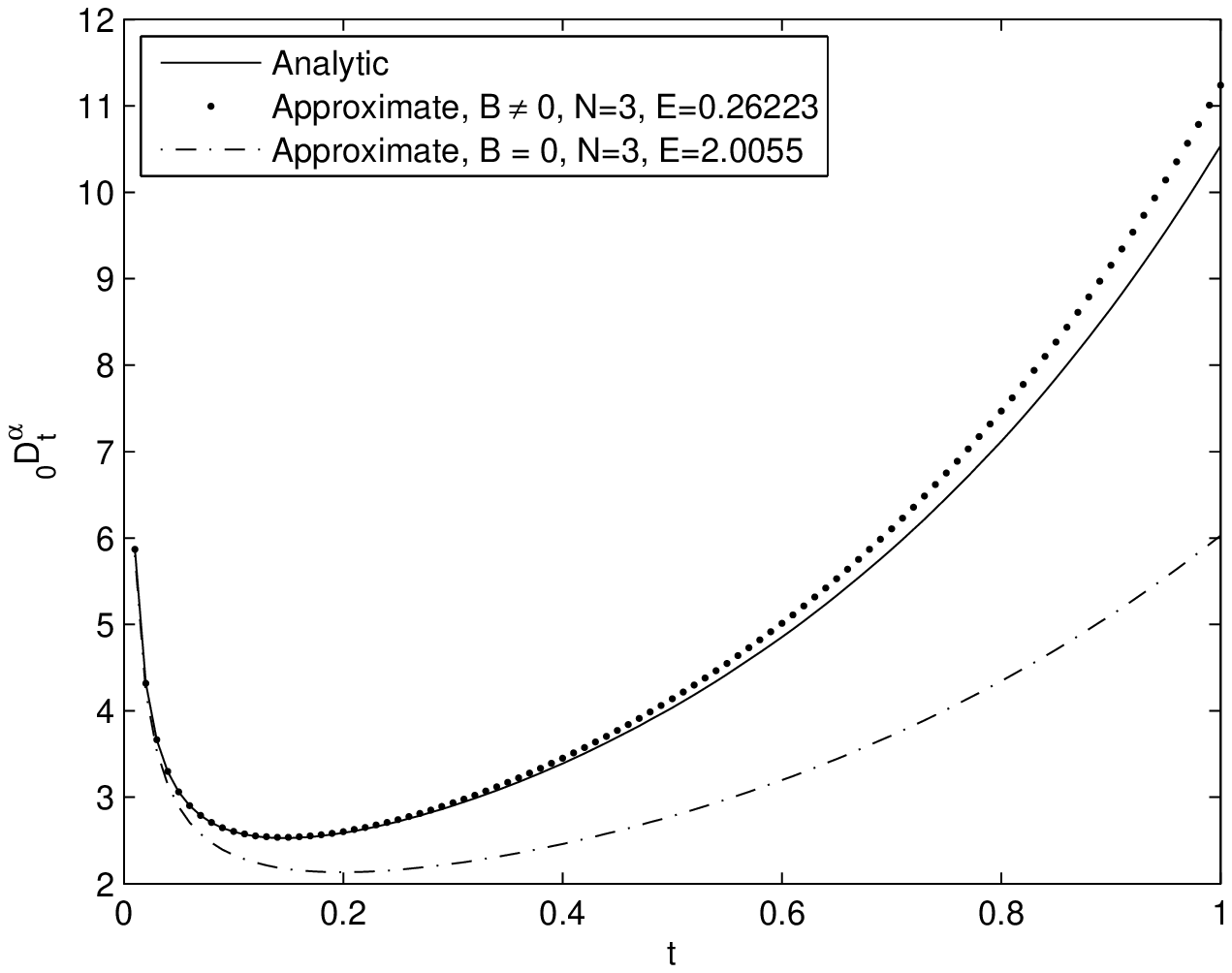}}
\end{center}
\caption{Comparison of approximation \eqref{expanMom} proposed here
and approximation \eqref{expanAtan} of \cite{Atan2}.}
\label{compareEval}
\end{figure}
To show how the appearance of higher-order derivatives in generalization \eqref{Gen}
gives better results, we evaluate fractional derivatives of $x(t)=t^4$ and $x(t)=e^{2t}$
for different values of $n$. We consider $n=1,2,3$, $N=6$ for $x(t)=t^4$
(Figure~\ref{ExpEt}) and $N=4$ for $x(t)=e^{2t}$ (Figure~\ref{ExpExn}).
\begin{figure}[!ht]
\begin{center}
\subfigure[$\LDz(t^4)$]{\label{ExpEt}\includegraphics[scale=0.54]{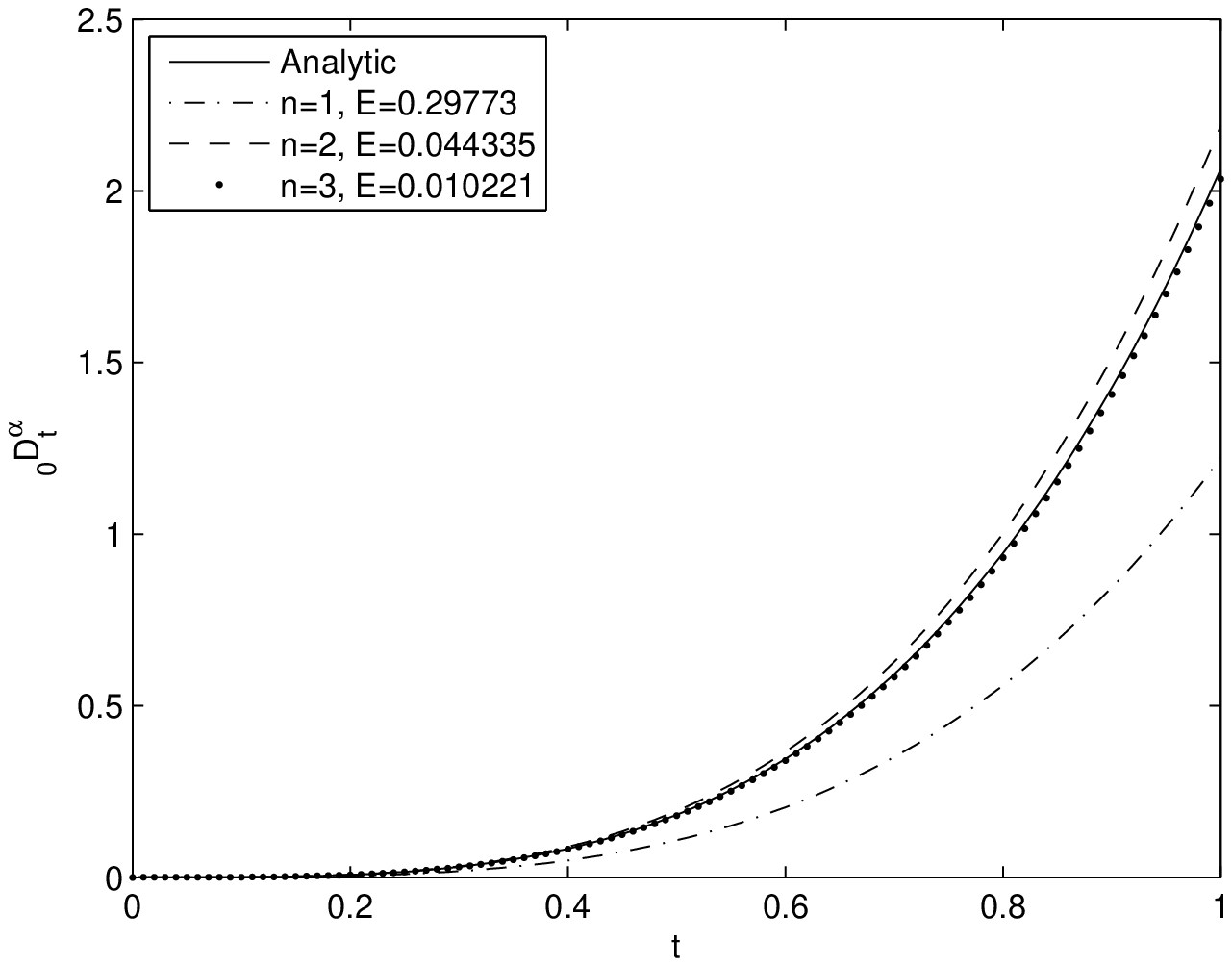}}
\subfigure[$\LDz(e^{2t})$]{\label{ExpExn}\includegraphics[scale=0.54]{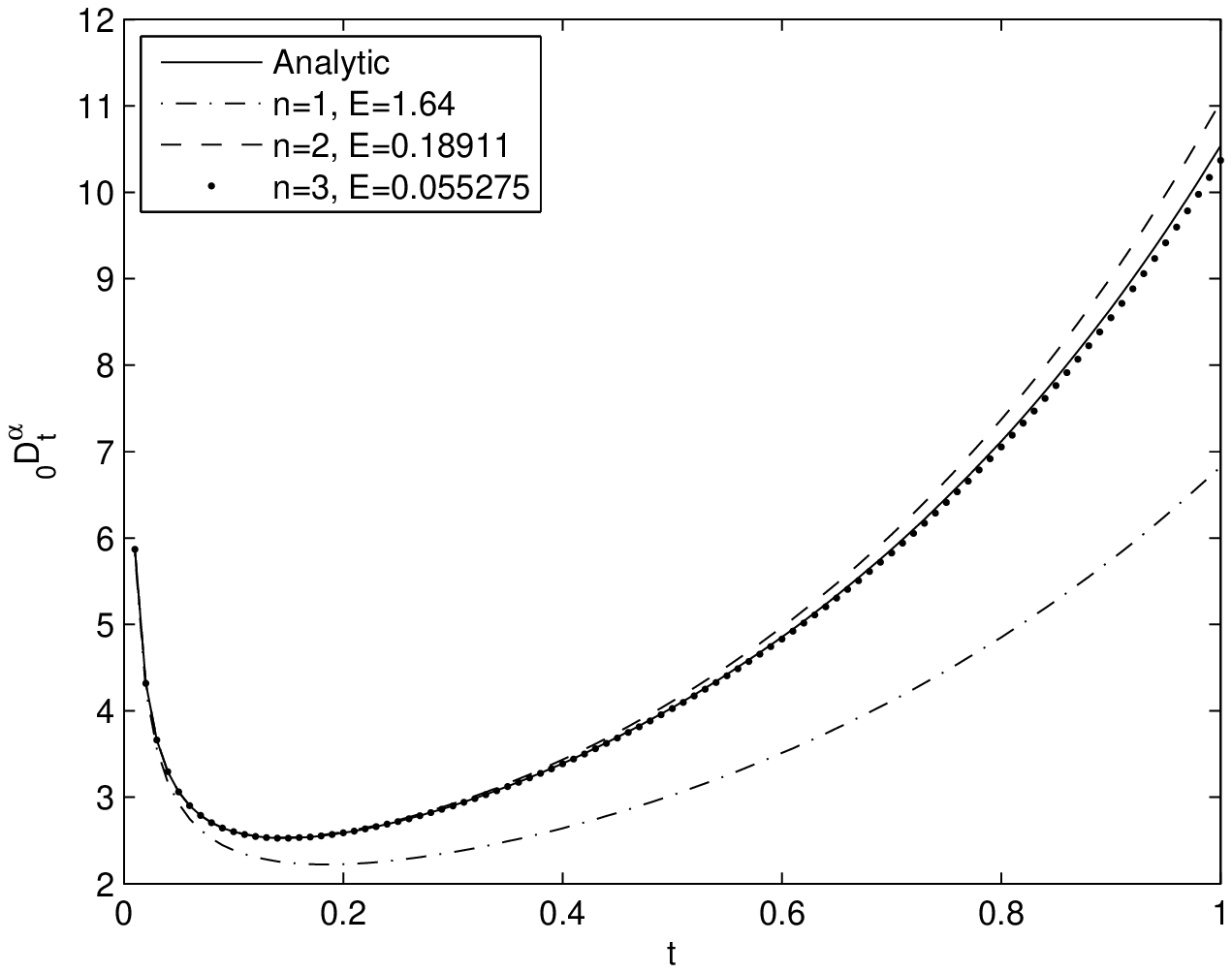}}
\end{center}
\caption{Analytic (solid line) {\it versus} numerical approximation \eqref{Gen}.}
\end{figure}

% ----------------------------------------

\subsection{Fractional derivatives of tabular data}

In many situations, the function itself is not accessible in a closed form, but
as a tabular data for discrete values of the independent variable.
Thus, we cannot use the definition to compute the fractional derivative directly.
Approximation \eqref{expanMom} that uses the function and its first derivative
to evaluate the fractional derivative, seems to be a good candidate in those cases.
Suppose that we know the values of $x(t_i)$ on $n+1$ distinct points in a given interval $[a,b]$,
\textrm{i.e.}, for $t_i$, $i=0,1,\ldots,n$, with $t_0=a$ and $t_n=b$.
According to formula \eqref{expanMom}, the value of the fractional derivative
of $x(\cdot)$ at each point $t_i$ is given approximately by
$$
\LDai x(t_i)\approx A(\a,N)(t_i-a)^{-\a}x(t_i)
+B(\a,N)(t_i-a)^{1-\a}\dot{x}(t_i)
-\sum_{p=2}^NC(p,\a)(t_i-a)^{1-p-\a}V_p(t_i).
$$
The values of $x(t_i)$, $i=0,1,\ldots,n$, are given. A good approximation for
$\dot{x}(t_i)$ can be obtained using the forward,
centered, or backward difference approximation
of the first-order derivative \cite{Stoer}. For $V_p(t_i)$ one can either
use the definition and compute the integral numerically,
\textrm{i.e.}, $V_p(t_i)=\int_a^{t_i} (1-p)(\tau-a)^{p-2}x(\tau)d\tau$,
or it is possible to solve \eqref{sysVp} as an initial value problem.
All required computations are straightforward and only need
to be implemented with the desired accuracy. The only thing to take
care is the way of choosing a good order, $N$, in the formula \eqref{expanMom}.
Because no value of $N$, guaranteeing the error
to be smaller than a certain preassigned number, is known a priori,
we start with some prescribed value for $N$ and increase it step by step.
In each step we compare, using an appropriate norm,
the result with the one of previous step.
For instance, one can use the Euclidean norm
$\|(\LDa)^{new}-(\LDa)^{old} \|_2$ and terminate the procedure
when it's value is smaller than a predefined $\epsilon$.
For illustrative purposes, we compute the fractional derivatives
of order $\a=0.5$ for tabular data extracted from $x(t)=t^4$ and $x(t)=e^{2t}$.
The results are given in Figure~\ref{tabular}.
\begin{figure}[!ht]
  \begin{center}
    \subfigure[$\LDz(t^4)$]{\includegraphics[scale=0.54]{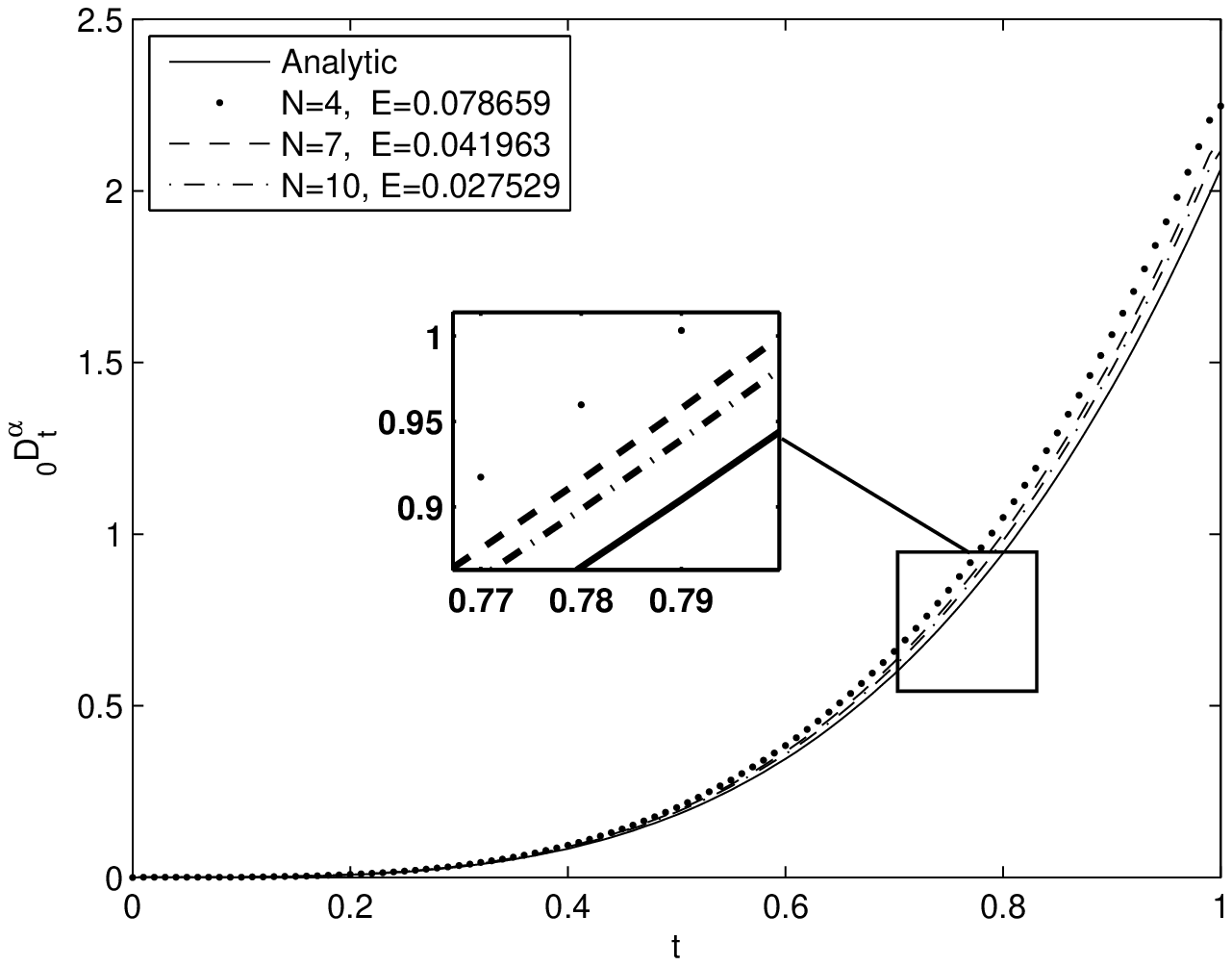}}
    \subfigure[$\LDz(e^{2t})$]{\includegraphics[scale=0.54]{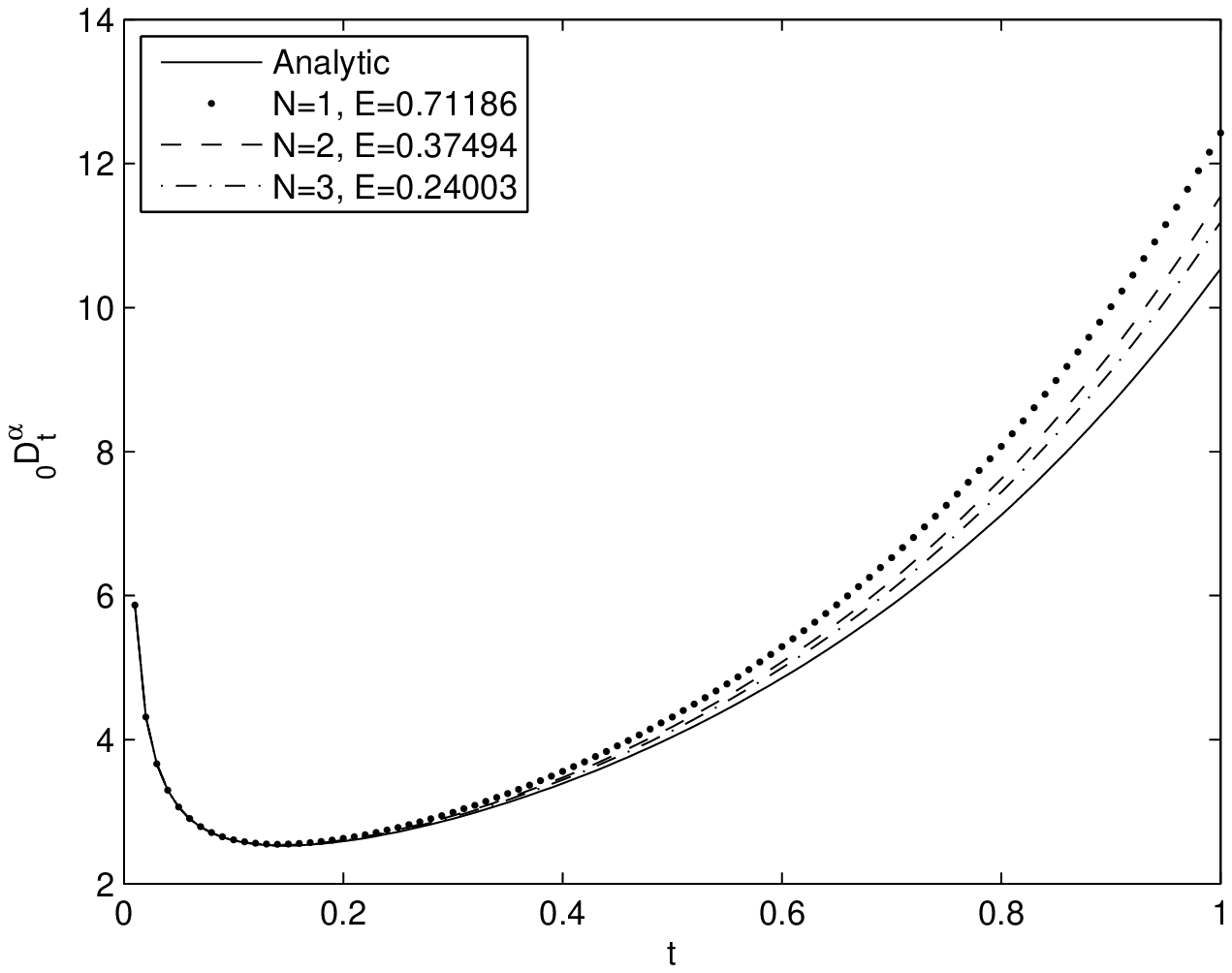}}
  \end{center}
  \caption{Fractional derivatives of tabular data.}
  \label{tabular}
\end{figure}

% ----------------------------------------

\subsection{Applications to fractional differential equations}

The classical theory of ordinary differential equations is a well developed field
with many tools available for numerical purposes. Using the approximations
\eqref{expanInt} and \eqref{expanMom}, one can transform a fractional
ordinary differential equation\index{Fractional! differential equation} into a classical ODE.

We should mention here that, using \eqref{expanInt}, derivatives
of higher-order appear in the resulting ODE, while we only have
a limited number of initial or boundary conditions available.
In this case the value of $N$, the order of approximation,
should be equal to the number of given conditions.
If we choose a larger $N$, we will encounter
lack of initial or boundary conditions. This problem is not present
in the case in which we use the approximation \eqref{expanMom},
because the initial values for the auxiliary variables $V_p$, $p=2,3,\ldots$,
are known and we don't need any extra information.

Consider, as an example, the following initial value problem:
\begin{equation}
\label{ex1}
\left\{
\begin{array}{l}
\LDz x(t)+x(t)=t^2+\frac{2}{\Gamma(2.5)}t^{\frac{3}{2}},\\
x(0)=0.
\end{array}
\right.
\end{equation}
We know that $\LDz (t^2)=\frac{2}{\Gamma(2.5)}t^{\frac{3}{2}}$.
Therefore, the analytic solution for system \eqref{ex1} is $x(t)=t^2$.
Because only one initial condition is available, we can only expand
the fractional derivative up to the first derivative
in \eqref{expanInt}. One has
\begin{equation}
\label{ExampOdeInt}
\left\{
\begin{array}{l}
1.5642~t^{-0.5}x(t)+0.5642~t^{0.5}\dot{x}(t)=t^2+1.5045~t^{1.5},\\
x(0)=0.
\end{array}
\right.
\end{equation}
This is a classical initial value problem
and can be easily treated numerically.
The solution is drawn in Figure~\ref{odeInt}.
As expected, the result is not satisfactory.
Let us now use the approximation given by \eqref{expanMom}.
The system in \eqref{ex1} becomes
\begin{equation}
\label{ex2}
\left\{
\begin{array}{l}
A(N)t^{-0.5}x(t)+B(N)t^{0.5}\dot{x}(t)
-\sum_{p=2}^NC(p)t^{0.5-p}V_p+x(t)=t^2+\frac{2}{\Gamma(2.5)}t^{1.5},\\
\dot{V}_p(t)=(1-p)(t-a)^{p-2}x(t), \quad p=2,3,\ldots,N,\\
x(0)=0,\\
V_p(0)=0, \quad p=2,3,\ldots,N.
\end{array}
\right.
\end{equation}
We solve this initial value problem for $N=7$.
The MATLAB$^\circledR$\index{MATLAB} \textsf{ode45} built-in function is used
to integrate system \eqref{ex2}. The solution is given
in Figure~\ref{odeVp} and shows a better approximation
when compared with \eqref{ExampOdeInt}.
\begin{figure}[!ht]
\begin{center}
\subfigure[Exact {\it versus} Approximation
\eqref{expanInt}.]{\label{odeInt}\includegraphics[scale=0.535]{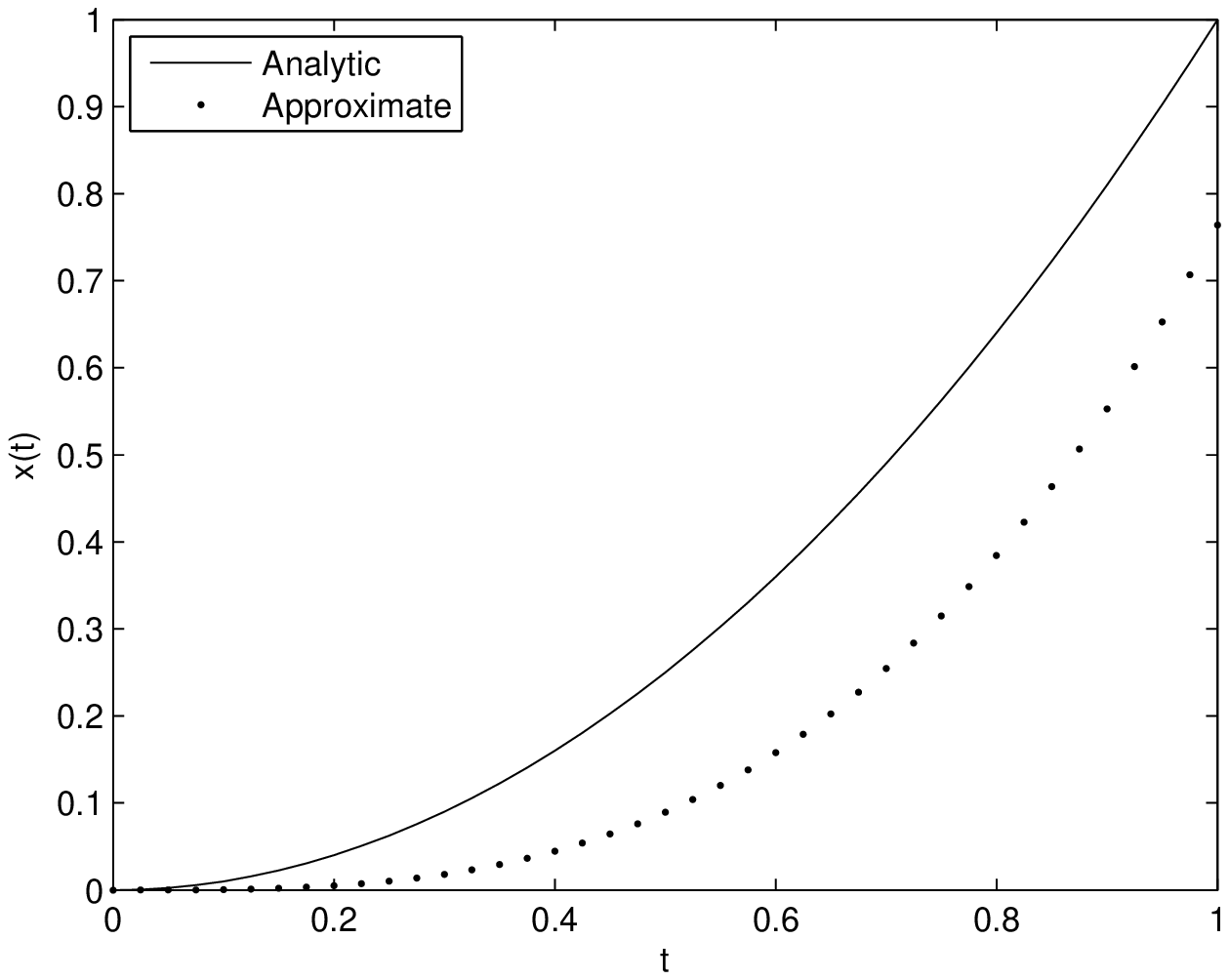}}
\subfigure[Exact {\it versus} Approximation \eqref{expanMom}.]{\label{odeVp}
\includegraphics[scale=0.535]{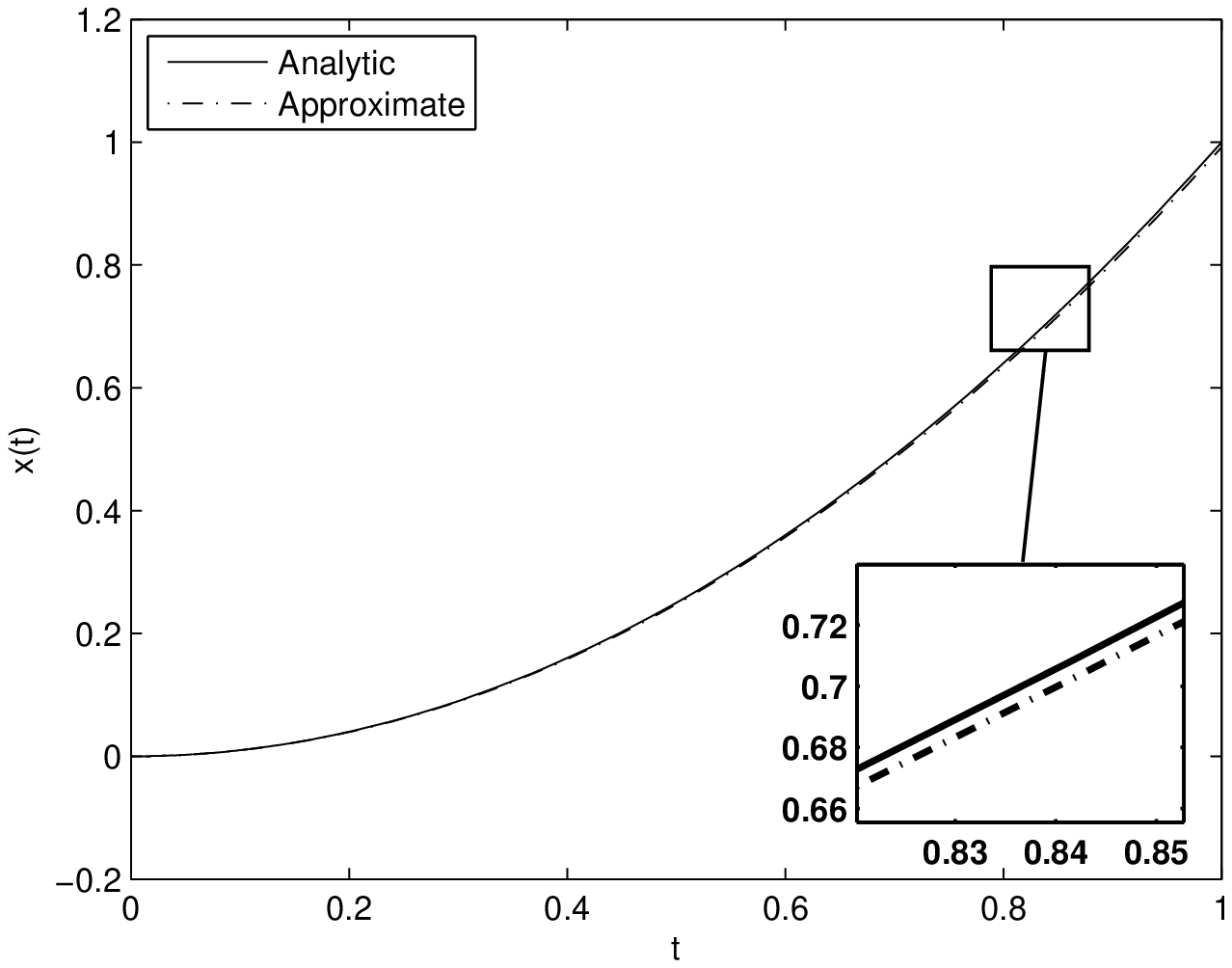}}\\
\end{center}
\caption{Two approximations applied to fractional differential equation \eqref{ex1}.}
\end{figure}

\begin{remark}
To show the difference caused by the appearance of the first derivative
in formula \eqref{expanMom}, we solve the initial value problem \eqref{ex1}
with $B(\a,N)=0$. Since the original fractional differential equation
does not depend on integer-order derivatives of function $x(\cdot)$,
\textrm{i.e.}, it has the form
$$
\LDa x(t)+f(x,t)=0,
$$
by \eqref{expanAtan} the dependence to derivatives of  $x(\cdot)$ vanishes.
In this case one needs to apply the operator $_aD_t^{1-\a}$
to the above equation and obtain
$$
\dot{x}(t)+_aD_t^{1-\a}[f(x,t)]=0.
$$
Nevertheless, we can use \eqref{expanMom} directly without any trouble.
Figure~\ref{cmpAtanMe} shows that at least for a moderate accurate method,
like the MATLAB$^\circledR$ routine \textsf{ode45}, taking $B(\a,N)\neq 0$ into account
gives a better approximation.
\begin{figure}[!ht]
  \begin{center}
    \includegraphics[scale=0.7]{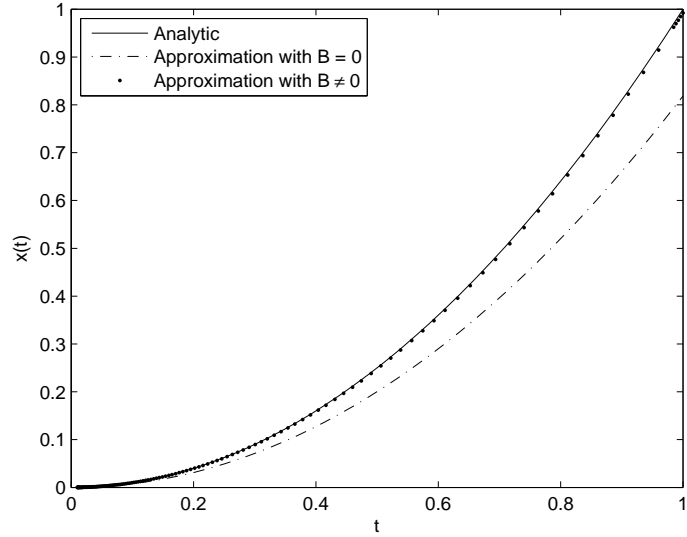}
  \end{center}
  \caption{Comparison of our approach to that of \cite{Atan2}.}
  \label{cmpAtanMe}
\end{figure}
\end{remark}

%--------------------------------------------------------

\section{Hadamard derivatives}

For Hadamard derivatives, the expansions can be obtained 
in a quite similar way and are introduced next \cite{PATHad}.

%--------------------------------------------------------

\subsection{Approximation by a sum of integer-order derivatives}

Assume that a function $x(\cdot)$ admits derivatives of any order, 
then expansion formulas for the Hadamard fractional integrals 
and derivatives of $x$, in terms of its integer-order derivatives, are given in 
\cite[Theorem~17]{Butzer2}:\index{Hadamard fractional derivative! Left}\index{Hadamard fractional derivative! Right}
$$
{_0\mathcal{I}_t^\a}x(t)=\sum_{k=0}^\infty S(-\a,k)t^k x^{(k)}(t)
$$
and
$$
{_0\mathcal{D}_t^\a}x(t)=\sum_{k=0}^\infty S(\a,k)t^k x^{(k)}(t),
$$
where
$$
S(\a,k)=\frac{1}{k!}\sum_{j=1}^k(-1)^{k-j} {k \choose j} j^{\a}
$$
is the Stirling function.

As approximations we truncate infinite sums 
at an appropriate order $N$ and get the following forms:
$$
{_0\mathcal{I}_t^\a}x(t)=\sum_{k=0}^N S(-\a,k)t^k x^{(k)}(t)
$$
and
$$
{_0\mathcal{D}_t^\a}x(t)=\sum_{k=0}^N S(\a,k)t^k x^{(k)}(t).
$$

%--------------------------------------------------------

\subsection{Approximation using moments of a function}

The same idea of expanding Riemann--Liouville derivatives, 
with slightly different techniques, is used to derive expansion formulas 
for left and right Hadamard derivatives\index{Hadamard fractional derivative! Left}. 
The following lemma is the basis for such new relations.

\begin{lemma}
Let $\a\in(0,1)$ and $x(\cdot)$ be an absolutely continuous function on $[a,b]$. 
Then the Hadamard fractional derivatives may be expressed by
\begin{equation}
\label{LHDformula}
\LHD x(t)=\frac{x(a)}{\Gamma(1-\a)}\left(\ln\frac{t}{a}\right)^{-\a}
+\frac{1}{\Gamma(1-\a)}\int_a^t \left(\ln\frac{t}{\t}\right)^{-\a}\dot{x}(\t)d\t
\end{equation}
and
\begin{equation}
\label{RHDformula}
\RHD x(t)=\frac{x(b)}{\Gamma(1-\a)}\left(\ln\frac{b}{t}\right)^{-\a}
-\frac{1}{\Gamma(1-\a)}\int_t^b \left(\ln\frac{\t}{t}\right)^{-\a}\dot{x}(\t)d\t.
\end{equation}
\end{lemma}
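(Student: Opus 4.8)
The plan is to reduce both identities to the definition of the Hadamard derivative followed by a single integration by parts (equivalently, a Fubini exchange), in complete analogy with the decomposition formula \eqref{DecThm} for the Riemann--Liouville case. Since $\a\in(0,1)$ we have $n=[\a]+1=1$, so by definition $\LHD x(t)=\delta\,{_a\mathcal{I}_t^{1-\a}}x(t)$, where $\delta=t\frac{d}{dt}$ and
$$
{_a\mathcal{I}_t^{1-\a}}x(t)=\frac{1}{\Gamma(1-\a)}\int_a^t\left(\ln\frac{t}{\t}\right)^{-\a}\frac{x(\t)}{\t}\,d\t .
$$
Recall also that $0<a<b$ is assumed throughout for Hadamard operators, so the logarithmic kernel causes no difficulty at the endpoints.

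The key step is to use absolute continuity of $x$ to write $x(\t)=x(a)+\int_a^\t\dot x(s)\,ds$, substitute this into the integral above, and apply Fubini's theorem to the resulting double integral, which is legitimate because $\dot x\in L^1[a,b]$ and $\t\mapsto(\ln(t/\t))^{-\a}/\t$ is integrable on $[a,t]$. Together with the elementary change of variables $u=\ln(t/\t)$, which yields $\int_s^t(\ln(t/\t))^{-\a}\frac{d\t}{\t}=\frac{1}{1-\a}\left(\ln\frac{t}{s}\right)^{1-\a}$, this produces the intermediate identity
$$
{_a\mathcal{I}_t^{1-\a}}x(t)=\frac{1}{\Gamma(2-\a)}\left[x(a)\left(\ln\frac{t}{a}\right)^{1-\a}+\int_a^t\left(\ln\frac{t}{s}\right)^{1-\a}\dot x(s)\,ds\right],
$$
using $\Gamma(2-\a)=(1-\a)\Gamma(1-\a)$.

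Next I apply $\delta=t\frac{d}{dt}$ to this expression. For the first term, $t\frac{d}{dt}\left(\ln\frac{t}{a}\right)^{1-\a}=(1-\a)\left(\ln\frac{t}{a}\right)^{-\a}$. For the integral term, the Leibniz rule carries no boundary contribution because the integrand vanishes at $s=t$ (the exponent $1-\a$ is positive), so $t\frac{d}{dt}\int_a^t\left(\ln\frac{t}{s}\right)^{1-\a}\dot x(s)\,ds=(1-\a)\int_a^t\left(\ln\frac{t}{s}\right)^{-\a}\dot x(s)\,ds$. Dividing by $\Gamma(2-\a)=(1-\a)\Gamma(1-\a)$ gives exactly \eqref{LHDformula}. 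Formula \eqref{RHDformula} follows by repeating the computation for the right Hadamard integral, with $\delta$ replaced by $-t\frac{d}{dt}$ and $x(\t)=x(b)-\int_\t^b\dot x(s)\,ds$; the chain-rule sign from $\frac{d}{dt}\ln\frac{\t}{t}$ then produces the minus sign in front of the integral.

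The one delicate point is the rigor of the two analytic manipulations when $x$ is merely absolutely continuous rather than $C^1$: the Fubini exchange and the differentiation under the integral sign. Both are handled exactly as in the Riemann--Liouville setting: $\dot x\in L^1$ makes the double integral absolutely convergent so Fubini applies, and the map $t\mapsto\int_a^t\left(\ln\frac{t}{s}\right)^{1-\a}\dot x(s)\,ds$ is absolutely continuous with the claimed a.e.\ derivative by the same argument underlying Lemma~2.12 of \cite{Kai} and the representation \eqref{DecThm} for ${_a\mathcal{I}_t^{1-\a}}$. This is the step where I would spend the most care.
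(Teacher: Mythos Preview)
The paper does not actually prove this lemma; immediately after stating it the authors write ``A proof of this lemma for an arbitrary $\a>0$ can be found in \cite[Theorem~3.2]{Kilbas2}.'' So there is no in-paper argument to compare against.

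Your direct proof for the case $\a\in(0,1)$ is correct and is the natural one: expand $x(\t)=x(a)+\int_a^\t\dot x(s)\,ds$, apply Fubini to the resulting double integral (legitimate since $\dot x\in L^1$ and the logarithmic kernel is integrable for $0<a\le\t\le t$), evaluate the inner integral $\int_s^t(\ln(t/\t))^{-\a}\,d\t/\t=\frac{1}{1-\a}(\ln(t/s))^{1-\a}$ by the substitution $u=\ln(t/\t)$, and then apply $\delta=t\frac{d}{dt}$. The boundary term from Leibniz vanishes because $1-\a>0$, and the factor $(1-\a)$ cancels against $\Gamma(2-\a)=(1-\a)\Gamma(1-\a)$. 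This is exactly the Hadamard analogue of the Riemann--Liouville decomposition \eqref{DecThm}, and the analytical points you flag (Fubini, a.e.\ differentiation of the absolutely continuous map $t\mapsto\int_a^t(\ln(t/s))^{1-\a}\dot x(s)\,ds$) are indeed the only places requiring care; they are settled by the same Tonelli/dominated-convergence reasoning used for \eqref{DecThm}.
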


A proof of this lemma for an arbitrary $\a>0$ can be found in \cite[Theorem~3.2]{Kilbas2}.

Applying similar techniques as presented in Theorem~\ref{thm:oft} to the formulas 
\eqref{LHDformula} and \eqref{RHDformula} gives the following theorem.

\begin{theorem}
\label{ThmHad32}
Let $n\in\mathbb N$, $0<a<b$ and $x:[a,b]\to\mathbb R$
be a function of class $C^{n+1}$. Then
$$
\LHD x(t)\approx \sum_{i=0}^{n}A_i(\alpha,N)\left(\ln\frac{t}{a}\right)^{i-\a}
x_{i,0}(t)+\sum_{p=n+1}^N B(\a,p)\left(\ln\frac{t}{a}\right)^{n-\a-p}V_p(t)
$$
with
$$
\begin{array}{rl}
A_i(\a,N)&=\displaystyle\frac{1}{\Gamma(i+1-\a)}\left[1
+\sum_{p=n-i+1}^N\frac{\Gamma(p+\a-n)}{\Gamma(\a-i)(p-n+i)!}\right],
\quad i \in\{0,\ldots,n\},\\[12pt]
B(\a,p)&=\displaystyle\frac{\Gamma(p+\a-n)}{\Gamma(-\a)\Gamma(1+\a)(p-n)!},
\quad p \in \{n+1,\ldots\}, \\[12pt]
V_p(t)&=\displaystyle\int_a^t (p-n)\left(\ln\frac{\tau}{a}\right)^{p-n-1}\frac{x(\tau)}{\tau}d\tau,
\quad p \in \{n+1,\ldots\}.
\end{array}
$$
\end{theorem}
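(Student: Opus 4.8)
The plan is to reproduce, in the multiplicative (Hadamard) setting, the successive integration‑by‑parts argument used for Theorem~\ref{thm:oft}, with the Euclidean ingredients $(t-a)$, $\frac{d}{dt}$ and $d\tau$ replaced by their Hadamard analogues $\ln\frac{t}{a}$, $t\frac{d}{dt}$ and $\frac{d\tau}{\tau}$. Write $\delta=t\frac{d}{dt}$ for the Hadamard derivation, so that $x_{i,0}=\delta^{i}x$ in the notation of the statement, and record the two identities that make $\frac{d\tau}{\tau}$ the natural measure for integrating by parts: $\dot x(\tau)\,d\tau=(\delta x)(\tau)\frac{d\tau}{\tau}$ and $d\big[(\delta^{k}x)(\tau)\big]=(\delta^{k+1}x)(\tau)\frac{d\tau}{\tau}$.

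First I would start from the lemma's formula \eqref{LHDformula}, rewrite the integrand via $\dot x(\tau)\,d\tau=(\delta x)(\tau)\frac{d\tau}{\tau}$, and integrate by parts $n$ times, each step using
\[
\left(\ln\frac{t}{\tau}\right)^{j-\a}\frac{d\tau}{\tau}=\frac{-1}{j+1-\a}\,d\!\left[\left(\ln\frac{t}{\tau}\right)^{j+1-\a}\right].
\]
Since $j+1-\a>0$ at every step the upper endpoint $\tau=t$ never contributes, and the hypothesis $x\in C^{n+1}$ makes $\delta^{n+1}x$ continuous, so this gives
\[
\LHD x(t)=\sum_{i=0}^{n}\frac{(\delta^{i}x)(a)}{\Gamma(i+1-\a)}\left(\ln\frac{t}{a}\right)^{i-\a}+\frac{1}{\Gamma(n+1-\a)}\int_{a}^{t}\left(\ln\frac{t}{\tau}\right)^{n-\a}(\delta^{n+1}x)(\tau)\frac{d\tau}{\tau}.
\]
Next I would expand $\left(\ln\frac{t}{\tau}\right)^{n-\a}=\left(\ln\frac{t}{a}-\ln\frac{\tau}{a}\right)^{n-\a}$ by the generalized binomial theorem in powers of $\ln\frac{\tau}{a}\big/\ln\frac{t}{a}\in[0,1)$ (the coefficients are $\Gamma(p-n+\a)/(\Gamma(\a-n)\,p!)$) and interchange summation and integration to reach
\[
\LHD x(t)=\sum_{i=0}^{n}\frac{(\delta^{i}x)(a)}{\Gamma(i+1-\a)}\left(\ln\frac{t}{a}\right)^{i-\a}+\frac{\left(\ln\frac{t}{a}\right)^{n-\a}}{\Gamma(n+1-\a)}\sum_{p=0}^{\infty}\frac{\Gamma(p-n+\a)}{\Gamma(\a-n)\,p!}\left(\ln\frac{t}{a}\right)^{-p}\int_{a}^{t}\left(\ln\frac{\tau}{a}\right)^{p}(\delta^{n+1}x)(\tau)\frac{d\tau}{\tau}.
\]

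Then comes the cascade, exactly as in the proof of Theorem~\ref{thm:oft}: split the inner sum into its first term and the remainder, integrate each remaining integral by parts once more (the $\tau=a$ endpoint drops out because the relevant power of $\ln\frac{\tau}{a}$ is positive), and iterate. Each round lowers the order of the Hadamard derivation inside the integral by one and produces a new term proportional to $\left(\ln\frac{t}{a}\right)^{i-\a}(\delta^{i}x)(t)$, while the at‑$a$ boundary contributions it creates cancel the corresponding at‑$a$ terms from the first phase. After $n+1$ rounds $x$ itself appears in the integrand, producing precisely
\[
V_{p}(t)=(p-n)\int_{a}^{t}\left(\ln\frac{\tau}{a}\right)^{p-n-1}\frac{x(\tau)}{\tau}\,d\tau,\qquad p\ge n+1.
\]
Collecting, for each $i\in\{0,\dots,n\}$, all coefficients multiplying $\left(\ln\frac{t}{a}\right)^{i-\a}x_{i,0}(t)$ and summing the resulting Gamma‑quotient series yields $A_i(\a)$ (its convergence reducing to that of a ${_1F_0}$ series exactly as in Remark~\ref{RemarkAtan}), while the coefficient of $\left(\ln\frac{t}{a}\right)^{n-\a-p}V_p(t)$ is $B(\a,p)$. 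Truncating the outer sum at $p=N$ gives the stated approximation with $A_i(\a,N)$ the partial sums; the right Hadamard derivative follows symmetrically from \eqref{RHDformula}.

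The hard part is not the structure but the rigour and the bookkeeping. The binomial expansion step requires interchanging a sum and an integral in which the series for $\left(\ln\frac{t}{\tau}\right)^{n-\a}$ converges only conditionally as $\tau\to t$, so termwise integration needs a dominated‑convergence or Abel‑summation justification (the same subtlety is implicit in Theorem~\ref{thm:oft} and in \cite{Atan2}). And, as the remark following Theorem~\ref{ThmAtan} (which records the error in \cite{Atan2}) warns, one must track the telescoping cancellations and the accumulated Gamma‑quotients carefully enough to be certain that the cascade reassembles into exactly the closed forms $A_i(\a,N)$ and $B(\a,p)$ claimed, and not a superficially similar but incorrect expression.
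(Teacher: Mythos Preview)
Your proposal is correct and follows essentially the same approach the paper indicates: the text preceding Theorem~\ref{ThmHad32} simply says to apply ``similar techniques as presented in Theorem~\ref{thm:oft}'' to the Hadamard formula \eqref{LHDformula}, and you have carried this out in detail, correctly identifying the Hadamard substitutions $(t-a)\rightsquigarrow\ln\frac{t}{a}$, $\frac{d}{dt}\rightsquigarrow t\frac{d}{dt}$, $d\tau\rightsquigarrow\frac{d\tau}{\tau}$ and the role of the sequences $x_{i,0}=\delta^{i}x$. The paper's own full $n=2$ computation for the Hadamard \emph{integral} later in Chapter~\ref{AppFracInt} confirms that your cascade (split off the first term, integrate by parts, repeat) is exactly the intended mechanism.
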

\begin{remark}
The right Hadamard fractional derivative\index{Hadamard fractional derivative! Right} 
can be expanded in the same way. This gives the following approximation:
\begin{eqnarray*}
\RHD x(t)&\approx& A(\a,N)\left(\ln\frac{b}{t}\right)^{-\a}x(t)
-B(\a,N)\left(\ln\frac{b}{t}\right)^{1-\a}t\dot{x}(t)\\
&&\quad-\sum_{p=2}^N C(\a,p)\left(\ln\frac{b}{t}\right)^{1-\a-p}W_p(t),
\end{eqnarray*}
with
\begin{equation*}
W_p(t)=(1-p)\int_t^b \left(\ln\frac{b}{\tau}\right)^{p-2}\frac{x(\tau)}{\tau}d\tau.
\end{equation*}
\end{remark}

\begin{remark}
In the particular case $n=1$, one obtains from Theorem~\ref{ThmHad32} that
\begin{eqnarray}\label{HadAprx}
\LHD x(t)&\approx&A(\a,N)\left(\ln\frac{t}{a}\right)^{-\a}x(t)
+B(\a,N)\left(\ln\frac{t}{a}\right)^{1-\a}t\dot{x}(t)\nonumber\\
&&\quad+\sum_{p=2}^N C(\a,p)\left(\ln\frac{t}{a}\right)^{1-\a-p}V_p(t)
\end{eqnarray}
with
\begin{eqnarray*}
A(\a,N)&=&\frac{1}{\Gamma(1-\a)}\left(1+\sum_{p=2}^N\frac{\Gamma(p+\a-1)}{\Gamma(\a)(p-1)!}\right),\\
B(\a,N)&=&\frac{1}{\Gamma(2-\a)}\left(1+\sum_{p=1}^N\frac{\Gamma(p+\a-1)}{\Gamma(\a-1)p!}\right).
\end{eqnarray*}
\end{remark}

%--------------------------------------------------------

\subsection{Examples}

In this section we apply \eqref{HadAprx} to compute fractional derivatives, of order $\a=0.5$, 
for $x(t)=\ln(t)$ and $x(t)=t^4$. The exact Hadamard fractional derivative 
is available for $x(t)=\ln(t)$ and we have
$$
\LHDHz (\ln(t))=\frac{\sqrt{\ln t}}{\Gamma(1.5)}.
$$
For $x(t)=t^4$ only an approximation of Hadamard fractional derivative is found in the literature:
$$
\LHDHz t^4\approx \frac{1}{\Gamma(0.5)\sqrt{\ln t}}
+\frac{0.5908179503}{\Gamma(0.5)}4t^4 \mbox{erf}(3\sqrt{\ln t}),
$$
where $\mbox{erf}(\cdot)$ in the so-called Gauss error function,
$$
\mbox{erf}(t)=\frac{1}{\sqrt{\pi}}\int_0^t e^{-\t^2}\,d\t.
$$
The results of applying \eqref{HadAprx} to evaluate fractional 
derivatives are depicted in Figure~\ref{HadTestFig}.

\begin{figure}[ht!]
\begin{center}
\subfigure[$\LHDHz(\ln t)$]{\includegraphics[scale=0.54]{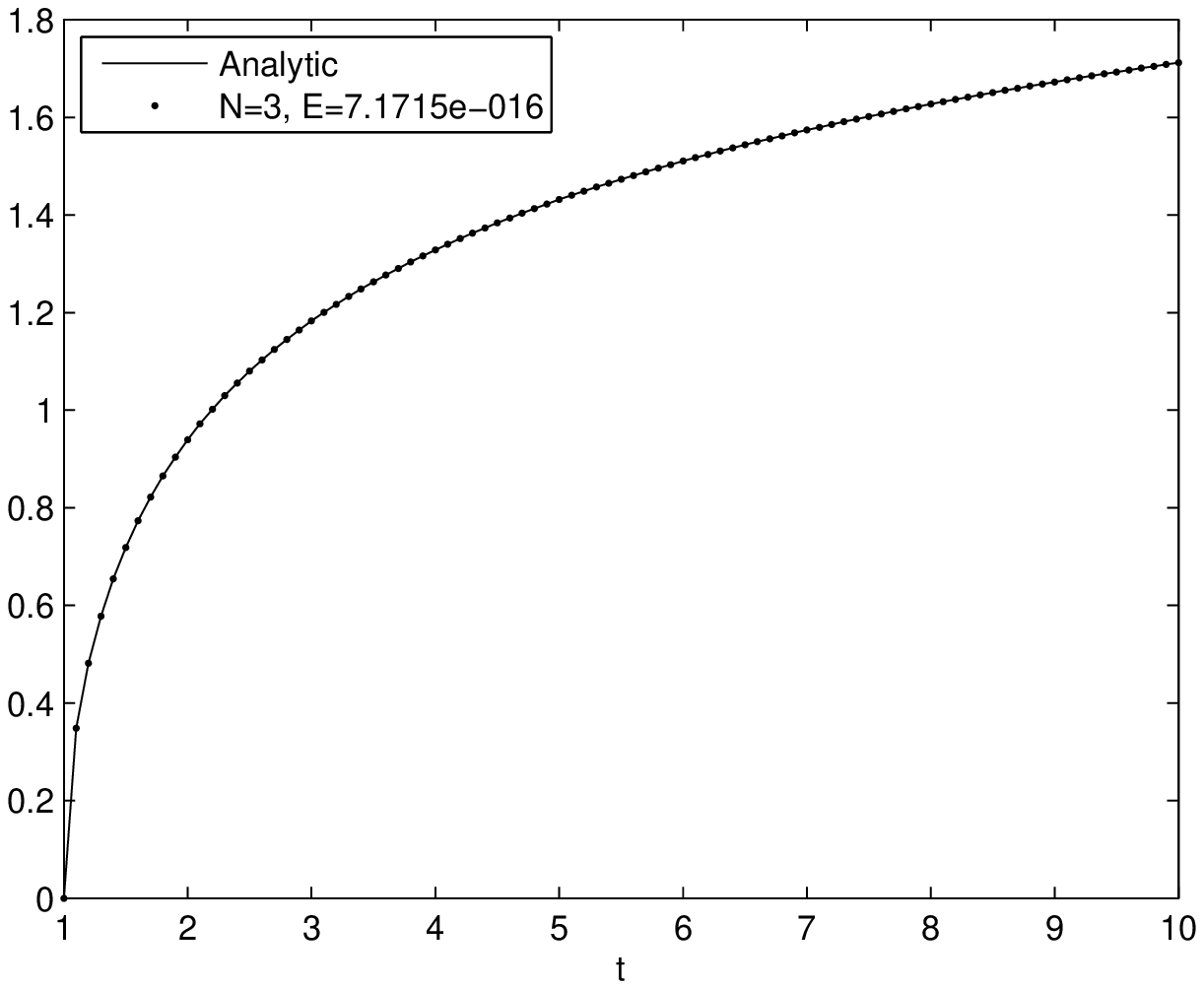}}
\subfigure[$\LHDHz(t^4)$]{\includegraphics[scale=0.54]{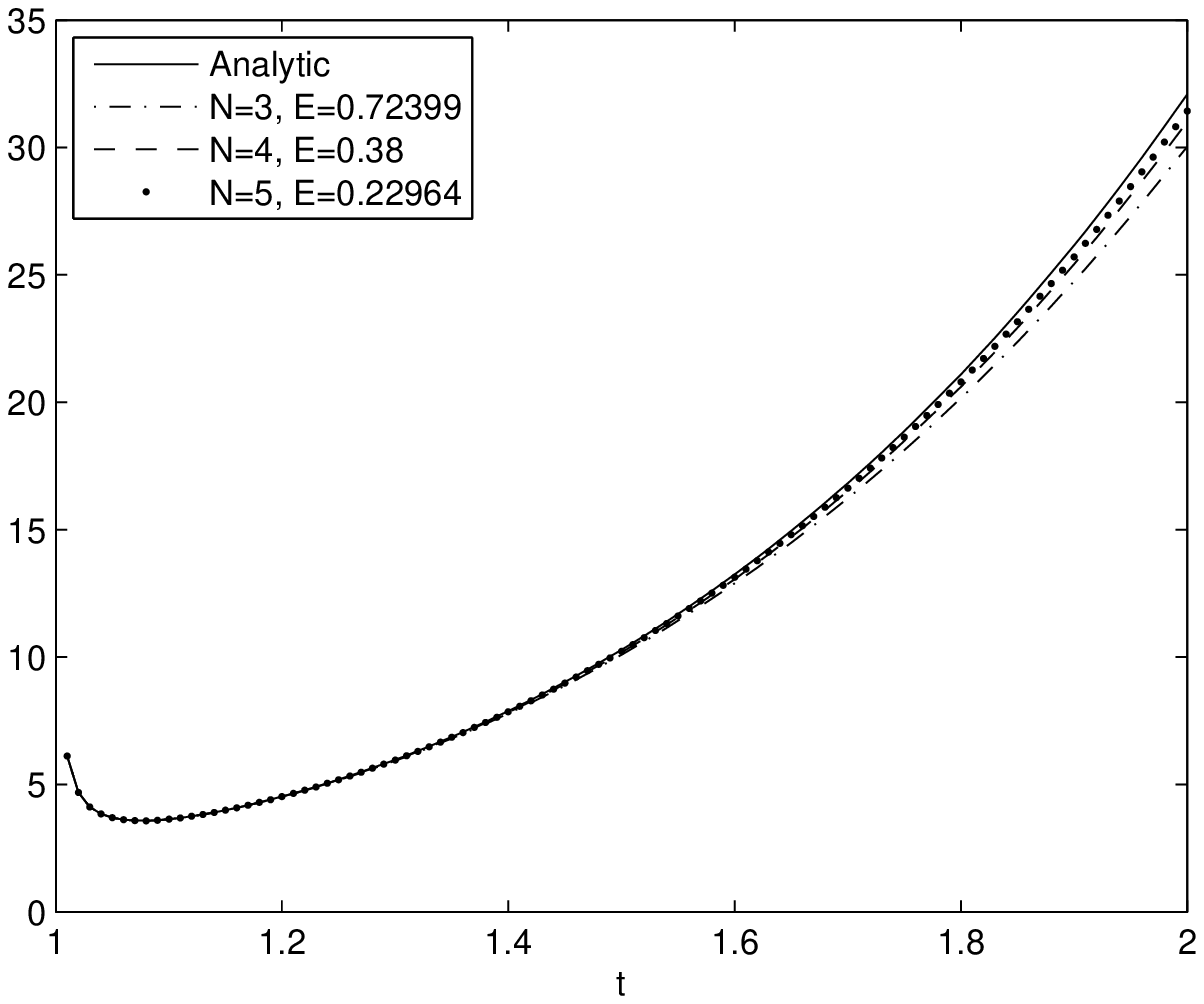}}
\end{center}
\caption{Analytic {\it versus} numerical approximation \eqref{HadAprx}.}
\label{HadTestFig}
\end{figure}
As another example, we consider the following fractional differential 
equation\index{Fractional! differential equation} 
involving a Hadamard fractional derivative:
\begin{equation}
\label{FDE1}
\left\{
\begin{array}{l}
\displaystyle\LHDHz x(t)+x(t)
=\frac{\sqrt{x(t)}}{\Gamma(1.5)}+\ln t\\
x(1)=0.
\end{array}
\right.
\end{equation}
Obviously, $x(t)=\ln t$ is a solution for \eqref{FDE1}.
Since we have only one initial condition, we replace the operator
$\LHDHz (\cdot)$ by the expansion with $n=1$ and thus obtaining
\begin{equation}
\label{eq:na:exp}
\left\{
\begin{array}{l}
\displaystyle\left[1+A_0(\ln t)^{-0.5}\right]x(t)
+A_1(\ln t)^{0.5}t \dot{x}(t)
+\sum_{p=2}^NB(0.5,p)(\ln t)^{0.5-p}V_p(t)
=\frac{\sqrt{x(t)}}{\Gamma(1.5)}+\ln t,\\
\displaystyle\dot{V_p}(t)=(p-1)(\ln t)^{p-2}\frac{x(t)}{t},
\quad p=2,3,\ldots,N,\\
x(1)=0,\\
V_p(1)=0, \quad p=2,3, \ldots, N.
\end{array}\right.
\end{equation}

In Figure~\ref{FDEn=1} we compare the analytical solution
of problem \eqref{FDE1} with the numerical result
for $N=2$ in \eqref{eq:na:exp}.
\begin{figure}
\begin{center}
\includegraphics[scale=.7]{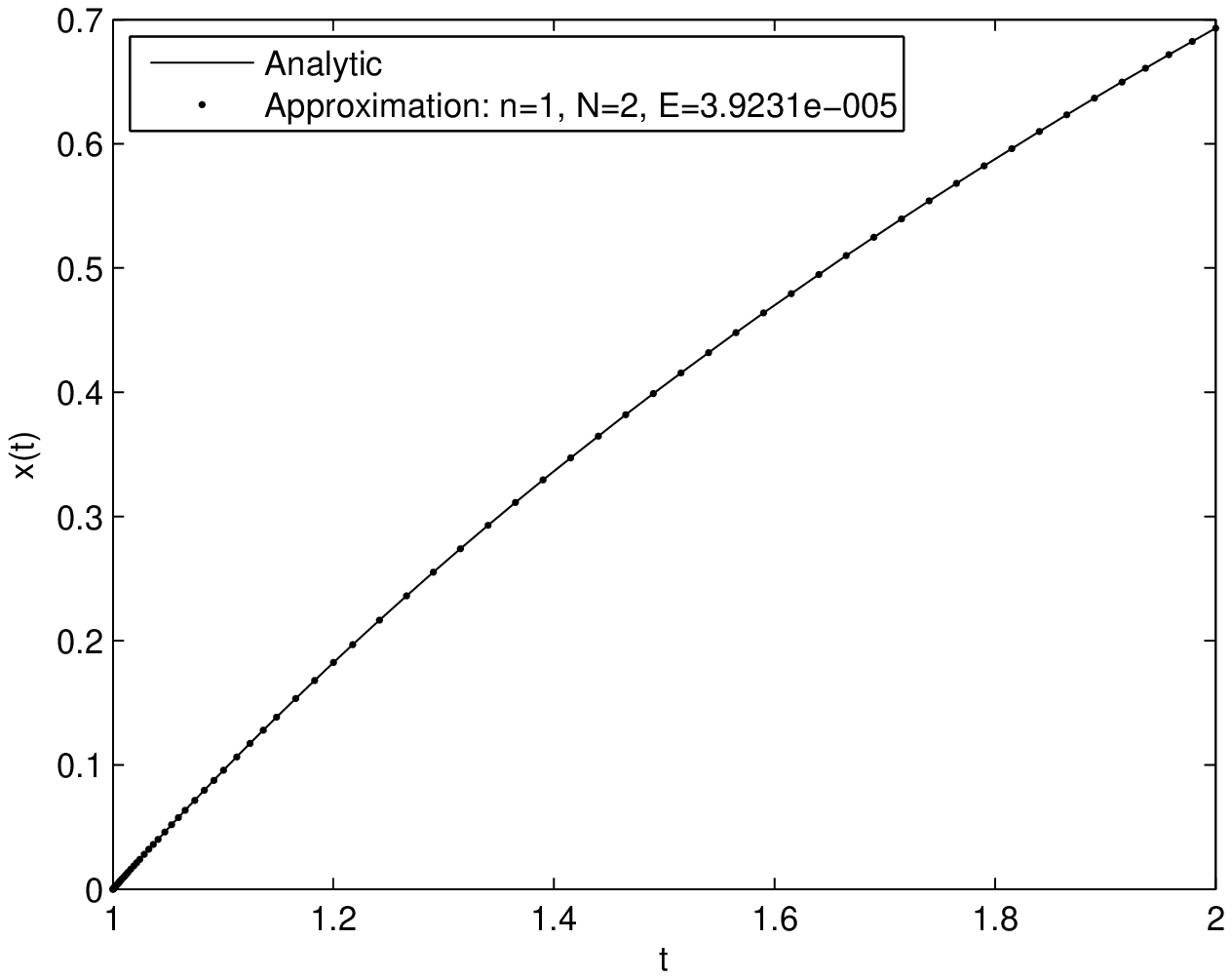}\\
\caption{Analytic {\it versus} numerical approximation
for problem \eqref{FDE1} with one initial condition.}\label{FDEn=1}
\end{center}
\end{figure}

%--------------------------------------------------------

\section{Error analysis}

When we approximate an infinite series by a finite sum, the choice of the order 
of approximation is a key question. Having an estimate knowledge of truncation 
errors\index{Error! Truncation}, one can choose properly up to which order 
the approximations should be made to suit the accuracy requirements. 
In this section we study the errors of the approximations presented so far.

Separation of an error term in \eqref{ExpIntErr} ends in
\begin{eqnarray}
\label{IntErr}
\LDa x(t)&=&\frac{1}{\Gamma(1-\a)}\frac{d}{dt}\int_a^t \left((t-\t)^{-\a}
\sum_{k=0}^{N}\frac{(-1)^k x^{(k)}(t)}{k!}(t-\t)^k\right)d\t\nonumber\\
& &+\frac{1}{\Gamma(1-\a)}\frac{d}{dt}\int_a^t \left((t-\t)^{-\a}
\sum_{k=N+1}^{\infty}\frac{(-1)^k x^{(k)}(t)}{k!}(t-\t)^k\right)d\t.
\end{eqnarray}
The first term in \eqref{IntErr} gives \eqref{expanInt} directly 
and the second term is the error caused by truncation. The next step 
is to give a local upper bound for this error, $E_{tr}(t)$.

The series
$$
\sum_{k=N+1}^{\infty}\frac{(-1)^k x^{(k)}(t)}{k!}(t-\t)^k,
\quad \t\in (a,t), \quad t\in (a,b),
$$
is the remainder of the Taylor expansion of $x(\t)$ and thus bounded 
by $\left|\frac{M}{(N+1)!}(t-\t)^{N+1}\right|$ in which
$$
M=\displaystyle\max_{\t \in [a,t]}|x^{(N+1)}(\t)|.
$$
Then,
$$
E_{tr}(t)\leq \left|\frac{M}{\Gamma(1-\alpha)(N+1)!}\frac{d}{dt}
\int_a^t (t-\t)^{N+1-\alpha}d\t\right|
=\frac{M}{\Gamma(1-\alpha)(N+1)!}(t-a)^{N+1-\alpha}.
$$

In order to estimate a truncation error\index{Error! Truncation} 
for approximation \eqref{expanMom}, the expansion procedure is carried 
out with separation of $N$ terms in binomial expansion as
\begin{eqnarray}
\label{expanError}
\left(1-\frac{\t-a}{t-a}\right)^{1-\a}&=&\sum_{p=0}^{\infty}
\frac{\Gamma(p-1+\a)}{\Gamma(\a-1)p!}\left(\frac{\t-a}{t-a}\right)^p\nonumber\\
&=&\sum_{p=0}^{N}\frac{\Gamma(p-1+\a)}{\Gamma(\a-1)p!}\left(\frac{\t-a}{t-a}\right)^p+R_N(\t),
\end{eqnarray}
where
$$
R_N(\t)=\sum_{p=N+1}^{\infty}\frac{\Gamma(p
-1+\a)}{\Gamma(\a-1)p!}\left(\frac{\t-a}{t-a}\right)^p.
$$
Integration by parts on the right-hand-side of \eqref{DecThm} gives
\begin{equation}\label{exp2}
\LD x(t)=\frac{x(a)}{\Gamma(1-\a)}(t-a)^{-\a}
+\frac{\dot{x}(a)}{\Gamma(2-\a)}(t-a)^{1-\a}+\frac{1}{\Gamma(2-\a)}
\int_a^t (t-\t)^{1-\a}\ddot{x}(\t)d\t.
\end{equation}
Substituting \eqref{expanError} into \eqref{exp2}, we get
\begin{eqnarray*}
\LD x(t)&=&\frac{x(a)}{\Gamma(1-\a)}(t-a)^{-\a}
+\frac{\dot{x}(a)}{\Gamma(2-\a)}(t-a)^{1-\a}\\
&& +\frac{(t-a)^{1-\a}}{\Gamma(2-\a)}\int_a^t
\left(\sum_{p=0}^{N}\frac{\Gamma(p-1+\a)}{\Gamma(\a-1)p!}\left(
\frac{\t-a}{t-a}\right)^p+R_N(\t)\right)\ddot{x}(\t)d\t\\
&=&\frac{x(a)}{\Gamma(1-\a)}(t-a)^{-\a}
+\frac{\dot{x}(a)}{\Gamma(2-\a)}(t-a)^{1-\a}\\
&& +\frac{(t-a)^{1-\a}}{\Gamma(2-\a)}\int_a^t
\left(\sum_{p=0}^{N}\frac{\Gamma(p-1+\a)}{\Gamma(\a-1)p!}
\left(\frac{\t-a}{t-a}\right)^p\right)\ddot{x}(\t)d\t\\
&& +\frac{(t-a)^{1-\a}}{\Gamma(2-\a)}\int_a^t R_N(\t)\ddot{x}(\t)d\t.
\end{eqnarray*}
At this point, we apply the techniques of \cite{Atan2} to the first three terms 
with finite sums. Then, we receive \eqref{expanMom} with an extra term of truncation error:
\begin{equation*}
E_{tr}(t)=\frac{(t-a)^{1-\a}}{\Gamma(2-\a)}\int_a^t R_N(\t)\ddot{x}(\t)d\t.
\end{equation*}
Since $0\leq\frac{\t-a}{t-a}\leq 1$ for $\t\in [a,t]$, one has
\begin{eqnarray*}
|R_N(\t)|&\leq & \sum_{p=N+1}^{\infty}\left| 
\frac{\Gamma(p-1+\a)}{\Gamma(\a-1)p!}\right|
=\sum_{p=N+1}^{\infty}\left|\binom{1-\a}{p}\right|
\leq\sum_{p=N+1}^{\infty}\frac{\mathrm{e}^{(1-\a)^2+1-\a}}{p^{2-\a}} \\
&\leq&\int_{p=N}^{\infty}\frac{\mathrm{e}^{(1-\a)^2+1-\a}}{p^{2-\a}}dp
=\frac{\mathrm{e}^{(1-\a)^2+1-\a}}{(1-\a)N^{1-\a}}.
\end{eqnarray*}
Finally, assuming 
$L_n=\displaystyle\max_{\t \in [a,t]}\left|x^{(n)}(\t)\right|$, 
we conclude that
\begin{equation*}
|E_{tr}(t)|\leq L_2 
\frac{\mathrm{e}^{(1-\a)^2+1-\a}}{\Gamma(2-\a)(1-\a)N^{1-\a}} (t-a)^{2-\a}.
\end{equation*}

In the general case, the error is given by the following result.

\begin{theorem}
If we approximate the left Riemann--Liouville fractional 
derivative\index{Riemann--Liouville fractional derivative! Left} by the finite sum
\eqref{ApproxDerGeneralcase}, then the error $E_{tr}(\cdot)$ is bounded by
\begin{equation}
\label{eq:formula:rv1}
|E_{tr}(t)|\leq L_n
\frac{\mathrm{e}^{(n-1-\a)^2+n-1-\a}}{\Gamma(n-\a)(n-1-\a)N^{n-1-\a}}(t-a)^{n-\a}.
\end{equation}
\end{theorem}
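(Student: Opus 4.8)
The plan is to repeat the error analysis already performed for the case $n=2$, but to start from the $n$-fold integration-by-parts representation of $\LDa x$ that opens the proof of Theorem~\ref{thm:oft}. First I would recall that identity, namely
\[
\LDa x(t)=\sum_{k=0}^{n-1}\frac{x^{(k)}(a)}{\Gamma(k+1-\a)}(t-a)^{k-\a}
+\frac{1}{\Gamma(n-\a)}\int_a^t (t-\t)^{n-1-\a}x^{(n)}(\t)\,d\t,
\]
and rewrite the kernel as $(t-\t)^{n-1-\a}=(t-a)^{n-1-\a}\bigl(1-\tfrac{\t-a}{t-a}\bigr)^{n-1-\a}$.

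Next I would expand the factor $\bigl(1-\tfrac{\t-a}{t-a}\bigr)^{n-1-\a}$ in its binomial series and split it at order $N$, obtaining a finite sum plus a remainder $R_N(\t)=\sum_{p=N+1}^{\infty}\binom{n-1-\a}{p}\bigl(-\tfrac{\t-a}{t-a}\bigr)^p$. Feeding the finite-sum part back into the integral and running the very same sequence of ``split off the leading term, then integrate by parts'' steps as in the proof of Theorem~\ref{thm:oft} reproduces the approximation \eqref{ApproxDerGeneralcase} exactly, while the left-over piece is the truncation error
\[
E_{tr}(t)=\frac{(t-a)^{n-1-\a}}{\Gamma(n-\a)}\int_a^t R_N(\t)\,x^{(n)}(\t)\,d\t.
\]

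It then remains to estimate $R_N$. Since $0\le\tfrac{\t-a}{t-a}\le 1$ on $[a,t]$, and since $n-1-\a>0$, I would use the elementary binomial estimate $\bigl|\binom{n-1-\a}{p}\bigr|\le \mathrm{e}^{(n-1-\a)^2+n-1-\a}\,p^{-(n-\a)}$ (the same bound exploited in the $n=2$ computation) to obtain
\[
|R_N(\t)|\le\sum_{p=N+1}^{\infty}\Bigl|\binom{n-1-\a}{p}\Bigr|
\le \mathrm{e}^{(n-1-\a)^2+n-1-\a}\int_N^{\infty}\frac{dp}{p^{n-\a}}
=\frac{\mathrm{e}^{(n-1-\a)^2+n-1-\a}}{(n-1-\a)N^{n-1-\a}}.
\]
Bounding $|x^{(n)}(\t)|\le L_n$ on $[a,t]$, pulling this constant out of $\int_a^t d\t=t-a$, and collecting the powers $(t-a)^{n-1-\a}(t-a)=(t-a)^{n-\a}$ then yields precisely \eqref{eq:formula:rv1}.

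The main obstacle is bookkeeping rather than analysis: one must verify that the repeated splitting-and-integration-by-parts which converts the finite sum into \eqref{ApproxDerGeneralcase} is carried out in lockstep with the proof of Theorem~\ref{thm:oft}, so that no boundary term is accidentally absorbed into, or dropped from, $E_{tr}(t)$. The quantitative part is routine once one observes that the hypothesis $n-1-\a>0$ (implicit because the statement specializes to the already-treated $n=2$ case) is exactly what makes both the binomial bound and the tail integral $\int_N^{\infty}p^{-(n-\a)}\,dp$ converge.
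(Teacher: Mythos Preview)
Your proposal is correct and is exactly the approach the paper intends: the paper carries out the argument in full only for the case $n=2$ (yielding the bound for approximation \eqref{expanMom}) and then states the general theorem without proof, the implication being that one replaces $(t-\t)^{1-\a}$ and $\ddot{x}$ by $(t-\t)^{n-1-\a}$ and $x^{(n)}$ throughout. Your identification of the error term as $\frac{(t-a)^{n-1-\a}}{\Gamma(n-\a)}\int_a^t R_N(\t)\,x^{(n)}(\t)\,d\t$, the binomial-coefficient bound $\bigl|\binom{n-1-\a}{p}\bigr|\le \mathrm{e}^{(n-1-\a)^2+n-1-\a}p^{-(n-\a)}$, and the tail-integral estimate all match the paper's $n=2$ computation line for line, and your remark that $n-1-\a>0$ is what makes the tail converge is the correct implicit hypothesis.
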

From \eqref{eq:formula:rv1} we see that if
the test function grows very fast or the point $t$ is far from $a$,
then the value of $N$ should also increase in order to have a good approximation.
Clearly, if we increase the value of $n$,
then we need also to increase the value of $N$ to control the error.

\begin{remark}
Following similar techniques, one can extract an error bound 
for the approximations of Hadamard derivatives\index{Hadamard fractional derivative! Left}.  
When we consider finite sums in \eqref{HadAprx}, the error is bounded by
$$
\left| E_{tr}(t)\right|\leq L(t)\frac{e^{(1-\a)^2+1-\a}}{\Gamma(2-\a)(1-\a)
N^{1-\a}}\left(\ln\frac{t}{a}\right)^{1-\a}(t-a),
$$
where
$$
L(t)=\max_{\tau\in[a,t]}|\dot{x}(\tau)+\t\ddot{x}(\t)|.
$$
For the general case, the expansion up to the derivative 
of order $n$, the error is bounded by
$$
\left| E_{tr}(t)\right|\leq L_n(t)\frac{e^{(n-\a)^2
+n-\a}}{\Gamma(n+1-\a)(n-\a)N^{n-\a}}
\left(\ln\frac{t}{a}\right)^{n-\a}(t-a),
$$
where
$$
L_n(t)=\max_{\tau\in[a,t]}|x_{n,1}(\tau)|.
$$
\end{remark}

\CP
%================================================================
\chapter{Approximating fractional integrals}
\label{AppFracInt}

We obtain a new decomposition of the Riemann--Liouville operators
of fractional integration as a series involving derivatives (of integer order).
The new formulas are valid for functions of class $C^n$,
$n \in \mathbb{N}$, and allow us to develop suitable numerical
approximations with known estimations for the error.
The usefulness of the obtained results,
in solving fractional integral equations,
is illustrated \cite{PATFracInt}.

% -----------------------------------------------

\section{Riemann--Liouville fractional integral}

\subsection{Approximation by a sum of integer-order derivatives}

For analytical functions, we can rewrite the left Riemann--Liouville 
fractional integral\index{Riemann--Liouville fractional integral! Left}
as a series involving integer-order derivatives only.
If $x$ is analytic in $[a,b]$, then
\begin{equation}
\label{analytical}
\LI x(t)=\frac{1}{\Gamma(\a)}
\sum_{k=0}^\infty\frac{(-1)^k(t-a)^{k+\a}}{(k+\a)k!}x^{(k)}(t)
\end{equation}
for all $t\in[a,b]$ (\textrm{cf.} Eq. (3.44) in \cite{Miller}).
From the numerical point of view, one considers finite sums
and the following approximation:
\begin{equation}
\label{analytical2}
\LI x(t)\approx\frac{1}{\Gamma(\a)}\sum_{k=0}^N
\frac{(-1)^k(t-a)^{k+\a}}{(k+\a)k!}x^{(k)}(t).
\end{equation}

One problem with formula \eqref{analytical}
is that in order to have a ``good'' approximation 
we need to take a large value for $n$. In applications,
this approach may not be suitable. 
Here we present a new decomposition formula 
for functions of class $C^n$. The advantage is that 
even for $n=1$, we can achieve an appropriate accuracy.

% -----------------------------------------------

\subsection{Approximation using moments of a function}
\index{Moment of a function}

Before we give the result in its full extension, we explain the method for $n=3$.
To that purpose, let $x\in C^3[a,b]$. Using integration by parts three times, we deduce that
\begin{eqnarray*}
\LI x(t)&=&\frac{x(a)}{\Gamma(\a+1)}(t-a)^\a
+\frac{\dot{x}(a)}{\Gamma(\a+2)}(t-a)^{\a+1}
+\frac{\ddot{x}(a)}{\Gamma(\a+3)}(t-a)^{\a+2}\\
&&+\frac{1}{\Gamma(\a+3)}\int_a^t (t-\tau)^{\a+2}x^{(3)}(\tau)d\tau.
\end{eqnarray*}
By the binomial formula, we can rewrite the fractional integral as
\begin{eqnarray*}
\LI x(t)&=&\frac{x(a)}{\Gamma(\a+1)}(t-a)^\a
+\frac{\dot{x}(a)}{\Gamma(\a+2)}(t-a)^{\a+1}
+\frac{\ddot{x}(a)}{\Gamma(\a+3)}(t-a)^{\a+2}\\
&&+\frac{(t-a)^{\a+2}}{\Gamma(\a+3)}\sum_{p=0}^\infty
\frac{\Gamma(p-\a-2)}{\Gamma(-\a-2)p!(t-a)^p}
\int_a^t (\tau-a)^px^{(3)}(\tau)d\tau.
\end{eqnarray*}
The rest of the procedure follows the same pattern: decompose the sum into
a first term plus the others, and integrate by parts\index{Integration by parts!}. Then assuming
$$
\begin{array}{ll}
A_0(\a)&=\displaystyle\frac{1}{\Gamma(\a+1)}\left[1+\sum_{p=3}^\infty
\frac{\Gamma(p-\a-2)}{\Gamma(-\a)(p-2)!}\right],\\
A_1(\a)&=\displaystyle\frac{1}{\Gamma(\a+2)}\left[1+\sum_{p=2}^\infty
\frac{\Gamma(p-\a-2)}{\Gamma(-\a-1)(p-1)!}\right],\\
A_2(\a)&=\displaystyle\frac{1}{\Gamma(\a+3)}\left[1+\sum_{p=1}^\infty
\frac{\Gamma(p-\a-2)}{\Gamma(-\a-2)p!}\right],\\
\end{array}
$$
we obtain
\begin{equation*}
\begin{split}
\LI x(t) &= \frac{x(a)}{\Gamma(\a+1)}(t-a)^\a+\frac{\dot{x}(a)}{\Gamma(\a+2)}(t-a)^{\a+1}
+A_2(\a)(t-a)^{\a+2}\ddot{x}(t)\\
&\quad +\frac{(t-a)^{\a+2}}{\Gamma(\a+2)}
\sum_{p=1}^\infty\frac{\Gamma(p-\a-2)}{\Gamma(-\a-1)(p-1)!(t-a)^p}
\int_a^t (\tau-a)^{p-1}\ddot{x}(\tau)d\tau\\
&= \frac{x(a)}{\Gamma(\a+1)}(t-a)^\a
+A_1(\a)(t-a)^{\a+1}\dot{x}(t)+A_2(\a)(t-a)^{\a+2}\ddot{x}(t)\\
&\quad +\frac{(t-a)^{\a+2}}{\Gamma(\a+1)}\sum_{p=2}^\infty
\frac{\Gamma(p-\a-2)}{\Gamma(-\a)(p-2)!(t-a)^p}
\int_a^t (\tau-a)^{p-2}\dot{x}(\tau)d\tau\\
&= A_0(\a)(t-a)^\a x(t)+A_1(\a)(t-a)^{\a+1}\dot{x}(t)+A_2(\a)(t-a)^{\a+2}\ddot{x}(t)\\
&\quad +\frac{(t-a)^{\a+2}}{\Gamma(\a)}\sum_{p=3}^\infty
\frac{\Gamma(p-\a-2)}{\Gamma(-\a+1)(p-3)!(t-a)^p}\int_a^t (\tau-a)^{p-3}x(\tau)d\tau.
\end{split}
\end{equation*}
Therefore, we can expand $\LI x(t)$ as
\begin{multline}
\label{def:n=3}
\LI x(t)=A_0(\alpha)(t-a)^\a x(t)+ A_1(\alpha)(t-a)^{\a+1} \dot{x}(t)
+ A_2(\alpha)(t-a)^{\a+2} \ddot{x}(t)\\
+ \sum_{p=3}^\infty B(\a,p)(t-a)^{\a+2-p}V_p(t),
\end{multline}
where
\begin{equation}\label{def:B3}
B(\a,p)=\frac{\Gamma(p-\a-2)}{\Gamma(\a)\Gamma(1-\a)(p-2)!},
\end{equation}
and
\begin{equation}
\label{def:Vp3}
V_p(t)=\int_a^t (p-2)(\tau-a)^{p-3}x(\tau)d\tau.
\end{equation}

\begin{remark}
Function $V_p$ given by \eqref{def:Vp3} may be defined
as the solution of the differential equation
$$
\left\{
\begin{array}{l}
\dot{V}_p(t)=(p-2)(t-a)^{p-3}x(t)\\
V_p(a)=0,
\end{array}\right.
$$
for $p=3,4,\ldots$
\end{remark}

\begin{remark}
When $\a$ is not an integer, we may use
Euler's reflection formula (\textrm{cf.} \cite{Beals})
$$
\Gamma(\a)\Gamma(1-\a)=\frac{\pi}{\sin(\pi\a)},
$$
to simplify expression $B(\a,p)$ in \eqref{def:B3}.
\end{remark}

Following the same reasoning, we are able to deduce a general formula
of decomposition for fractional integrals, depending on the order
of smoothness of the test function.

\begin{theorem}
\label{TheoDecomp}
Let $n\in\mathbb N$ and $x\in C^n[a,b]$. Then
\begin{equation}
\label{ExpanDecomp}
\LI x(t)=\sum_{i=0}^{n-1}A_i(\alpha)(t-a)^{\a+i} x^{(i)}(t)
+\sum_{p=n}^\infty B(\a,p)(t-a)^{\a+n-1-p}V_p(t),
\end{equation}
where
\begin{equation}
\label{def:B}
\begin{array}{ll}
A_i(\a)&=\displaystyle\frac{1}{\Gamma(\a+i+1)}\left[1+\sum_{p=n-i}^\infty
\frac{\Gamma(p-\a-n+1)}{\Gamma(-\a-i)(p-n+1+i)!}\right],
\quad i = 0, \ldots, n-1,\\
B(\a,p)&=\displaystyle\frac{\Gamma(p-\a-n+1)}{\Gamma(\a)\Gamma(1-\a)(p-n+1)!},
\end{array}
\end{equation}
and
\begin{equation}
\label{def:Vp}
V_p(t)=\int_a^t (p-n+1)(\tau-a)^{p-n}x(\tau)d\tau,
\end{equation}
$p = n, n+1, \ldots$
\end{theorem}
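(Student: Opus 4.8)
The plan is to reproduce, for general $n$, the computation carried out above for $n=3$, and then to check separately that the two series appearing in the statement converge. Assume throughout that $x\in C^n[a,b]$, so all the integrations by parts below are justified.

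First I would integrate by parts $n$ times in $\LI x(t)=\frac{1}{\Gamma(\a)}\int_a^t(t-\t)^{\a-1}x(\t)\,d\t$, obtaining
$$
\LI x(t)=\sum_{i=0}^{n-1}\frac{x^{(i)}(a)}{\Gamma(\a+i+1)}(t-a)^{\a+i}
+\frac{1}{\Gamma(\a+n)}\int_a^t(t-\t)^{\a+n-1}x^{(n)}(\t)\,d\t .
$$
Next I would write $(t-\t)^{\a+n-1}=(t-a)^{\a+n-1}\bigl(1-\tfrac{\t-a}{t-a}\bigr)^{\a+n-1}$ and expand the last factor by the binomial series; it converges on $(a,t)$ since $0\le\tfrac{\t-a}{t-a}<1$ there, and because its $p$th coefficient is $O(p^{-\a-n})$ while $\int_a^t(\t-a)^p|x^{(n)}(\t)|\,d\t=O\bigl((t-a)^{p+1}\bigr)$, term-by-term integration is legitimate. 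This rewrites the remainder integral as
$$
\frac{(t-a)^{\a+n-1}}{\Gamma(\a+n)}\sum_{p=0}^{\infty}
\frac{\Gamma(p-\a-n+1)}{\Gamma(1-\a-n)\,p!\,(t-a)^p}\int_a^t(\t-a)^p x^{(n)}(\t)\,d\t .
$$

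The core of the argument is then a finite induction of length $n$. At the $j$th step ($j=0,\dots,n-1$) I would detach the $p=j$ term of the current sum, use $\int_a^t(\t-a)^0 x^{(n-j)}(\t)\,d\t=x^{(n-1-j)}(t)-x^{(n-1-j)}(a)$, and integrate the remaining integrals by parts once more, replacing $x^{(n-j)}$ by $x^{(n-1-j)}$ and lowering each power of $(\t-a)$ by one — exactly the move performed in the displayed $n=3$ calculation. The detached constant together with the new boundary contributions produced by this integration by parts assemble into $A_{n-1-j}(\a)\,(t-a)^{\a+n-1-j}x^{(n-1-j)}(t)$, while the term $-x^{(n-1-j)}(a)$ cancels the boundary term $\frac{x^{(n-1-j)}(a)}{\Gamma(\a+n-j)}(t-a)^{\a+n-1-j}$ coming from the initial integration by parts; the only algebra needed is the family of elementary $\Gamma$-ratio identities already used for $n=3$. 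After $n$ such steps all boundary terms have been consumed and $x$ itself sits in the last integral, leaving precisely $\sum_{i=0}^{n-1}A_i(\a)(t-a)^{\a+i}x^{(i)}(t)+\sum_{p=n}^{\infty}B(\a,p)(t-a)^{\a+n-1-p}V_p(t)$ with $B(\a,p)$ and $V_p$ as in \eqref{def:B} and \eqref{def:Vp}; an induction on $j$ promotes this observed pattern to a proof.

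Finally I would record the convergence statements. Re-indexing, the series inside $A_i(\a)$ equals $\sum_{k\ge1}\frac{\Gamma(k-\a-i)}{\Gamma(-\a-i)\,k!}={_1F_0}(-\a-i;1)-1$, which converges because $-\a-i<0$ (Theorem~2.1.1 of \cite{Andrews}), exactly as in Remark~\ref{RemarkAtan}. For the tail series, $|B(\a,p)|=O(p^{-\a-1})$ by Stirling's formula and $|V_p(t)|\le(t-a)^{p-n+1}\max_{[a,t]}|x|$, so the general term of $\sum_{p\ge n}B(\a,p)(t-a)^{\a+n-1-p}V_p(t)$ is bounded by $\mathrm{const}\cdot p^{-\a-1}(t-a)^{\a}$ and the series converges absolutely — alternatively it simply equals $\LI x(t)$ minus the finitely many explicit terms already extracted. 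The main obstacle is purely organisational: carrying the induction through all $n$ steps and checking that the leftover pieces of the successive binomial sums recombine, after the $\Gamma$-ratio simplifications, into the closed forms $A_i(\a)$, i.e. turning the hands-on manipulations that suffice for $n=3$ into a clean inductive claim.
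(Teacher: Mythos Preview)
Your proposal is correct and follows essentially the same route as the paper: the paper does not give a separate proof of Theorem~\ref{TheoDecomp} but refers to the $n=3$ computation (``following the same reasoning''), and your finite induction---detach the leading term, integrate by parts to lower the order of the derivative, collect boundary contributions into $A_{n-1-j}(\alpha)$ while cancelling $x^{(n-1-j)}(a)$---is exactly the general-$n$ version of that procedure, matching also the style of the proof of the analogous derivative result (Theorem~\ref{thm:oft}). Your added convergence checks for the $A_i$ series via ${_1F_0}(-\alpha-i;1)$ and for the $V_p$ tail via the $O(p^{-\alpha-1})$ bound on $B(\alpha,p)$ are likewise in line with the paper's subsequent discussion.
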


A remark about the convergence of the series in $A_i(\a)$,
for $i\in\{0,\ldots,n-1\}$, is in order. Since
\begin{equation}
\label{eq:conv:ser}
\begin{array}{ll}
\displaystyle \sum_{p=n-i}^\infty\frac{\Gamma(p-\a-n+1)}{\Gamma(-\a-i)(p-n+1+i)!}
&= \displaystyle\sum_{p=0}^\infty\frac{\Gamma(p-\a-i)}{\Gamma(-\a-i) p!}-1\\
&={_1F_0} (-\a-i,1),\\
\end{array}
\end{equation}
where ${_1F_0}$ denotes the hypergeometric function,\index{Hypergeometric function}
and because $\a+i>0$, we conclude that \eqref{eq:conv:ser}
converges absolutely (\textrm{cf.} Theorem 2.1.2 in \cite{Andrews}).
In fact, we may use Eq. (2.1.6) in \cite{Andrews} to conclude that
$$
\sum_{p=n-i}^\infty\frac{\Gamma(p-\a-n+1)}{\Gamma(-\a-i)(p-n+1+i)!}=-1.
$$
Therefore, the first $n$ terms of our decomposition \eqref{ExpanDecomp} vanish.
However, because of numerical reasons, we do not follow this procedure here.
Indeed, only finite sums of these coefficients are to be taken,
and we obtain a better accuracy for the approximation
taking them into account (see Figures~\ref{ExpTkA} and \ref{ExpTk2A}).
More precisely, we consider finite sums up to order $N$,
with $N\geq n$. Thus, our approximation will depend on two parameters:
the order of the derivative $n\in\mathbb N$, and the number
of terms taken in the sum, which is given by $N$.
The left fractional integral is then approximated by
\begin{equation}
\label{Approx:LI}
\LI x(t)\approx \sum_{i=0}^{n-1}A_i(\alpha,N)(t-a)^{\a+i} x^{(i)}(t)
+\sum_{p=n}^N B(\a,p)(t-a)^{\a+n-1-p}V_p(t),
\end{equation}
where
\begin{equation}
\label{Def:A}
A_i(\a,N)=\frac{1}{\Gamma(\a+i+1)}\left[1+\sum_{p=n-i}^N
\frac{\Gamma(p-\a-n+1)}{\Gamma(-\a-i)(p-n+1+i)!}\right],
\end{equation}
and $B(\a,p)$ and $V_p(t)$ are given by \eqref{def:B}
and \eqref{def:Vp}, respectively.

To measure the truncation errors\index{Error! Truncation} made by neglecting
the remaining terms, observe that
\begin{equation}
\label{error:A}
\begin{split}
\frac{1}{\Gamma(\a+i+1)}&\sum_{p=N+1}^\infty\frac{\Gamma(p-\a-n+1)}{\Gamma(-\a-i)(p-n+1+i)!}
=\frac{1}{\Gamma(\a+i+1)}\sum_{p=N-n+2+i}^\infty\frac{\Gamma(p-\a-i)}{\Gamma(-\a-i)p!}\\
&= \frac{1}{\Gamma(\a+i+1)}\left[  {_2F_1} (-\a-i,-,-,1)
- \sum_{p=0}^{N-n+1+i}\frac{\Gamma(p-\a-i)}{\Gamma(-\a-i)p!} \right]\\
&=\frac{-1}{\Gamma(\a+i+1)}\sum_{p=0}^{N-n+i+1}\frac{\Gamma(p-\a-i)}{\Gamma(-\a-i)p!}.
\end{split}
\end{equation}
Similarly,
\begin{equation}
\label{error:B}
\frac{1}{\Gamma(\a)\Gamma(1-\a)}\sum_{p=N+1}^\infty\frac{\Gamma(p-\a-n+1)}{(p-n+1)!}
=\frac{-1}{\Gamma(\a)\Gamma(1-\a)}\sum_{p=0}^{N-n+1}\frac{\Gamma(p-\a)}{p!}.
\end{equation}
In Tables~\ref{tab1} and \ref{tab2} we exemplify some values
for \eqref{error:A} and \eqref{error:B}, respectively, with $\a=0.5$
and for different values of $N$, $n$ and $i$. Observe that the errors
only depend on the values of $N-n$ and $i$ for \eqref{error:A},
and on the value of  $N-n$ for \eqref{error:B}.

% ---------------------------------------------------

\begin{table}[!ht]
\center
\begin{tabular}{|l|c|c|c|c|c|}\hline
\diaghead{\theadfont Diag Column }%
{~\\$i$}{$N-n$\\~}&
\thead{0}&\thead{1}&\thead{2}&\thead{3}&\thead{4}\\
\hline
0 & -0.5642 & -0.4231 & -0.3526 & -0.3085 & -0.2777  \\
 \hline
1 &0.09403 & 0.04702 & 0.02938 & 0.02057 & 0.01543 \\
 \hline
2 & -0.01881 & -0.007052& -0.003526& -0.002057& -0.001322\\
 \hline
3 & 0.003358& 0.001007 & 0.0004198 & 0.0002099 & 0.0001181 \\
 \hline
4 & -0.0005224 & -0.0001306 & -0.00004664& -0.00002041 & -0.00001020\\
 \hline
5 & $7.12 \times10^{-5}$& $1.52\times10^{-5}$& $4.77\times10^{-6}$
  &$ 1.85\times10^{-6}$ & $8.34\times10^{-7}$ \\
\hline
\end{tabular}
\caption{Values of error \eqref{error:A} for $\a=0.5$.}
\label{tab1}
\end{table}

% ---------------------------------------------------

\begin{table}[!ht]
\center
\begin{tabular}{|c|c|c|c|c|c|}
\hline
 $N-n$ & 0 & 1 & 2 & 3 & 4  \\
 \hline
 & 0.5642 & 0.4231 & 0.3526 & 0.3085 & 0.2777  \\
 \hline
\end{tabular}
\caption{Values of error \eqref{error:B} for $\a=0.5$.}
\label{tab2}
\end{table}

%-----------------------------------------

Everything done so far is easily adapted
to the right fractional integral.\index{Riemann--Liouville fractional integral! Right}
In fact, one has:

\begin{theorem}
Let $n\in\mathbb N$ and $x\in C^n[a,b]$. Then
$$
\RI x(t)=\sum_{i=0}^{n-1}A_i(\alpha)(b-t)^{\a+i} x^{(i)}(t)
+\sum_{p=n}^\infty B(\a,p)(b-t)^{\a+n-1-p}W_p(t),
$$
where
\begin{equation*}
\begin{split}
A_i(\a)&=\frac{(-1)^i}{\Gamma(\a+i+1)}\left[1+\sum_{p=n-i}^\infty
\frac{\Gamma(p-\a-n+1)}{\Gamma(-\a-i)(p-n+1+i)!}\right],\\
B(\a,p)&=\frac{(-1)^n\Gamma(p-\a-n+1)}{\Gamma(\a)\Gamma(1-\a)(p-n+1)!},\\
W_p(t) &= \int_t^b (p-n+1)(b-\tau)^{p-n}x(\tau)d\tau.
\end{split}
\end{equation*}
\end{theorem}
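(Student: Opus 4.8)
The proof will run completely parallel to that of Theorem~\ref{TheoDecomp}, so the plan is only to indicate the modifications forced by passing from the left to the right operator. First I would start from the definition $\RI x(t)=\frac{1}{\Gamma(\a)}\int_t^b(\tau-t)^{\a-1}x(\tau)\,d\tau$ and integrate by parts $n$ times. Each such step differentiates $x$ and antidifferentiates the power $(\tau-t)^{\a+j-1}\mapsto(\tau-t)^{\a+j}/(\a+j)$; since the lower boundary term at $\tau=t$ always vanishes (the power $(\tau-t)^{\a+j}$ is zero there), one obtains
$$
\RI x(t)=\sum_{i=0}^{n-1}\frac{(-1)^i x^{(i)}(b)}{\Gamma(\a+i+1)}(b-t)^{\a+i}+\frac{(-1)^n}{\Gamma(\a+n)}\int_t^b(\tau-t)^{\a+n-1}x^{(n)}(\tau)\,d\tau,
$$
the alternating sign being the only novelty compared with the left-sided computation.

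Next I would expand the kernel by the binomial theorem, writing $\tau-t=(b-t)-(b-\tau)$, so that
$$
(\tau-t)^{\a+n-1}=(b-t)^{\a+n-1}\sum_{p=0}^{\infty}\frac{\Gamma(p-\a-n+1)}{\Gamma(1-\a-n)\,p!}\left(\frac{b-\tau}{b-t}\right)^p,
$$
which, since $\a+n-1>0$, converges absolutely and uniformly for $\tau\in[t,b]$, so the series may be interchanged with the integral. This produces an integrand of the form $(b-\tau)^p x^{(n)}(\tau)$; splitting off the $p=0$ summand and integrating by parts once more lowers the derivative order on $x$ by one, at the cost of (i) a boundary contribution at $\tau=t$ of the form $(\text{const})(b-t)^{\a+n-1-(\text{shift})}x^{(n-1)}(t)$ — the $\tau=b$ contribution again vanishing — and (ii) a re-indexing of the binomial sum. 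Iterating this reduction $n$ times brings $x$ itself into the integrand; gathering all the boundary pieces produced along the way together with the already isolated $x^{(i)}(b)$ terms into the coefficients $A_i(\a)$ (whose infinite-series part is precisely the accumulated tails of the successive binomial expansions, now carrying the alternating factor $(-1)^i$), and collecting the surviving integral for $p\geq n$ into $\sum_{p=n}^{\infty}B(\a,p)(b-t)^{\a+n-1-p}W_p(t)$ with $W_p(t)=\int_t^b(p-n+1)(b-\tau)^{p-n}x(\tau)\,d\tau$ and $B(\a,p)=\frac{(-1)^n\Gamma(p-\a-n+1)}{\Gamma(\a)\Gamma(1-\a)(p-n+1)!}$, yields the claimed identity. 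As in the left-sided case, I would close by justifying convergence of the series defining $A_i(\a)$: it equals $\frac{1}{\Gamma(\a+i+1)}\bigl({_1F_0}(-\a-i,1)-1\bigr)$, which converges (indeed ${_1F_0}(-\a-i,1)=0$) because $\a+i>0$, by Theorem~2.1.2 of \cite{Andrews}.

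The main obstacle I anticipate is purely bookkeeping: keeping the two families of signs straight — the $(-1)^i$ attached to $A_i(\a)$ and the global $(-1)^n$ in $B(\a,p)$ — while simultaneously tracking the Gamma-function factors and the shifting summation indices through the $n$ nested integrations by parts and binomial re-expansions. There is no conceptual difficulty beyond what is already present in the proof of Theorem~\ref{TheoDecomp}; one just has to verify, at each of the $n$ stages, that the boundary term at $\tau=b$ drops out (which it does as soon as the relevant power of $(b-\tau)$ carries a strictly positive exponent) and that the leftover binomial tails assemble into exactly the stated coefficients $A_i(\a)$ and $B(\a,p)$.
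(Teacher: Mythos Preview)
Your proposal is correct and follows exactly the route the paper intends: the paper offers no separate argument for the right-sided case, stating only that ``everything done so far is easily adapted to the right fractional integral,'' and your plan---$n$-fold integration by parts from the definition of $\RI$, binomial expansion of $(\tau-t)^{\a+n-1}$ about $b$, then the iterated split-and-integrate-by-parts reduction---is precisely that adaptation of the proof of Theorem~\ref{TheoDecomp}. Your identification of the sign bookkeeping (the $(-1)^i$ on $A_i$ and the global $(-1)^n$ on $B$) as the only new wrinkle, and your justification of convergence of the $A_i$-series via ${_1F_0}(-\a-i,1)$, match the paper's treatment of the left case.
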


%-----------------------------------------

\subsection{Numerical evaluation of fractional integrals}

In this section we exemplify the proposed
approximation procedure with some examples.
In each step, we evaluate the accuracy of our method, \textrm{i.e.},
the error when substituting $\LI x$ by an approximation $\tilde{\LI}x$.
For that purpose, we take the distance given by
$$
E=\sqrt{\int_a^b \left(\LI x(t)-\tilde{\LI}x(t)\right)^2dt}.
$$
Firstly, consider $x_1(t)=t^3$ and $x_2(t)=t^{10}$ with $t\in[0,1]$. Then
$$
\LIz x_1(t)=\frac{\Gamma(4)}{\Gamma(4.5)}t^{3.5}\
\mbox{ and } \ \LIz x_2(t)=\frac{\Gamma(11)}{\Gamma(11.5)}t^{10.5}
$$
(\textrm{cf.} Property~2.1 in \cite{Kilbas}).
Let us consider Theorem~\ref{TheoDecomp} for $n=3$,
\textrm{i.e.}, expansion \eqref{def:n=3} for different values of step $N$.
For function $x_1$, small values of $N$ are enough
($N=3,4,5$). For $x_2$ we take $N=4,6,8$. In Figures~\ref{ExpTk}
and \ref{ExpTk2} we represent the graphs of the fractional integrals
of $x_1$ and $x_2$ of order $\a=0.5$ together with different
approximations. As expected, when $N$ increases
we obtain a better approximation for each fractional integral.

% -------------------------------------------

\begin{figure}[!ht]
  \begin{center}
    \subfigure[$\LIz(t^3)$]{\label{ExpTk}\includegraphics[scale=0.54]{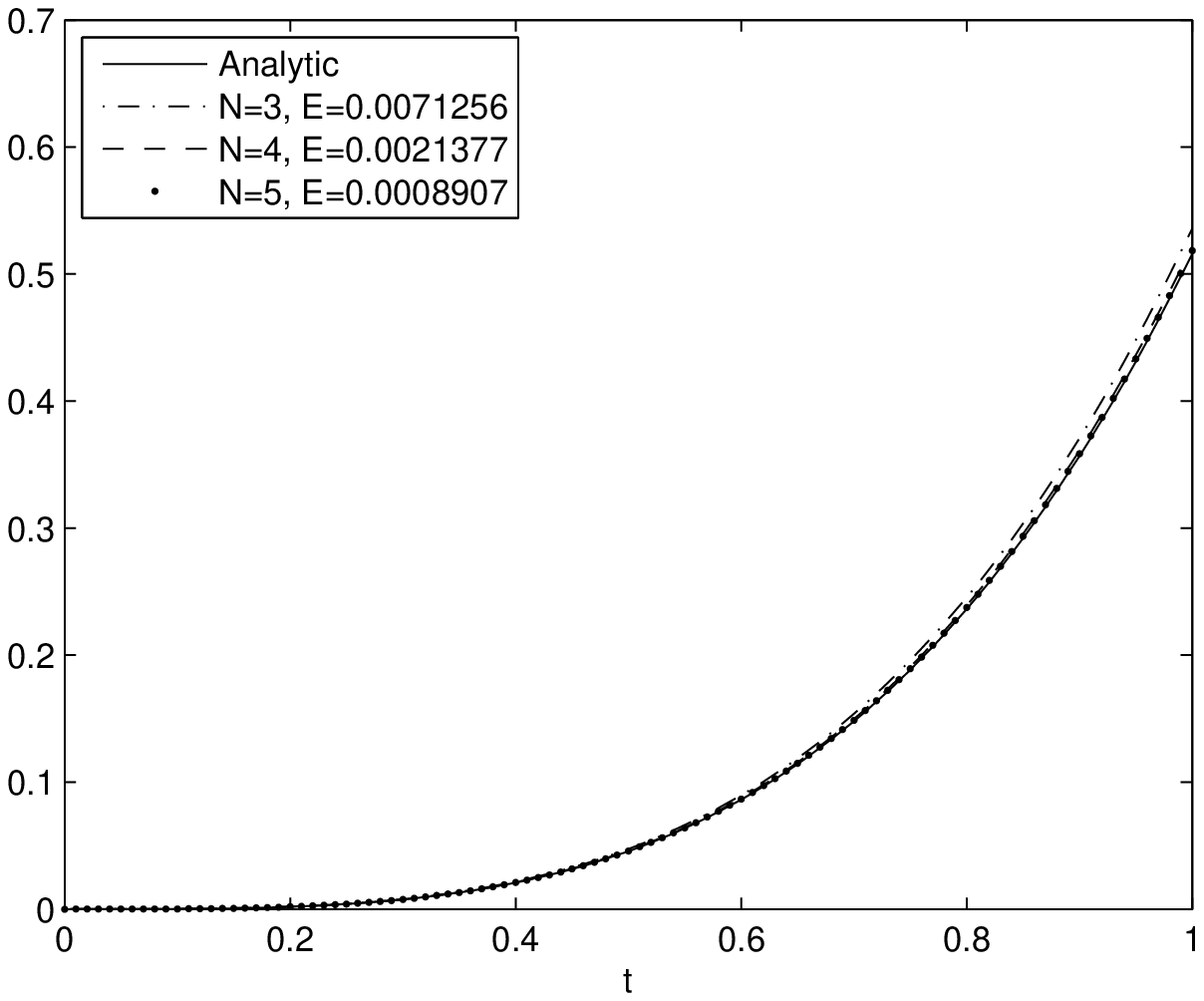}}
    \subfigure[$\LIz(t^{10})$]{\label{ExpTk2}\includegraphics[scale=0.54]{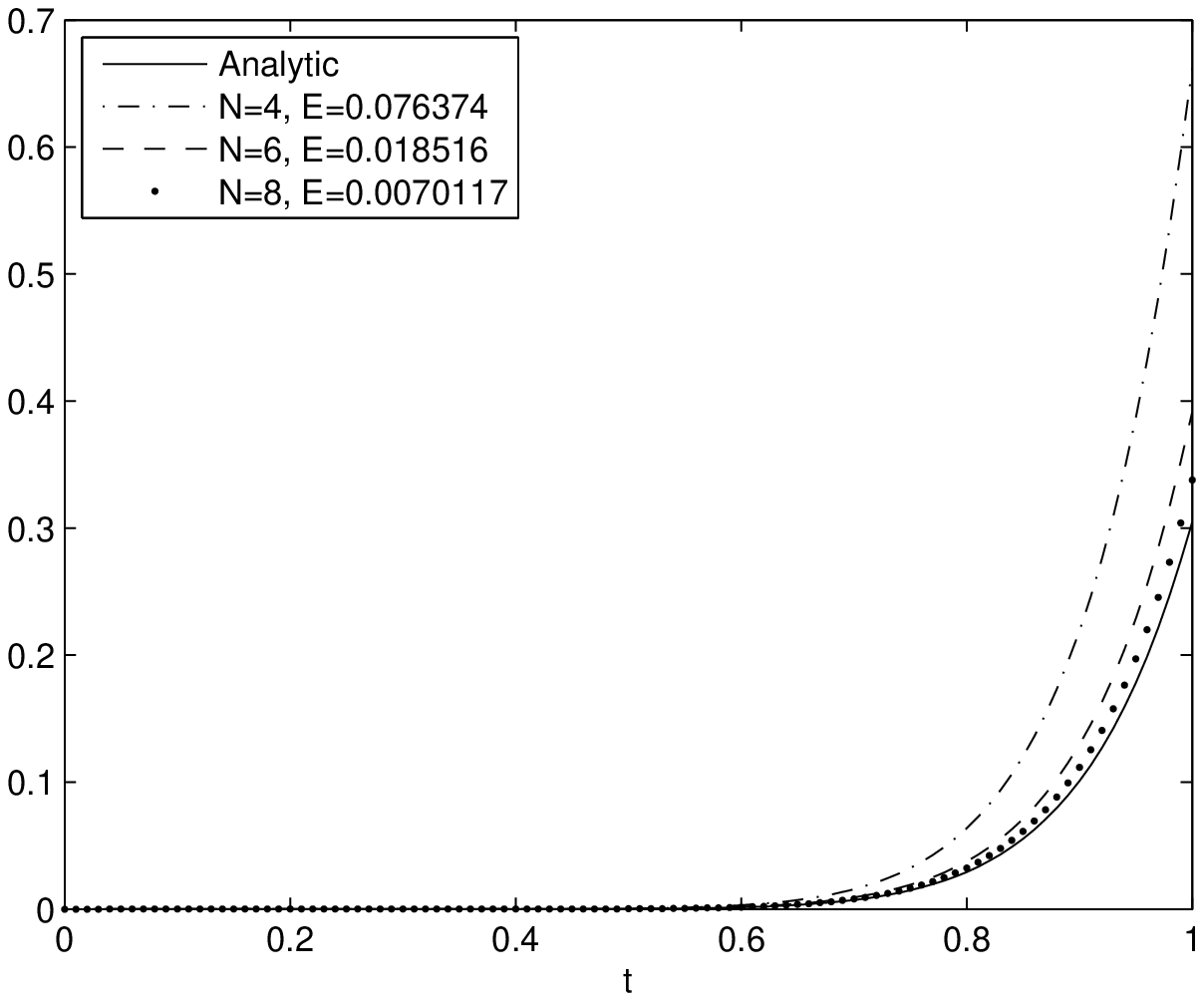}}
  \end{center}
  \caption{Analytic {\it versus} numerical approximation for a fixed $n$.}
\end{figure}

% -------------------------------------------

Secondly, we apply our procedure to the transcendental functions
$x_3(t)=e^t$ and $x_4(t)=\sin(t)$. Simple calculations give
$$
\LIz x_3(t)=\sqrt{t}\sum_{k=0}^\infty\frac{t^k}{\Gamma(k+1.5)}
\ \mbox{ and }\  \LIz x_4(t)=\sqrt{t}\sum_{k=0}^\infty
\frac{(-1)^k t^{2k+1}}{\Gamma(2k+2.5)}.
$$
Figures~\ref{ExpEtInt} and \ref{ExpSint} show the numerical results
for each approximation, with $n=3$. We see that for a small value of $N$
one already obtains a good approximation for each function.

% -------------------------------------------

\begin{figure}[!ht]
  \begin{center}
    \subfigure[$\LIz(e^t)$]{\label{ExpEtInt}\includegraphics[scale=0.54]{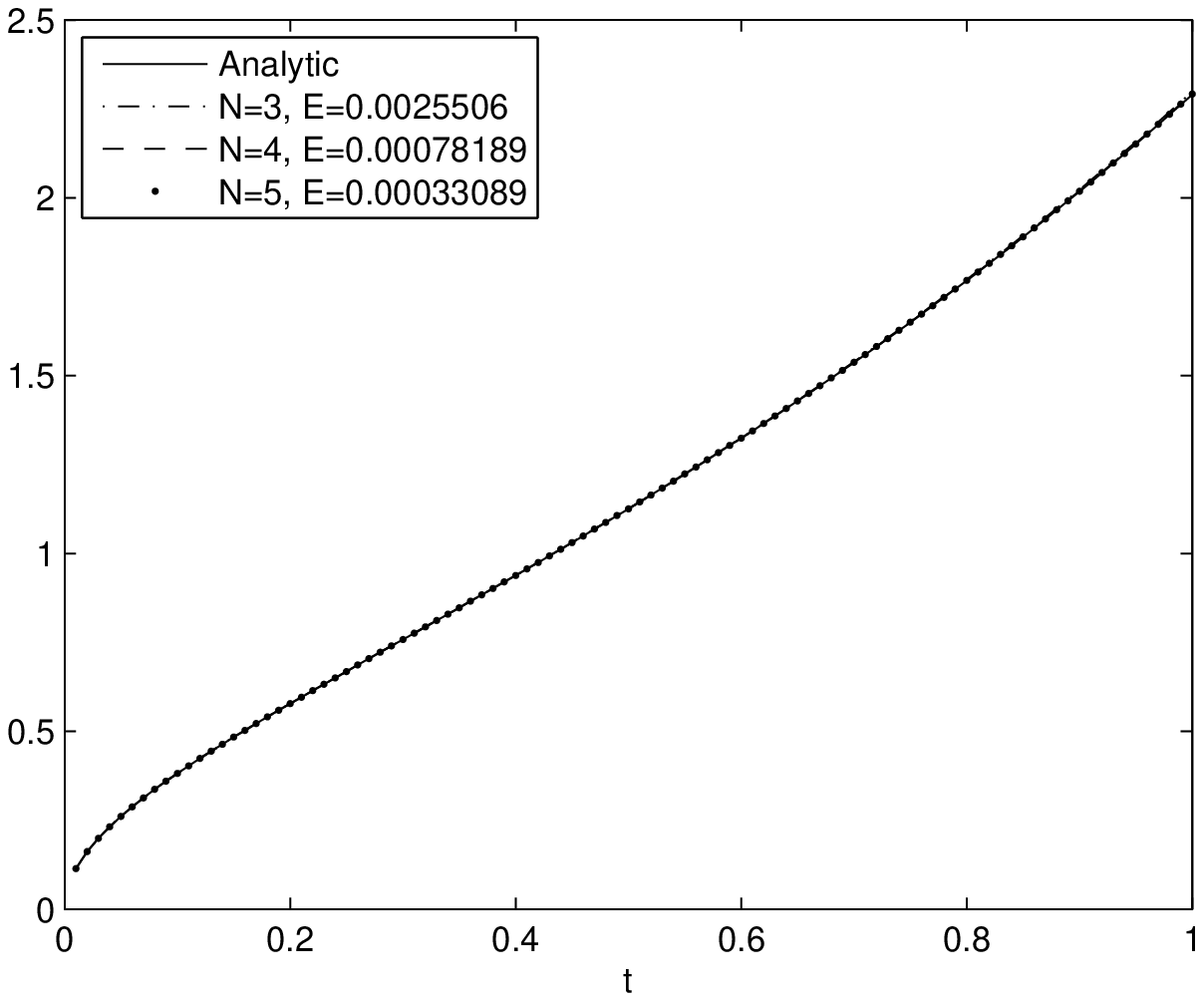}}
    \subfigure[$\LIz(\sin(t))$]{\label{ExpSint}\includegraphics[scale=0.54]{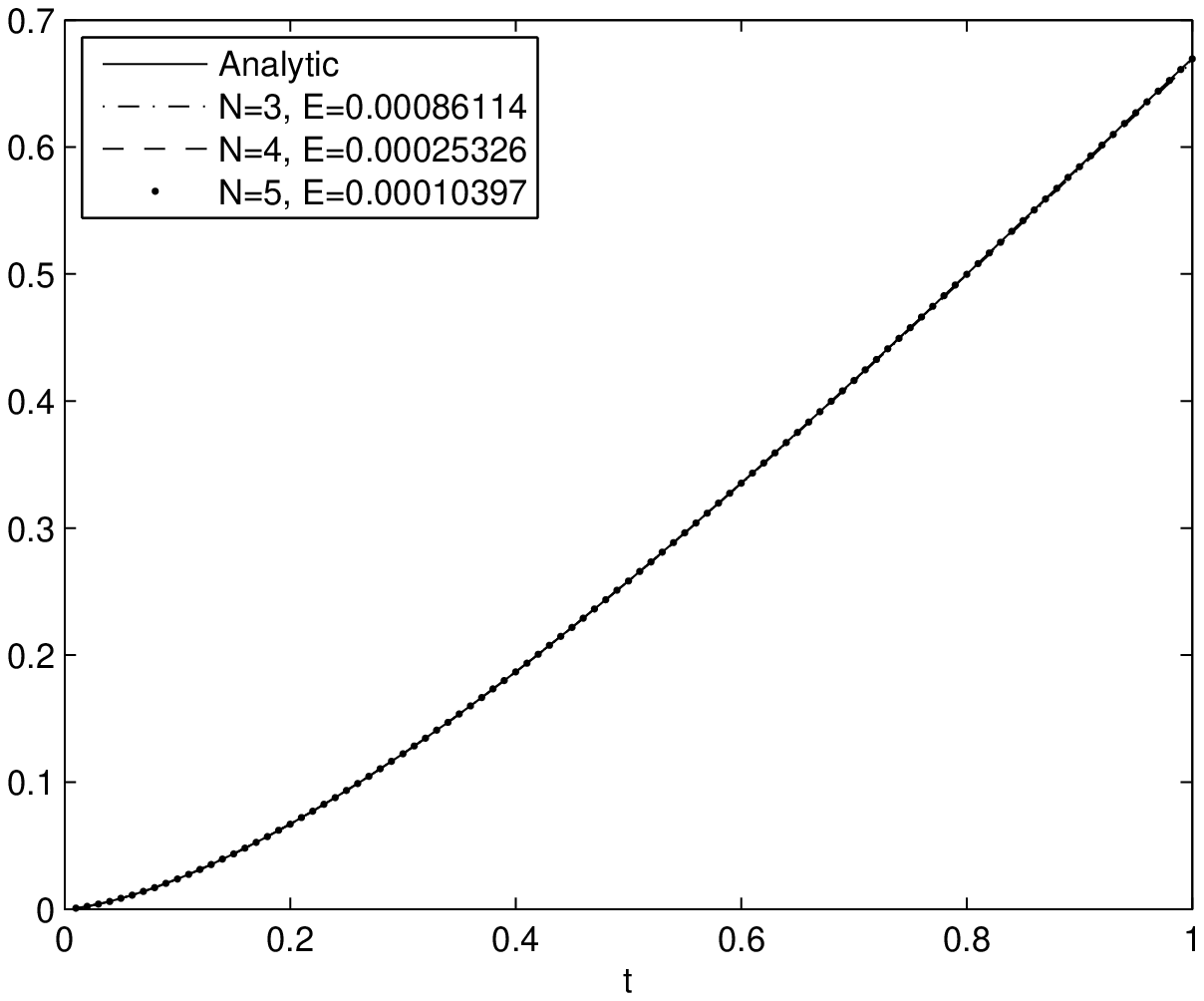}}
  \end{center}
  \caption{Analytic {\it versus} numerical approximation for a fixed $n$.}
\end{figure}

% -------------------------------------------

For analytical functions, we may apply the well-known formula \eqref{analytical2}.
In Figure~\ref{Fig:IntExp3} we show the results of approximating
with \eqref{analytical2}, $N=1,2,3$, for functions $x_3(t)$
and $x_4(t)$. We remark that, when we consider expansions up to the second derivative,
\textrm{i.e.}, the cases $n=3$ as in \eqref{def:n=3} and expansion \eqref{analytical2}
with $N=2$, we obtain a better accuracy using our approximation
\eqref{def:n=3} even for a small value of $N$.

% -------------------------------------------

\begin{figure}[!ht]
\begin{center}
\subfigure[$\LIz(e^t)$]{\includegraphics[scale=0.54]{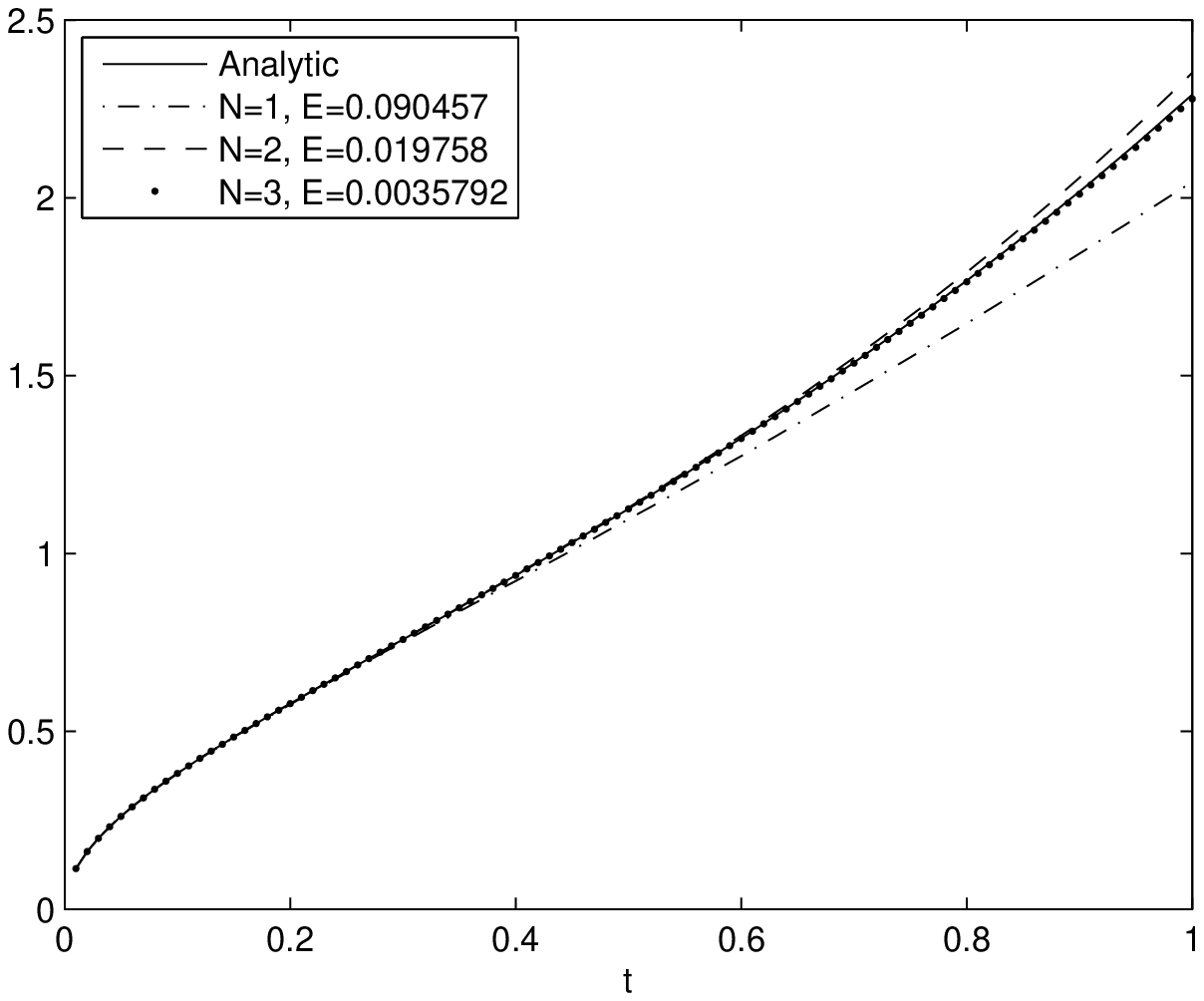}}
\subfigure[$\LIz(\sin(t))$]{\includegraphics[scale=0.54]{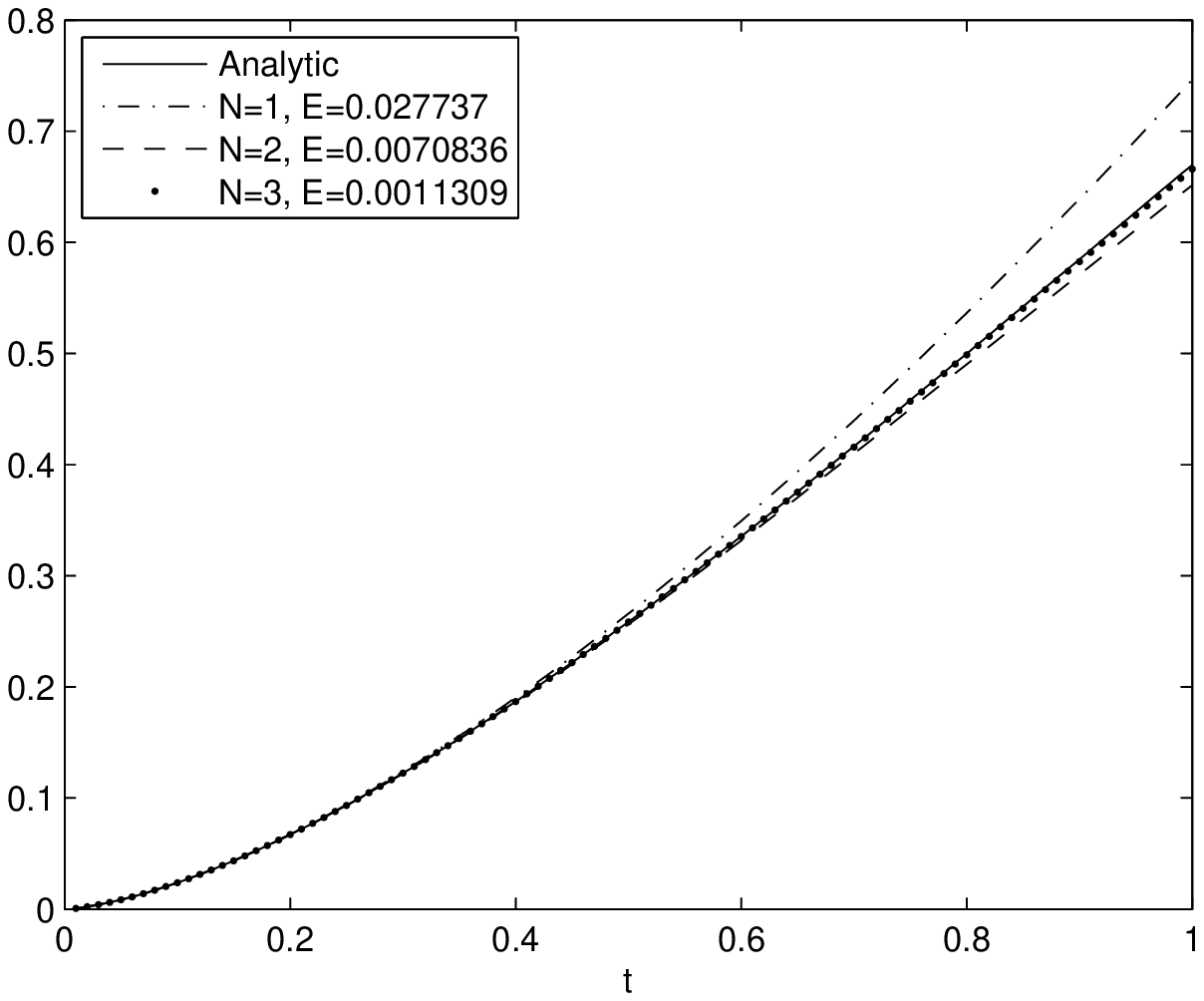}}
\end{center}
\caption{Numerical approximation using \eqref{analytical2} of previous literature.}
\label{Fig:IntExp3}
\end{figure}

% -------------------------------------------

Another way to approximate fractional integrals is to fix $N$
and consider several sizes for the decomposition, \textrm{i.e.},
letting $n$ to vary. Let us consider the two test functions $x_1(t)=t^3$
and $x_2(t)=t^{10}$, with $t\in[0,1]$ as before. In both cases
we consider the first three approximations of the fractional integral,
\textrm{i.e.}, for $n=1,2,3$. For the first function we fix
$N=3$, for the second one we choose $N=8$. Figures~\ref{ExpTkn} and \ref{ExpTkn2}
show the numerical results. As expected, for a greater value of $n$ the error decreases.

% -------------------------------------------

\begin{figure}[!ht]
\begin{center}
\subfigure[$\LIz(t^3)$]{\label{ExpTkn}\includegraphics[scale=0.54]{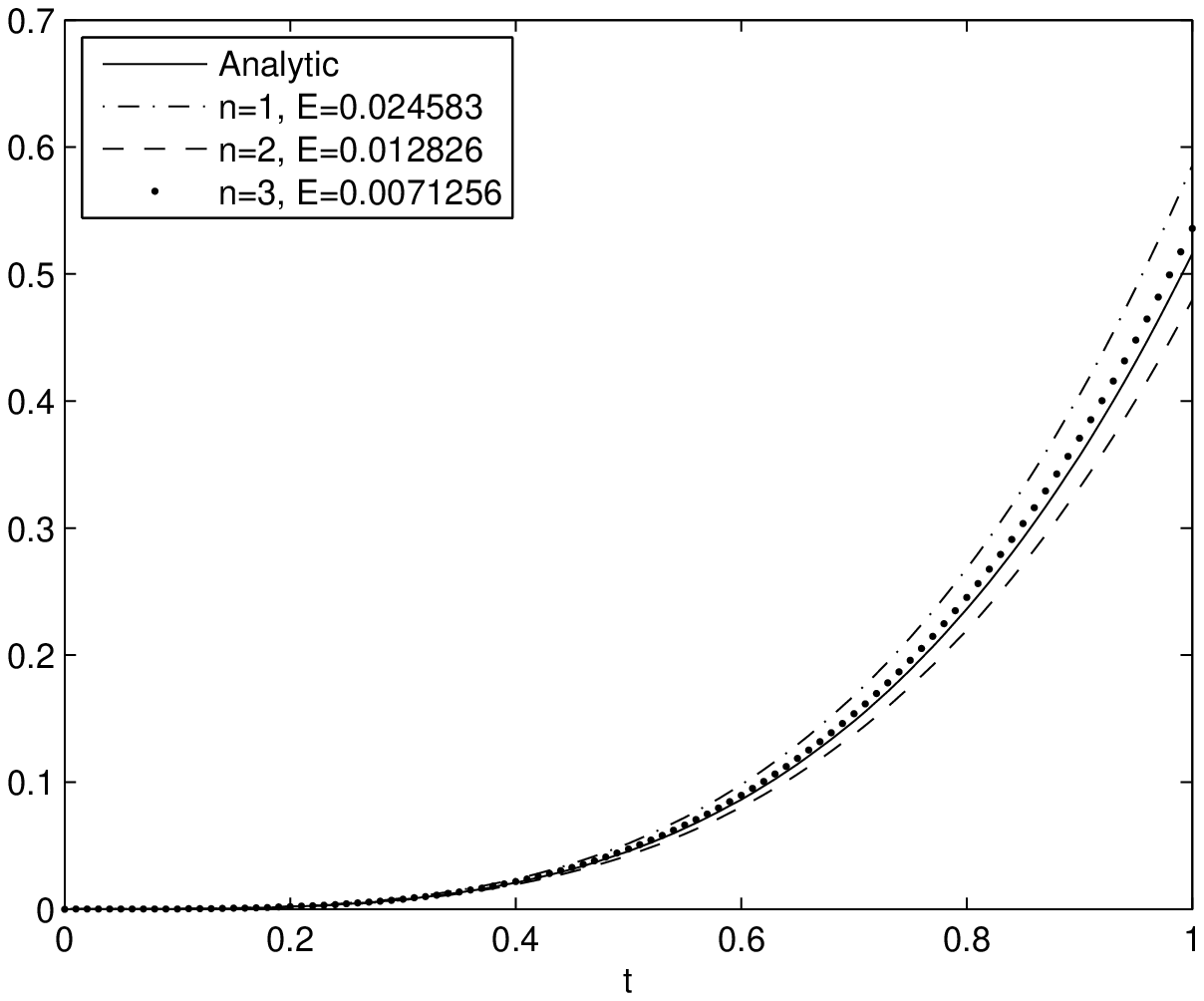}}
\subfigure[$\LIz(t^{10})$]{\label{ExpTkn2}\includegraphics[scale=0.54]{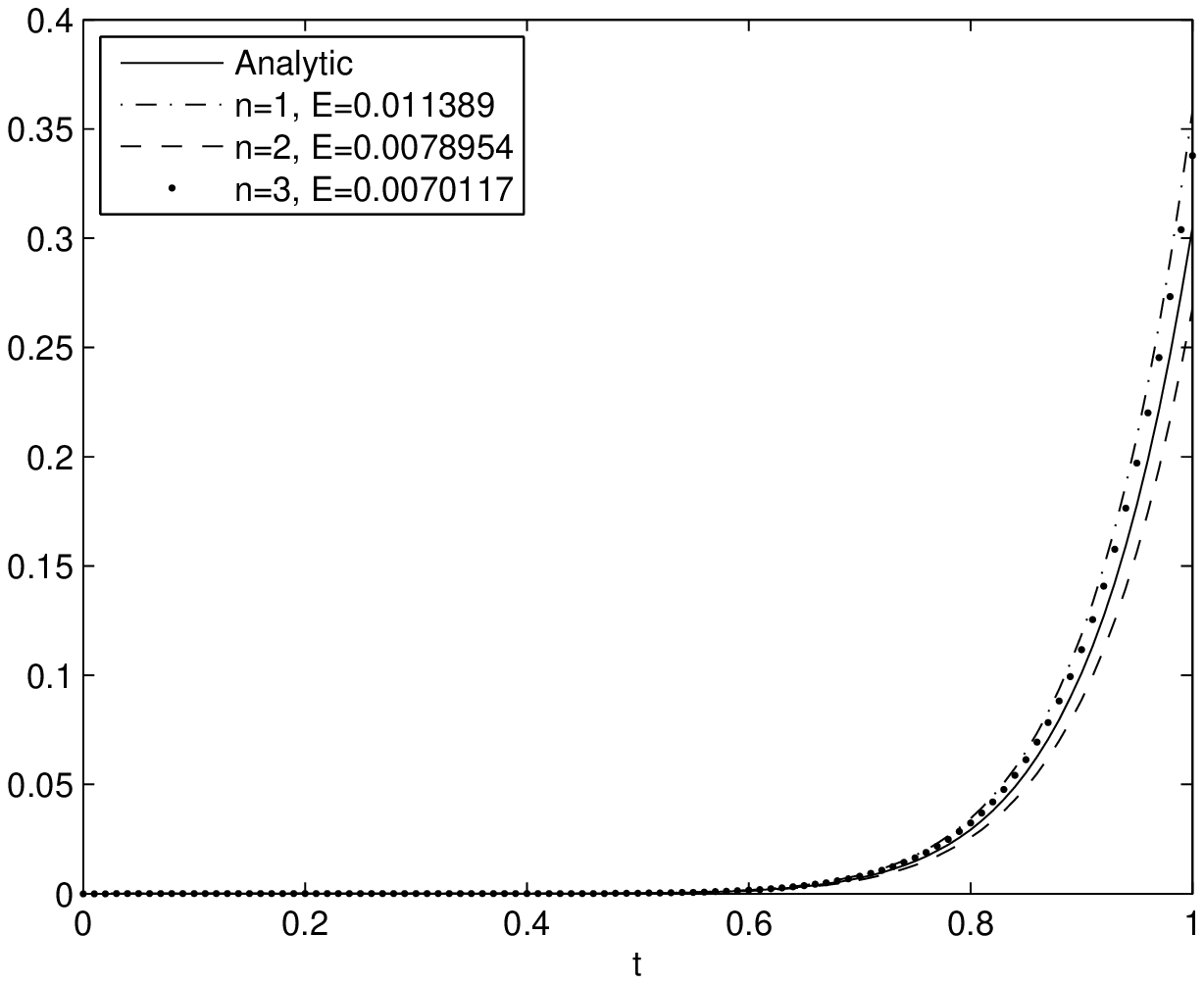}}
\end{center}
\caption{Analytic {\it versus} numerical approximation for a fixed $N$.}
\end{figure}

We mentioned before that although the terms $A_i$ are all equal to zero,
for $i \in \{0,\ldots,n-1\}$, we consider them in the decomposition formula.
Indeed, after we truncate the sum, the error is lower. This is illustrated
in Figures~\ref{ExpTkA} and \ref{ExpTk2A}, where we study the approximations
for $\LIz x_1(t)$ and $\LIz x_2(t)$ with $A_i\not=0$ and $A_i=0$.
\begin{figure}[!ht]
  \begin{center}
    \subfigure[$\LIz(t^3)$]{\label{ExpTkA}\includegraphics[scale=0.54]{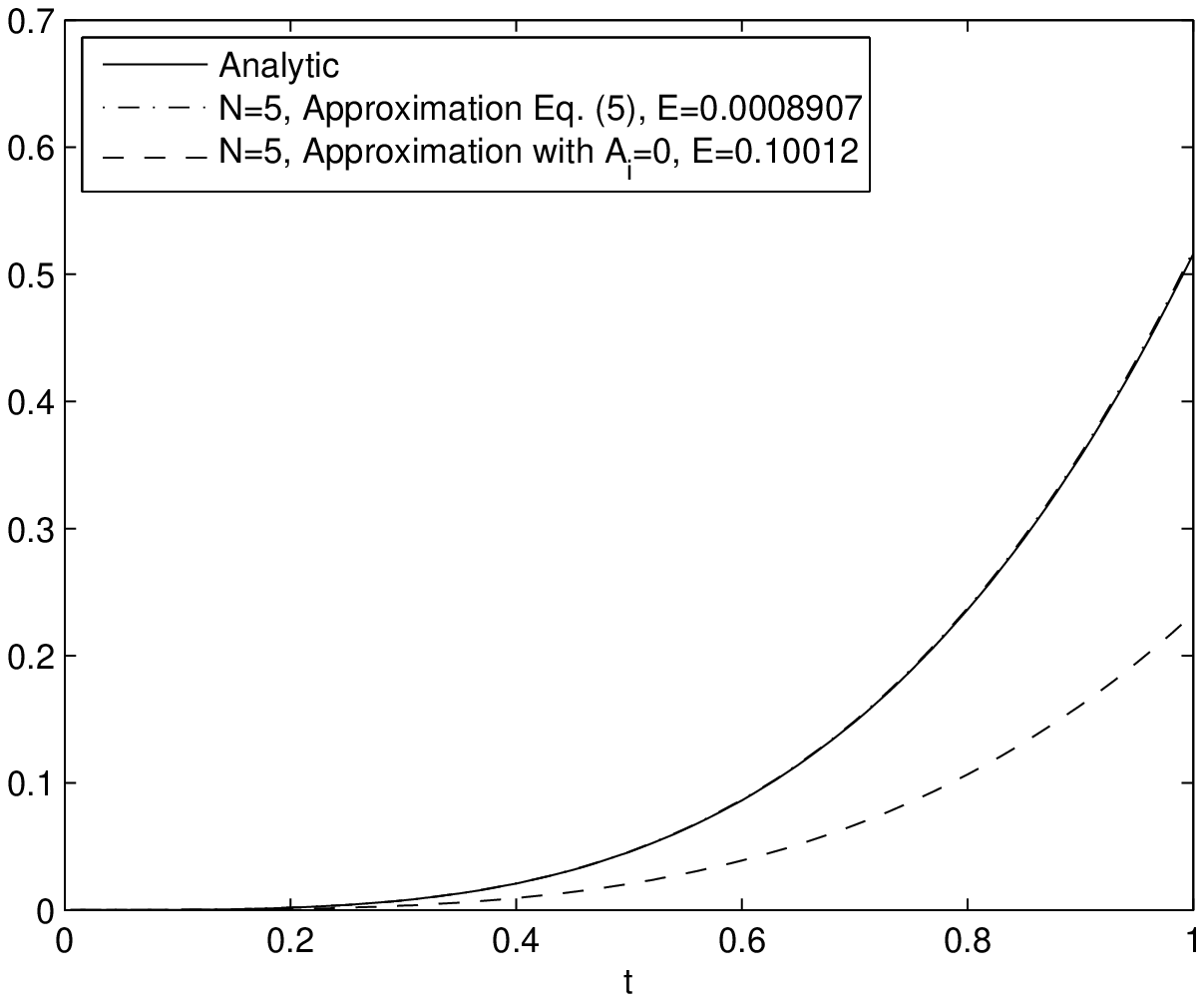}}
    \subfigure[$\LIz(t^{10})$]{\label{ExpTk2A}\includegraphics[scale=0.54]{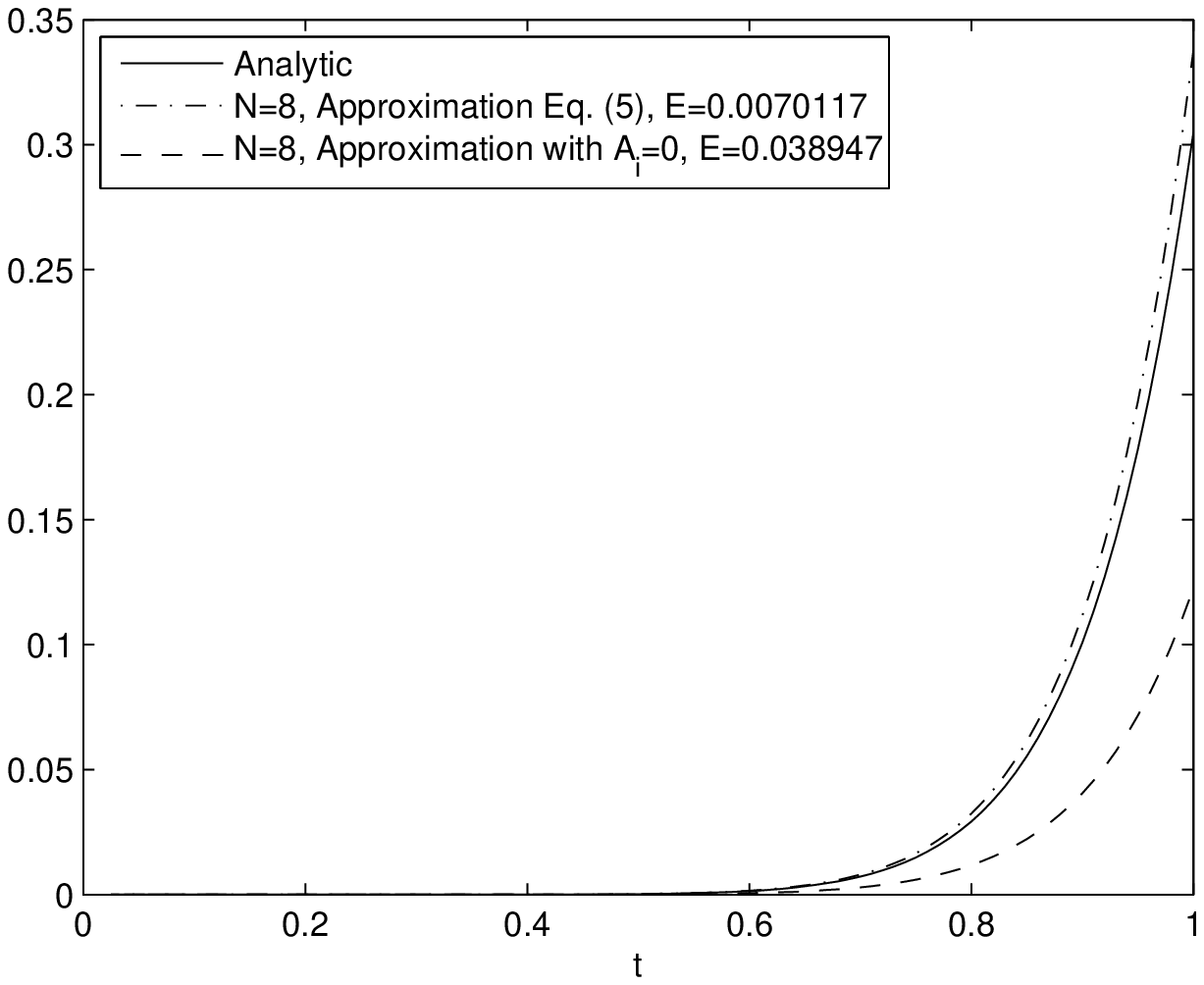}}
  \end{center}
  \caption{Comparison of approximation \eqref{def:n=3} and approximation with $A_i=0$.}
\end{figure}

% -------------------------------------------

\subsection{Applications to fractional integral equations}

In this section we show how the proposed approximations can be applied
to solve a fractional integral\index{Fractional! integral equation}
equation (Example~\ref{example:1}) which depends on the left Riemann--Liouville
fractional integral. The main idea is to rewrite the initial problem 
by replacing the fractional integrals by an expansion
of type \eqref{analytical} or \eqref{ExpanDecomp},
and thus getting a problem involving integer-order derivatives,
which can be solved by standard techniques.

% ----

\begin{example}
\label{example:1}
Consider the following fractional system:
\begin{equation}
\label{system}
\left\{ \begin{array}{l}
\LIz x(t)=\frac{\Gamma(4.5)}{24}t^4\\
x(0)=0.
\end{array}\right.
\end{equation}
Since $\LIz t^{3.5}=\frac{\Gamma(4.5)}{24}t^\a$, the function
$t\mapsto t^{3.5}$ is a solution to problem \eqref{system}.

To provide a numerical method to solve such type of systems,
we replace the fractional integral by approximations \eqref{analytical2}
and \eqref{Approx:LI}, for a suitable order. We remark that the order
of approximation, $N$ in \eqref{analytical2} and $n$ in \eqref{Approx:LI},
are restricted by the number of given initial or boundary conditions.
Since \eqref{system} has one initial condition, in order to solve it numerically,
we will consider the expansion for the fractional integral up to the first derivative,
\textrm{i.e.}, $N=1$ in \eqref{analytical2} and $n=2$ in \eqref{Approx:LI}.
The order $N$ in \eqref{Approx:LI} can be freely chosen.

Applying approximation \eqref{analytical2}, with $\a=0.5$,
we transform \eqref{system} into the initial value problem
$$
\left\{
\begin{array}{l}
1.1285t^{0.5} x(t)-0.3761t^{1.5} \dot{x}(t)=\frac{\Gamma(4.5)}{24}t^4,\\
x(0)=0,
\end{array}\right.
$$
which is a first order ODE.
The solution is shown in Figure~\ref{IntFI}.
It reveals that the approximation remains close to the exact solution
for a short time and diverges drastically afterwards. Since we have
no extra information, we cannot increase the order of approximation to proceed.

To use expansion \eqref{ExpanDecomp}, we rewrite the problem as
a standard one, depending only on a derivative of first order.
The approximated system that we must solve is
$$
\left\{
\begin{array}{l}
A_0(0.5,N)t^{0.5} x(t)+A_1(0.5,N)t^{1.5} \dot{x}(t)
+\sum_{p=2}^NB(0.5,p)t^{1.5-p}V_p(t)
=\frac{\Gamma(4.5)}{24}t^4,\\
\dot{V}_p(t)=(p-1)t^{p-2}x(t),\quad p=2,3,\ldots,N,\\
x(0)=0,\\
V_p(0)=0, \quad p=2,3,\ldots,N,\\
\end{array}\right.
$$
where $A_0$ and $A_1$ are given as in \eqref{Def:A} and $B$ is given
by Theorem~\ref{TheoDecomp}. Here, by increasing $N$, we get better
approximations to the fractional integral and we expect more accurate
solutions to the original problem \eqref{system}. For $N=2$ and $N=3$
we transform the resulting system of ordinary differential equations
to a second and a third order differential equation, respectively. Finally,
we solve them using the \textsf{Maple} built in function \textsf{dsolve}.
For example, for $N=2$ the second-order equation takes the form
$$
\left\{
\begin{array}{l}
\ddot{V}_2(t)=\frac{6}{t}\dot{V}_2(t)+\frac{6}{t^2}V_2(t)-5.1542t^{2.5}\\
V_2(0)=0\\
\dot{V}_2(0)=x(0)=0,
\end{array}
\right.
$$
and the solution is $x(t)=\dot{V}_2(t)=1.34t^{3.5}$.
In Figure~\ref{momFI} we compare the exact solution with numerical
approximations for two values of $N$.

\begin{figure}[!ht]
\begin{center}
\subfigure[Approximation by \eqref{analytical2}.]{\label{IntFI}\includegraphics[scale=0.54]{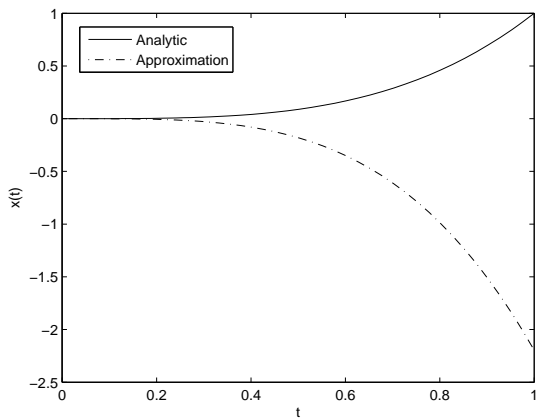}}
\subfigure[Approximation by \eqref{Approx:LI}.]{\label{momFI}\includegraphics[scale=0.54]{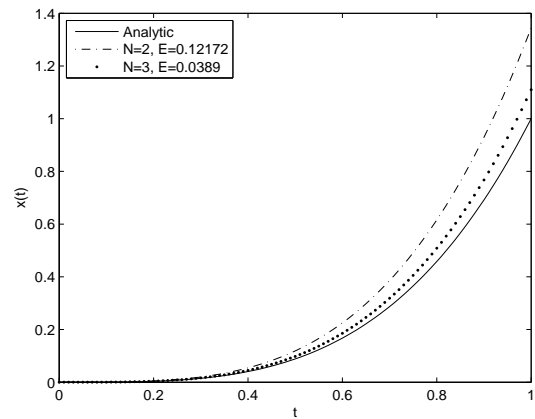}}
\end{center}
\caption{Analytic {\it versus} numerical solution to problem \eqref{system}.}
\end{figure}
\end{example}

% -----------------------------------------------

\section{Hadamard fractional integrals}

\subsection{Approximation by a sum of integer-order derivatives}

For an arbitrary $\a>0$ we refer the reader to \cite[Theorem~3.2]{Kilbas2}.
If a function $x$ admits derivatives of any order,
then expansion formulas\index{Fractional! integral equation} 
for the Hadamard fractional integrals and derivatives
of $x$, in terms of its integer-order derivatives,
are given in \cite[Theorem~17]{Butzer2}:
$$
{_0\mathcal{I}_t^\a}x(t)=\sum_{k=0}^\infty S(-\a,k)t^k x^{(k)}(t)
$$
and
$$
{_0\mathcal{D}_t^\a}x(t)=\sum_{k=0}^\infty S(\a,k)t^k x^{(k)}(t),
$$
where
$$
S(\a,k)=\frac{1}{k!}\sum_{j=1}^k(-1)^{k-j} {k \choose j} j^{\a}
$$
is the Stirling function.

% -----------------------------------------------

\subsection{Approximation using moments of a function}
\index{Moment of a function}

In this section we consider the class of differentiable functions
up to order $n+1$, $x\in C^{n+1}[a,b]$, and deduce expansion formulas
for the Hadamard fractional integrals in terms of $x^{(i)}(\cdot)$,
for $i\in \{0,\ldots,n\}$. Before presenting the result in its full extension,
we briefly explain the techniques involved for the particular case $n=2$.
To that purpose, let $x\in C^3[a,b]$.
Integrating by parts three times, we obtain
$$
\begin{array}{ll}
\LHI x(t)&=\displaystyle-\frac{1}{\Gamma(\a)}
\int_a^t-\frac{1}{\tau} \left(\ln\frac{t}{\tau}\right)^{\a-1}x(\tau)d\tau\\
&=\displaystyle\frac{1}{\Gamma(\a+1)} \left(\ln\frac{t}{a}\right)^{\a}x(a)
-\frac{1}{\Gamma(\a+1)}\int_a^t-\frac{1}{\tau}
\left(\ln\frac{t}{\tau}\right)^{\a}\tau \dot{x}(\tau)d\tau\\
&=\displaystyle\frac{1}{\Gamma(\a+1)} \left(\ln\frac{t}{a}\right)^{\a}x(a)
+\frac{1}{\Gamma(\a+2)} \left(\ln\frac{t}{a}\right)^{\a+1}a\dot{x}(a)\\
&\quad\displaystyle -\frac{1}{\Gamma(\a+2)}\int_a^t-\frac{1}{\tau}
\left(\ln\frac{t}{\tau}\right)^{\a+1}(\tau \dot{x}(\tau)+\tau^2\ddot{x}(\tau))d\tau\\
&=\displaystyle\frac{1}{\Gamma(\a+1)} \left(\ln\frac{t}{a}\right)^{\a}x(a)
+\frac{1}{\Gamma(\a+2)} \left(\ln\frac{t}{a}\right)^{\a+1}a\dot{x}(a)\\
&\displaystyle\quad+\frac{1}{\Gamma(\a+3)}
\left(\ln\frac{t}{a}\right)^{\a+2}(a\dot{x}(a) + a^2\ddot{x}(a))\\
&\quad+\displaystyle\frac{1}{\Gamma(\a+3)}
\int_a^t \left(\ln\frac{t}{\tau}\right)^{\a+2}(\dot{x}(\tau)
+3\tau \ddot{x}(\tau)+\tau^2\dddot{x}(\tau))d\tau.
\end{array}
$$
On the other hand, using the binomial theorem\index{Binomial! theorem}, we have
$$
\begin{array}{ll}
\displaystyle\left(\ln\frac{t}{\tau}\right)^{\a+2}
&=\displaystyle\left(\ln\frac{t}{a}\right)^{\a+2}\left(
1-\frac{\ln\frac{\tau}{a}}{\ln\frac{t}{a}}\right)^{\a+2}\\
&=\displaystyle\left(\ln\frac{t}{a}\right)^{\a+2}
\sum_{p=0}^\infty\frac{\Gamma(p-\a-2)}{\Gamma(-\a-2)p!}
\cdot \frac{\left(\ln\frac{\tau}{a}\right)^p}{\left(\ln\frac{t}{a}\right)^p}.
\end{array}
$$
This series converges since $\tau\in[a,t]$ and $\a+2>0$.
Combining these formulas, we get
\begin{eqnarray*}
\LHI x(t)&=&\frac{x(a)}{\Gamma(\a+1)} \left(\ln\frac{t}{a}\right)^{\a}
+\frac{a\dot{x}(a)}{\Gamma(\a+2)} \left(\ln\frac{t}{a}\right)^{\a+1}
+\frac{a\dot{x}(a)+a^2\ddot{x}(a)}{\Gamma(\a+3)} \left(\ln\frac{t}{a}\right)^{\a+2}\\
&&+\frac{1}{\Gamma(\a+3)}\left(\ln\frac{t}{a}\right)^{\a+2}
\sum_{p=0}^\infty\Gamma_0(\a,p,t)
\int_a^t \left(\ln\frac{\tau}{a}\right)^p \left(\dot{x}(\tau)+3\tau \ddot{x}(\tau)
+\tau^2\dddot{x}(\tau)\right)d\tau,
\end{eqnarray*}
where
$$
\Gamma_i(\a,p,t)=\frac{\Gamma(p-\a-2)}{\Gamma(-\a-2+i)(p-i)!\left(\ln\frac{t}{a}\right)^p}.
$$
Now, split the series into the two cases $p=0$ and $p=1\ldots\infty$,
and integrate by parts the second one. We obtain
$$
\begin{array}{ll}
\LHI x(t)&=\displaystyle\frac{1}{\Gamma(\a+1)}
\left(\ln\frac{t}{a}\right)^{\a}x(a)
+\frac{1}{\Gamma(\a+2)} \left(\ln\frac{t}{a}\right)^{\a+1}a\dot{x}(a)\\
&\displaystyle\quad +\frac{1}{\Gamma(\a+3)}
\left(\ln\frac{t}{a}\right)^{\a+2}(t\dot{x}(t)
+t^2\ddot{x}(t))\left[1+\sum_{p=1}^\infty\frac{\Gamma(p-\a-2)}{\Gamma(-\a-2)p!} \right]\\
&\displaystyle\quad +\frac{1}{\Gamma(\a+2)}\left(\ln\frac{t}{a}\right)^{\a+2}
\sum_{p=1}^\infty \Gamma_1(\a,p,t)
\int_a^t \left(\ln\frac{\tau}{a}\right)^{p-1} (\dot{x}(\tau)+\tau \ddot{x}(\tau))d\tau.
\end{array}
$$

Repeating this procedure two more times, we obtain the following:
$$
\begin{array}{ll}
\LHI x(t)&=\displaystyle \frac{1}{\Gamma(\a+1)}
\left(\ln\frac{t}{a}\right)^{\a}x(t)\left[1
+\sum_{p=3}^\infty\frac{\Gamma(p-\a-2)}{\Gamma(-\a)(p-2)!} \right]\\
&\displaystyle\quad +\frac{1}{\Gamma(\a+2)}
\left(\ln\frac{t}{a}\right)^{\a+1}t\dot{x}(t)\left[1
+\sum_{p=2}^\infty\frac{\Gamma(p-\a-2)}{\Gamma(-\a-1)(p-1)!} \right]\\
&\displaystyle\quad +\frac{1}{\Gamma(\a+3)} \left(\ln\frac{t}{a}\right)^{\a+2}(t\dot{x}(t)
+t^2\ddot{x}(t))\left[1+\sum_{p=1}^\infty\frac{\Gamma(p-\a-2)}{\Gamma(-\a-2)p!} \right]\\
&\displaystyle\quad +\frac{1}{\Gamma(\a)}  \left(\ln\frac{t}{a}\right)^{\a+2}
\sum_{p=3}^\infty\frac{\Gamma(p-\a-2)}{\Gamma(-\a+1)(p-3)!\left(\ln\frac{t}{a}\right)^p}
\int_a^t \left(\ln\frac{\tau}{a}\right)^{p-3} \frac{x(\tau)}{\tau}d\tau,
\end{array}
$$
or, in a more concise way,
$$
\begin{array}{ll}
\LHI x(t)&=\displaystyle A_0(\alpha)\left(\ln\frac{t}{a}\right)^{\a} x(t)
+ A_1(\alpha)\left(\ln\frac{t}{a}\right)^{\a+1}t\dot{x}(t)\\
&\quad\displaystyle + A_2(\alpha) \left(\ln\frac{t}{a}\right)^{\a+2}(t\dot{x}(t)
+t^2\ddot{x}(t))+ \sum_{p=3}^\infty B(\a,p)\left(\ln\frac{t}{a}\right)^{\a+2-p}V_p(t),
\end{array}
$$
with
$$
\begin{array}{ll}
A_0(\a)&=\displaystyle\frac{1}{\Gamma(\a+1)}\left[1
+\sum_{p=3}^\infty\frac{\Gamma(p-\a-2)}{\Gamma(-\a)(p-2)!}\right],\\
A_1(\a)&=\displaystyle\frac{1}{\Gamma(\a+2)}\left[1
+\sum_{p=2}^\infty\frac{\Gamma(p-\a-2)}{\Gamma(-\a-1)(p-1)!}\right],\\
A_2(\a)&=\displaystyle\frac{1}{\Gamma(\a+3)}\left[1
+\sum_{p=1}^\infty\frac{\Gamma(p-\a-2)}{\Gamma(-\a-2)p!}\right],
\end{array}
$$
\begin{equation}
\label{Case:B3}
B(\a,p)=\displaystyle\frac{\Gamma(p-\a-2)}{\Gamma(\a)\Gamma(1-\a)(p-2)!},
\end{equation}
and
\begin{equation}
\label{Case:Vp3}
V_p(t)=\int_a^t (p-2)\left(\ln\frac{\tau}{a}\right)^{p-3}\frac{x(\tau)}{\tau}d\tau,
\end{equation}
where we assume the series and the integral $V_p$ to be convergent.

\begin{remark}
\index{Fractional! differential equation}
\index{Fractional! integral equation}
When useful, namely on fractional differential and integral equations,
we can define $V_p$ as in \eqref{Case:Vp3}
by the solution of the system
$$
\left\{
\begin{array}{l}
\dot{V_p}(t)=\displaystyle (p-2)\left(
\ln\frac{t}{a}\right)^{p-3}\frac{x(t)}{t}\\
V_p(a)=0,
\end{array}
\right.
$$
for all $p=3,4,\ldots$
\end{remark}

We now discuss the convergence of the series involved
in the definitions of $A_i(\a)$, for $i \in \{0,1,2\}$.
Simply observe that
$$
\sum_{p=3-i}^\infty\frac{\Gamma(p-\a-2)}{\Gamma(-\a-i)(p-2+i)!}
={_1F_0} (-\a-i,1)-1,
$$
and ${_1F_0}(a,x)$ converges absolutely when $|x|=1$
if $a<0$ (\cite[Theorem~2.1.2]{Andrews}).

For numerical purposes, only finite sums are considered,
and thus the Hadamard left fractional integral\index{Hadamard fractional integral! Left} 
is approximated by the decomposition
\begin{equation}
\begin{array}{ll}
\label{Case:n=3}
\LHI x(t)&\approx\displaystyle A_0(\alpha,N)\left(\ln\frac{t}{a}\right)^{\a} x(t)
+ A_1(\alpha,N)\left(\ln\frac{t}{a}\right)^{\a+1}t\dot{x}(t)\\
&\quad\displaystyle + A_2(\alpha,N) \left(\ln\frac{t}{a}\right)^{\a
+2}(t\dot{x}(t)+t^2\ddot{x}(t))+ \sum_{p=3}^N B(\a,p)\left(
\ln\frac{t}{a}\right)^{\a+2-p}V_p(t),
\end{array}
\end{equation}
with
$$
\begin{array}{ll}
A_0(\a,N)&=\displaystyle\frac{1}{\Gamma(\a+1)}\left[1
+\sum_{p=3}^N\frac{\Gamma(p-\a-2)}{\Gamma(-\a)(p-2)!}\right],\\
A_1(\a,N)&=\displaystyle\frac{1}{\Gamma(\a+2)}\left[1
+\sum_{p=2}^N\frac{\Gamma(p-\a-2)}{\Gamma(-\a-1)(p-1)!}\right],\\
A_2(\a,N)&=\displaystyle\frac{1}{\Gamma(\a+3)}\left[1
+\sum_{p=1}^N\frac{\Gamma(p-\a-2)}{\Gamma(-\a-2)p!}\right],
\end{array}
$$
$B(\a,p)$ and $V_p(t)$ as in \eqref{Case:B3}--\eqref{Case:Vp3}, and $N\geq3$.

Following similar arguments as done for $n=2$,
we can prove the general case with an expansion
up to the derivative of order $n$. First,
we introduce a notation. Given $k\in\mathbb N \cup\{0\}$,
we define the sequences $x_{k,0}(t)$ and $x_{k,1}(t)$
recursively by the formulas
$$
x_{0,0}(t)=x(t) \mbox{ and } x_{k+1,0}(t)
=t\frac{d}{dt} x_{k,0}(t),
\mbox{ for } k\in\mathbb N\cup\{0\},
$$
and
$$
x_{0,1}(t)=\dot{x}(t) \mbox{ and } x_{k+1,1}(t)
=\frac{d}{dt} (t x_{k,1}(t)),
\mbox{ for } k\in\mathbb N\cup\{0\}.
$$

\begin{theorem}
Let $n\in\mathbb N$, $0<a<b$ and $x:[a,b]\to\mathbb R$
be a function of class $C^{n+1}$. Then,
$$
\LHI x(t)=\sum_{i=0}^{n}A_i(\alpha)\left(\ln\frac{t}{a}\right)^{\a+i} x_{i,0}(t)
+\sum_{p=n+1}^\infty B(\a,p)\left(\ln\frac{t}{a}\right)^{\a+n-p}V_p(t)
$$
with
$$
\begin{array}{ll}
A_i(\a)&=\displaystyle\frac{1}{\Gamma(\a+i+1)}\left[1
+\sum_{p=n-i+1}^\infty\frac{\Gamma(p-\a-n)}{\Gamma(-\a-i)(p-n+i)!}\right],\\[12pt]
B(\a,p)&=\displaystyle\frac{\Gamma(p-\a-n)}{\Gamma(\a)\Gamma(1-\a)(p-n)!},\\[12pt]
V_p(t)&=\displaystyle\int_a^t (p-n)\left(\ln\frac{\tau}{a}\right)^{p-n-1}\frac{x(\tau)}{\tau}d\tau.
\end{array}
$$
\end{theorem}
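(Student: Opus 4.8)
The plan is to push through, for general $n$, exactly the computation displayed above for $n=2$, organizing it as an induction on the number of integration-by-parts steps. First I would integrate by parts $n+1$ times in $\LHI x(t)=\frac{1}{\Gamma(\a)}\int_a^t\left(\ln\frac{t}{\tau}\right)^{\a-1}\frac{x(\tau)}{\tau}\,d\tau$, each time exploiting the identity $\left(\ln\frac{t}{\tau}\right)^{\a+k-1}\frac{1}{\tau}=-\frac{1}{\a+k}\frac{d}{d\tau}\left(\ln\frac{t}{\tau}\right)^{\a+k}$ and the vanishing of the boundary term at $\tau=t$. Checking the inductive step once — that $\frac{d}{d\tau}$ applied to $\tau\,x_{k,1}(\tau)$ reproduces $x_{k+1,1}(\tau)$, with the pre-defined sequences absorbing the characteristic Hadamard factor $\tau$ — this yields
$$
\LHI x(t)=\sum_{i=0}^{n}\frac{\left(\ln\frac{t}{a}\right)^{\a+i}}{\Gamma(\a+i+1)}\,c_i(a)
+\frac{1}{\Gamma(\a+n+1)}\int_a^t\left(\ln\frac{t}{\tau}\right)^{\a+n}x_{n,1}(\tau)\,d\tau ,
$$
where the constants $c_i(a)$ are built from the $x_{k,1}$ evaluated at $a$ and the remaining kernel has exponent $\a+n$.

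Next I would expand the kernel via the binomial theorem, $\left(\ln\frac{t}{\tau}\right)^{\a+n}=\left(\ln\frac{t}{a}\right)^{\a+n}\sum_{p=0}^{\infty}\frac{\Gamma(p-\a-n)}{\Gamma(-\a-n)\,p!}\left(\frac{\ln(\tau/a)}{\ln(t/a)}\right)^{p}$, a series that converges absolutely and uniformly on $\tau\in[a,t]$ since $\a+n>0$, which legitimizes interchanging the sum with the integral. Then I would iterate the following reduction $n$ times: split off the $p=0$ term of the current series — which, after an integration by parts that strips one derivative from $x$, contributes the coefficient of $\left(\ln\frac{t}{a}\right)^{\a+i}x_{i,0}(t)$, the point being that the accumulated bracket $1+\sum_{p}\frac{\Gamma(p-\a-n)}{\Gamma(-\a-i)(p-n+i)!}$ equals precisely $\Gamma(\a+i+1)A_i(\a)$ — and integrate by parts the $p\ge1$ tail so that $\left(\ln\frac{\tau}{a}\right)^{p}$ becomes $\left(\ln\frac{\tau}{a}\right)^{p-1}$. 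After $n$ steps the integrand is reduced to $\frac{x(\tau)}{\tau}$ and the leftover tail is $\frac{1}{\Gamma(\a)}\left(\ln\frac{t}{a}\right)^{\a+n}\sum_{p=n+1}^{\infty}\frac{\Gamma(p-\a-n)}{\Gamma(1-\a)(p-n)!\left(\ln(t/a)\right)^{p}}\int_a^t\left(\ln\frac{\tau}{a}\right)^{p-n-1}\frac{x(\tau)}{\tau}\,d\tau$, which is exactly $\sum_{p=n+1}^{\infty}B(\a,p)\left(\ln\frac{t}{a}\right)^{\a+n-p}V_p(t)$ with $B(\a,p)$ and $V_p$ as stated. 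I would close by recording the convergence of the series defining $A_i(\a)$: as in the $n=2$ discussion, $\sum_{p=n-i+1}^{\infty}\frac{\Gamma(p-\a-n)}{\Gamma(-\a-i)(p-n+i)!}={_1F_0}(-\a-i,1)-1$, which converges absolutely because $\a+i>0$ (\cite[Theorem~2.1.2]{Andrews}).

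The hard part will be purely the bookkeeping: tracking indices through $n+1$ integrations by parts and then $n$ further split-and-reduce passes, and verifying at each stage that the constant in front of $\left(\ln\frac{t}{a}\right)^{\a+i}x_{i,0}(t)$ collapses to the claimed closed form — which comes down to recognizing the telescoping of the Gamma-ratios and the identity ${_1F_0}(-\a-i,1)=0$ responsible for the cancellation that, for numerical reasons, we deliberately do not carry out. No genuinely new analytic input beyond the $n=2$ model is required, since the Hadamard-specific feature that every $\frac{d}{d\tau}$ is paired with a factor $\tau$ is encapsulated entirely in the sequences $x_{k,0}$ and $x_{k,1}$.
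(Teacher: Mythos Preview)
Your proposal is correct and follows essentially the same route as the paper: repeated integration by parts to reach the form $\sum_{i=0}^{n}\frac{(\ln t/a)^{\a+i}}{\Gamma(\a+i+1)}x_{i,0}(a)+\frac{1}{\Gamma(\a+n+1)}(\ln t/a)^{\a+n}\sum_{p\ge0}\frac{\Gamma(p-\a-n)}{\Gamma(-\a-n)p!(\ln t/a)^p}\int_a^t(\ln\tau/a)^p x_{n,1}(\tau)\,d\tau$, then the split-and-reduce passes, with convergence of $A_i(\a)$ via ${_1F_0}(-\a-i,1)$. One small sharpening: your constants $c_i(a)$ are exactly $x_{i,0}(a)$ in the paper's notation (since $x_{i,0}(t)=t\,x_{i-1,1}(t)$ for $i\ge1$), and at each reduction step the $p=0$ piece is handled by direct integration of $x_{k,1}=\frac{d}{d\tau}(\tau x_{k-1,1})$ rather than by parts, the resulting $-x_{k,0}(a)$ cancelling the earlier boundary term.
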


\begin{proof}
Applying integration by parts repeatedly
and the binomial formula, we arrive to
$$
\begin{array}{ll}
\LHI x(t)&=\displaystyle\sum_{i=0}^{n}\frac{1}{\Gamma(\a+i+1)}
\left(\ln\frac{t}{a}\right)^{\a+i}x_{i,0}(a)\\
&\quad\displaystyle+\frac{1}{\Gamma(\a+n+1)}
\left(\ln\frac{t}{a}\right)^{\a+n}
\sum_{p=0}^\infty\frac{\Gamma(p-\a-n)}{\Gamma(-\a-n)p!\left(
\ln\frac{t}{a}\right)^p} \int_a^t \left(\ln\frac{\tau}{a}\right)^p x_{n,1}(\tau)d\tau.
\end{array}
$$
To achieve the expansion formula, we repeat the same procedure as for the case $n=2$:
we split the sum into two parts (the first term plus the remaining)
and integrate by parts the second one. The convergence of the series
$A_i(\a)$ is ensured by the relation
$$
\sum_{p=n-i+1}^\infty\frac{\Gamma(p-\a-n)}{\Gamma(-\a-i)(p-n+i)!}
={_1F_0} (-\a-i,1)-1.
$$
\end{proof}
An estimation for the error bound is given in Section~\ref{ErrorSecHad}.

Similarly to what was done with the left fractional integral,
we can also expand the right Hadamard fractional integral.\index{Hadamard fractional integral! Right}

\begin{theorem}
\label{HRFI}
Let $n\in\mathbb N$, $0<a<b$ and $x:[a,b]\to\mathbb R$
be a function of class $C^{n+1}$. Then,
$$
\RHI x(t)=\sum_{i=0}^{n}A_i(\alpha)\left(\ln\frac{b}{t}\right)^{\a+i} x_{i,0}(t)
+\sum_{p=n+1}^\infty B(\a,p)\left(\ln\frac{b}{t}\right)^{\a+n-p}W_p(t)
$$
with
$$
\begin{array}{ll}
A_i(\a)&=\displaystyle\frac{(-1)^i}{\Gamma(\a+i+1)}\left[1
+\sum_{p=n-i+1}^\infty\frac{\Gamma(p-\a-n)}{\Gamma(-\a-i)(p-n+i)!}\right],\\
B(\a,p)&=\displaystyle\frac{\Gamma(p-\a-n)}{\Gamma(\a)\Gamma(1-\a)(p-n)!},\\
W_p(t)&=\displaystyle\int_t^b (p-n)\left(
\ln\frac{b}{\tau}\right)^{p-n-1}\frac{x(\tau)}{\tau}d\tau.
\end{array}
$$
\end{theorem}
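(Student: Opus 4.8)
The plan is to mirror, mutatis mutandis, the proof of the preceding theorem for $\LHI$, the only new ingredient being the sign changes caused by working on the interval $[t,b]$ rather than $[a,t]$. Starting from
$$
\RHI x(t)=\frac{1}{\Gamma(\a)}\int_t^b\left(\ln\frac{\t}{t}\right)^{\a-1}\frac{x(\t)}{\t}\,d\t,
$$
I would first integrate by parts $n+1$ times. At each step one uses the elementary identity $\left(\ln\frac{\t}{t}\right)^{\b}\frac{1}{\t}=\frac{1}{\b+1}\frac{d}{d\t}\left(\ln\frac{\t}{t}\right)^{\b+1}$ to raise the running exponent $\b$ by one, so that the boundary contribution is collected at $\t=b$ (the contribution at $\t=t$ vanishes because $\ln 1=0$) and the derivative is transferred onto the test function through the operator $\t\frac{d}{d\t}$. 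Invoking the relation $\t\,x_{k,1}(\t)=x_{k+1,0}(\t)$ between the two auxiliary sequences introduced before, the $n+1$ boundary terms assemble into $\sum_{i=0}^{n}\frac{(-1)^i}{\Gamma(\a+i+1)}\left(\ln\frac{b}{t}\right)^{\a+i}x_{i,0}(b)$ --- the alternating sign being exactly the $(-1)^i$ appearing in $A_i(\a)$ --- while what remains is a single integral of the form $\frac{(-1)^{n+1}}{\Gamma(\a+n+1)}\int_t^b\left(\ln\frac{\t}{t}\right)^{\a+n}x_{n,1}(\t)\,d\t$.

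The second stage deals with this remaining integral. Since $\t\in[t,b]$ forces $0\le\ln\frac{b}{\t}\le\ln\frac{b}{t}$ and $\a+n>0$, I would expand
$$
\left(\ln\frac{\t}{t}\right)^{\a+n}=\left(\ln\frac{b}{t}\right)^{\a+n}\left(1-\frac{\ln(b/\t)}{\ln(b/t)}\right)^{\a+n}=\left(\ln\frac{b}{t}\right)^{\a+n}\sum_{p=0}^{\infty}\frac{\Gamma(p-\a-n)}{\Gamma(-\a-n)\,p!}\left(\frac{\ln(b/\t)}{\ln(b/t)}\right)^{p},
$$
an absolutely convergent binomial series. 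Substituting it into the remainder, separating the $p=0$ term from the rest, and integrating the rest by parts lowers the order of differentiation on $x$ from $x_{n,1}$ to $x_{n-1,1}$; iterating this ``split off the $p=0$ term, integrate the tail by parts'' step $n$ more times brings the integrand down to $x(\t)/\t$ and produces the moments $W_p(t)=\int_t^b(p-n)\left(\ln\frac{b}{\t}\right)^{p-n-1}\frac{x(\t)}{\t}\,d\t$ with coefficient $B(\a,p)=\frac{\Gamma(p-\a-n)}{\Gamma(\a)\Gamma(1-\a)(p-n)!}$; the finitely many terms peeled off along the way are precisely what promotes each bare coefficient $(-1)^i/\Gamma(\a+i+1)$ of $x_{i,0}(b)$ to the full $A_i(\a)$. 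Convergence of the series defining $A_i(\a)$ is justified exactly as in the left-hand case, from the identity $\sum_{p\ge n-i+1}\frac{\Gamma(p-\a-n)}{\Gamma(-\a-i)(p-n+i)!}={_1F_0}(-\a-i,1)-1$ and Theorem~2.1.2 of \cite{Andrews}, valid because $\a+i>0$.

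Every individual manipulation here is routine and is already carried out in full, for $\LHI$, in the case $n=2$ and in the general theorem above; the only genuine obstacle is the bookkeeping --- keeping track of the running exponents, factorials, $\Gamma$-factors and, above all, the signs through the repeated cycle ``integrate by parts / binomial expand / split off the first term / integrate by parts again'' so that the constants line up exactly with $A_i(\a)$, $B(\a,p)$ and $W_p$ as stated. Since each step is formally identical to one already performed for the left Hadamard integral, the cleanest write-up is to display the $n=2$ computation for $\RHI$, so that the alternating sign pattern is visible, and then note that ``the general case follows analogously to the proof of the previous theorem.'' No analytic input beyond the convergence of the binomial and hypergeometric series is needed.
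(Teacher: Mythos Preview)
Your proposal is correct and matches the paper's approach: the paper gives no proof for Theorem~\ref{HRFI} beyond the sentence ``Similarly to what was done with the left fractional integral, we can also expand the right Hadamard fractional integral,'' and your outline is precisely that analogy, correctly identifying the alternating sign $(-1)^i$ from the reversed orientation of the integration-by-parts on $[t,b]$ and the same binomial/hypergeometric convergence argument. The only cosmetic point is that the ``promotion'' step does more than upgrade the bare coefficient $(-1)^i/\Gamma(\a+i+1)$ to $A_i(\a)$: it simultaneously moves the evaluation point of $x_{i,0}$ from $b$ to $t$, since each split-and-integrate-by-parts produces a new boundary contribution at $\tau=t$ (the one at $\tau=b$ vanishing because $\ln(b/b)=0$); you clearly have this in mind, but it is worth saying explicitly in a write-up.
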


\begin{remark}
Analogously to what was done for the left fractional integral,
one can consider an approximation for the right Hadamard fractional
integral\index{Hadamard fractional integral! Right} by considering
finite sums in the expansion obtained in Theorem~\ref{HRFI}.
\end{remark}

% -----------------------------------------------

\subsection{Examples}

We obtained approximation formulas for the Hadamard fractional integrals. 
The error caused by such decompositions is given later in Section~\ref{ErrorSecHad}. 
In this section we study several cases, comparing the solution with the approximations.
To gather more information on the accuracy, we evaluate the error using the distance
$$
E=\sqrt{\int_a^b \left(\LHI x(t)-\tilde{\LHI}x(t)\right)^2dt},
$$
where $\tilde{\LHI}x(t)$ is the approximated value.

To begin with, we consider $\a=0.5$ and functions $x_1(t)=\ln t$
and $x_2(t)=1$ with $t\in[1,10]$. Then,
$$
\LHIHz x_1(t)=\frac{\sqrt{\ln^3 t}}{\Gamma(2.5)}
\mbox{ and } \LHIHz x_2(t)
=\frac{\sqrt{\ln t}}{\Gamma(1.5)}
$$
(\textrm{cf.} \cite[Property~2.24]{Kilbas}).
We consider the expansion formula for $n=2$
as in \eqref{Case:n=3} for both cases.
We obtain then the approximations
$$
\LHIHz x_1(t)\approx \left[ A_0(0.5,N)+A_1(0.5,N)
+\sum_{p=3}^N B(0.5,p)\frac{p-2}{p-1}\right]\sqrt{\ln^3 t}
$$
and
$$
\LHIHz x_2(t)\approx \left[ A_0(0.5,N)
+\sum_{p=3}^N B(0.5,p)\right]\sqrt{\ln t}.
$$
The results are exemplified in Figures~\ref{ExpLnt}
and \ref{Exp1}. As can be seen, the value $N=3$ is enough
in order to obtain a good accuracy in the sense of the error function.

\begin{figure}[ht!]
\begin{center}
\subfigure[$\LHIHz(\ln t)$]{\label{ExpLnt}\includegraphics[scale=0.54]{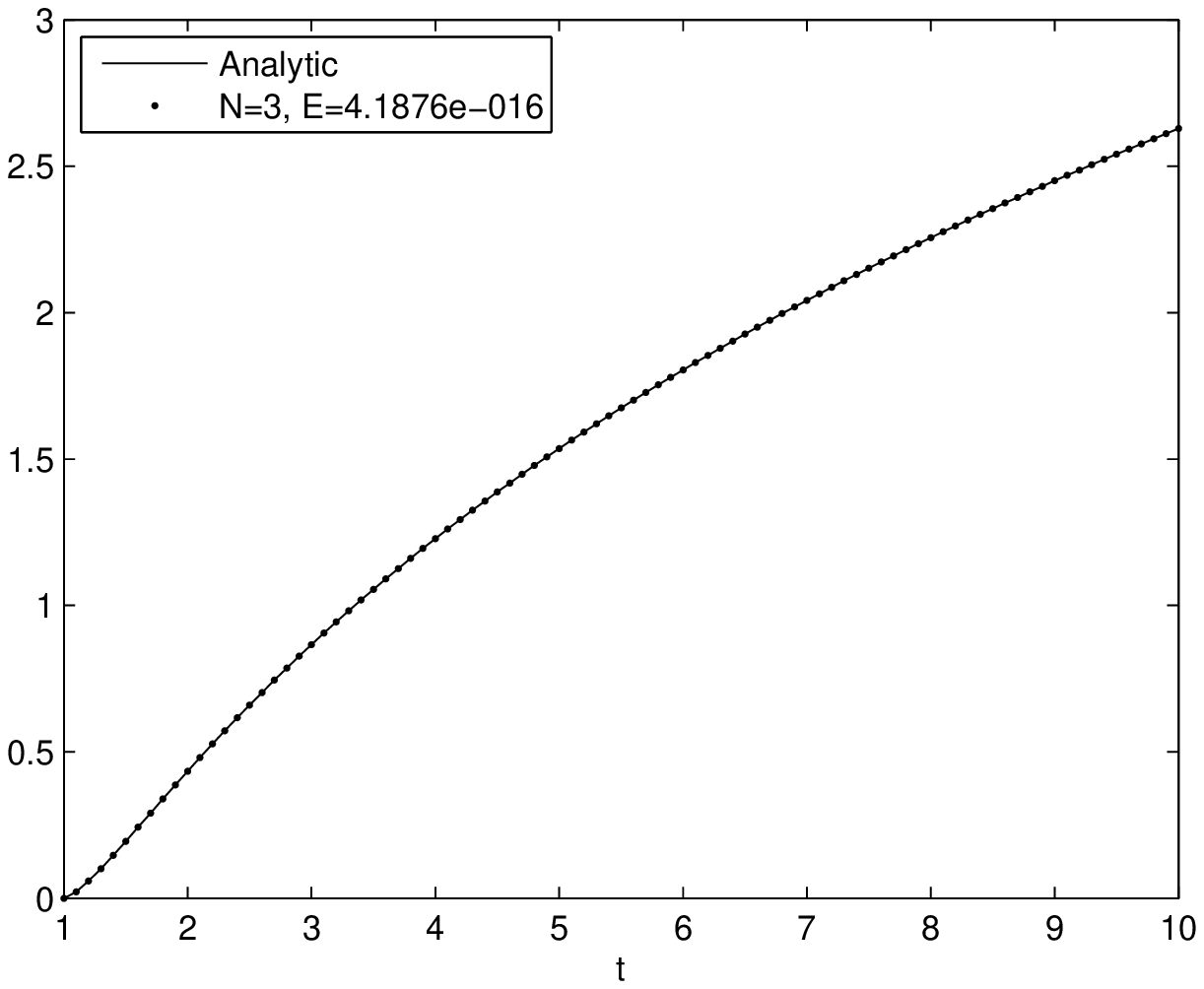}}
\subfigure[$\LHIHz(1)$]{\label{Exp1}\includegraphics[scale=0.54]{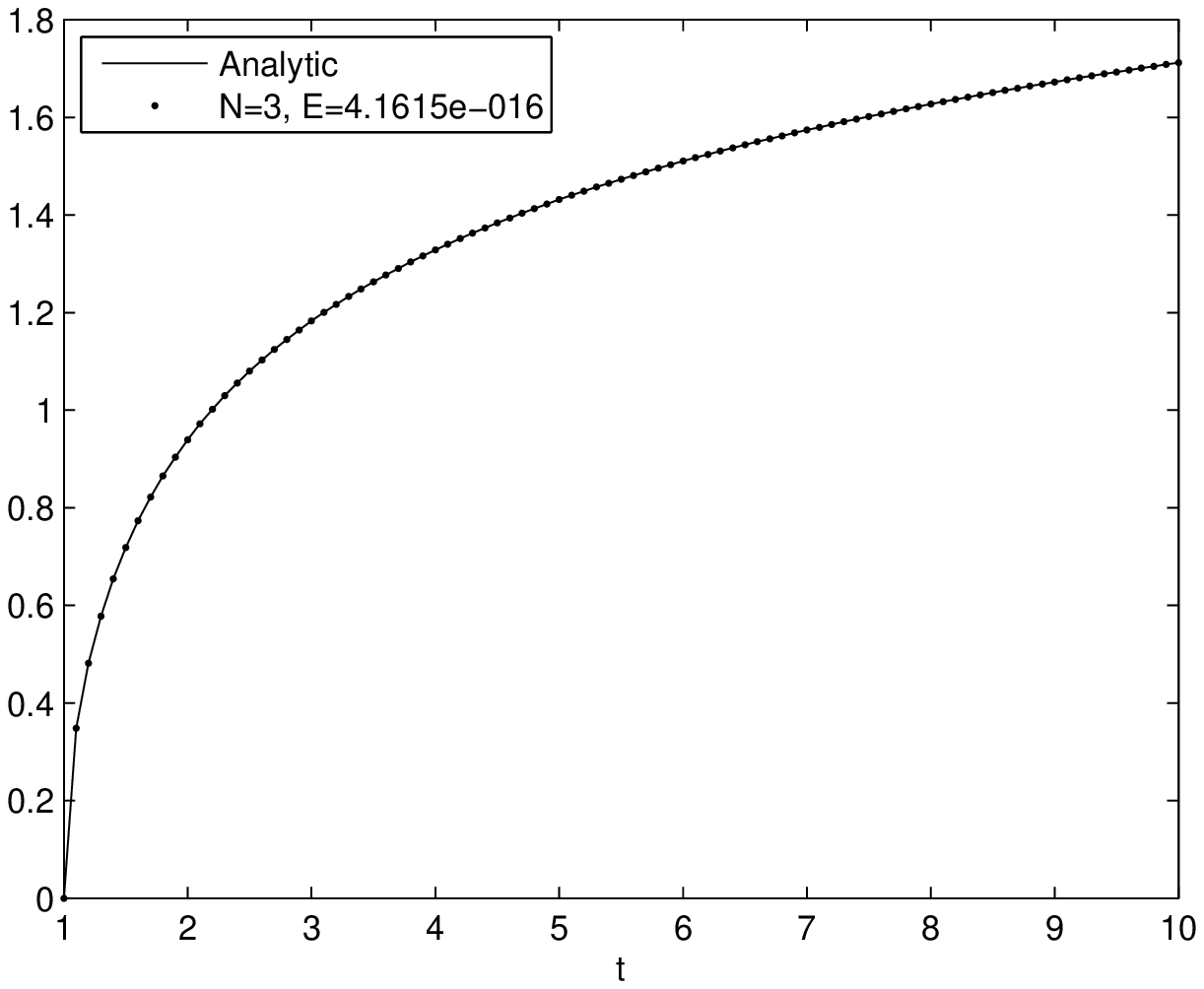}}
\end{center}
\caption{Analytic vs. numerical approximation for $n=2$.}
\end{figure}

We now test the approximation on the power functions $x_3(t)=t^4$
and $x_4(t)=t^9$, with $t\in[1,2]$. Observe first that
$$
\LHIHz(t^k)=\frac{1}{\Gamma(0.5)}\int_1^t
\left(\ln\frac{t}{\tau}\right)^{-0.5}\tau^{k-1}d\tau
=\frac{t^k}{\Gamma(0.5)}\int_0^{\ln t} \xi^{-0.5}e^{-\xi k} d\xi
$$
by the change of variables $\xi=\ln\frac{t}{\tau}$. In our cases,
$$
\LHIHz(t^4)\approx \frac{0.8862269255}{\Gamma(0.5)}t^4 \mbox{erf}(2\sqrt{\ln t})
\mbox { and } \LHIHz(t^9)\approx \frac{0.5908179503}{\Gamma(0.5)}t^9
\mbox{erf}(3\sqrt{\ln t}),
$$
where $\mbox{erf}(\cdot)$ is the error function\index{Error! function}. In Figures~\ref{Expt4}
and \ref{Expt9} we show approximations for several values of $N$.
We mention that, as $N$ increases, the error decreases
and thus we obtain a better approximation.

\begin{figure}[ht!]
\begin{center}
\subfigure[$\LHIHz(t^4)$]{\label{Expt4}\includegraphics[scale=0.54]{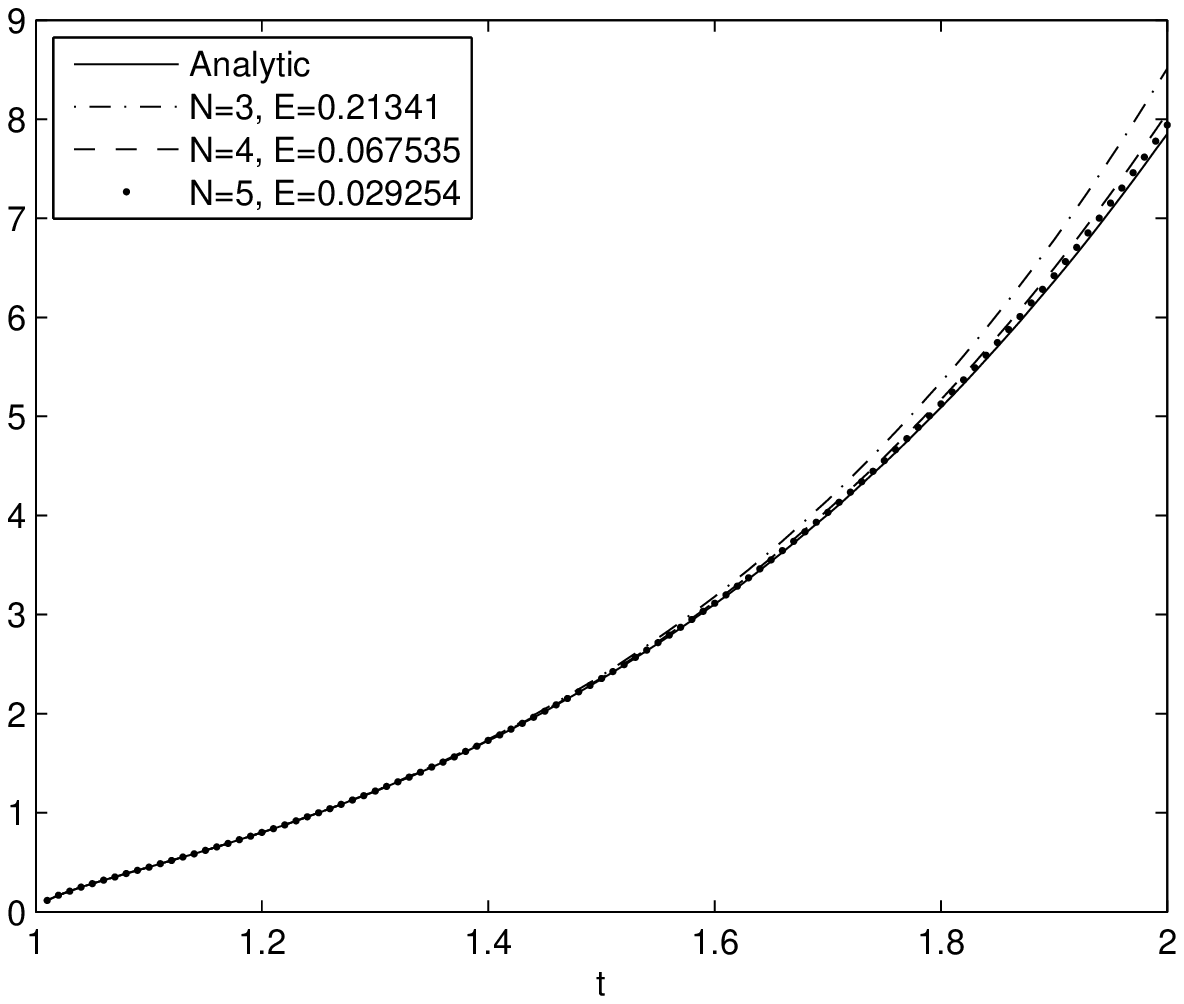}}
\subfigure[$\LHIHz(t^9)$]{\label{Expt9}\includegraphics[scale=0.54]{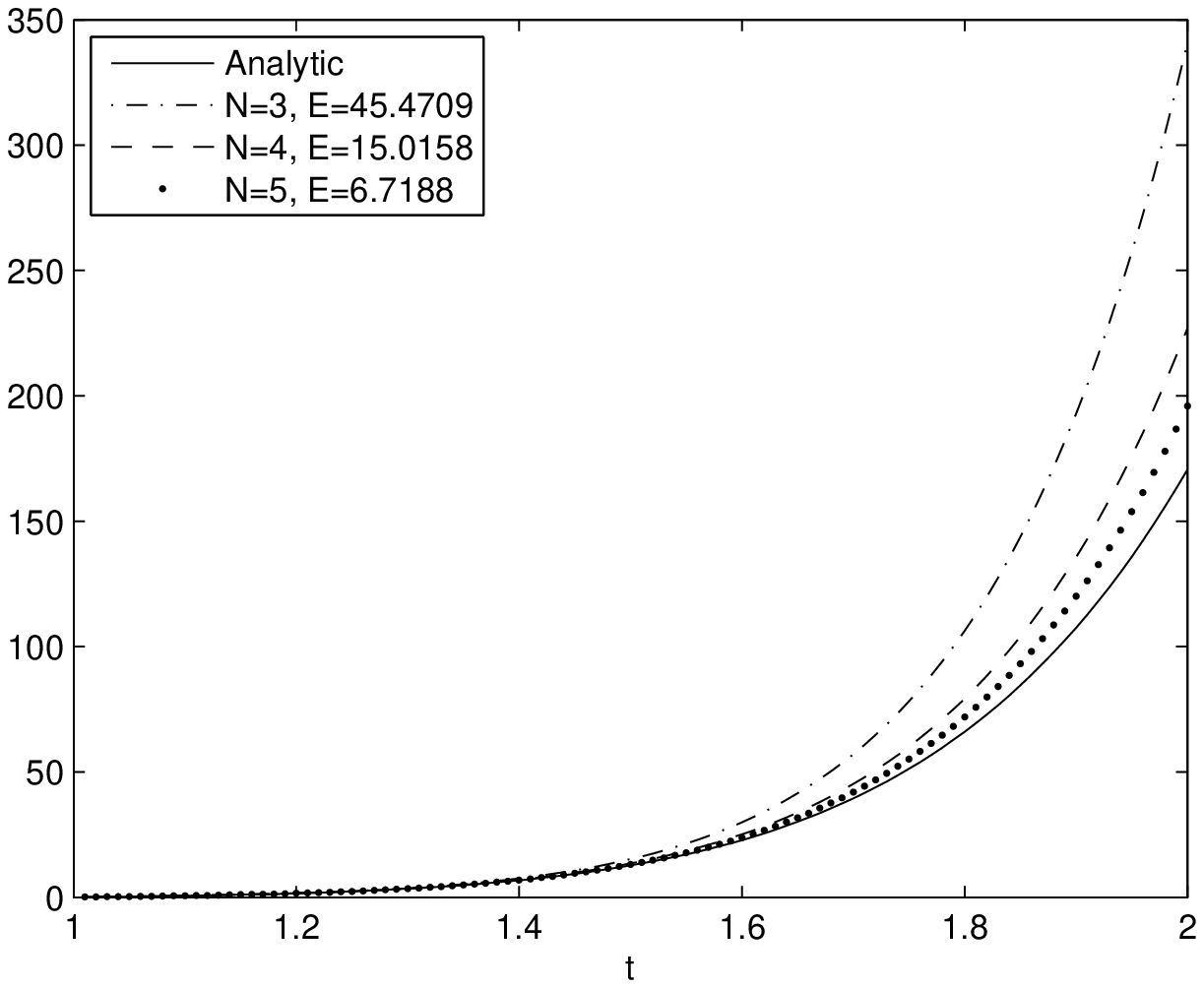}}
\end{center}
\caption{Analytic vs. numerical approximation for $n=2$.}
\end{figure}

Another way to obtain different expansion formulas is to vary $n$.
To exemplify, we choose the previous test functions $x_i$, for $i=1,2,3,4$,
and consider the cases $n=2,3,4$ with $N=5$ fixed. The results are shown
in Figures~\ref{ExpLntNfixed}, \ref{Exp1Nfixed}, \ref{Expt4Nfixed}
and \ref{Expt9Nfixed}. Observe that as $n$ increases, the error may increase.
This can be easily explained by analysis of the error formula, and the values
of the sequence $x_{(k,0)}$ involved. For example, for $x_4$ we have
$x_{(k,0)}(t)=9^kt^9$, for $k=0\ldots,n$. This suggests that, when we increase
the value of $n$ and the function grows fast, in order to obtain a better
accuracy on the method, the value of $N$ should also increase.
\begin{figure}[ht!]
\begin{center}
\subfigure[$\LHIHz(\ln t)$]{\label{ExpLntNfixed}\includegraphics[scale=0.54]{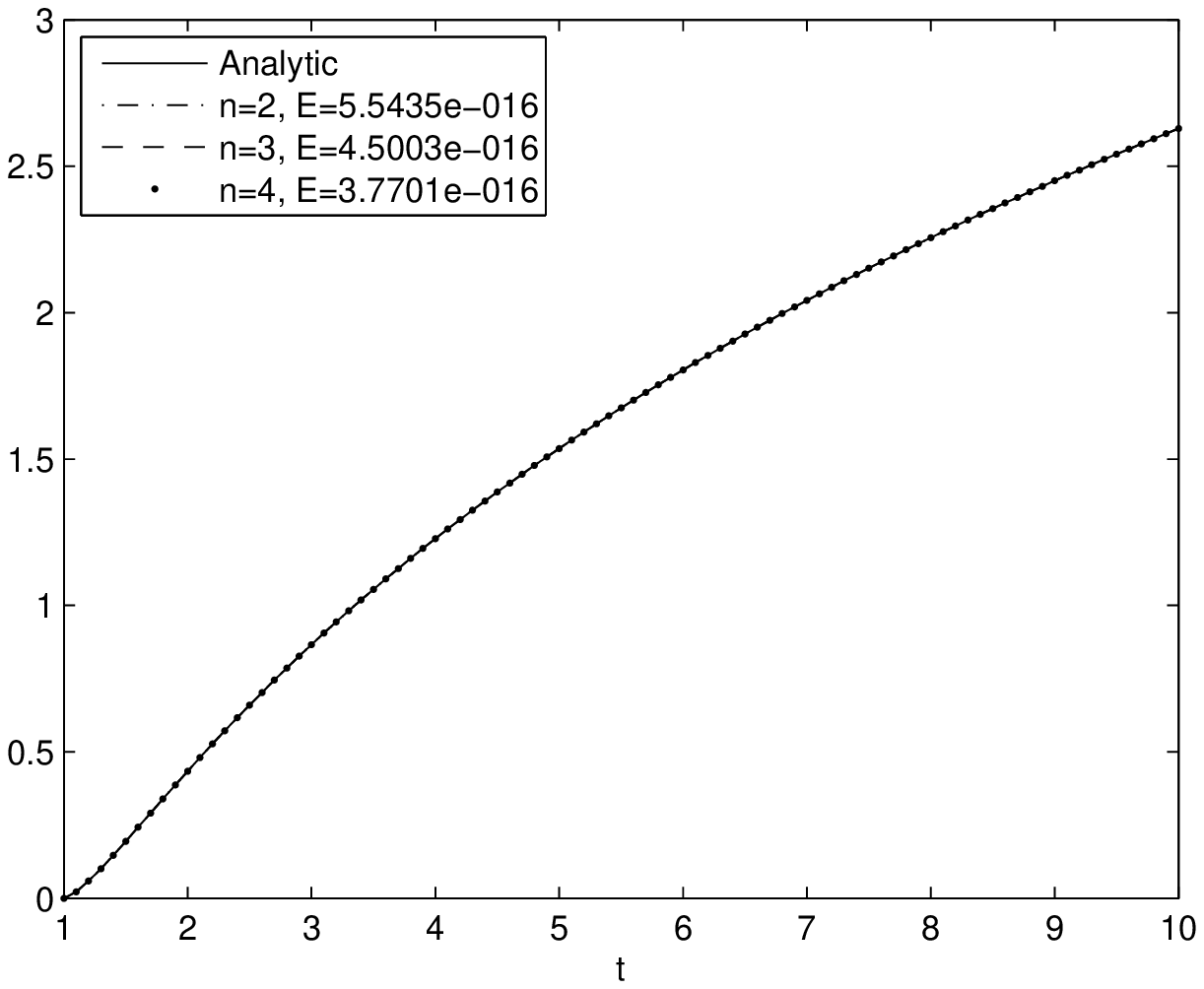}}
\subfigure[$\LHIHz(1)$]{\label{Exp1Nfixed}\includegraphics[scale=0.54]{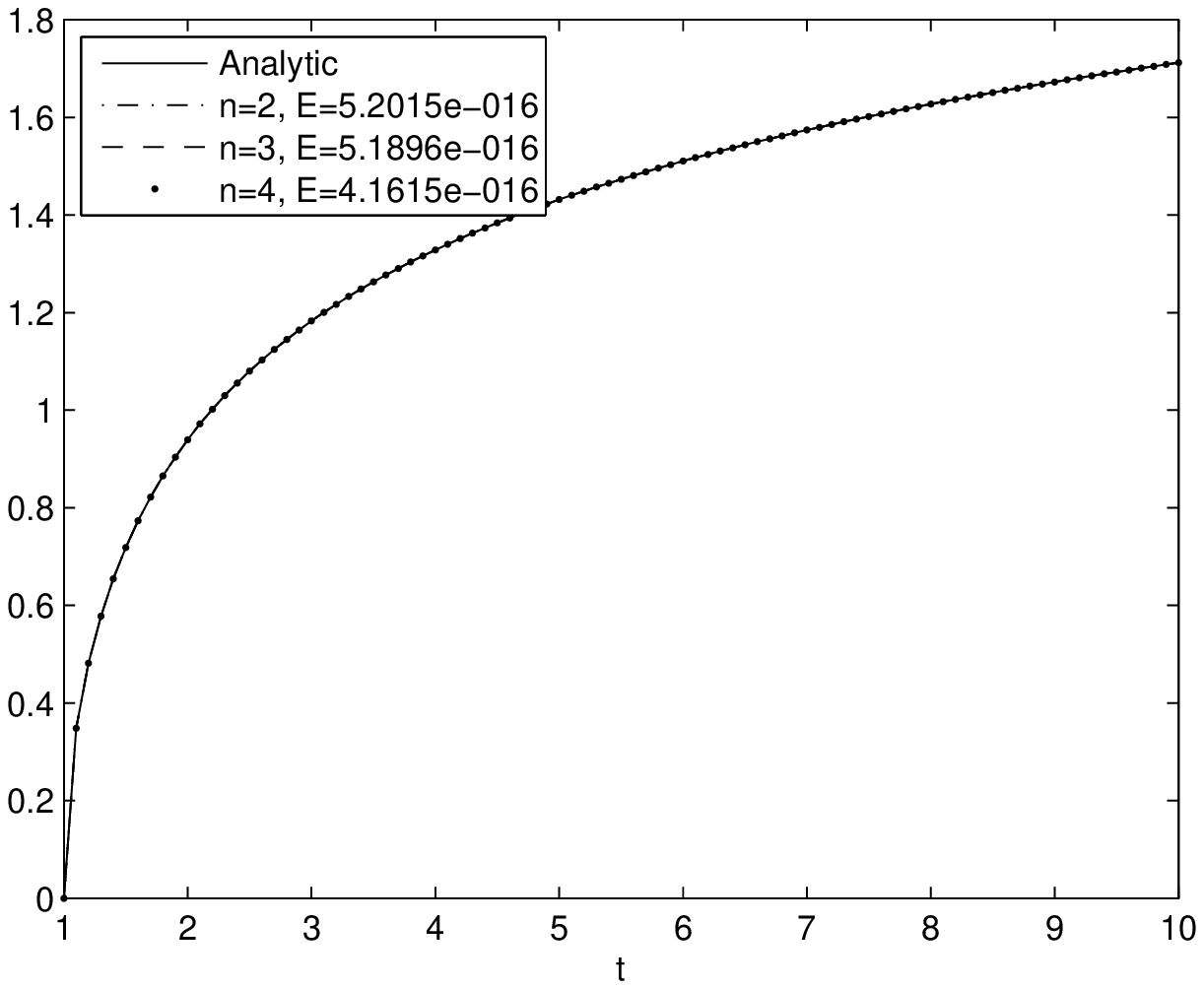}}
\subfigure[$\LHIHz(t^4)$]{\label{Expt4Nfixed}\includegraphics[scale=0.54]{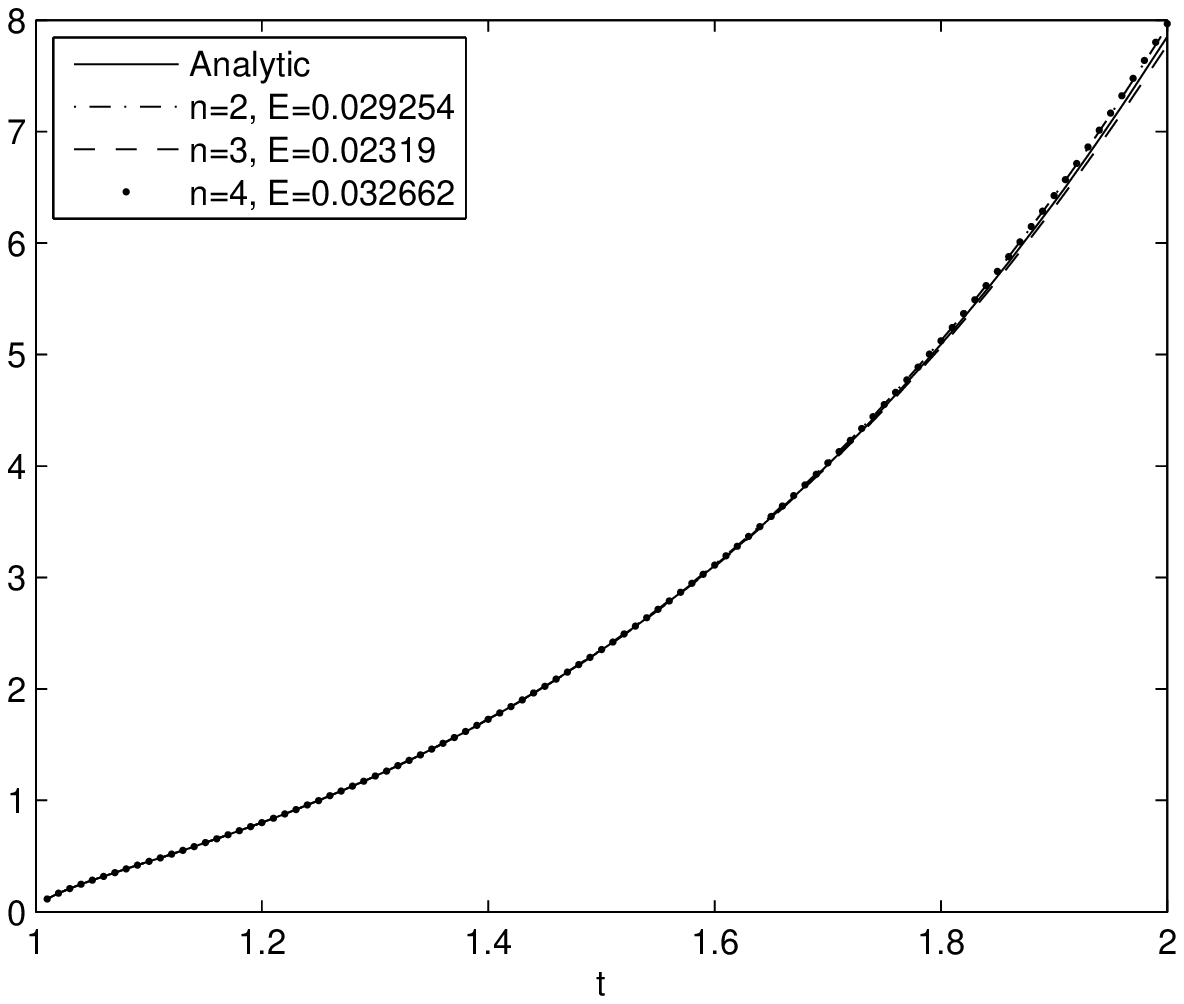}}
\subfigure[$\LHIHz(t^9)$]{\label{Expt9Nfixed}\includegraphics[scale=0.54]{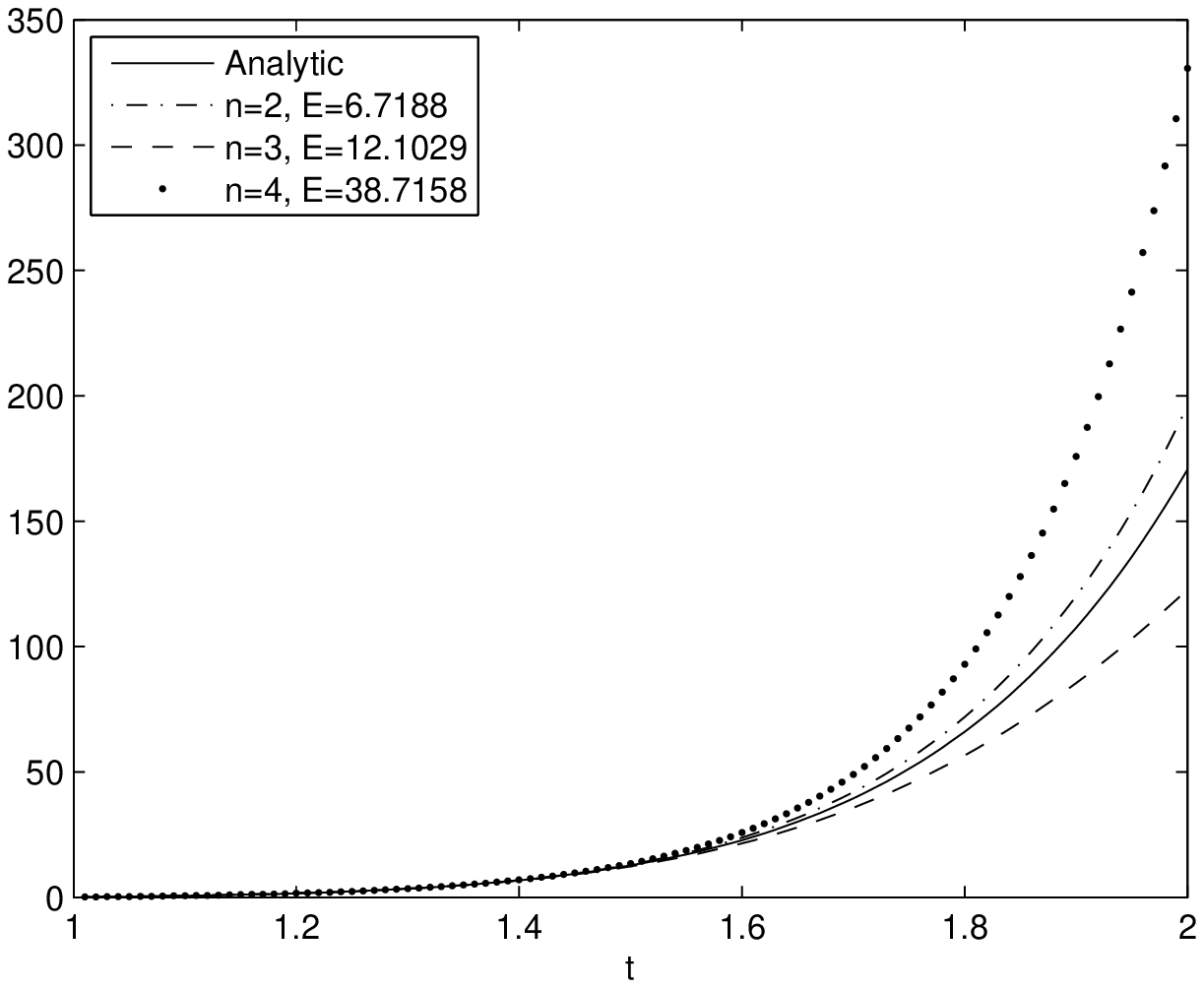}}
\end{center}
\caption{Analytic vs. numerical approximation for $n=2,3,4$ and $N=5$.}
\end{figure}

%-----------------------------------------

\section{Error analysis}
\label{ErrorSecHad}

In the previous section we deduced an approximation formula for
the left Riemann--Liouville fractional integral (Eq.~\eqref{Approx:LI}).
The order of magnitude of the coefficients that we ignore during
this procedure is small for the examples that we have chosen
(Tables~\ref{tab1} and \ref{tab2}). The aim of this section
is to obtain an estimation for the error, when considering sums
up to order $N$. We proved that
\begin{multline*}
\LI x(t)=\frac{(t-a)^\a}{\Gamma(\a+1)}x(a)
+\cdots+\frac{(t-a)^{\a+n-1}}{\Gamma(\a+n)}x^{(n-1)}(a)\\
+\frac{(t-a)^{\a+n-1}}{\Gamma(\a+n)}
\int_a^t \left(1-\frac{\tau-a}{t-a}\right)^{\a+n-1}x^{(n)}(\tau)d\tau.
\end{multline*}
Expanding up to order $N$ the binomial, we get
$$
\left(1-\frac{\tau-a}{t-a}\right)^{\a+n-1}
=\sum_{p=0}^N\frac{\Gamma(p-\a-n+1)}{\Gamma(1-\a-n)\,p!}\left(\frac{\tau
-a}{t-a}\right)^p+R_N(\tau),
$$
where
$$
R_N(\tau)=\sum_{p=N+1}^\infty\frac{\Gamma(p-\a-n+1)}{\Gamma(1
-\a-n)\,p!}\left(\frac{\tau-a}{t-a}\right)^p.
$$
Since $\tau\in[a,t]$, we easily deduce an upper bound for $R_N(\tau)$:
\begin{equation*}
\begin{split}
\left|R_N(\tau)\right| &\leq  \sum_{p=N+1}^\infty\left|
\frac{\Gamma(p-\a-n+1)}{\Gamma(1-\a-n)\,p!} \right|
= \sum_{p=N+1}^\infty\left| \binom{\a+n-1}{p} \right|
\leq  \sum_{p=N+1}^\infty \frac{e^{(\a+n-1)^2+\a+n-1}}{p^{\a+n}}\\
&\leq \int_{N}^\infty \frac{e^{(\a+n-1)^2+\a+n-1}}{p^{\a+n}}dp
=\frac{e^{(\a+n-1)^2+\a+n-1}}{(\a+n-1)N^{\a+n-1}}.
\end{split}
\end{equation*}
Thus, we obtain an estimation for the truncation error\index{Error! Truncation} $E_{tr}(\cdot)$:
$$
\left| E_{tr}(t)\right|\leq L_n \frac{(t-a)^{\a+n}
e^{(\a+n-1)^2+\a+n-1}}{\Gamma(\a+n)(\a+n-1)N^{\a+n-1}},
$$
where $\displaystyle L_n=\max_{\tau\in[a,t]}|x^{(n)}(\tau)|$.

We proceed with an estimation for the error on the approximation for the Hadamard fractional integral.
We have proven before that
\begin{eqnarray*}
\LHI x(t)&=&\frac{x(a)}{\Gamma(\a+1)} \left(\ln\frac{t}{a}\right)^{\a}
+\frac{a\dot{x}(a)}{\Gamma(\a+2)} \left(\ln\frac{t}{a}\right)^{\a+1}
+\frac{(a\dot{x}(a)+a^2\ddot{x}(a))}{\Gamma(\a+3)} \left(\ln\frac{t}{a}\right)^{\a+2}\\
&&+\frac{1}{\Gamma(\a+3)}  \left(\ln\frac{t}{a}\right)^{\a+2} \int_a^t
\sum_{p=0}^\infty\Gamma_1(\a,p,t)(\dot{x}(\tau)+3\tau \ddot{x}(\tau)+\tau^2\dddot{x}(\tau))d\tau.
\end{eqnarray*}
When we consider finite sums up to order $N$, the error is given by
$$
\left| E_{tr}(t)\right|=\left|\frac{1}{\Gamma(\a+3)}
\left(\ln\frac{t}{a}\right)^{\a+2}
\int_a^t R_N(\tau) (\dot{x}(\tau)+3\tau \ddot{x}(\tau)
+\tau^2\dddot{x}(\tau))d\tau\right|
$$
with
$$
R_N(\tau)=\sum_{p=N+1}^\infty\frac{\Gamma(p-\a-2)}{\Gamma(-\a-2)p!}\frac{
\left(\ln\frac{\tau}{a}\right)^p}{\left(\ln\frac{t}{a}\right)^p}.
$$
Since $\tau\in[a,t]$, we have
$$
\begin{array}{ll}
|R_N(\tau)| &  \displaystyle \leq \sum_{p=N+1}^\infty\left|
\binom{\a+2}{p} \right|
\leq  \sum_{p=N+1}^\infty \frac{e^{(\a+2)^2+\a+2}}{p^{\a+3}}\\
&\displaystyle\leq \int_{N}^\infty \frac{e^{(\a+2)^2+\a+2}}{p^{\a+3}}dp
=\frac{e^{(\a+2)^2+\a+2}}{(\a+2)N^{\a+2}}.
\end{array}
$$
Therefore,
$$
\left| E_{tr}(t)\right|\leq  \frac{e^{(\a+2)^2+\a+2}}{(\a+2)N^{\a+2}}
\frac{\left(\ln\frac{t}{a}\right)^{\a+2}}{\Gamma(\a+3)}
\left[(t-a)L_1(t)+3(t-a)^2L_2(t)+(t-a)^3L_3(t)\right],
$$
where
$$
L_i(t)=\max_{\tau\in[a,t]}|x^{(i)}(\tau)|,
\quad i \in \{1,2,3\}.
$$
We remark that the error formula tends to zero as $N$ increases. 
Moreover, if we consider the approximation
$$
\LHI x(t)\approx\sum_{i=0}^{n}A_i(\alpha,N)\left(\ln\frac{t}{a}\right)^{\a+i}
x_{i,0}(t)+\sum_{p=n+1}^N B(\a,p)\left(\ln\frac{t}{a}\right)^{\a+n-p}V_p(t)
$$
with $N \geq n+1$ and
$$
A_i(\a,N)=\frac{1}{\Gamma(\a+i+1)}\left[1
+\sum_{p=n-i+1}^N\frac{\Gamma(p-\a-n)}{\Gamma(-\a-i)(p-n+i)!}\right],
$$
then the error is bounded by the expression
$$
\left| E_{tr}(t)\right|\leq L_n(t)\frac{e^{(\a+n)^2+\a+n}}{
\Gamma(\a+n+1)(\a+n)N^{\a+n}} \left(\ln\frac{t}{a}\right)^{\a+n}(t-a),
$$
where
$$
L_n(t)=\max_{\tau\in[a,t]}|x_{n,1}(\tau)|.
$$
For the right Hadamard integral, the error is bounded by
$$
\left| E_{tr}(t)\right|\leq L_n(t)\frac{e^{(\a+n)^2
+\a+n}}{\Gamma(\a+n+1)(\a+n)N^{\a+n}}
\left(\ln\frac{b}{t}\right)^{\a+n}(b-t),
$$
where
$$
L_n(t)=\max_{\tau\in[t,b]}|x_{n,1}(\tau)|.
$$

\CP
%================================================================
\chapter{Direct methods}
\label{Direct}\index{Direct method}

In the presence of fractional operators, the same ideas that were discussed 
in Section~\ref{ClassisDir}, are applied to discretize the problem. 
Many works can be found in the literature that use different types 
of basis functions to establish Ritz-like methods for the fractional 
calculus of variations and optimal control. Nevertheless, 
finite differences have got less interest. A brief introduction
of using finite differences has been made in \cite{Riewe}, which can be regarded
as a predecessor to what we call here an Euler-like direct method.
A generalization of Leitmann's direct method can be found in \cite{AlD},
while \cite{Lotfi} discusses the Ritz direct method for optimal control problems
that can easily be reduced to a problem of the calculus of variations.

%--------------------------------------------------------

\section{Finite differences for fractional derivatives}

Recall the definitions of Gr\"{u}nwald--Letnikov\index{Gr\"{u}nwald--Letnikov fractional derivative! Left}, 
e.g. \eqref{LGLdef}. It exhibits a finite difference\index{Finite! differences} 
nature involving an infinite series. For numerical purposes we need a finite sum in \eqref{LGLdef}. 
Given a grid on $[a,b]$ as $a=t_0,t_1,\ldots,t_n=b$, where $t_i=t_0+ih$ for some $h>0$, 
we approximate the left Riemann--Liouville 
derivative\index{Riemann--Liouville fractional derivative! Left} as
\begin{equation}
\label{GLApprx}
\LDa x(t_i)\approx \frac{1}{h^\a} \sum_{k=0}^{i}\w x(t_i-kh),
\end{equation}
where $\w=(-1)^k\binom{\a}{k}=\frac{\Gamma(k-\a)}{\Gamma(-\a)\Gamma(k+1)}$.
\begin{remark}
Similarly, one can approximate the right Riemann--Liouville  derivative by
\begin{equation}
\label{GLApprxR}
\RD x(t_i)\approx \frac{1}{h^\a} \sum_{k=0}^{n-i}\w x(t_i+kh).
\end{equation}
\end{remark}
\begin{remark}
The Gr\"{u}nwald--Letnikov approximation of Riemann--Liouville 
is a first order approximation \cite{Podlubny}, i.e.,
\begin{equation}
\LDa x(t_i)= \frac{1}{h^\a} \sum_{k=0}^{i}\w x(t_i-kh)+\mathcal{O}(h).\nonumber
\end{equation}
\end{remark}
\begin{remark}
It has been shown that the implicit Euler method solution to a certain fractional 
partial differential equation based on Gr\"{u}nwald--Letnikov approximation to the 
fractional derivative, is unstable \cite{Meerschaert}. Therefore, discretizing 
fractional derivatives,  shifted Gr\"{u}nwald--Letnikov derivatives are used 
and despite the slight difference they exhibit a stable performance at least 
for certain cases. The left shifted Gr\"{u}nwald--Letnikov derivative is defined by
\begin{equation*}
\sGLa x(t_i)\approx \frac{1}{h^\a} \sum_{k=0}^{i}\w x(t_i-(k-1)h).
\end{equation*}
\end{remark}

Other finite difference approximations can be found in the literature. 
Specifically, we refer to \cite{Kai2}, Diethelm's backward finite differences formula 
for Caputo fractional derivative, with $0<\a<2$ and $\a\neq 1$, 
that is an approximation of order $\mathcal{O}(h^{2-\a})$:
$$
\LDC x(t_i)\approx \frac{h^{-\a}}{\Gamma(2-\a)}\sum_{j=0}^i a_{i,j}
\left(x_{i-j}-\sum_{k=0}^{\lfloor\a\rfloor}\frac{(i-j)^kh^k}{k!}x^{(k)}(a)\right),
$$
where
$$
a_{i,j}=\left \{
\begin{array}{ll}
1,& \text{if } i=0,\\
(j+1)^{1-\a}-2j^{1-\a}+(j-1)^{1-\a},&\text{if } 0<j<i,\\
(1-\a)i^{-\a}-i^{1-\a}+(i-1)^{1-\a},&\text{if } j=i.
\end{array}\right.
$$

%--------------------------------------------------------

\section{Euler-like direct method for variational problems}

\subsection{Euler's classic direct method}

Euler's method\index{Euler method} in the classical theory of the calculus 
of variations uses finite difference approximations for derivatives and 
is also referred as the method of finite differences. The basic idea 
of this method is that instead of considering the values of a functional
\begin{equation*}
J[x(\cdot)]=\int_a^b L(t, x(t), \dot{x}(t))dt
\end{equation*}
with boundary conditions $x(a)=x_a$ and $ x(b)=x_b$, on arbitrary admissible curves, 
we only track the values at an $n+1$ grid points, $t_i,~i=0,\ldots,n$, 
of the interested time interval \cite{fda12}. The functional $J[x(\cdot)]$ 
is then transformed into a function $\Psi(x(t_1),x(t_2),\ldots,x(t_{n-1}))$ 
of the values of the unknown function on mesh points. Assuming $h=t_{i}-t_{i-1}$, 
$x(t_i)=x_i$ and $\dot{x}_i\approx \frac{x_{i}-x_{i-1}}{h}$, one has
\begin{eqnarray*}
J[x(\cdot)]&\approx&\Psi(x_1,x_2,\ldots,x_{n-1})
=h\sum_{i=1}^{n}L\left(t_i, x_i,\frac{x_{i}-x_{i-1}}{h}\right),\\
&&x_0=x_a,\quad x_n=x_b.
\end{eqnarray*}
The desired values of $x_i$, $i=1,\ldots,n-1$, are the extremum 
of the multi-variable function $\Psi$ which is the solution to the system
$$
\frac{\partial \Psi}{\partial x_i}=0,\quad i=1,\ldots,n-1.
$$

The fact that only two terms in the sum, $(i-1)$th and $i$th, depend on $x_i$ makes 
it rather easy to find the extremum of $\Psi$ solving a system of algebraic equations.
For each $n$, we obtain a polygonal line which is an approximate solution of the 
original problem. It has been shown that passing to the limit as $h\rightarrow 0$, 
the linear system corresponding to finding the extremum of $\Psi$ is equivalent 
to the Euler--Lagrange equation for problem \cite{Tuckey}.

% ------------------------------------------

\subsection{Euler-like direct method}

As mentioned earlier, we consider a simple version of fractional variational problems 
where the fractional term has a Riemann--Liouville derivative on a finite time interval 
$[a,b]$. The boundary conditions are given and we approximate the derivative using 
Gr\"{u}nwald--Letnikov approximation given by \eqref{GLApprx}. In this context, 
we discretize the functional in \eqref{Functional} using a simple quadrature rule 
on the mesh points, $a=t_0,t_1,, \ldots,t_n=b$, with $h=\frac{b-a}{n}$. The goal 
is to find the values $x_1,\ldots,x_{n-1}$ of the unknown function $x(\cdot)$ 
at the points $t_i$, $i=1,\ldots,n-1$. The values of $x_0$ and $x_n$ are given. 
Applying the quadrature rule gives
\begin{eqnarray}
J[x(\cdot)]&=& \sum_{i=1}^{n}\int_{t_{i-1}}^{t_{i}} L(t_i, x_i, \LDai x_i)dt
\approx \sum_{i=1}^{n} h L(t_i, x_i, \LDai x_i),\nonumber
\end{eqnarray}
and by approximating the fractional derivatives at mesh points using \eqref{GLApprx} we have
\begin{equation}
\label{disFuncl}
J[x(\cdot)]\approx \sum_{i=1}^{n}hL\left(t_i, x_i, \frac{1}{h^\a} \sum_{k=0}^{i}\w x_{i-k}\right).
\end{equation}
Hereafter the procedure is the same as in classical case. The right-hand-side of \eqref{disFuncl} 
can be regarded as a function $\Psi$ of $n-1$ unknowns $\mathbf{x}=(x_1,x_2,\ldots,x_{n-1})$,
\begin{equation}
\label{sumFrac}
\Psi(\mathbf{x})=\sum_{i=1}^{n}hL\left(t_i, x_i, \frac{1}{h^\a} \sum_{k=0}^{i}\w x_{i-k}\right).
\end{equation}
To find an extremum for $\Psi$, one has to solve the following system of algebraic equations:
\begin{equation}
\label{AlgSys}
\frac{\partial \Psi}{\partial x_i}=0,\qquad i=1,\ldots,n-1.
\end{equation}
Unlike the classical case, all terms, starting from $i$th term, 
in \eqref{sumFrac} depend on $x_i$ and we have
\begin{eqnarray}
\label{GenSys}
\frac{\partial \Psi}{\partial x_i}
&=&h\frac{\partial L}{\partial x}(t_i,x_i,\LDai x_i)+h\sum_{k=0}^{n-i}\frac{1}{h^\a}\w
\frac{\partial L}{\partial~\LDa x}(t_{i+k},x_{i+k},\LDaik x_{i+k}).
\end{eqnarray}
Equating the right hand side of \eqref{GenSys} with zero one has
\begin{align}
&\frac{\partial L}{\partial x}(t_i,x_i,\LDai x_i)+\frac{1}{h^\a}
\sum_{k=0}^{n-i}\w\frac{\partial L}{\partial~\LDa x}(t_{i+k},x_{i+k},
\LDaik x_{i+k})=0.\nonumber
\end{align}
Passing to the limit and considering the approximation formula for the right 
Riemann--Liouville derivative, equation \eqref{GLApprxR}, 
it is straightforward to verify that:

\begin{theorem}
The Euler-like method for a fractional variational problem of the form \eqref{Functional} 
is equivalent to the fractional Euler--Lagrange equation\index{Euler--Lagrange equation! Fractional}
\begin{equation}
\frac{\partial L}{\partial x}+\RD\frac{\partial L}{\partial~\LDa x}=0,\nonumber
\end{equation}
as the mesh size, $h$, tends to zero.
\end{theorem}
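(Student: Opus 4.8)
The plan is to differentiate the discretized functional $\Psi$ of \eqref{sumFrac} with respect to each interior nodal value, to recognize the resulting finite sums as Grünwald--Letnikov difference quotients, and then to let the step size vanish.

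First I would make explicit where $x_i$ enters $\Psi(\x)=\sum_{j=1}^{n}hL\bigl(t_j,x_j,\frac{1}{h^\a}\sum_{k=0}^{j}\w x_{j-k}\bigr)$. In the $j=i$ summand, $x_i$ appears directly as the second argument of $L$; in every summand with $j\geq i$ it appears inside the discrete fractional derivative $\frac{1}{h^\a}\sum_{k=0}^{j}\w x_{j-k}$ through the single term $k=j-i$, which carries the weight $\omega_{j-i}^\a$. Collecting all these contributions reproduces \eqref{GenSys},
$$
\frac{\partial\Psi}{\partial x_i}
=h\,\frac{\partial L}{\partial x}(t_i,x_i,\LDai x_i)
+h\sum_{k=0}^{n-i}\frac{1}{h^\a}\w\,
\frac{\partial L}{\partial \LDa x}(t_{i+k},x_{i+k},\LDaik x_{i+k}),
$$
and setting this equal to zero and cancelling the common factor $h$ gives the discrete optimality equations for $i=1,\dots,n-1$.

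Next I would observe that, writing $\psi(t):=\frac{\partial L}{\partial \LDa x}\bigl(t,x(t),\LDa x(t)\bigr)$ and sampling it on the grid, the sum $\frac{1}{h^\a}\sum_{k=0}^{n-i}\w\,\psi(t_{i+k})$ is exactly the right Grünwald--Letnikov approximation \eqref{GLApprxR} of $\RD\psi$ at $t_i$, because $t_{i+k}=t_i+kh$ and $b=t_n$; similarly $\LDai x_i$ is the left Grünwald--Letnikov approximation \eqref{GLApprx} of $\LDa x$ at $t_i$. Using the first-order consistency of the Grünwald--Letnikov scheme recorded in the remarks after \eqref{GLApprxR} (the $\mathcal{O}(h)$ estimate), the assumed continuity of $L$ and of its first partial derivatives, and the convergence of the nodal values $x_i$ to the limiting curve $x(t_i)$, the discrete equation passes, for each fixed interior $t=t_i$, to the limit
$$
\frac{\partial L}{\partial x}\bigl(t,x(t),\LDa x(t)\bigr)
+\RD\frac{\partial L}{\partial \LDa x}\bigl(t,x(t),\LDa x(t)\bigr)=0,
$$
which is precisely the fractional Euler--Lagrange equation of Theorem~\ref{agr}. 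Since the interior mesh points become dense in $[a,b]$ as $h\to 0$, the identity holds throughout $(a,b)$.

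The delicate point --- the one I expect to be the real obstacle --- is the passage to the limit itself: one must argue that the solutions of the algebraic system \eqref{AlgSys} converge, in a sense strong enough (e.g.\ uniformly, together with the relevant difference quotients), to a function regular enough for the Grünwald--Letnikov consistency estimates to be legitimate, and that this limit commutes with the nonlinear maps $\partial L/\partial x$ and $\partial L/\partial \LDa x$. Under the standing smoothness hypotheses on $L$ the commutation reduces to local uniform convergence, but a fully rigorous treatment would require a priori bounds on the discrete extremizers. As in the classical finite-difference method (cf.\ \cite{Tuckey}), it is this heuristic limiting argument that the phrase ``straightforward to verify'' is meant to invoke.
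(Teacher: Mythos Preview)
Your proposal is correct and follows essentially the same route as the paper: derive the discrete optimality condition \eqref{GenSys}, recognize the forward-weighted sum as the right Gr\"unwald--Letnikov quotient \eqref{GLApprxR}, and pass to the limit. The only cosmetic difference is that the paper's formal proof recasts the derivation of the discrete equation through a test-sequence argument (inserting $\epsilon\eta_i$ and Taylor-expanding, then reordering the double sum), whereas you compute $\partial\Psi/\partial x_i$ directly---but this direct computation is exactly what the paper already carries out in the paragraph preceding the theorem, so the two are equivalent. Your closing caveat about the limiting step mirrors the paper's own treatment, which simply \emph{postulates} the existence of a continuous limit $\overline{x}$ satisfying a uniform approximation property rather than proving it.
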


\begin{proof}
Consider a minimizer $(x_1,\ldots,x_{n-1})$ of $\Psi$, a variation\index{Variation}
function $\eta\in C[a,b]$ with $\eta(a)=\eta(b)=0$ and define $\eta_i=\eta(t_i)$, 
for $i=0,\ldots,n$. We remark that $\eta_0=\eta_n=0$ and that 
$(x_1+\epsilon\eta_1,\ldots,x_{n-1}+\epsilon\eta_{n-1})$ is a variation of 
$(x_1,\ldots,x_{n-1})$, with $|\epsilon|<r$, for some fixed $r>0$. Therefore, 
since $(x_1,\ldots,x_{n-1})$ is a minimum for $\Psi$, proceeding with Taylor's expansion, we deduce
\begin{eqnarray*}
0&\leq & \Psi(x_1+\epsilon\eta_1,\ldots,x_{n-1}+\epsilon\eta_{n-1})-\Psi(x_1,\ldots,x_{n-1})\\
&=&\epsilon\sum_{i=1}^n h\left[ \frac{\partial L}{\partial x}[i]\eta_i
+\frac{\partial L}{\partial {_aD_t^{\a}}}[i] \frac{1}{h^\a}
\sum_{k=0}^i(\omega^\a_k) \eta_{i-k} \right]+\mathcal{O}(\epsilon),
\end{eqnarray*}
where
$$
[i]=\left(t_i,x_i,\frac{1}{h^\a} \sum_{k=0}^i(\omega^\a_k) x_{i-k} \right).
$$
Since $\epsilon$ takes any value, it follows that
\begin{equation}
\label{sum1}
\sum_{i=1}^n h\left[ \frac{\partial L}{\partial x}[i]\eta_i
+\frac{\partial L}{\partial {_aD_t^{\a}}}[i] \frac{1}{h^\a}
\sum_{k=0}^i(\omega^\a_k) \eta_{i-k} \right]=0.
\end{equation}
On the other hand, since $\eta_0=0$, reordering the terms of the sum, 
it follows immediately that
$$
\sum_{i=1}^n \frac{\partial L}{\partial {_aD_t^{\a}}}[i]
\sum_{k=0}^i(\omega^\a_k) \eta_{i-k}
= \sum_{i=1}^n \eta_i \sum_{k=0}^{n-i}(\omega^\a_k) 
\frac{\partial L}{\partial {_aD_t^{\a}}}[i+k].
$$
Substituting this relation into equation \eqref{sum1}, we obtain
$$
\sum_{i=1}^n\eta_i h\left[ \frac{\partial L}{\partial x}[i]+\frac{1}{h^\a} 
\sum_{k=0}^{n-i}(\omega^\a_k)\frac{\partial L}{\partial {_aD_t^{\a}}}[i+k] \right]=0.
$$
Since $\eta_i$ is arbitrary, for $i=1,\ldots,n-1$, we deduce that
$$
\frac{\partial L}{\partial x}[i]+\frac{1}{h^\a} 
\sum_{k=0}^{n-i}(\omega^\a_k)\frac{\partial L}{\partial {_aD_t^{\a}}}[i+k]=0, 
\quad \mbox{for } i=1,\ldots,n-1.
$$
Let us study the case when $n$ goes to infinity. 
Let $\overline t \in ]a,b[$ and $i\in\{1,\ldots,n\}$ such that 
$t_{i-1}<\overline t \leq t_i$. First observe that in such case, 
we also have $i\to\infty$ and $n-i\to\infty$. 
In fact, let $i\in\{1,\ldots,n\}$ be such that
$$
a+(i-1)h<\overline t\leq a+ih.
$$
So, $i<(\overline t-a)/h+1$, which implies that
$$
n-i>n\frac{b-\overline t}{b-a}-1.
$$
Then
$$
\lim_{n\to\infty,i\to\infty}t_i=\overline t.
$$
Assume that there exists a function $\overline x\in C[a,b]$ satisfying
$$
\forall \epsilon>0\, \exists N \, 
\forall n\geq N \,: |x_i-\overline x(t_i)|
<\epsilon, \quad \forall i=1,\ldots,n-1.
$$
As $\overline x$ is uniformly continuous, we have
$$
\forall \epsilon>0\, \exists N \, \forall n\geq N \,: 
|x_i-\overline x(\overline t)|<\epsilon, \quad \forall i=1,\ldots,n-1.
$$
By the continuity assumption of $\overline x$, we deduce that
$$
\lim_{n\to\infty,i\to\infty}\frac{1}{h^\a} \sum_{k=0}^{n-i}(\omega^\a_k)
\frac{\partial L}{\partial {_aD_t^{\a}}}[i+k]={_tD^{\a}_b }
\frac{\partial L}{\partial {_aD_t^{\a}}}(\overline t, 
\overline x (\overline t),{_aD_{\overline t}^{\a}} \overline x(\overline t)).
$$
For $n$ sufficiently large (and therefore $i$ also sufficiently large),
$$
\lim_{n\to\infty,i\to\infty}\frac{\partial L}{\partial x}[i]
=\frac{\partial L}{\partial x}(\overline t, 
\overline x (\overline t),{_aD_{\overline t}^{\a}} \overline x(\overline t)).
$$
In conclusion,
\begin{equation}
\label{fracELE}
\frac{\partial L}{\partial x}(\overline t, \overline x (\overline t),{_aD_{\overline t}^{\a}} 
\overline x(\overline t))+{_tD^{\a}_b }\frac{\partial L}{\partial {_aD_t^{\a}}}(\overline t, 
\overline x (\overline t),{_aD_{\overline t}^{\a}} \overline x(\overline t))=0.
\end{equation}
Using the continuity condition, we prove that the fractional Euler--Lagrange equation 
\eqref{fracELE} for all values on the closed interval $a\leq t\leq b$ holds.
\end{proof}

%--------------------------------------------------------

\subsection{Examples}

Now we apply Euler-like direct method to some test problems for which 
the exact solutions are in hand. Although we propose problems on 
to the interval $[0,1]$, moving to arbitrary intervals is a matter 
of more computations. To measure the errors related to approximations, 
different norms can be used. Since a direct method seeks for the function 
values at certain points, we use the maximum norm to determine how close 
we can get to the exact value at that point. Assume that the exact value 
of the function $x(\cdot)$, at the point $t_i$, is $x(t_i)$ 
and it is approximated by $x_i$. The error is defined as
\begin{equation}
\label{ErrorDis}
E=\max\{|x(t_i)-x_i|,~i=1,2,\ldots,n\}.
\end{equation}

\begin{example}
\label{Example1}
Our goal here is to minimize a quadratic Lagrangian\index{Lagrangian} 
on $[0,1]$ with fixed boundary conditions. 
Consider the following minimization problem:
\begin{equation}
\label{Exmp1}
\left\{
\begin{array}{l}
J[x(\cdot)]=\int_0^1 \left(\LDz x(t)
-\frac{2}{\Gamma(2.5)}t^{1.5}\right)^2 dt \rightarrow \min\\
x(0)=0,~x(1)=1.
\end{array}
\right.
\end{equation}
Since the Lagrangian is always positive, 
problem \eqref{Exmp1} attains its minimum when
$$
\LDz x(t)-\frac{2}{\Gamma(2.5)}t^{1.5}=0,
$$
and has the obvious solution of the form $x(t)=t^2$ 
because $\LDz t^2=\frac{2}{\Gamma(2.5)}t^{1.5}$.
\end{example}

To begin with, we approximate the fractional derivative by
\begin{equation}
\LDz x(t_i)\approx \frac{1}{h^{0.5}} \sum_{k=0}^{i}\wh x(t_i-kh)\nonumber
\end{equation}
for a fixed $h>0$. The functional is now transformed into
\begin{equation}
J[x(\cdot)]\approx\int_0^1 \left(\frac{1}{h^{0.5}} 
\sum_{k=0}^{i}\wh x_{i-k}-\frac{2}{\Gamma(2.5)}t^{1.5}\right)^2dt.\nonumber
\end{equation}
Finally, we approximate the integral by a rectangular 
rule and end with the discrete problem
\begin{equation}
\Psi(\mathbf{x})=\sum_{i=1}^{n}h\left(\frac{1}{h^{0.5}} 
\sum_{k=0}^{i}\wh x_{i-k}-\frac{2}{\Gamma(2.5)}t^{1.5}_i\right)^2.\nonumber
\end{equation}

Since the Lagrangian\index{Lagrangian} in this example is quadratic, system \eqref{AlgSys} 
has a linear form and therefore is easy to solve. Other problems may end with a system 
of nonlinear equations. Simple calculations lead to the system
\begin{equation}
\label{Ex1Sys}
\mathbf{A}\mathbf{x}=\mathbf{b},
\end{equation}
in which
\begin{equation*}
\mathbf{A}=\left[\begin{array}{llll}
\sum_{i=0}^{n-1}A_i^2        & \sum_{i=1}^{n-1}A_{i}A_{i-1} & \cdots & \sum_{i=n-2}^{n-1}A_{i}A_{i-(n-2)} \\
\sum_{i=0}^{n-2}A_{i}A_{i+1} & \sum_{i=1}^{n-2}A_i^2        & \cdots & \sum_{i=n-3}^{n-2}A_{i}A_{i-(n-3)} \\
\sum_{i=0}^{n-3}A_{i}A_{i+2} & \sum_{i=1}^{n-3}A_{i}A_{i+1} & \cdots & \sum_{i=n-4}^{n-3}A_{i}A_{i-(n-4)} \\
\vdots                       & \vdots                       & \ddots & \vdots                        \\
\sum_{i=0}^{1}A_{i}A_{i+n-2} & \sum_{i=0}^{1}A_{i}A_{i+n-3} & \cdots & \sum_{i=0}^{1}A_i^2
\end{array}\right]
\end{equation*}
where $A_i=(-1)^{i}h^{1.5}\binom{0.5}{i}$, $\x=(x_1,\cdots,x_{n-1})^T$ 
and $\mathbf{b}=(b_1,\cdots,b_{n-1})^T$ with
\begin{equation*}
b_i=\sum_{k=0}^{n-i}\frac{2h^2A_k}{\Gamma(2.5)}t_{k+i}^{1.5}
-A_{n-i}A_0-\left(\sum_{k=0}^{n-i}A_kA_{k+i}\right).
\end{equation*}
The linear system \eqref{Ex1Sys} is easily solved for different values of $n$. 
As indicated in Figure~\ref{Ex1Fig}, by increasing the value of $n$ we get better solutions.
\begin{figure}[!tp]
\begin{center}
\includegraphics[scale=.8]{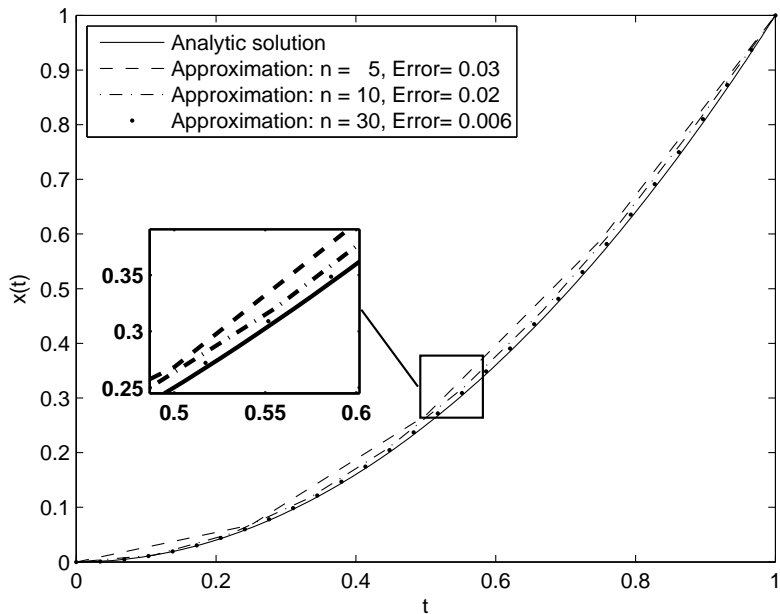}
\end{center}
\caption{Analytic and approximate solutions of Example~\ref{Example1}.}\label{Ex1Fig}
\end{figure}

Let us now move to another example for which the solution 
is obtained by the fractional Euler--Lagrange equation.
\begin{example}
\label{Example2}
Consider the following minimization problem:
\begin{equation}\label{Exmp2}
\left\{
\begin{array}{l}
J[x(\cdot)]=\int_0^1 \left(\LDz x(t)-\dot{x}^2(t)\right) dt \rightarrow \min\\
x(0)=0,~x(1)=1.
\end{array}
\right.
\end{equation}
In this case the only way to get a solution is by use of Euler--Lagrange 
equations\index{Euler--Lagrange equation! Fractional}. The Lagrangian depends 
not only on the fractional derivative, but also on the first order derivative 
of the function. The Euler--Lagrange equation for this setting becomes
\begin{equation*}
\frac{\partial L}{\partial x}+\RD \frac{\partial L}{\partial \LDa}
-\frac{d}{dt}\left(\frac{\partial L}{\partial \dot{x}}\right)=0
\end{equation*}
and, by direct computations, a necessary condition 
for $x(\cdot)$ to be a minimizer of \eqref{Exmp2} is
$$
_tD_1^{\a} 1+2\ddot{x}(t)=0,~\mbox{or}~ \ddot{x}(t)
=\frac{1}{2\Gamma(1-\a)}(1-t)^{-\a}.
$$
Subject to the given boundary conditions, the above second-order 
ordinary differential equation has the solution
\begin{equation}
\label{solEx51}
x(t)=-\frac{1}{2\Gamma(3-\a)}(1-t)^{2-\a}
+\left(1-\frac{1}{2\Gamma(3-\a)}\right)t+\frac{1}{2\Gamma(3-\a)}.
\end{equation}
\end{example}

Discretizing problem \eqref{Exmp2} with the same assumptions 
of Example~\ref{Example1} ends in a linear system of the form
\begin{equation}\label{Ex2Sys}
\left[\begin{array}{ccccccc}
2      & -1    & 0     & 0    & \cdots & 0 & 0\\
-1     & 2     & -1    & 0    & \cdots & 0 & 0\\
0      & -1    & 2     & -1   & \cdots & 0 & 0\\
\vdots & \vdots& \vdots&\vdots& \ddots & \vdots&\vdots \\
0      &  0    & 0     &  0   & \cdots &-1 & 2
\end{array}\right]\left[\begin{array}{c}x_1\\x_2\\x_3\\\vdots\\x_{n-1}\end{array}\right]=
\left[\begin{array}{c}b_1\\b_2\\b_3\\\vdots\\b_{n-1}\end{array}\right],
\end{equation}
where
$$
b_i=\frac{h}{2}\sum_{k=0}^{n-i-1}(-1)^{k}h^{0.5}\binom{0.5}{k},\qquad i=1,2,\cdots,n-2,
$$
and
$$
b_{n-1}=\frac{h}{2}\sum_{k=0}^{1}\left((-1)^{k}h^{0.5}\binom{0.5}{k}\right)+x_n.
$$
System \eqref{Ex2Sys} is linear and can be solved for any $n$ to reach the desired accuracy. 
The analytic solution together with some approximated solutions are shown in Figure~\ref{Ex2Fig}.
\begin{figure}[!tp]
\begin{center}
\includegraphics[scale=.8]{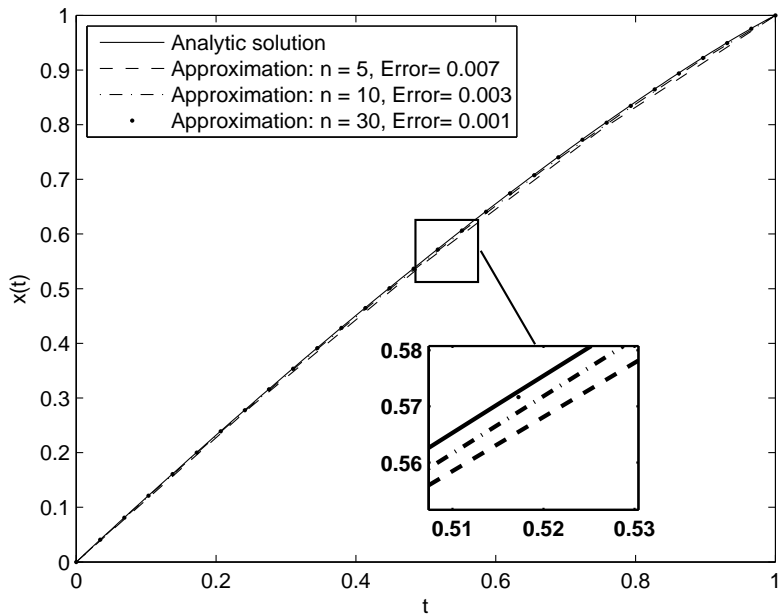}
\end{center}
\caption{Analytic and approximate solutions of Example \ref{Example2}.}\label{Ex2Fig}
\end{figure}

Both examples above end with linear systems and their solvability is simply dependant 
on the matrix of coefficients. Now we try our method on a more complicated problem, 
yet analytically solvable with an oscillating solution.

\begin{example}
\label{Example3}
Let $0<\a<1$ and we are supposed to minimize 
a functional with the following Lagrangian on $[0,1]$:
\begin{equation}
L=\left(\LDz x(t)-\frac{16\Gamma(6)}{\Gamma(5.5)}t^{4.5}
+\frac{20\Gamma(4)}{\Gamma(3.5)}t^{2.5}
-\frac{5}{\Gamma(1.5)}t^{0.5}\right)^4. \nonumber
\end{equation}
This example has an obvious solution too. Since $L$ is positive, 
$\int_0^1 Ldt$ subject to the boundary conditions $x(0)=0$ 
and $x(1)=1$ has a minimizer of the form
\begin{equation}
x(t)=16t^{5}-20t^{3}+5t.\nonumber
\end{equation}
Note that $\LD (t-a)^\nu=\frac{\Gamma(\nu+1)}{\Gamma(\nu+\-\a)}(t-a)^{\nu-\a}$.
\end{example}

The appearance of a fourth power in the Lagrangian, results in a nonlinear system 
as we apply the Euler-like direct method to this problem. For $j=1,\ldots,n-1$, we have
\begin{equation}
\label{Sys3}
\sum_{i=j}^n \left(\omega_{i-j}^{0.5}\right)\left(\frac{1}{h^{0.5}} 
\sum_{k=0}^{i}\wh x_{i-k}-\phi(t_i)\right)^3=0,
\end{equation}
where
$$
\phi(t)=\frac{16\Gamma(6)}{\Gamma(5.5)}t^{4.5}
+\frac{20\Gamma(4)}{\Gamma(3.5)}t^{2.5}-\frac{5}{\Gamma(1.5)}t^{0.5}.
$$
System \eqref{Sys3} is solved for different values of $n$ 
and the results are depicted in Figure~\ref{Ex3Fig}.
\begin{figure}[!hbtp]
\begin{center}
\includegraphics[scale=.8]{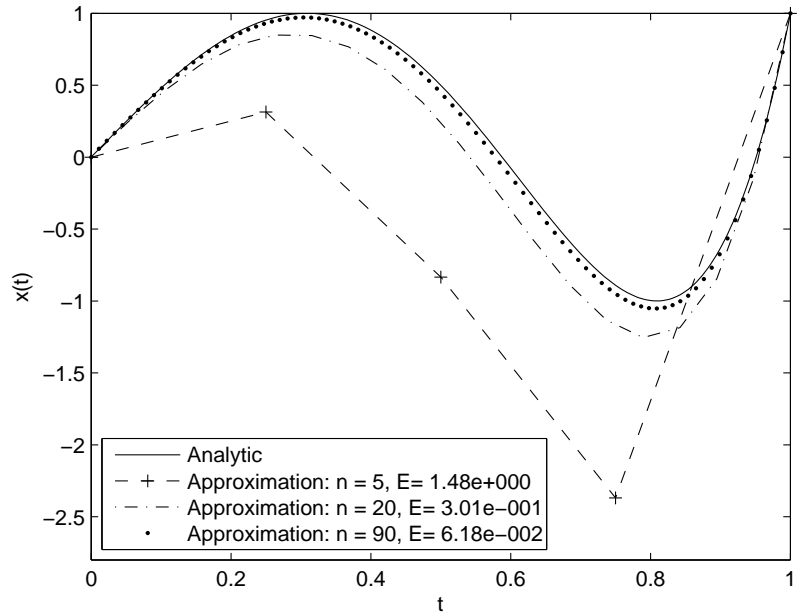}
\end{center}
\caption{Analytic and approximate solutions of Example~\ref{Example3}.}\label{Ex3Fig}
\end{figure}

These examples show that an Euler-like direct method reduces a variational problem
to a system of algebraic equations. When the resulting system is linear,
better solutions are obtained by increasing the number of mesh points
as long as the resulted matrix of coefficients is invertible. The method
is very fast in this case.

The situation is completely different when the problem ends with a nonlinear system. 
Table~\ref{table} summarizes the results regarding the running time and the error.

\begin{table}[!htp]
\center
\begin{tabular}{|c|c|c|c|}
\hline
          &  n &  T   & E \\
\hline
Example 1 &  5  &  $1.9668\times 10^{-4}$ & 0.0264 \\
          &  10 &  $2.8297\times 10^{-4}$ & 0.0158 \\
          &  30 &  $9.8318\times 10^{-4}$ & 0.0065 \\
\hline
Example 2 &  5  &  $2.4053\times 10^{-4}$ & 0.0070 \\
          &  10 &  $3.0209\times 10^{-4}$ & 0.0035 \\
          &  30 &  $7.3457\times 10^{-4}$ & 0.0012 \\
\hline
Example 3 &  5  &  0.0126      & 1.4787 \\
          &  20 &  0.2012      & 0.3006 \\
          &  90 &  26.355      & 0.0618 \\
\hline
\end{tabular}
\caption{Number of mesh points, $n$, with corresponding run time in seconds,
$T$, and error, $E$ \eqref{ErrorDis}.}\label{table}
\end{table}

% -----------------------------------------------------

\section{A discrete time method on the first variation}
\label{sec:2}

The fact that the first variation of a variational functional must vanish along an extremizer
is the base of most effective solution schemes to solve problems of the calculus of variations.
We generalize the method to variational problems involving fractional order derivatives.
First order splines are used as variations, for which fractional derivatives are known.
The Gr\"{u}nwald--Letnikov definition of fractional derivative is used,
because of its intrinsic discrete nature that leads to straightforward approximations \cite{PATDisVar}.

The problem under consideration is stated in the following way: find the extremizers of
\begin{equation}
\label{def:funct}
J[x]=\int_a^bL(t,x(t),\LDa x(t))\,dt
\end{equation}
subject to given boundary conditions $x(a)=x_a$ and $x(b)=x_b$.
Here, $L:[a,b]\times\mathbb R^2\to\mathbb R$ is such that
$\frac{\partial L}{\partial x}$ and $\frac{\partial L}{\partial \LDa x}$
exist and are continuous for all triplets $(t,x(t),\LDa x(t))$.
If $x$ is a solution to the problem and $\eta:[a,b]\to\mathbb R$
is a variation\index{Variation} function, i.e., $\eta(a)=\eta(b)=0$,
then the first variation of $J$ at $x$, with the variation $\eta$,
whatever choice of $\eta$ is taken, must vanish:
\begin{equation}
\label{1variation}
J'[x,\eta]=\int_a^b\left[\frac{\partial L}{\partial x}(t,x(t),\LDa x(t))\eta(t)
+\frac{\partial L}{\partial \LDa x}(t,x(t),\LDa x(t))\LDa \eta(t)\right]\,dt=0.
\end{equation}
Using an integration by parts formula for fractional derivatives and the Dubois--Reymond lemma,
Riewe \cite{Riewe1} proved that if $x$ is an extremizer of \eqref{def:funct}, then
$$
\frac{\partial L}{\partial x}(t,x(t),\LDa x(t))
+{_tD^\a_b}\left(\frac{\partial L}{\partial \LDa x}\right)(t,x(t),\LDa x(t))=0
$$
(see also \cite{AgrawalForm}). This fractional differential 
equation\index{Fractional! differential equation}
is called an Euler--Lagrange equation.
For the state of the art on the subject we refer the reader to the recent book \cite{book:frac}.
Here, instead of solving such Euler--Lagrange equation, we apply a discretization over time
and solve a system of algebraic equations. The procedure has proven
to be a successful tool for classical variational problems \cite{Gregory1,Gregory2}.

The discretization method is the following. Let $n\in\mathbb N$
be a fixed parameter and $h=\frac{b-a}{n}$. If we define
$t_i=a+ih$, $x_i=x(t_i)$, and $\eta_i=\eta(t_i)$ for $i=0,\ldots,n$,
the integral \eqref{1variation} can be approximated by the sum
$$
J'[x,\eta)]\approx h \sum_{i=1}^n\left[\frac{\partial L}{\partial x}(t_i,x(t_i),
{_aD_{t_i}^\a} x(t_i))\eta(t_i)+\frac{\partial L}{\partial \LDa x}(t_i,x(t_i),
{_aD_{t_i}^\a} x(t_i)){_aD_{t_i}^\a} \eta(t_i)\right].
$$
To compute the fractional derivative, we replace it by the sum as in
\eqref{GLApprx}, and to find an approximation for $x$
on mesh points one must solve the equation
\begin{align}\label{VarEquation}
\sum_{i=1}^n&\left[\frac{\partial L}{\partial x}\left(t_i,x_i,\frac{1}{h^\a}
\sum_{k=0}^{i}\w x_{i-k}\right)\eta_i\right.\nonumber\\[5pt]
&\qquad\left.+\frac{\partial L}{\partial \LDa x}\left(t_i,x_i,\frac{1}{h^\a}
\sum_{k=0}^{i}\w x_{i-k}\right)\frac{1}{h^\a} \sum_{k=0}^{i}\w \eta_{i-k}\right]=0.
\end{align}
For different choices of $\eta$, one obtains different equations. Here we use simple variations.
More precisely, we use first order splines as the set of variation\index{Variation} functions:
\begin{equation}
\label{VariationFunction}
\eta_j(t)=\left\{
\begin{array}{ll}
\displaystyle\frac{t-t_{j-1}}{h} & \mbox{ if } t_{j-1}\leq t <t_j,\\
\displaystyle\frac{t_{j+1}-t}{h} & \mbox{ if } t_j \leq t<t_{j+1},\\
0 & \mbox{ otherwise,}
\end{array}\right.
\end{equation}
for $j=1,\ldots,n-1$. We remark that conditions $\eta_j(a)=\eta_j(b)=0$
are fulfilled for all $j$, and that $\eta_j(t_i)=0$ for $i\not=j $ and
$\eta_j(t_j)=1$. The fractional derivative of $\eta_j$ at any point $t_i$
is also computed using approximation \eqref{GLApprx}:
$$
{_aD_{t_i}^\a}\eta_j(t_i)=\left\{
\begin{array}{ll}
\displaystyle\frac{1}{h^\a}(w_{i-j}^\a) & \mbox{ if } j\leq i,\\
0 & \mbox{ otherwise.}
\end{array}\right.
$$
Using $\eta_j$, $j=1,\ldots,n-1$, and equation \eqref{VarEquation}
we establish the following system of $n-1$ algebraic equations
with $n-1$ unknown variables $x_1,\ldots,x_{n-1}$:
\begin{equation}
\label{system1}
\left\{\begin{array}{l}
\displaystyle\frac{\partial L}{\partial x}\{x_1\}
+\frac{1}{h^\a}\sum_{i=1}^{n}\left[
\frac{\partial L}{\partial \LDa x}\{x_i\}(w_{i-1}^\a)\right]=0,\\
\displaystyle\frac{\partial L}{\partial x}\{x_2\}
+\frac{1}{h^\a}\sum_{i=2}^{n}\left[
\frac{\partial L}{\partial \LDa x}\{x_i\}(w_{i-2}^\a)\right]=0,\\
\quad \vdots\\
\displaystyle\frac{\partial L}{\partial x}\{x_{n-1}\}
+\frac{1}{h^\a}\sum_{i=n-1}^{n}\left[
\frac{\partial L}{\partial \LDa x}\{x_i\}(w_{i-n+1}^\a)\right]=0,
\end{array}\right.
\end{equation}
where we define
$$
\{x_i\}=\left(t_i,x_i,\frac{1}{h^\a} \sum_{k=0}^{i}\w x_{i-k}\right).
$$
The solution to \eqref{system1}, if exists, gives an approximation
to the values of the unknown function $x$ on mesh points $t_i$.

We have considered so far the so called fundamental
or basic problem of the fractional calculus of variations \cite{book:frac}.
However, other types of problems can be solved applying similar techniques.
Let us show how to solve numerically the isoperimetric problem, that is,
when in the initial problem the set of admissible functions must satisfy
some integral constraint that involves a fractional derivative.
We state the fractional isoperimetric\index{Isoperimetric problem} problem as follows.

Assume that the set of admissible functions are subject not only
to some prescribed boundary conditions, but to some integral constraint, say
$$
\int_a^bg(t,x(t),\LDa x(t))\,dt=K,
$$
for a fixed $K\in\mathbb R$. As usual, we assume that
$g:[a,b]\times\mathbb R^2\to\mathbb R$ is such that
$\frac{\partial g}{\partial x}$ and $\frac{\partial g}{\partial \LDa x}$
exist and are continuous. The common procedure to solve this problem
follows some simple steps: first we consider the auxiliary function
\begin{equation}
\label{eq:aux:f}
F=\lambda_0 L(t,x(t),\LDa x(t))+\lambda g(t,x(t),\LDa x(t)),
\end{equation}
for some constants $\lambda_0$ and $\lambda$ to be determined later. Next,
it can be proven that $F$ satisfies the fractional Euler--Lagrange equation
and that in case the extremizer does not satisfies the Euler--Lagrange associated to $g$,
then we can take $\lambda_0=1$ (cf. \cite{AlmeidaIso}).
In conclusion, the first variation of $F$ evaluated
along an extremal must vanish, and so we obtain a system similar to \eqref{system1},
replacing $L$ by $F$. Also, from the integral constraint, we obtain another
equation derived by discretization that is used to obtain $\lambda$:
$$
h \sum_{i=1}^n g\left(t_i,x_i,\frac{1}{h^\a} \sum_{k=0}^{i}\w x_{i-k}\right)=K.
$$

We show the usefulness of our approximate method
with three problems of the fractional calculus of variations.

% ------------------------------------------

\subsection{Basic fractional variational problems}

\begin{example}
\label{Ex1}
Consider the following variational problem: to minimize the functional
$$
J(x)=\int_0^1\left(\LDz x(t)-\frac{2}{\Gamma(2.5)}t^{1.5}\right)^2\,dt
$$
subject to the boundary conditions $x(0)=0$ and $x(1)=1$.
It is an easy exercise to verify that the solution is the function $x(t)=t^2$.
\end{example}
We apply our method to this problem, for the variation \eqref{VariationFunction}.
The functional $J$ does not depend on $x$ and is quadratic with respect
to the fractional term. Therefore, the first variation is linear.
The resulting algebraic system from \eqref{system1} is also linear and easy to solve:
$$
\left\{\begin{array}{l}
\displaystyle \sum_{i=0}^{n-1}\left(\omega_i^{0.5}\right)^2 x_1
+\sum_{i=1}^{n-1}\left(\omega_i^{0.5}\right)\left(\omega_{i-1}^{0.5}\right) x_2
+\sum_{i=2}^{n-1}\left(\omega_i^{0.5}\right)\left(\omega_{i-2}^{0.5}\right)x_3\\
\quad \displaystyle +\cdots+
\sum_{i=n-2}^{n-1}\left(\omega_i^{0.5}\right)\left(\omega_{i-(n-2)}^{0.5}\right)x_{n-1}
=\frac{2h^{2}}{\Gamma(2.5)}\sum_{i=0}^{n-1}\left(\omega_i^{0.5}\right)(i+1)^{1.5}
-\left(\omega_0^{0.5}\right)\left(\omega_{n-1}^{0.5}\right),\\
\displaystyle \sum_{i=0}^{n-2}\left(\omega_i^{0.5}\right)\left(\omega_{i+1}^{0.5}\right) x_1
+\sum_{i=0}^{n-2}\left(\omega_i^{0.5}\right)^2 x_2
+\sum_{i=1}^{n-2}\left(\omega_i^{0.5}\right)\left(\omega_{i-1}^{0.5}\right) x_3\\
\quad \displaystyle +\cdots +
\sum_{i=n-3}^{n-2}\left(\omega_i^{0.5}\right)\left(\omega_{i-(n-3)}^{0.5}\right)x_{n-1}
=\frac{2h^{2}}{\Gamma(2.5)}\sum_{i=0}^{n-2}\left(\omega_i^{0.5}\right)(i+2)^{1.5}
-\left(\omega_0^{0.5}\right)\left(\omega_{n-2}^{0.5}\right),\\
\qquad \vdots\\
\displaystyle \sum_{i=0}^{1}\left(\omega_i^{0.5}\right)\left(\omega_{i+n-2}^{0.5}\right) x_1
+\sum_{i=0}^{1}\left(\omega_i^{0.5}\right)\left(\omega_{i+n-3}^{0.5}\right) x_2
+\sum_{i=0}^{1}\left(\omega_i^{0.5}\right)\left(\omega_{i+n-4}^{0.5}\right) x_3\\
\quad \displaystyle +\cdots + \sum_{i=0}^{1}\left(\omega_i^{0.5}\right)^2x_{n-1}
=\frac{2h^{2}}{\Gamma(2.5)}\sum_{i=0}^{1}\left(\omega_i^{0.5}\right)(i+n-1)^{1.5}
-\left(\omega_0^{0.5}\right)\left(\omega_{1}^{0.5}\right).
\end{array}\right.
$$
The exact solution together with three numerical approximations,
with different discretization step sizes,
are depicted in Figure~\ref{FigEx1}.
% --------------------------------------
\begin{figure}[!h]
\center
\includegraphics[scale=0.5]{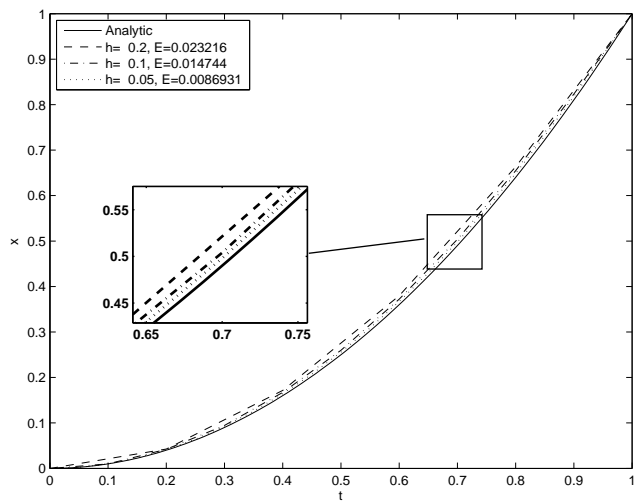}
\caption{Exact solution {\it versus} numerical approximations to Example~\ref{Ex1}.}
\label{FigEx1}
\end{figure}

% --------------------------------------

\begin{example}
\label{Ex2}
Find the minimizer of the functional
$$
J(x)=\int_0^1\left(\LDz x(t)-\frac{16\Gamma(6)}{\Gamma(5.5)}t^{4.5}+
\frac{20\Gamma(4)}{\Gamma(3.5)}t^{2.5}-\frac{5}{\Gamma(1.5)}t^{0.5}\right)^4\,dt
$$
subject to $x(0)=0$ and $x(1)=1$. 
The minimum value of this functional is zero and the minimizer is
$$
x(t)=16t^{5}-20t^{3}+5t.
$$
\end{example}
Discretizing the first variation as discussed above, 
leads to a nonlinear system of algebraic equation.
Its solution, using different step sizes, 
is depicted in Figure~\ref{FigEx2}.
\begin{figure}[!h]
\center
\includegraphics[scale=0.5]{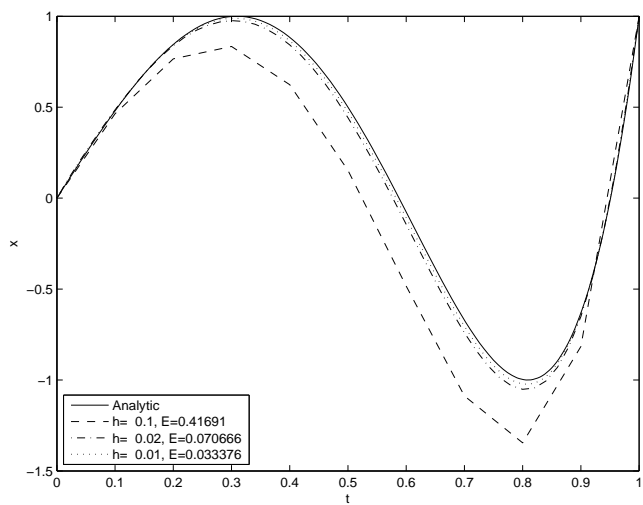}
\caption{Exact solution {\it versus} numerical approximations to Example~\ref{Ex2}.}
\label{FigEx2}
\end{figure}

% --------------------------------------

\subsection{An isoperimetric fractional variational problem}

\begin{example}
\label{Ex3}
Let us search the minimizer of
$$
J(x)=\int_0^1\left(t^4 +\left(\LDz x(t)\right)^2\right)\,dt
$$
subject to the boundary conditions
$$
x(0)=0 \quad \mbox{and}\quad x(1)=\frac{16}{15\Gamma(0.5)}
$$
and the integral constraint
$$
\int_0^1t^2\,\LDz x(t)\,dt=\frac{1}{5}.
$$
In \cite{Almeida1} it is shown that the solution to this problem is the function
$$
x(t)=\frac{16t^{2.5}}{15\Gamma(0.5)}.
$$
\end{example}
Because $x$ does not satisfy the fractional Euler--Lagrange equation associated
to the integral constraint, one can take $\lambda_0=1$
and the auxiliary function \eqref{eq:aux:f} is
$F=t^4 +\left(\LDz x(t)\right)^2+\lambda \, t^2\,\LDz x(t)$.
Now we calculate the first variation of $\int_0^1 F dt$. An extra unknown,
$\lambda$, is present in the new setting, that is obtained by discretizing
the integral constraint, as explained in Section~\ref{sec:2}.
The solutions to the resulting algebraic system,
with different step sizes, are given in Figure~\ref{FigIso}.
\begin{figure}[!h]
\centering
\includegraphics[scale=0.5]{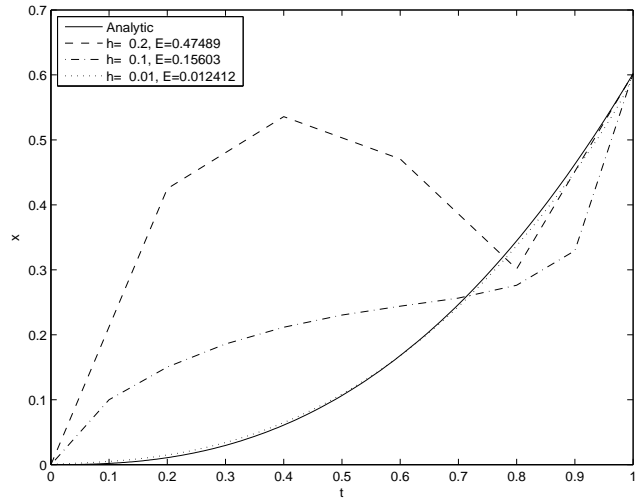}
\caption{Exact {\it versus} numerical approximations
to the isoperimetric problem of Example~\ref{Ex3}.}
\label{FigIso}
\end{figure}

\CP
%================================================================
\chapter{Indirect methods}
\label{Indirect}

As in the classical case, indirect methods\index{Indirect method} in fractional 
sense provide necessary conditions of optimality using the first variation. 
Fractional Euler--Lagrange equations are now a well-known and well-studied 
subject in fractional calculus. For a simple problem of the form \eqref{Functional}, 
following \cite{AgrawalForm}, a necessary condition implies that the solution 
must satisfy a fractional boundary value differential equation.

Let $x(\cdot)$ have a continuous left Riemann--Liouville 
derivative of order $\a$ and $J[x]$ be a functional of the form
\begin{equation}
\label{CopyMainProb}
J[x(\cdot)]=\int_a^b L(t, x(t), \LD x(t))dt
\end{equation}
subject to the boundary conditions $x(a)=x_a$ and $x(b)=x_b$.
Recall that
\begin{equation}
\label{ELeqnIndirect}
\left\{\begin{array}{l}
\frac{\partial L}{\partial x}+\RD\frac{\partial L}{\partial~\LDa x}=0\\
x(a)=x_a,\quad x(b)=x_b
\end{array}\right.
\end{equation}
is the fractional Euler--Lagrange equation\index{Euler--Lagrange equation! Fractional} 
and is a necessary optimality condition.

Many variants of \eqref{ELeqnIndirect} can be found in the literature. 
Different types of fractional terms have been embedded in the Lagrangian 
and appropriate versions of Euler--Lagrange equations have been derived 
using proper integration by parts formulas. 
See \cite{Agrawal,APTIndInt,Atan,Malinowska,MyID:207} for details.

For fractional optimal control problems, a so called Hamiltonian system 
is constructed using Lagrange multipliers. For example, cf. \cite{Ozlem}. 
Assume that we are required  to minimize a functional of the form
$$
J[x(\cdot),u(\cdot)]=\int_a^b L(t, x(t),u(t))dt
$$
such that $x(a)=x_a$, $x(b)=x_b$ and $\LD x(t)=f(t, x(t),u(t))$. Similar 
to the classical methods, one can introduce a Hamiltonian\index{Hamiltonian}
$$
H=L(t, x(t),u(t))+\lambda(t) f(t, x(t),u(t)),
$$
where $\lambda(t)$ is considered as a Lagrange multiplier.\index{Lagrange multipliers}
In this case we define the augmented functional as
$$
J[x(\cdot),u(\cdot)]=\int_a^b [H(t, x(t),u(t),\lambda(t))-\lambda(t)\LD x(t)]dt.
$$
Optimizing the latter functional results 
into the following necessary optimality conditions:
\begin{equation}
\label{HamIndirect}
\left\{\begin{array}{l}
\LD x(t)=\displaystyle\frac{\partial H}{\partial \lambda}\\[8pt]
\RD \lambda(t)=\displaystyle\frac{\partial H}{\partial x}
\end{array}\right., \qquad \frac{\partial H}{\partial u}=0.
\end{equation}
Together with the prescribed boundary conditions, 
this makes a two point fractional boundary value problem.

These arguments reveal that, like in the classical case, fractional variational 
problems end with fractional boundary value problems. To reach an optimal solution, 
one needs to deal with a fractional differential equation or a system 
of fractional differential equations. There are a few attempts in the literature
to present analytic solutions to fractional variational problems. Simple problems
have been treated in \cite{AlD}; some other examples are presented in \cite{AtanHam}.

Many solution methods, theoretical and numerical, furnish the classical theory 
of differential equations; nevertheless, solving a fractional differential equation 
is a rather tough task \cite{Kai}. To benefit from those methods, especially 
all solvers that are available to solve an integer-order differential equation numerically, 
we can either approximate a fractional variational problem by an equivalent integer-order 
one or approximate the necessary optimality conditions \eqref{ELeqnIndirect} and \eqref{HamIndirect}. 
The rest of this section discusses two types of approximations that are used to transform 
a fractional problem to one in which only integer-order derivatives are present, i.e., 
we approximate the original problem by substituting a fractional term by its corresponding 
expansion formulas. This is mainly done by case studies on certain examples. The examples 
are chosen so that either they have a trivial solution or it is possible to get an analytic 
solution using the fractional Euler--Lagrange equations \cite{PATFracDer}.

By substituting the approximations \eqref{expanInt} or \eqref{expanMom} for the fractional 
derivative in \eqref{CopyMainProb}, the problem is transformed to
\begin{eqnarray*}
J[x(\cdot)]&\approx&\int_a^b L\left(t, x(t), 
\sum_{k=0}^{N}\frac{(-1)^{k-1}\a x^{(k)}(t)}{k!(k-\a)\Gamma(1-\a)}(t-a)^{k-\a}\right)dt\\
&=&\int_a^b L'\left(t, x(t), \dot{x}(t), \ldots,x^{(N)}(t)\right)dt
\end{eqnarray*}
or
\begin{eqnarray*}
J[x(\cdot)]&\approx&\int_a^b L\left(t, x(t),\frac{Ax(t)}{(t-a)^{\a}}
+\frac{B\dot{x}(t)}{(t-a)^{\a-1}}-\sum_{p=2}^N \frac{C(\a,p)V_p(t)}{(t-a)^{p+\a-1}}\right)dt\\
&=&\int_a^b L'\left(t, x(t), \dot{x}(t), V_2(t), \ldots,V_N(t)\right)dt
\end{eqnarray*}
with
$$
\left\{
\begin{array}{l}
\dot{V}_p(t)=(1-p)(t-a)^{p-2}x(t)\\
V_p(a)=0, \qquad p=2,3,\ldots
\end{array}
\right.
$$
The former problem is a classical variational problem containing higher order derivatives. 
The latter is a multi-state problem, subject to an ordinary differential equation constraint. 
Together with the boundary conditions, both above problems belong 
to classes of well studied variational problems.

To accomplish a detailed study, as test problems, we consider here Example~\ref{Example2},
\begin{equation}\label{Exmp1Indirect}
\left\{
\begin{array}{l}
J[x(\cdot)]=\int_0^1 \left(\LDz x(t)-\dot{x}^2(t)\right) dt \rightarrow \min\\
x(0)=0,~x(1)=1,
\end{array}
\right.
\end{equation}
and the following example.
\begin{example}\label{Example4}
Given $\a\in (0,1)$, consider the functional
\begin{equation}\label{Exmp2Indirect}
J[x(\cdot)]=\int_0^1 (\LD x(t)-1)^2dt
\end{equation}
to be minimized subject to the boundary conditions $x(0)=0$ and $x(1)=\frac{1}{\Gamma(\a+1)}$. 
Since the integrand in \eqref{Exmp2Indirect} is non-negative, the functional attains its minimum 
when $\LD x(t)=1$, \textrm{i.e.}, for $x(t)=\frac{t^{\a}}{\Gamma(\a+1)}$.
\end{example}
We illustrate the use of the two different expansions separately.

%--------------------------------------------------------

\section{Expansion to integer orders}

Using approximation \eqref{expanInt} for the fractional derivative 
in \eqref{Exmp1Indirect}, we get the approximated problem
\begin{equation}\label{expanCOV}
\begin{aligned}
&\tilde{J}[x(\cdot)]=\int_0^1 \left[\sum_{n=0}^NC(n,\a)t^{n-\a}x^{(n)}(t)
-\dot{x}^2(t)\right]dt\longrightarrow\min\\
&x(0)=0,\quad x(1)=1,
\end{aligned}
\end{equation}
which is a classical higher-order problem of the calculus of variations that depends 
on derivatives up to order $N$. The corresponding necessary optimality condition
is a well-known result.
\begin{theorem}[\textrm{cf.}, \textrm{e.g.}, \cite{Lebedev}]
Suppose that $x(\cdot)\in C^{2N}[a,b]$ minimizes
$$
\int_a^b L(t,x(t),x^{(1)}(t),x^{(2)}(t),\ldots,x^{(N)}(t))dt
$$
with given boundary conditions
\begin{eqnarray*}
x(a)=a_0, & &x(b)=b_0,\\
x^{(1)}(a)=a_1, & &x^{(1)}(b)=b_1,\\
&\vdots&\\
x^{(N-1)}(a)=a_{N-1}, & & x^{(N-1)}(b)=b_{N-1}.
\end{eqnarray*}
Then $x(\cdot)$ satisfies the Euler--Lagrange equation\index{Euler--Lagrange equation!}
\begin{equation}
\label{ELN}
\frac{\partial L}{\partial x}-\frac{d}{dt}\left(\frac{\partial L}{\partial x^{(1)}}\right)
+\frac{d^2}{dt^2}\left(\frac{\partial L}{\partial x^{(2)}}\right)
-\cdots+(-1)^N\frac{d^N}{dt^N}\left(\frac{\partial L}{\partial x^{(N)}}\right)=0.
\end{equation}
\end{theorem}
In general \eqref{ELN} is an ODE of order $2N$, depending on the order $N$ 
of the approximation we choose, and the method leaves $2N-2$ parameters unknown. 
In our example, however, the Lagrangian in \eqref{expanCOV} is linear with respect 
to all derivatives of order higher than two. 
The resulting Euler--Lagrange equation is the second-order ODE
\begin{equation*}
\sum_{n=0}^N (-1)^nC(n,\a)\frac{d^n}{dt^n}(t^{n-\a})
-\frac{d}{dt}\left[-2\dot{x}(t)\right]=0,
\end{equation*}
that has the solution
\begin{equation*}
x(t)=M_1(\a,N)t^{2-\a}+M_2(\a,N)t,
\end{equation*}
where
\begin{eqnarray*}
M_1(\a,N)&=&-\frac{1}{2\Gamma(3-\a)}\left[\sum_{n=0}^N(-1)^n\Gamma(n+1-\a)C(n,\a)\right],\\
M_2(\a,N)&=&\left[1+\frac{1}{2\Gamma(3-\a)}\sum_{n=0}^N(-1)^n\Gamma(n+1-\a)C(n,\a)\right].
\end{eqnarray*}
Figure~\ref{expIntFig} shows the analytic solution together with several approximations. 
It reveals that by increasing $N$, approximate solutions do not converge to the analytic one. 
The reason is the fact that the solution \eqref{solEx51} to Example~\ref{Example2} 
is not an analytic function. We conclude that \eqref{expanInt} may not be a good choice 
to approximate fractional variational problems. In contrast, as we shall see, 
the approximation \eqref{expanMom} leads to good results.
\begin{figure}
\begin{center}
\includegraphics[scale=.7]{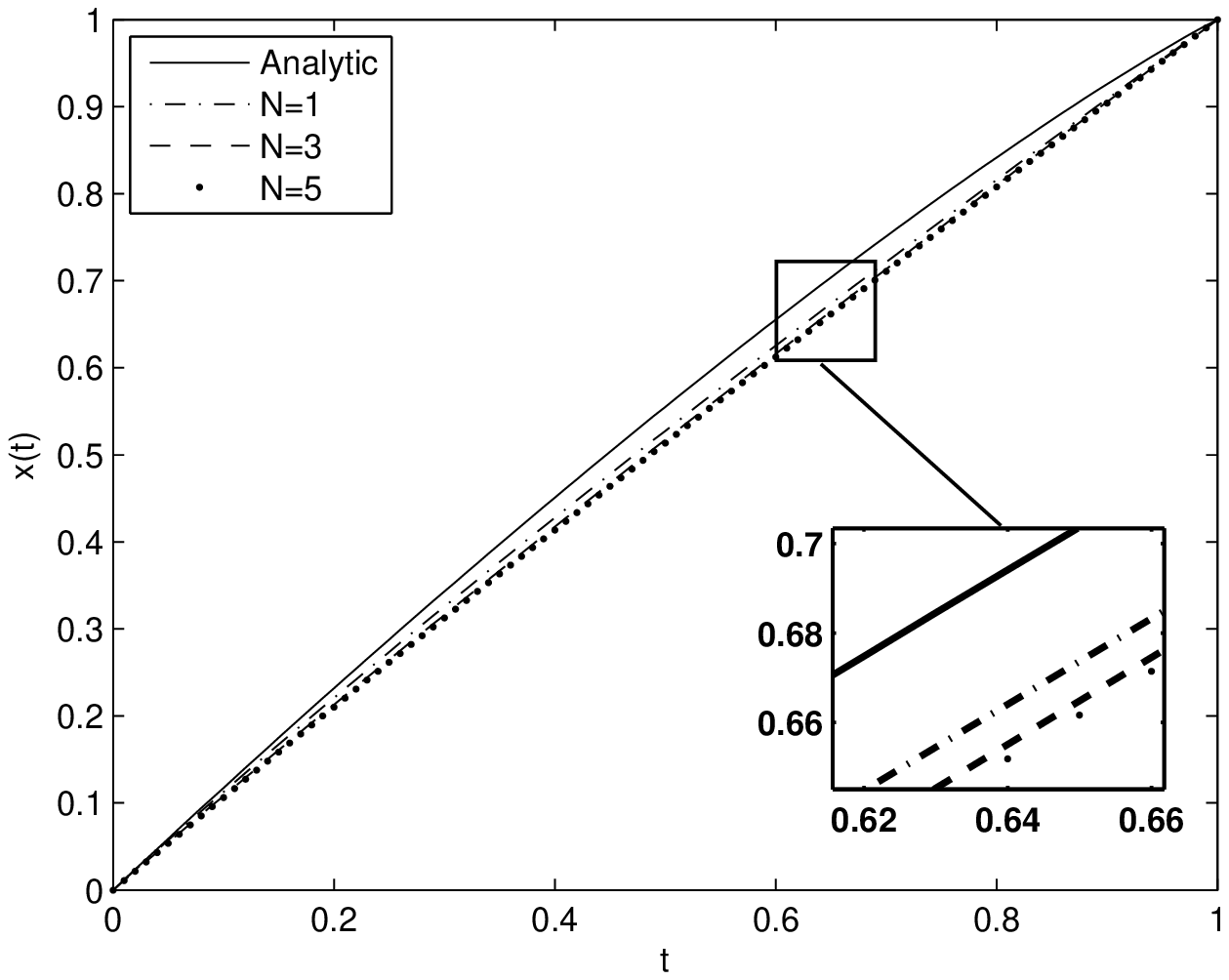}
\caption{Analytic {\it versus} approximate solutions to Example~\ref{Example2}
using approximation \eqref{expanInt} with $\a=0.5$.}
\label{expIntFig}
\end{center}
\end{figure}

To solve Example~\ref{Example2} using \eqref{expanInt} 
as an approximation for the fractional derivative, the problem becomes
\begin{equation*}
\begin{aligned}
&\tilde{J}[x(\cdot)]=\int_0^1 \left(\sum_{n=0}^N 
C(n,\a)t^{n-\a}x^{(n)}(t)-1\right)^2dt\longrightarrow\min\\
&x(0)=0,\quad x(1)=\frac{1}{\Gamma(\a+1)}.
\end{aligned}
\end{equation*}
The Euler--Lagrange equation \eqref{ELN} gives a $2N$ order ODE. 
For $N \ge 2$ this approach is inappropriate since the two given boundary 
conditions $x(0)=0$ and $x(1)=\frac{1}{\Gamma(\a+1)}$ are not enough 
to determine the $2N$ constants of integration.

%--------------------------------------------------------

\section{Expansion through the moments of a function}
\index{Moment of a function}

If we use \eqref{expanMom} to approximate the optimization problem \eqref{Exmp1Indirect}, 
with $A=A(\a,N)$, $B=B(\a,N)$ and $C_p=C(\a,p)$, we have
\begin{equation}
\label{momCOV}
\begin{split}
\tilde{J}[x(\cdot)]&=\int_0^1 \left[At^{-\a}x(t)
+Bt^{1-\a}\dot{x}(t)-\sum_{p=2}^N
C_pt^{1-p-\a}V_p(t)-\dot{x}^2(t)\right]dt\longrightarrow\min\\
\dot{V}_p(t)&=(1-p)t^{p-2}x(t),\quad p=2,3,\ldots,N,\\
V_p(0)&=0, \quad p=2,3,\ldots,N,\\
x(0)&=0,\quad x(1)=1.
\end{split}
\end{equation}
Problem \eqref{momCOV} is constrained with a set of ordinary differential equations 
and is natural to look to it as an optimal control problem \cite{Pontryagin}.
For that, we introduce the control variable $u(t) = \dot{x}(t)$. Then, using 
the Lagrange multipliers\index{Lagrange multipliers} $\lambda_1, \lambda_2,\ldots,\lambda_N$,
and the Hamiltonian system, one can reduce \eqref{momCOV} 
to the study of the two point boundary value problem
\begin{equation}
\label{tpbvp}
\left\{
\begin{array}{rl}
\dot{x}(t)&=\frac{1}{2}Bt^{1-\a}-\frac{1}{2}\lambda_1(t),\\
\dot{V}_p(t)&=(1-p)t^{p-2}x(t),\quad p=2,3,\ldots,N,\\
\dot{\lambda}_1(t)&=At^{-\a}-\sum_{p=2}^N(1-p)t^{p-2}\lambda_p(t),\\
\dot{\lambda}_p(t)&=-C_pt^{(1-p-\a)},\quad p=2,3,\ldots,N,\\
\end{array}\right.
\end{equation}
with boundary conditions
\begin{equation*}
\left\{
\begin{array}{l}
x(0)=0, \\
V_p(0)=0,\quad p=2,3,\ldots,N,
\end{array}
\right.\qquad\left\{
\begin{array}{l}
x(1)=1,\\
\lambda_p(1)=0,\quad p=2,3,\ldots,N,
\end{array}
\right.
\end{equation*}
where $x(0)=0$ and $x(1)=1$ are given. We have $V_p(0)=0$, $p=2,3,\ldots,N$, 
due to \eqref{sysVp} and $\lambda_p(1)=0$, $p=2,3,\ldots,N$, because $V_p$ 
is free at final time for $p=2,3,\ldots,N$ \cite{Pontryagin}. In general, 
the Hamiltonian\index{Hamiltonian} system is a nonlinear, 
hard to solve, two point boundary value problem that needs special numerical methods. 
In this case, however, \eqref{tpbvp} is a non-coupled system of ordinary 
differential equations and is easily solved to give
\begin{equation*}
x(t)=M(\a,N)t^{2-\a}-\sum_{p=2}^N \frac{C(\a,p)}{2p(2-p-\a)}t^p
+\left[ 1-M(\a,N)+\sum_{p=2}^N \frac{C(\a,p)}{2p(2-p-\a)}\right]t,
\end{equation*}
where
$$
M(\a,N)=\frac{1}{2(2-\a)}\left[ B(\a,N)-\frac{A(\a,N)}{1-\a}
-\sum_{p=2}^N \frac{C(\a,p)(1-p)}{(1-\a)(2-p-\a)} \right].
$$
Figure~\ref{expMomFig} shows the graph of $x(\cdot)$ for different values of $N$.
\begin{figure}
\begin{center}
\includegraphics[scale=.7]{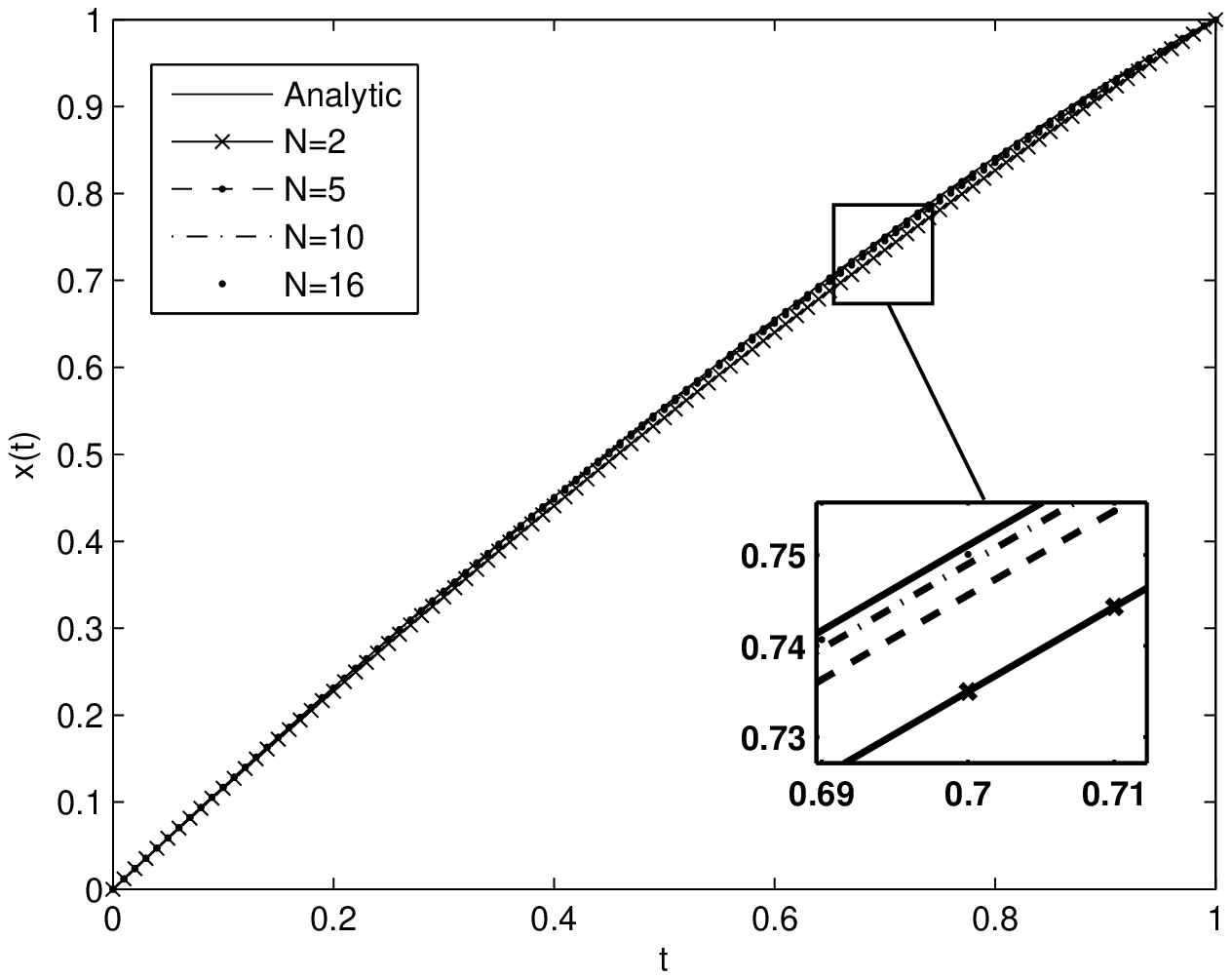}
\caption{Analytic {\it versus} approximate solutions to Example~\ref{Example2}
using approximation \eqref{expanMom} with $\a=0.5$.}
\label{expMomFig}
\end{center}
\end{figure}

Let us now approximate Example~\ref{Example4} using \eqref{expanMom}.
The resulting minimization problem has the following form:
\begin{equation}\label{ex52Mom}
\begin{aligned}
&\tilde{J}[x(\cdot)]=\int_0^1 \left[At^{-\a}x(t)
+Bt^{1-\a}\dot{x}(t)-\sum_{p=2}^N
C_pt^{1-p-\a}V_p(t)-1\right]^2dt\longrightarrow\min\\
& \dot{V}_p(t)=(1-p)t^{p-2}x(t),\quad p=2,3,\ldots,N,\\
& V_p(0)=0, \quad p=2,3,\ldots,N,\\
&x(0)=0,\quad x(1)=\frac{1}{\Gamma(\a+1)}.
\end{aligned}
\end{equation}
Following the classical optimal control approach 
of Pontryagin \cite{Pontryagin}, this time with
$$
u(t)=At^{-\a}x(t)+Bt^{1-\a}\dot{x}(t)-\sum_{p=2}^N C_pt^{1-p-\a}V_p(t),
$$
we conclude that the solution to \eqref{ex52Mom} 
satisfies the system of differential equations
\begin{equation}
\label{ex52tpbvp}
\left\{
\begin{array}{rl}
\dot{x}(t)&=-AB^{-1}t^{-1}x(t)+\sum_{p=2}^NB^{-1}C_pt^{-p}V_p(t)
+\frac{1}{2}B^{-2}t^{2\a-2}\lambda_1(t)+B^{-1}t^{\a-1},\\
\dot{V}_p(t)&=(1-p)t^{p-2}x(t),\quad p=2,3,\ldots,N,\\
\dot{\lambda}_1(t)&=AB^{-1}t^{-1}\lambda_1
-\sum_{p=2}^N(1-p)t^{p-2}\lambda_p(t),\\
\dot{\lambda}_p(t)&=-B^{-1}C_pt^{-p}\lambda_1,
\quad p=2,3,\ldots,N,\\
\end{array}\right.
\end{equation}
where $A=A(\a,N)$, $B=B(\a,N)$ and $C_p=C(\a,p)$ are defined according 
to Section~\ref{SecAppDer}, subject to the boundary conditions
\begin{equation}
\label{sysB52}
\left\{
\begin{array}{l}
x(0)=0,\\
V_p(0)=0,\quad p=2,3,\ldots,N,
\end{array}
\right.\qquad\left\{
\begin{array}{l}
x(1)=\displaystyle\frac{1}{\Gamma(\a+1)},\\
\lambda_p(1)=0,\quad p=2,3,\ldots,N.
\end{array}
\right.
\end{equation}
The solution to system \eqref{ex52tpbvp}--\eqref{sysB52}, 
with $N=2$, is shown in Figure~\ref{expMomFig52}.
\begin{figure}
\begin{center}
\includegraphics[scale=.7]{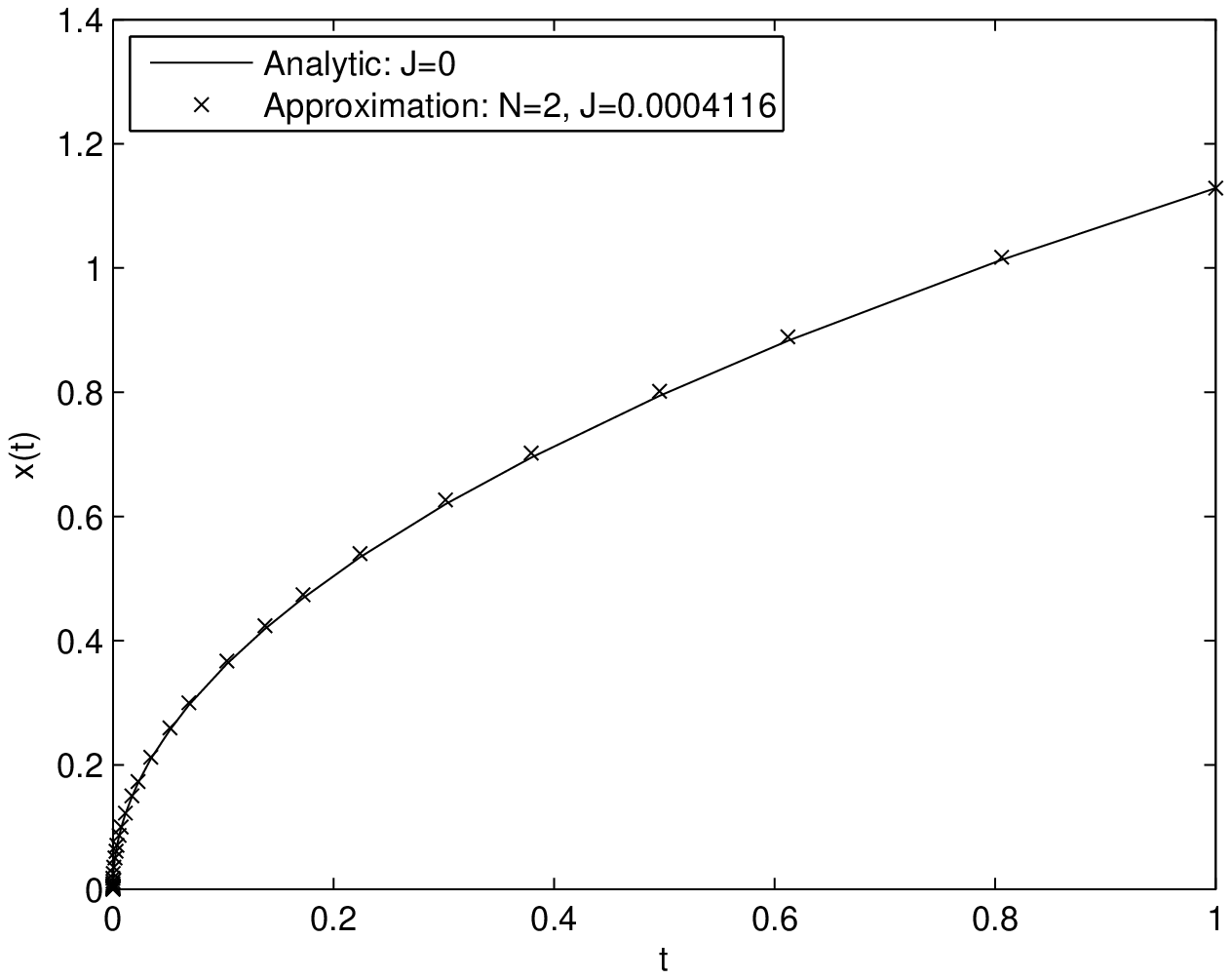}
\caption{Analytic {\it versus} approximate solution to Example~\ref{Example4}
using approximation \eqref{expanMom} with $\a=0.5$.}
\label{expMomFig52}
\end{center}
\end{figure}

\CP
%================================================================
\chapter{Fractional optimal control with free end-points}
\label{FreeTime}

This chapter is devoted to fractional order optimal control problems in which
the dynamic control system involves integer
and fractional order derivatives and the terminal time is free.
Necessary conditions for a state/control/terminal-time triplet
to be optimal are obtained. Situations with constraints present
at the end time are also considered.
Under appropriate assumptions, it is shown that the
obtained necessary optimality conditions become sufficient.
Numerical methods to solve the problems are presented,
and some computational simulations are discussed in detail \cite{PATFree}.

% --------------------------------------------

\section{Necessary optimality conditions}
\label{SecNecCond}

Let $\a\in(0,1)$, $a\in\mathbb{R}$, $L$ and $f$ be two differentiable functions
with domain $[a,+\infty)\times \mathbb{R}^2$, and
$\phi : [a,+\infty)\times \mathbb{R} \rightarrow \mathbb{R}$
be a differentiable function. The fundamental problem
is stated in the following way:
\begin{equation}
\label{Opt:func}
J[x,u,T]=\int_a^T L(t,x(t),u(t))\,dt+\phi(T,x(T))\longrightarrow\min
\end{equation}
subject to the control system
\begin{equation}
\label{Opt:dynamic}
M \dot{x}(t) + N\;\LCD x(t) = f(t,x(t),u(t)),
\end{equation}
and the initial boundary condition
\begin{equation}
\label{Opt:bound}
x(a)=x_a,
\end{equation}
with $(M,N)\not=(0,0)$ and $x_a$ a fixed real number.
Our goal is to generalize previous works
on fractional optimal control problems
by considering the end time, $T$, free and the dynamic control system \eqref{Opt:dynamic}
involving integer and fractional order derivatives. For convenience,
we consider the one-dimensional case. However, using similar techniques,
the results can be easily extended to problems with multiple states and multiple controls.
Later we consider the cases $T$ and/or $x(T)$ fixed.
Here, $T$ is a variable number with $a< T<\infty$.
Thus, we are interested not only on the optimal trajectory
$x$ and optimal control function $u$,
but also on the corresponding time $T$ for which
the functional $J$ attains its minimum value.
We assume that the state variable $x$ is differentiable and that
the control $u$ is piecewise continuous. When $N=0$ we obtain a classical
optimal control problem; the case $M=0$ with fixed $T$ has already been
studied for different types of fractional order derivatives
(see, e.g., \cite{AgrawalNum,Agrawal2,Agrawal3,Gastao0,gasta1,Tricaud,Tricaud2}).
In \cite{Jelicic} a special type of the proposed problem is also studied
for fixed $T$.

\begin{remark}
In this chapter the terminal time $T$ is a free decision variable
and, \textit{a priori}, no constraints are imposed.
For future research, one may wish to consider a class of
fractional optimal control problems in which the terminal time
is governed by a stopping condition. Such problems were recently investigated,
within the classical (integer-order) framework, in \cite{Lin1,Lin2}.
\end{remark}

% ----------------------------------------

\subsection{Fractional necessary conditions}

To deduce necessary optimality conditions that an optimal triplet
$(x,u,T)$ must satisfy, we use a Lagrange multiplier
to adjoin the dynamic constraint \eqref{Opt:dynamic}
to the performance functional \eqref{Opt:func}. To start,
we define the Hamiltonian\index{Hamiltonian} function $H$ by
\begin{equation}
\label{def:Hamiltonian}
H(t,x,u,\lambda)=L(t,x,u)+\lambda f(t,x,u),
\end{equation}
where $\lambda$ is a Lagrange multiplier,
so that we can rewrite the initial problem as minimizing
$$
\mathcal{J}[x,u,T,\lambda]=\int_a^T \left[H(t,x,u,\lambda)
-\lambda(t)[M \dot{x}(t)+N\;\LCD x(t)]\right]\,dt+\phi(T,x(T)).
$$
Next, we consider variations\index{Variation} of the form
$$
x+ \delta x, \quad u+\delta u, \quad T+\delta T, \quad \lambda+\delta\lambda,
$$
with $\delta x(a)=0$ by the imposed boundary condition \eqref{Opt:bound}.
Using the well-known fact that the first variation of $\mathcal J$
must vanish when evaluated along a minimizer, we get
\begin{multline*}
0 = \int_a^T\Biggl(\frac{\partial H}{\partial x}\delta x+\frac{\partial H}{\partial u}\delta u
+\frac{\partial H}{\partial \lambda}\delta \lambda-\delta \lambda\left(M \dot{x}(t)
+N\;\LCD x(t)\right)\\
-\lambda(t)\left(M \dot{\delta x}(t)+N\;\LCD\delta x(t)\right)\Biggr)dt
+\delta T\bigl[H(t,x,u,\lambda)\\
-\lambda(t)\left(M \dot{x}(t)+N\;\LCD x(t)\right)\bigr]_{t=T}
+\frac{\partial\phi}{\partial t}(T,x(T)) \delta T
+\frac{\partial \phi}{\partial x}(T,x(T))\left(\dot{x}(T)\delta T+\delta x(T)\right)
\end{multline*}
with the partial derivatives of $H$ evaluated at $(t,x(t),u(t),\lambda(t))$.
Integration by parts gives the relations
$$
\int_a^T\lambda(t) \dot{\delta x}(t)\, dt
=-\int_a^T \delta x(t)  \dot{\lambda}(t)\,dt+\delta x(T)\lambda(T)
$$
and
$$
\int_a^T\lambda(t) \LCD \delta x(t)\,dt
=\int_a^T \delta x(t) \, \RDT\lambda(t)\,dt
+\delta x(T)[\RIT\lambda(t)]_{t=T}.
$$
Thus, we deduce the following formula:
\begin{multline*}
\int_a^T\left[\delta x \left(\frac{\partial H}{\partial x}
+M \dot{\lambda} -N\;\RDT\lambda\right)+\delta u \frac{\partial H}{\partial u}
+\delta \lambda\left( \frac{\partial H}{\partial \lambda}-M \dot{x}-N\;\LCD x\right)\right]dt\\
-\delta x(T)\left[M \lambda+N\;\RIT\lambda-\frac{\partial \phi}{\partial x}(t,x)\right]_{t=T}\\
+\delta T\left[H(t,x,u,\lambda)-\lambda[M \dot{x}+N\;\LCD x]+\frac{\partial \phi}{\partial t}(t,x)
+\frac{\partial \phi}{\partial x}(t,x)\dot{x}\right]_{t=T}=0.
\end{multline*}
Now, define the new variable
$$
\delta x_T=[x+\delta x](T+\delta T)-x(T).
$$
Because $\delta\dot{x}(T)$ is arbitrary,
in particular one can consider variation functions
for which $\delta \dot{x}(T)=0$.
By Taylor's theorem,
$$
[x+\delta x](T+\delta T)-[x+\delta x](T)=\dot{x}(T)\delta T+O(\delta T^2),
$$
where $\displaystyle \lim_{\zeta\to0} \frac{O(\zeta)}{\zeta}$ is finite, and so
$\delta x(T)=\delta x_T-\dot{x}(T)\delta T+O(\delta T^2)$.
In conclusion, we arrive at the expression
\begin{multline*}
\delta T\left[H(t,x,u,\lambda)-N\lambda(t)\LCD x(t)+N\dot{x}(t)\RIT\lambda(t)
+\frac{\partial \phi}{\partial t}(t,x(t))\right]_{t=T}\\
+\int_a^T\left[\delta x \left(\frac{\partial H}{\partial x}
+M \dot{\lambda}(t)-N\;\RDT\lambda(t)\right)
+\delta \lambda\left(\frac{\partial H}{\partial \lambda}-M \dot{x}(t)-N\;\LCD x(t)\right)\right.\\
\left.+\delta u \frac{\partial H}{\partial u}\right]dt
-\delta x_T\left[M \lambda(t)+N\;\RIT\lambda(t)-\frac{\partial \phi}{\partial x}(t,x(t))\right]_{t=T}
+O(\delta T^2)=0.
\end{multline*}
Since the variation functions were chosen arbitrarily, the following theorem is proven.

\begin{theorem}
\label{Opt:MainTheo}
If $(x,u,T)$ is a minimizer of \eqref{Opt:func} under the dynamic constraint
\eqref{Opt:dynamic} and the boundary condition \eqref{Opt:bound},
then there exists a function $\lambda$ for which the triplet $(x,u,\lambda)$  satisfies:
\begin{itemize}
\item the \emph{Hamiltonian system}
\begin{equation}
\label{OPT:HamilSyst}
\begin{cases}
M \dot{\lambda}(t) - N\;\RDT\lambda(t) 
= - \displaystyle\frac{\partial H}{\partial x}(t,x(t),u(t),\lambda(t))\\[8pt]
M \dot{x}(t)+N\;\LCD x(t) 
= \displaystyle\frac{\partial H}{\partial \lambda}(t,x(t),u(t),\lambda(t))
\end{cases}
\end{equation}
for all $t\in[a,T]$;

\item the \emph{stationary condition}
\begin{equation}
\label{OPT:stacionary}
\frac{\partial H}{\partial u}(t,x(t),u(t),\lambda(t))=0
\end{equation}
for all $t\in[a,T]$;

\item and the \emph{transversality conditions}\index{Transversality conditions}
\begin{equation}
\label{OPT:transversality}
\begin{gathered}
\left[H(t,x(t),u(t),\lambda(t))-N\lambda(t)\LCD x(t)+N\dot{x}(t)\RIT\lambda(t)
+\frac{\partial \phi}{\partial t}(t,x(t))\right]_{t=T}=0,\\
\left[M \lambda(t) +N\;\RIT\lambda(t)-\frac{\partial \phi}{\partial x}(t,x(t))\right]_{t=T}=0;
\end{gathered}
\end{equation}
\end{itemize}
where the Hamiltonian $H$ is defined by \eqref{def:Hamiltonian}.
\end{theorem}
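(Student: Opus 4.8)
The plan is to convert the constrained problem into an unconstrained one by adjoining the dynamics \eqref{Opt:dynamic} through a Lagrange multiplier $\lambda$, and then to impose that the first variation of the resulting functional vanishes along an optimal triplet. Concretely, I would first introduce the Hamiltonian \eqref{def:Hamiltonian} and rewrite $J$ as
$$
\mathcal{J}[x,u,T,\lambda]=\int_a^T\left[H(t,x,u,\lambda)-\lambda(t)\left(M\dot x(t)+N\,\LCD x(t)\right)\right]dt+\phi(T,x(T)),
$$
which agrees with $J$ whenever \eqref{Opt:dynamic} holds. Then I would consider the four-parameter family of admissible variations $x+\delta x$, $u+\delta u$, $T+\delta T$, $\lambda+\delta\lambda$, with $\delta x(a)=0$ forced by the fixed initial condition \eqref{Opt:bound}, $\delta u$ piecewise continuous, and $\delta T\in\mathbb{R}$. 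Differentiating $\mathcal J$ with respect to the variation parameter and setting it to zero at the optimal triplet yields the identity displayed just before the theorem, in which the free terminal time contributes the extra boundary block $\delta T\,[\,H-\lambda(M\dot x+N\,\LCD x)\,]_{t=T}$ together with the terms coming from $\phi(T,x(T))$.

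Next I would eliminate the derivatives of $\delta x$ from the integrand. For the integer-order part I would use the classical integration by parts, $\int_a^T\lambda\,\dot{\delta x}\,dt=-\int_a^T\delta x\,\dot\lambda\,dt+\delta x(T)\lambda(T)$; for the fractional part I would invoke the Caputo integration-by-parts formula \eqref{eq:frac:IBP} on the interval $[a,T]$, which gives $\int_a^T\lambda\,\LCD\delta x\,dt=\int_a^T\delta x\,\RDT\lambda\,dt+\delta x(T)[\RIT\lambda]_{t=T}$ and thereby introduces the right Riemann--Liouville derivative $\RDT$ as the natural adjoint operator. Substituting these, the expression regroups into an integral with factors $\delta x$, $\delta u$, $\delta\lambda$, plus boundary contributions proportional to $\delta x(T)$ and $\delta T$. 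The key technical manoeuvre is then handling the free terminal time: following the standard device I would set $\delta x_T:=[x+\delta x](T+\delta T)-x(T)$ and use Taylor's theorem to write $\delta x(T)=\delta x_T-\dot x(T)\delta T+O(\delta T^2)$, restricting attention to variations with $\delta\dot x(T)=0$ so that no further boundary terms survive.

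After this substitution, collecting the now mutually independent quantities $\delta x$, $\delta u$, $\delta\lambda$ inside the integral and $\delta x_T$, $\delta T$ at $t=T$, the arbitrariness of each — together with the fundamental lemma of the calculus of variations for the interior terms — forces every coefficient to vanish. The coefficients of $\delta\lambda$ and $\delta x$ give the two equations of the Hamiltonian system \eqref{OPT:HamilSyst}, the coefficient of $\delta u$ gives the stationarity condition \eqref{OPT:stacionary}, and the coefficients of $\delta x_T$ and $\delta T$ give the two transversality conditions \eqref{OPT:transversality}; in particular the $\dot x(T)$ extracted from the Taylor expansion is exactly what produces the extra term $N\dot x(t)\RIT\lambda(t)$ in the first transversality relation.

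I expect the main obstacle to be bookkeeping rather than anything conceptual: one has to keep careful track of every terminal boundary term generated by the two integrations by parts and by the variation of the upper limit $T$, and ensure that \eqref{eq:frac:IBP} is applied on $[a,T]$ with the correct sign and the correct boundary contribution $[\RIT\lambda]_{t=T}$. A secondary subtlety worth flagging is that $\LCD x(t)$ for $t<T$ does not depend on $T$ — only the upper limit of the integral and the terminal evaluations do — which is precisely what legitimises treating $\delta T$ and $\delta x_T$ as independent of the interior variation $\delta x$; I would include a brief remark to that effect.
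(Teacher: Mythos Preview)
Your proposal is correct and follows essentially the same approach as the paper: adjoin the dynamics via a multiplier to form $\mathcal{J}$, take variations in $x,u,\lambda,T$ with $\delta x(a)=0$, integrate by parts classically and via the Caputo formula \eqref{eq:frac:IBP}, introduce $\delta x_T$ and use Taylor's theorem (restricting to $\delta\dot x(T)=0$) to decouple the endpoint contributions, then read off the conditions from the independence of the variations. The steps you outline match the paper's proof almost line for line.
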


\begin{remark}
In standard optimal control, a free terminal time problem
can be converted into a fixed final time problem
by using the well-known transformation $s = t/T$ (see Example~\ref{exm42}).
This transformation does not work in the fractional setting.
Indeed, in standard optimal control, translating the problem from time $t$
to a new time variable $s$ is straightforward: the chain rule gives
$\frac{d x}{d s} = \frac{d x}{d t} \frac{d t}{d s}$.
For Caputo or Riemann--Liouville fractional derivatives,
the chain rule has no practical use and such conversion is not possible.
\end{remark}

Some interesting special cases are obtained when restrictions
are imposed on the end time $T$ or on $x(T)$.

\begin{corollary}
\label{OPT:MainCor}
Let $(x,u)$ be a minimizer of \eqref{Opt:func} under the dynamic constraint
\eqref{Opt:dynamic} and the boundary condition \eqref{Opt:bound}.
\begin{enumerate}
\item If $T$ is fixed and $x(T)$ is free,
then Theorem~\ref{Opt:MainTheo} holds with the transversality conditions
\eqref{OPT:transversality} replaced by
$$
\left[M \lambda(t)+N\;\RIT\lambda(t)-\frac{\partial \phi}{\partial x}(t,x(t))\right]_{t=T}=0.
$$

\item If $x(T)$ is fixed and $T$ is free,
then Theorem~\ref{Opt:MainTheo} holds with the transversality
conditions\index{Transversality conditions} \eqref{OPT:transversality} replaced by
$$
\left[H(t,x(t),u(t),\lambda(t))-N\lambda(t)\LCD x(t)+N\dot{x}(t)\RIT\lambda(t)
+\frac{\partial \phi}{\partial t}(t,x(t))\right]_{t=T}=0.
$$

\item If $T$ and $x(T)$ are both fixed,
then Theorem~\ref{Opt:MainTheo} holds
with no transversality conditions.

\item If the terminal point $x(T)$ belongs to a fixed curve, i.e.,
$x(T)=\gamma(T)$ for some differentiable curve $\gamma$, then
Theorem~\ref{Opt:MainTheo} holds with the transversality conditions
\eqref{OPT:transversality} replaced by
\begin{multline*}
\Biggl[H(t,x(t),u(t),\lambda(t))-N\lambda(t)\LCD x(t)
+N\dot{x}(t)\RIT\lambda(t)+\frac{\partial \phi}{\partial t}(t,x(t))\\
-\dot{\gamma}(t)\left(M \lambda(t)+N\;\RIT\lambda(t)
-\frac{\partial \phi}{\partial x}(t,x(t))\right)\Biggr]_{t=T}=0.
\end{multline*}

\item If $T$ is fixed and $x(T)\geq K$ for some fixed $K\in\mathbb{R}$,
then Theorem~\ref{Opt:MainTheo} holds with the transversality conditions
\eqref{OPT:transversality} replaced by
\begin{gather*}
\left[M \lambda(t)+N\;\RIT\lambda(t)
-\frac{\partial \phi}{\partial x}(t,x(t))\right]_{t=T}\leq 0,\\
(x(T)-K)\left[M \lambda(t)+N\;\RIT\lambda(t)
-\frac{\partial \phi}{\partial x}(t,x(t))\right]_{t=T}=0.
\end{gather*}

\item If $x(T)$ is fixed and $T\leq K$ for some fixed $K\in\mathbb{R}$,
then Theorem~\ref{Opt:MainTheo} holds with the transversality conditions
\eqref{OPT:transversality} replaced by
$$
\left[H(t,x(t),u(t),\lambda(t))-N\lambda(t)\LCD x(t)+N\dot{x}(t)\RIT\lambda(t)
+\frac{\partial \phi}{\partial t}(t,x(t))\right]_{t=T}\geq 0,
$$
\begin{multline*}
\left[H(t,x(t),u(t),\lambda(t))-N\lambda(t)\LCD x(t)+N\dot{x}(t)\RIT\lambda(t)
+\frac{\partial \phi}{\partial t}(t,x(t))\right]_{t=T}\\
\times (T-K) = 0.
\end{multline*}
\end{enumerate}
\end{corollary}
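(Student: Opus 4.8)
The plan is to obtain all six cases as specializations of the variational identity established in the proof of Theorem~\ref{Opt:MainTheo}. Recall that, evaluating the first variation of $\mathcal{J}$ along an optimal triplet and using the substitution $\delta x(T)=\delta x_T-\dot x(T)\delta T+O(\delta T^2)$ with $\delta x_T=[x+\delta x](T+\delta T)-x(T)$, one arrives at
\begin{multline*}
\delta T\left[H(t,x,u,\lambda)-N\lambda\LCD x+N\dot{x}\RIT\lambda+\frac{\partial\phi}{\partial t}(t,x)\right]_{t=T}\\
+\int_a^T\left[\delta x\left(\frac{\partial H}{\partial x}+M\dot\lambda-N\RDT\lambda\right)+\delta\lambda\left(\frac{\partial H}{\partial\lambda}-M\dot x-N\LCD x\right)+\delta u\frac{\partial H}{\partial u}\right]dt\\
-\delta x_T\left[M\lambda+N\RIT\lambda-\frac{\partial\phi}{\partial x}(t,x)\right]_{t=T}+O(\delta T^2)=0 .
\end{multline*}
First I would note that in every case the Hamiltonian system \eqref{OPT:HamilSyst} and the stationary condition \eqref{OPT:stacionary} follow exactly as in Theorem~\ref{Opt:MainTheo}: one chooses $\delta T=0$ and $\delta x_T=0$ (always admissible, since this only requires $\delta x$ to vanish at both endpoints), so that only the integral term survives, and then invokes the fundamental lemma of the calculus of variations with $\delta x$, $\delta u$, $\delta\lambda$ independent and arbitrary on $(a,T)$. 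Hence only the two boundary brackets have to be re-examined in each of the six situations.

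For case~3, both $T$ and $x(T)$ are prescribed, so $\delta T\equiv0$ and $\delta x_T\equiv0$; both boundary terms vanish identically and no transversality condition is left. For case~1, $T$ fixed forces $\delta T=0$ while $\delta x_T$ is free; dividing by $\delta x_T$ and letting it tend to zero gives $\left[M\lambda+N\RIT\lambda-\partial\phi/\partial x\right]_{t=T}=0$. Case~2 is symmetric: $x(T)$ fixed forces $\delta x_T=0$, $\delta T$ remains free, and the surviving bracket yields the first transversality condition of \eqref{OPT:transversality}. For case~4, differentiating the curve constraint $x(T)=\gamma(T)$ along an admissible variation gives $\delta x_T=\dot\gamma(T)\,\delta T+O(\delta T^2)$; substituting this into the two boundary brackets, collecting the coefficient of the now-free $\delta T$, and discarding the $O(\delta T^2)$ term produces the single combined condition stated.

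The delicate point, and the one I expect to require the most care, is cases~5 and~6, where a one-sided inequality constraint may be active or inactive at the terminal point. In case~5 ($T$ fixed, $x(T)\geq K$) one has $\delta T=0$ and the admissible perturbations must satisfy $x(T)+\delta x_T\geq K$. If $x(T)>K$ the constraint is inactive, $\delta x_T$ may take either sign, and the bracket must vanish; if $x(T)=K$ then only $\delta x_T\geq0$ is admissible, and the first-order condition for a minimum forces $\left[M\lambda+N\RIT\lambda-\partial\phi/\partial x\right]_{t=T}\leq0$. The two alternatives are unified by the sign condition together with the complementary-slackness relation $(x(T)-K)\left[\,\cdot\,\right]_{t=T}=0$. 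Case~6 is treated in the same way with the roles of $\delta x_T$ and $\delta T$ exchanged, the extra subtlety being that $T\leq K$ restricts $\delta T\leq0$ when $T=K$, which reverses the inequality to $\geq0$; the sign of $\delta T$ inside the $O(\delta T^2)$ remainder is irrelevant since it is of higher order. The only genuinely non-routine steps, then, are the bookkeeping of these one-sided signs and the verification that admissible one-sided variations actually exist, i.e. that the perturbed triplet remains feasible for the dynamics \eqref{Opt:dynamic} and the boundary condition \eqref{Opt:bound}; everything else is read off directly from the identity above.
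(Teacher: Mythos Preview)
Your proposal is correct and follows essentially the same route as the paper's own proof. The paper is considerably terser---it declares cases~1--3 ``obvious'', derives case~4 from the same Taylor expansion $\delta x_T=\gamma(T+\delta T)-\gamma(T)=\dot\gamma(T)\delta T+O(\delta T^2)$ you use, splits case~5 into the inactive ($x(T)>K$) and active ($x(T)=K$, invoking the KKT theorem for the one-sided direction $\delta x_T\ge0$) subcases exactly as you do, and dismisses case~6 as ``similar''---but the logical content is identical to what you wrote.
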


\begin{proof}
The first three conditions are obvious. The fourth follows from
$$
\delta x_T=\gamma(T+\delta T)-\gamma(T)=\dot{\gamma}(T)\delta T+O(\delta T^2).
$$
To prove \textit{5}, observe that we have two possible cases. If $x(T)>K$,
then $\delta x_T$ may take negative and positive values, and so we get
$$
\left[M \lambda(t)+N\;\RIT\lambda(t)-\frac{\partial \phi}{\partial x}(t,x(t))\right]_{t=T}=0.
$$
On the other hand, if $x(T)=K$, then $\delta x_T\geq 0$ and so by the KKT theorem
$$
\left[M \lambda(t)+N\;\RIT\lambda(t)-\frac{\partial \phi}{\partial x}(t,x(t))\right]_{t=T}\leq 0.
$$
The proof of the last condition is similar.
\end{proof}

Case~1 of Corollary~\ref{OPT:MainCor} was proven in \cite{Gastao0}
for $(M,N)=(0,1)$ and $\phi \equiv 0$. Moreover, if $\a=1$, then we obtain the classical
necessary optimality conditions for the standard optimal control problem (see, e.g., \cite{Chiang}):
\begin{itemize}
\item the Hamiltonian system
$$
\begin{cases}
\dot{x}(t) = \displaystyle\frac{\partial H}{\partial \lambda}(t,x(t),u(t),\lambda(t)),\\[8pt]
\dot{\lambda}(t) = - \displaystyle\frac{\partial H}{\partial x}(t,x(t),u(t),\lambda(t)),
\end{cases}
$$
\item the stationary condition $$\displaystyle \frac{\partial H}{\partial u}(t,x(t),u(t),\lambda(t))=0,$$
\item the transversality condition $\lambda(T)=0$.
\end{itemize}

% ----------------------------------------

\subsection{Approximated integer-order necessary optimality conditions}

Using approximation \eqref{expanMom}, and the relation between Caputo 
and Riemann--Liouville derivatives, up to order $K$, we can transform 
the original problem \eqref{Opt:func}--\eqref{Opt:bound}
into the following classical problem:
\begin{equation*}
\tilde{J}[x,u,T]=\int_a^TL(t,x(t),u(t))\,dt+\phi(T,x(T))\longrightarrow \min
\end{equation*}
subject to
\begin{equation*}
\begin{cases}
\dot{x}(t)=\displaystyle \frac{f(t,x(t),u(t))
-NA(t-a)^{-\a}x(t)+\sum_{p=2}^KNC_p(t-a)^{1-p-\a}V_p(t)
-\frac{x(a)(t-a)^{-\a}}{\Gamma(1-\a)}}{M+NB(t-a)^{1-\a}},\\[0.25cm]
\dot{V}_p(t)=(1-p)(t-a)^{p-2}x(t), \quad p=2,\ldots,K,
\end{cases}
\end{equation*}
and
\begin{equation}
\label{App:bound}
\begin{cases}
x(a)=x_a,\\
V_p(a)=0, \quad p=2,\ldots,K,
\end{cases}
\end{equation}
where $A=A(\a,K)$, $B=B(\a,K)$ and $C_p=C(\a,p)$
are the coefficients in the approximation \eqref{expanMom}.
Now that we are dealing with an integer-order problem,
so we can follow a classical procedure (see, e.g., \cite{Kirk}),
by defining the Hamiltonian $H$ by
\begin{equation*}
\begin{split}
H=L(t,x,u)&+\frac{\lambda_1 \left(f(t,x,u) -NA(t-a)^{-\a}x
+\sum_{p=2}^KNC_p(t-a)^{1-p-\a}V_p
-\frac{x(a)(t-a)^{-\a}}{\Gamma(1-\a)}\right)}{M+NB(t-a)^{1-\a}}\\
&+\sum_{p=2}^K \lambda_p (1-p)(t-a)^{p-2}x.
\end{split}
\end{equation*}
Let $\bm{\lambda}=(\lambda_1,\lambda_2,\ldots,\lambda_K)$ 
and $\mathbf{x}=(x,V_2,\ldots,V_K)$.
The necessary optimality conditions
\begin{equation*}
\frac{\partial H}{\partial u}=0, \quad
\begin{cases}
\dot{\mathbf{x}}=\displaystyle \frac{\partial H}{\partial \bm{\lambda}},\\[0.25cm]
\dot{\bm{\lambda}}=-\displaystyle \frac{\partial H}{\partial \mathbf{x}},
\end{cases}
\end{equation*}
result in a two point boundary value problem.
Assume that $(T^*,\mathbf{x}^*,\bm{u}^*)$ is the optimal triplet.
In addition to the boundary conditions \eqref{App:bound},
the transversality conditions imply
$$
\left[\frac{\partial \phi}{\partial \mathbf{x}}(T^*,
\mathbf{x}^*(T))\right]^{tr}\delta \mathbf{x}_T
+\left[H(T^*, \mathbf{x}^*(T),\bm{u}^*(T),\bm{\lambda}^*(T))
+\frac{\partial \phi}{\partial t}(T^*, \mathbf{x}^*(T))\right]\delta T=0,
$$
where $tr$ denotes the transpose.
Because $V_p$, $p=2,\ldots,K$, are auxiliary variables
whose values $V_p(T)$, at the final time $T$, are free, we have
$$
\lambda_p(T)=\frac{\partial\phi}{\partial V_p}\Big |_{t=T} = 0,
\quad p=2,\ldots,K.
$$
The value of $\lambda_1(T)$ is determined from the value of $x(T)$.
If $x(T)$ is free, then $\lambda_1(T)=\frac{\partial\phi}{\partial x}|_{t=T}$.
Whenever the final time is free, a transversality condition of the form
$$
\left[H\left(t,\mathbf{x}(t),\bm{u}(t),\bm{\lambda}(t)\right)
-\frac{\partial\phi}{\partial t}\left(t,\mathbf{x}(t)\right)\right]_{t=T}=0
$$
completes the required set of boundary conditions.

% ----------------------------------------

\section{A generalization}

The aim is now to consider a generalization of the
optimal control problem \eqref{Opt:func}--\eqref{Opt:bound}
studied in Section~\ref{SecNecCond}. Observe that the initial point $t=a$ is in fact
the initial point for two different operators: for the integral in \eqref{Opt:func}
and, secondly, for the left Caputo fractional derivative given by the dynamic constraint
\eqref{Opt:dynamic}. We now consider the case where the lower bound of the integral
of $J$ is greater than the lower bound of the fractional derivative.
The problem is stated as follows:
\begin{equation}
\label{eq:gJ}
J[x,u,T]=\int_A^TL(t,x(t),u(t))\,dt+\phi(T,x(T))\longrightarrow\min
\end{equation}
under the constraints
\begin{equation}
\label{eq:gCS}
M \dot{x}(t)+N\;\LCD x(t)=f(t,x(t),u(t)) \quad \mbox{and} \quad x(A)=x_A,
\end{equation}
where $(M,N)\not=(0,0)$, $x_A$ is a fixed real, and $a<A$.

\begin{remark}
We have chosen to consider the initial condition
on the initial time $A$ of the cost integral,
but the case of initial condition $x(a)$ instead of $x(A)$
can be studied using similar arguments.
Our choice seems the most natural: the interval of interest is $[A,T]$
but the fractional derivative is a non-local operator and has ``memory''
that goes to the past of the interval $[A,T]$ under consideration.
\end{remark}

\begin{remark}
In the theory of fractional differential equations,
the initial condition is given at $t=a$. To the best of our
knowledge there is no general theory about uniqueness of solutions
for problems like \eqref{eq:gCS}, where the fractional derivative
involves $x(t)$ for $a<t<A$ and the initial condition is given at $t=A$.
Uniqueness of solution is, however, possible. Consider, for example,
${_0^C D ^\alpha _t} x(t)=t^2$. Applying the fractional integral to both sides of equality
we get $x(t)=x(0)+ 2 t^{2+\alpha}/\Gamma(3+\alpha)$ so, knowing a value for $x(t)$,
not necessarily at $t=0$, one can determine $x(0)$ and by doing so $x(t)$.
A different approach than the one considered here
is to provide an initialization function for $t\in[a,A]$.
This initial memory approach was studied for fractional
continuous-time linear control systems in \cite{MyID:163}
and \cite{Dorota}, respectively for Caputo and Riemann--Liouville derivatives.
\end{remark}
The method to obtain
the required necessary optimality conditions follows the same procedure
as the one discussed before. The first variation gives
\begin{equation*}
\begin{split}
0 = \int_A^T&\Biggl[\frac{\partial H}{\partial x}\delta x+\frac{\partial H}{\partial u}\delta u
+\frac{\partial H}{\partial \lambda}\delta \lambda-\delta \lambda\left(M \dot{x}(t)+N\;\LCD x(t)\right)\\
&-\lambda(t)\left(M \dot{\delta x}(t)+N\;\LCD\delta x(t)\right)\Biggr]dt
+\frac{\partial \phi}{\partial x}(T,x(T))\left(\dot{x}(T)\delta T+\delta x(T)\right)\\
&+\frac{\partial\phi}{\partial t}(T,x(T)) \delta T
+\delta T\left[H(t,x,u,\lambda)-\lambda(t)\left(M \dot{x}(t)+N\;\LCD x(t)\right)\right]_{t=T},
\end{split}
\end{equation*}
where the Hamiltonian $H$ is as in \eqref{def:Hamiltonian}.
Now, if we integrate by parts, we get
$$
\int_A^T\lambda(t) \dot{\delta x}(t)\, dt
=-\int_A^T \delta x(t)  \dot{\lambda}(t)\,dt+\delta x(T)\lambda(T),
$$
and
\begin{equation*}
\begin{split}
\int_A^T & \lambda(t) \LCD \delta x(t)\,dt
=\int_a^T \lambda(t) \LCD \delta x(t)\,dt-\int_a^A\lambda(t) \LCD \delta x(t)\,dt\\
&=\int_a^T \delta x(t) \, \RDT\lambda(t)\,dt+[\delta x(t) \RIT\lambda(t)]_{t=a}^{t=T}
-\int_a^A \delta x(t) \, {_tD^\a_A}\lambda(t)\,dt\\
&\quad -[\delta x(t){_tI^{1-\a}_A}\lambda(t)]_{t=a}^{t=A}\\
&=\int_a^A \delta x(t) [\RDT\lambda(t)-{_tD^\a_A}\lambda(t)]\,dt
+\int_A^T \delta x(t) \, \RDT\lambda(t)\,dt\\
&\quad +\delta x(T)[\RIT\lambda(t)]_{t=T}
-\delta x(a)[{_aI^{1-\a}_T}\lambda(a)-{_aI^{1-\a}_A}\lambda(a)].
\end{split}
\end{equation*}
Substituting these relations into the first variation of $J$, we conclude that
\begin{equation*}
\begin{split}
\int_A^T&\left[\left(\frac{\partial H}{\partial x}
+M \dot{\lambda} -N\;\RDT\lambda\right)\delta x
+\frac{\partial H}{\partial u} \delta u
+\left( \frac{\partial H}{\partial \lambda}-M \dot{x}-N\;\LCD x\right)\delta \lambda \right]dt\\
&-N\int_a^A \delta x [\RDT\lambda-{_tD^\a_A}\lambda]\,dt
-\delta x[M \lambda+N\;\RIT\lambda-\frac{\partial \phi}{\partial x}(t,x)]_{t=T}\\
&+\delta T[H(t,x,u,\lambda)-\lambda[M \dot{x}+N\;\LCD x]
+\frac{\partial \phi}{\partial t}(t,x)
+\frac{\partial \phi}{\partial x}(t,x)\dot{x}]_{t=T}\\
&+N\delta x(a)[{_aI^{1-\a}_T}\lambda(a)-{_aI^{1-\a}_A}\lambda(a)]=0.
\end{split}
\end{equation*}
Repeating the calculations as before, we prove the following optimality conditions.

\begin{theorem}
If the triplet $(x,u,T)$ is an optimal solution to problem \eqref{eq:gJ}--\eqref{eq:gCS},
then there exists a function $\lambda$ for which the following conditions hold:
\begin{itemize}
\item the \emph{Hamiltonian system}
\begin{equation*}
\begin{cases}
M \dot{\lambda}(t)-N\;\RDT\lambda(t) 
= - \displaystyle \frac{\partial H}{\partial x}(t,x(t),u(t),\lambda(t))\\[0.25cm]
M \dot{x}(t)+N\;\LCD x(t) 
= \displaystyle \frac{\partial H}{\partial \lambda}(t,x(t),u(t),\lambda(t))
\end{cases}
\end{equation*}
for all $t\in[A,T]$, and
$\RDT\lambda(t)-{_tD^\a_A}\lambda(t)=0$ for all $t\in[a,A]$;

\item the \emph{stationary condition}
$$
\frac{\partial H}{\partial u}(t,x(t),u(t),\lambda(t))=0
$$
for all $t\in[A,T]$;

\item the \emph{transversality conditions}
\begin{gather*}
\left[H(t,x(t),u(t),\lambda(t))-N\lambda(t)\LCD x(t)+N\dot{x}(t)\RIT\lambda(t)
+\frac{\partial \phi}{\partial t}(t,x(t))\right]_{t=T}=0,\\
\left[M \lambda(t)+N\;\RIT\lambda(t)-\frac{\partial \phi}{\partial x}(t,x(t))\right]_{t=T}=0,\\
\left[{_tI^{1-\a}_T}\lambda(t)-{_tI^{1-\a}_A}\lambda(t)\right]_{t=a}=0;
\end{gather*}
\end{itemize}
with the Hamiltonian $H$ given by \eqref{def:Hamiltonian}.
\end{theorem}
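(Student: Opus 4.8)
The plan is to follow the Lagrange-multiplier (adjoint) technique employed in the proof of Theorem~\ref{Opt:MainTheo}, the only genuinely new feature being the fractional integration by parts when the lower limit $a$ of the Caputo derivative lies strictly below the lower limit $A$ of the cost integral. First I would adjoin the dynamic constraint \eqref{eq:gCS} to the cost functional \eqref{eq:gJ} via a multiplier $\lambda$, so that the problem becomes the unconstrained minimization of
\[
\mathcal{J}[x,u,T,\lambda]=\int_A^T\left[H(t,x,u,\lambda)-\lambda(t)\left(M\dot{x}(t)+N\,\LCD x(t)\right)\right]dt+\phi(T,x(T)),
\]
with $H$ given by \eqref{def:Hamiltonian}. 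Then I would take admissible variations $x+\delta x$, $u+\delta u$, $\lambda+\delta\lambda$, $T+\delta T$, subject only to $\delta x(A)=0$ (forced by $x(A)=x_A$); note that $\delta x$ on the open interval $(a,A)$, together with $\delta x(a)$, $\delta x(T)$, $\delta u$, $\delta\lambda$ and $\delta T$, are free and mutually independent. Writing down the first variation of $\mathcal{J}$ and equating it to zero produces exactly the expression displayed immediately before the statement.

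The technical core is the integration by parts of the term $\int_A^T\lambda\,\LCD\delta x\,dt$. Since the Caputo operator acts from $a$, I would split $\int_A^T=\int_a^T-\int_a^A$ and apply the Caputo integration by parts formula \eqref{eq:frac:IBP} separately on $[a,T]$ and on $[a,A]$; after using $\delta x(A)=0$ to discard the contributions at $t=A$, this gives
\begin{multline*}
\int_A^T\lambda\,\LCD\delta x\,dt
=\int_A^T\delta x\,\RDT\lambda\,dt
+\int_a^A\delta x\left(\RDT\lambda-{_tD^\a_A}\lambda\right)dt\\
+\delta x(T)\left[\RIT\lambda(t)\right]_{t=T}
-\delta x(a)\left({_aI^{1-\a}_T}\lambda(a)-{_aI^{1-\a}_A}\lambda(a)\right).
\end{multline*}
The integer-order term $\int_A^T\lambda\,\dot{\delta x}\,dt$ is treated by ordinary integration by parts, again with no boundary term at $t=A$. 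To cope with the free terminal time $T$ I would introduce $\delta x_T=[x+\delta x](T+\delta T)-x(T)$ and, by Taylor's theorem, replace $\delta x(T)$ by $\delta x_T-\dot{x}(T)\delta T+O(\delta T^2)$, so that the terminal contributions separate cleanly into a part multiplying $\delta x_T$ and a part multiplying $\delta T$.

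Finally I would collect the coefficients of the independent quantities $\delta x|_{[A,T]}$, $\delta x|_{(a,A)}$, $\delta u$, $\delta\lambda$, $\delta x_T$, $\delta x(a)$ and $\delta T$, and invoke the fundamental lemma of the calculus of variations on the interval terms. This yields, respectively: the adjoint equation $\frac{\partial H}{\partial x}+M\dot{\lambda}-N\,\RDT\lambda=0$ on $[A,T]$; the auxiliary relation $\RDT\lambda-{_tD^\a_A}\lambda=0$ on $[a,A]$; the stationary condition $\frac{\partial H}{\partial u}=0$; the state equation $M\dot{x}+N\,\LCD x=\frac{\partial H}{\partial\lambda}$ (which merely restates the constraint); the transversality condition $\left[M\lambda+N\,\RIT\lambda-\frac{\partial\phi}{\partial x}\right]_{t=T}=0$ from $\delta x_T$; the new transversality condition $\left[{_tI^{1-\a}_T}\lambda-{_tI^{1-\a}_A}\lambda\right]_{t=a}=0$ from $\delta x(a)$; and, after cancelling the $\dot{x}(T)\delta T$ terms, $\left[H-N\lambda\,\LCD x+N\dot{x}\,\RIT\lambda+\frac{\partial\phi}{\partial t}\right]_{t=T}=0$ from $\delta T$. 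Together with the imposed condition $x(A)=x_A$ these are precisely the asserted optimality conditions. The step I expect to demand the most care is the bookkeeping of boundary terms in the split fractional integration by parts — ensuring the $t=A$ contributions really cancel, that the new $t=a$ term carries the correct sign, and that $\delta x(a)$ is a legitimately free variation in this memory model, so that it produces a transversality condition rather than being tied down.
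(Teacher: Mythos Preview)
Your proposal is correct and follows essentially the same route as the paper: adjoin the constraint with a multiplier, take the first variation, split $\int_A^T\lambda\,\LCD\delta x\,dt=\int_a^T-\int_a^A$ and integrate each piece by parts via \eqref{eq:frac:IBP}, use $\delta x(A)=0$ to kill the $t=A$ boundary contributions, rewrite $\delta x(T)$ in terms of $\delta x_T$ and $\delta T$, and then read off each condition from the independent variations. The split integration by parts formula you wrote down matches the paper's computation exactly, including the new integral over $[a,A]$ that produces the auxiliary relation $\RDT\lambda={_tD^\a_A}\lambda$ and the $\delta x(a)$ boundary term that yields the third transversality condition.
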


\begin{remark}
If the admissible functions take fixed values at both $t=a$ and $t=A$,
then we only obtain the two transversality conditions evaluated at $t=T$.
\end{remark}

% ----------------------------------------

\section{Sufficient optimality conditions}

In this section we show that, under some extra hypotheses,
the obtained necessary optimality conditions
are also sufficient. To begin, let us recall the notions
of convexity and concavity for $C^1$ functions of several variables.

\begin{definition}\label{ConvDef}
Given $k\in\{1,\ldots,n\}$ and a function
$\Psi:D\subseteq \mathbb{R}^n\to \mathbb{R}$
such that $\partial \Psi / \partial t_i$ exist and
are continuous for all $i\in\{k,\ldots,n\}$,
we say that $\Psi$ is convex (concave) in $(t_k,\ldots,t_n)$ if
\begin{multline*}
\Psi(t_1+\theta_1,\ldots,t_{k-1}+\theta_{k-1},t_k+\theta_k,
\ldots,t_n+\theta_n)-\Psi(t_1,\ldots,t_{k-1},t_k,\ldots,t_n)\\
\geq \, (\leq) \, \frac{\partial \Psi}{\partial t_k}(t_1,\ldots,t_{k-1},t_k,\ldots,t_n)\theta_k
+ \cdots + \frac{\partial \Psi}{\partial t_n}(t_1,\ldots,t_{k-1},t_k,\ldots,t_n)\theta_n
\end{multline*}
for all $(t_1,\ldots,t_n),(t_1+\theta_1,\ldots,t_n+\theta_n)\in D$.
\end{definition}

\begin{theorem}
\label{thm:suff:cond}
Let $(\overline{x}, \overline{u}, \overline{\lambda})$ be a triplet satisfying conditions
\eqref{OPT:HamilSyst}--\eqref{OPT:transversality} of Theorem~\ref{Opt:MainTheo}.
Moreover, assume that
\begin{enumerate}
\item $L$ and $f$ are convex on $x$ and $u$, and $\phi$ is convex in $x$;
\item $T$ is fixed;
\item $\overline{\lambda}(t)\geq 0$ for all $t \in [a,T]$ or $f$ is linear in $x$ and $u$.
\end{enumerate}
Then $(\overline x,\overline u)$ is an optimal solution to problem
\eqref{Opt:func}--\eqref{Opt:bound}.
\end{theorem}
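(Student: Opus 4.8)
The plan is to follow the classical sufficiency argument: take any admissible pair $(x,u)$ for problem \eqref{Opt:func}--\eqref{Opt:bound} and show $J[x,u,T]\geq J[\overline x,\overline u,T]$. First I would write $J[x,u,T]-J[\overline x,\overline u,T]$ as an integral of $L(t,x,u)-L(t,\overline x,\overline u)$ plus the boundary term $\phi(T,x(T))-\phi(T,\overline x(T))$, and apply convexity of $L$ in $(x,u)$ to bound the integrand below by $\frac{\partial L}{\partial x}[\overline{\cdot}](x-\overline x)+\frac{\partial L}{\partial u}[\overline{\cdot}](u-\overline u)$, and convexity of $\phi$ in $x$ to bound the boundary term below by $\frac{\partial \phi}{\partial x}(T,\overline x(T))(x(T)-\overline x(T))$. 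Using the Hamiltonian $H$ of \eqref{def:Hamiltonian}, I would rewrite $\frac{\partial L}{\partial x}=\frac{\partial H}{\partial x}-\overline\lambda\frac{\partial f}{\partial x}$ and similarly for $u$. Since $(\overline x,\overline u,\overline\lambda)$ satisfies the stationary condition \eqref{OPT:stacionary}, the $\partial H/\partial u$ contribution vanishes, so I am left with
\begin{equation*}
J[x,u,T]-J[\overline x,\overline u,T]\geq \int_a^T\left[\frac{\partial H}{\partial x}(x-\overline x)-\overline\lambda\left(\frac{\partial f}{\partial x}(x-\overline x)+\frac{\partial f}{\partial u}(u-\overline u)\right)\right]dt+\frac{\partial \phi}{\partial x}(T,\overline x(T))(x(T)-\overline x(T)).
\end{equation*}

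Next I would handle the $\overline\lambda f$ terms. Here assumption \textit{3} enters: if $f$ is linear in $(x,u)$, then $\frac{\partial f}{\partial x}(x-\overline x)+\frac{\partial f}{\partial u}(u-\overline u)=f(t,x,u)-f(t,\overline x,\overline u)$ exactly; if instead $\overline\lambda\geq 0$, convexity of $f$ gives $\frac{\partial f}{\partial x}(x-\overline x)+\frac{\partial f}{\partial u}(u-\overline u)\leq f(t,x,u)-f(t,\overline x,\overline u)$, and multiplying by $-\overline\lambda\leq 0$ preserves the inequality direction. Either way,
\begin{equation*}
-\overline\lambda\left(\frac{\partial f}{\partial x}(x-\overline x)+\frac{\partial f}{\partial u}(u-\overline u)\right)\geq -\overline\lambda\left(f(t,x,u)-f(t,\overline x,\overline u)\right).
\end{equation*}
Now I invoke the dynamic constraint \eqref{Opt:dynamic}: both $(x,u)$ and $(\overline x,\overline u)$ satisfy $M\dot{y}+N\,{^C_aD_t^\a}y=f(t,y,u_y)$, so $f(t,x,u)-f(t,\overline x,\overline u)=M(\dot x-\dot{\overline x})+N\,{^C_aD_t^\a}(x-\overline x)$. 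Writing $\delta x:=x-\overline x$ (note $\delta x(a)=0$), the difference is bounded below by
\begin{equation*}
\int_a^T\left[\frac{\partial H}{\partial x}\delta x-\overline\lambda\left(M\dot{\delta x}+N\,{^C_aD_t^\a}\delta x\right)\right]dt+\frac{\partial \phi}{\partial x}(T,\overline x(T))\delta x(T).
\end{equation*}

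Finally I would integrate by parts exactly as in the derivation of Theorem~\ref{Opt:MainTheo}: $\int_a^T\overline\lambda M\dot{\delta x}\,dt=-\int_a^T M\dot{\overline\lambda}\delta x\,dt+M\overline\lambda(T)\delta x(T)$ (using $\delta x(a)=0$), and $\int_a^T\overline\lambda N\,{^C_aD_t^\a}\delta x\,dt=\int_a^T N\,{_tD_T^\a}\overline\lambda\,\delta x\,dt+N[{_tI_T^{1-\a}}\overline\lambda]_{t=T}\delta x(T)$ via the fractional integration by parts formula \eqref{eq:frac:IBP}. Substituting, the integrand becomes $\left(\frac{\partial H}{\partial x}+M\dot{\overline\lambda}-N\,{_tD_T^\a}\overline\lambda\right)\delta x$, which vanishes by the adjoint equation, the first line of the Hamiltonian system \eqref{OPT:HamilSyst}. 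The boundary terms collect to $\left(-M\overline\lambda(T)-N[{_tI_T^{1-\a}}\overline\lambda]_{t=T}+\frac{\partial \phi}{\partial x}(T,\overline x(T))\right)\delta x(T)$, which is zero by the transversality condition \eqref{OPT:transversality} (the second equation, which survives since $T$ is fixed by assumption \textit{2}). Hence $J[x,u,T]-J[\overline x,\overline u,T]\geq 0$, proving optimality. The main obstacle is bookkeeping: making sure the sign in the $\overline\lambda f$ step is used correctly under each of the two alternative hypotheses in \textit{3}, and that the fractional integration-by-parts boundary terms line up precisely with the transversality condition rather than leaving a stray term; the role of $T$ being fixed is exactly what removes the other transversality condition (the one multiplying $\delta T$), so that assumption must be used explicitly.
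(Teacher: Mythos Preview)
Your proposal is correct and follows essentially the same approach as the paper's proof: apply convexity of $L$ and $\phi$ to bound $\Delta J$ from below, express $\partial L/\partial x$ and $\partial L/\partial u$ via the Hamiltonian system and stationary condition, use the dynamic constraint to replace $f(t,x,u)-f(t,\overline x,\overline u)$ by $M\dot{\delta x}+N\,{^C_aD_t^\a}\delta x$, integrate by parts (classical and fractional, using $\delta x(a)=0$), and cancel everything against the adjoint equation and the surviving transversality condition. The only cosmetic difference is ordering: you invoke assumption~\textit{3} (convexity or linearity of $f$) before the integration by parts, whereas the paper integrates by parts first and applies assumption~\textit{3} at the very end; the content is identical.
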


\begin{proof}
From \eqref{OPT:HamilSyst} we deduce that
$$
\frac{\partial L}{\partial x}(t,\overline x(t),\overline u(t))
=-M\dot{\overline \lambda}(t)+N\;\RDT\overline \lambda(t)
-\overline\lambda(t)\frac{\partial f}{\partial x}(t,\overline x(t),\overline u(t)).
$$
Using \eqref{OPT:stacionary},
$$
\frac{\partial L}{\partial u}(t,\overline x(t),\overline u(t))
=-\overline\lambda(t)\frac{\partial f}{\partial u}(t,\overline x(t),\overline u(t)),
$$
and \eqref{OPT:transversality} gives
$[M\overline \lambda(t)+N\;{_tI_T^{1-\alpha}}\overline \lambda(t)
-\frac{\partial \phi}{\partial x}(t,\overline x(t))]_{t=T}=0$.
Let $(x,u)$ be admissible, i.e., let \eqref{Opt:dynamic}
and\eqref{Opt:bound} be satisfied for $(x,u)$. In this case,
\begin{equation*}
\begin{split}
\triangle J&=J[x,u]-J[\overline x,\overline u]\\
&=\int_a^T \left[L(t,x(t),u(t))-L(t,\overline x(t),\overline u(t))\right] dt
+\phi(T,x(T))-\phi(T,\overline x(T))\\
&\geq \int_a^T \left[\frac{\partial L}{\partial x}(t,\overline x(t),
\overline u(t)) (x(t)-\overline x(t))
+\frac{\partial L}{\partial u}(t,\overline x(t),\overline u(t))
(u(t)-\overline u(t))\right] dt\\
&\quad +\frac{\partial \phi}{\partial x}(T,\overline x(T))(x(T)-\overline x(T))\\
&=\int_a^T \biggl[ -M\dot{\overline \lambda}(t)(x(t)
-\overline x(t))+N\;\RDT\overline \lambda(t)(x(t)-\overline x(t))\\
&\hspace{1.5cm} -\overline\lambda(t)
\frac{\partial f}{\partial x}(t,\overline x(t),\overline u(t))(x(t)-\overline x(t))
-\overline\lambda(t)\frac{\partial f}{\partial u}(t,\overline x(t),
\overline u(t))(u(t)-\overline u(t))\biggr] dt\\
&\quad +\frac{\partial \phi}{\partial x}(T,\overline x(T))(x(T)-\overline x(T)).
\end{split}
\end{equation*}
Integrating by parts, and noting that $x(a)=\overline x(a)$, we obtain
\begin{equation*}
\begin{split}
\triangle J&\geq\int_a^T \overline \lambda(t)\Biggl[ M\left(\dot{x}(t)-\dot{\overline x}(t)\right)
+N\left(\LCD x(t)-\LCD \overline x(t)\right)\\
&\hspace{2.5cm} -\frac{\partial f}{\partial x}\left(t,\overline x(t),
\overline u(t)\right)\left(x(t)-\overline x(t)\right)
-\frac{\partial f}{\partial u}\left(t,\overline x(t),\overline u(t)\right)
\left(u(t)-\overline u(t)\right)\Biggr] dt\\
&\quad+\left[\frac{\partial \phi}{\partial x}(t,\overline x(t))
-M\overline \lambda(t)-N\;{_tI_T^{1-\alpha}}\overline \lambda(t)\right]_{t=T}
\left(x(T)-\overline x(T)\right),
\end{split}
\end{equation*}
and finally
\begin{equation*}
\begin{split}
\triangle J&\geq \int_a^T \Biggl[\overline \lambda(t)\left[ 
f(t,x(t),u(t))-f\left(t,\overline x(t),\overline u(t)\right) \right]
-\overline\lambda(t)\frac{\partial f}{\partial x}(t,\overline x(t),
\overline u(t))\left(x(t)-\overline x(t)\right)\\
&\qquad -\overline\lambda(t)\frac{\partial f}{\partial u}(t,
\overline x(t),\overline u(t))\left(u(t)-\overline u(t)\right)\Biggr] dt\\
&\geq \int_a^T \overline \lambda(t) \Biggl[
\frac{\partial f}{\partial x}\left(t,\overline x(t),
\overline u(t)\right)\left(x(t)-\overline x(t)\right)
+\frac{\partial f}{\partial u}\left(t,\overline x(t),
\overline u(t)\right)\left(u(t)-\overline u(t)\right)\\
&\qquad -\frac{\partial f}{\partial x}\left(t,\overline x(t),
\overline u(t)\right)\left(x(t)-\overline x(t)\right)
-\frac{\partial f}{\partial u}\left(t,\overline x(t),
\overline u(t)\right)\left(u(t)-\overline u(t)\right)\Biggr] dt\\
&= 0.
\end{split}
\end{equation*}
\end{proof}

\begin{remark}
If the functions in Theorem~\ref{thm:suff:cond} are strictly convex
instead of convex, then the minimizer is unique.
\end{remark}

% ----------------------------------------

\section{Numerical treatment and examples}

Here we apply the necessary conditions of Section~\ref{SecNecCond}
to solve some test problems. Solving an optimal control problem,
analytically, is an optimistic goal and is impossible
except for simple cases. Therefore, we apply numerical and computational methods
to solve our problems. In each case we try to solve the problem either
by applying fractional necessary conditions or by approximating the problem
by a classical one and then solving the approximate problem.

% ----------------------------------------

\subsection{Fixed final time}
\label{sub:sec:fft}

We first solve a simple problem with fixed final time.
In this case the exact solution, i.e.,
the optimal control and the corresponding optimal trajectory,
is known, and hence we can compare it with the approximations
obtained by our numerical method.

\begin{example}
\label{exm41}
Consider the following optimal control problem:
$$
J[x,u]=\int_0^1 \left(t u(t)-(\a+2)x(t)\right)^2\,dt \longrightarrow \min
$$
subject to the control system
$$
\dot{x}(t)+{^C_0D^\a_t} x(t)=u(t)+t^2,
$$
and the boundary conditions
$$
x(0)=0, \quad x(1)=\frac{2}{\Gamma(3+\a)}.
$$
The solution is given by
$$
\left(\overline x(t),\overline u(t)\right)
=\left(\frac{2t^{\a+2}}{\Gamma(\a+3)},\frac{2t^{\a+1}}{\Gamma(\a+2)}\right),
$$
because $J(x,u) \geq 0$ for all pairs $(x,u)$ and
$\overline x(0)=0$, $\overline x(1)=\frac{2}{\Gamma(3+\a)}$,
$\dot{\overline{x}}(t) = \overline{u}(t)$ and
${^C_0D^\a_t} \overline x (t)=t^2$ with $J(\overline x,\overline u)=0$.
It is trivial to check that $(\overline x,\overline u)$ satisfies
the fractional necessary optimality conditions given by
Theorem~\ref{Opt:MainTheo}/Corollary~\ref{OPT:MainCor}.
\end{example}

Let us apply the fractional necessary conditions
to the above problem. The Hamiltonian is
$H=\left(tu-(\a+2)x\right)^2+\lambda u+\lambda t^2$.
The stationary condition \eqref{OPT:stacionary} implies that
for $t \ne 0$
$$
u(t)=\frac{\a+2}{t}x(t)-\frac{\lambda(t)}{2t^2},
$$
and hence
\begin{equation}
\label{emx1Ham}
H=-\frac{\lambda^2}{4t^2}+\frac{\a+2}{t}x\lambda+t^2\lambda,
\quad t \ne 0.
\end{equation}
Finally, \eqref{OPT:HamilSyst} gives
$$
\begin{cases}
\displaystyle\dot{x}(t)+{^C_0D^\a_t} x(t)
=-\frac{\lambda}{2t^2}+\frac{\a+2}{t}x(t)+t^2\\[8pt]
\displaystyle-\dot{\lambda}(t)+{_tD^\a_1}\lambda(t)
=\frac{\a+2}{t}\lambda(t)
\end{cases}
,\quad
\begin{cases}
x(0)=0\\
\displaystyle x(1)=\frac{2}{\Gamma(3+\a)}.
\end{cases}
$$
At this point, we encounter a fractional boundary value problem that needs
to be solved in order to reach the optimal solution. A handful of methods can be found
in the literature to solve this problem. Nevertheless, we use approximations
\eqref{expanMom} and \eqref{expanMomR}, up to order $N$, that have been introduced
in \cite{Atan2} and used in \cite{Jelicic,PATFracInt}. With our choice of approximation,
the fractional problem is transformed into a classical (integer-order) boundary value problem:
$$
\begin{cases}
\displaystyle\dot{x}(t)=\left[\left(\frac{\a+2}{t}-At^{-\a}\right)x(t)
+\sum_{p=2}^N C_pt^{1-p-\a}V_p(t)-\frac{\lambda(t)}{2t^2}
+t^2\right]\frac{1}{1+Bt^{1-\a}}\\[5pt]
\dot{V}_p(t)=(1-p)t^{p-2}x(t), \quad p=2,\ldots,N\\[5pt]
\displaystyle\dot{\lambda}(t)=\left[\left(A(1-t)^{-\a}-\frac{\a+2}{t}\right)\lambda(t)
-\sum_{p=2}^N C_p(1-t)^{1-p-\a}W_p(t)\right]\frac{1}{1+B(1-t)^{1-\a}}\\
\dot{W}_p(t)=-(1-p)(1-t)^{p-2}\lambda(t), \quad p=2,\ldots,N,
\end{cases}
$$
subject to the boundary conditions
$$
\begin{cases}
\displaystyle x(0)=0,\quad x(1)=\frac{2}{\Gamma(3+\a)},\\
V_p(0)=0, \quad p=2,\ldots,N,\\
W_p(1)=0, \quad p=2,\ldots,N.
\end{cases}
$$
The solutions are depicted in Figure~\ref{exm41FNCFig}
for $N=2$, $N=3$ and $\a=1/2$. Since the exact solution
for this problem is known, for each $N$ we compute
the approximation error by using the maximum norm.
Assume that $\overline x(t_i)$ are the approximated values
on the discrete time horizon $a=t_0,t_1,\ldots,t_n$.
Then the error is given by
$$
E = \max_{i}(|x(t_i)-\overline x(t_i)|).
$$
% ------------------------------------------------------
\begin{figure}
\begin{center}
\subfigure[$x(t), N=2$]{\includegraphics[scale=0.405]{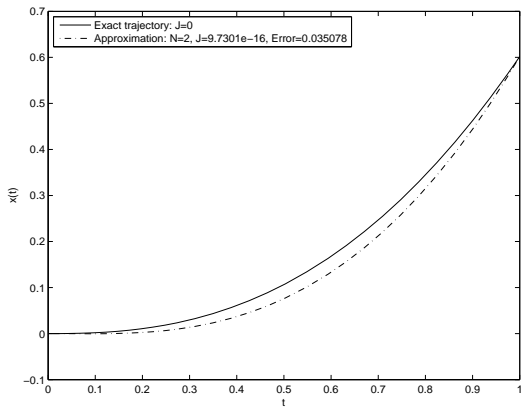}}
\subfigure[$u(t), N=2$]{\includegraphics[scale=0.405]{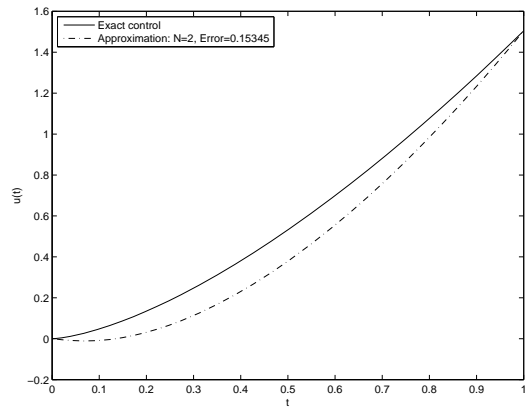}}
\subfigure[$x(t), N=3$]{\includegraphics[scale=0.405]{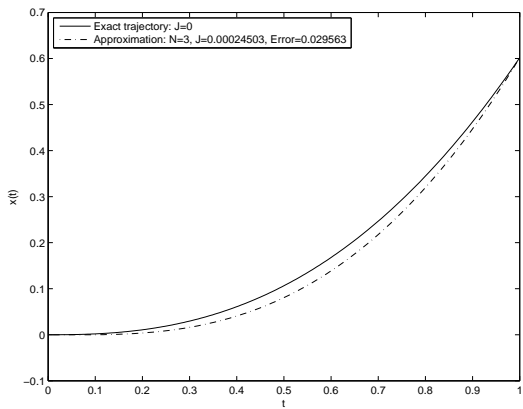}}
\subfigure[$u(t), N=3$]{\includegraphics[scale=0.405]{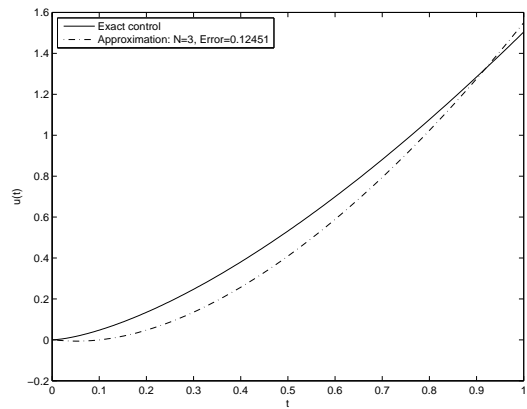}}
\end{center}
\caption{Exact solution (solid lines) for the problem in Example~\ref{exm41} with $\a = 1/2$
{\it versus} numerical solutions (dashed lines) obtained using approximations
\eqref{expanMom} and \eqref{expanMomR} up to order $N$
in the fractional necessary optimality conditions.}
\label{exm41FNCFig}
\end{figure}
% ------------------------------------------------------

Another approach is to approximate the original problem by using
\eqref{expanMom} for the fractional derivative. Following the procedure
discussed in Section~\ref{SecNecCond}, the problem of Example~\ref{exm41}
is approximated by
$$
\tilde{J}[x,u]=\int_0^1(tu-(\a+2)x)^2\,dt \longrightarrow \min
$$
subject to the control system
$$
\begin{cases}
\dot{x}(t)[1+B(\a,N)t^{1-\a}]+A(\a,N)t^{-\a}x(t)
-\sum_{p=2}^N C(\a,p)t^{1-p-\a}V_p(t)=u(t)+t^2\\
\dot{V}_p(t)=(1-p)t^{p-2}x(t),
\end{cases}
$$
and boundary conditions
$$
x(0)=0, \quad x(1)=\frac{2}{\Gamma(3+\a)},
\quad V_p(0)=0, \quad p=2,3,\ldots,N.
$$
The Hamiltonian system for this classical optimal control problem is
\begin{equation*}
\begin{split}
H=\left(tu-(\a+2)x\right)^2
&+\frac{\lambda_1(-A(\a,N)t^{-\a}x
+\sum_{p=2}^N C(\a,p)t^{1-p-\a}V_p+u+t^2)}{1+B(\a,N)t^{1-\a}}\\
&+\sum_{p=2}^N (1-p)t^{p-2}\lambda_p x.
\end{split}
\end{equation*}
Using the stationary condition $\frac{\partial H}{\partial u}=0$, we have
$$
u(t)=\frac{\a+2}{t}x(t)-\frac{\lambda_1(t)}{2t^2(1+B(\a,N)t^{1-\a})}
\quad \text{for } t \ne 0.
$$
Finally, the Hamiltonian becomes
\begin{equation}
\label{exmHamiltonian}
H=\phi_0\lambda_1^2+\phi_1x\lambda_1+\sum_{p=2}^N \phi_pV_p\lambda_1
+\phi_{N+1}\lambda_1+\sum_{p=2}^N(1-p)t^{p-2}x\lambda_p, \quad t \ne 0,
\end{equation}
where
\begin{equation}
\label{eq:phi0:phi1}
\phi_0(t)=\frac{-1}{4t^2(1+B(\a,N)t^{1-\a})^2},
\quad \phi_1(t)=\frac{\a+2-A(\a,N)t^{1-\a}}{t(1+B(\a,N)t^{1-\a})},
\end{equation}
and
\begin{equation}
\label{eq:phi2:phi3}
\phi_p(t)=\frac{C(\a,p)t^{1-p-\a}}{1+B(\a,N)t^{1-\a}},
\quad \phi_{N+1}(t)=\frac{t^2}{1+B(\a,N)t^{1-\a}}.
\end{equation}
The Hamiltonian system
$\dot{\mathbf{x}}=\frac{\partial H}{\partial \mathbf{\lambda}}$,
$\dot{\bm{\lambda}}=-\frac{\partial H}{\partial \mathbf{x}}$,
gives
$$
\begin{cases}
\dot{x}(t)=2\phi_0(t)\lambda_1(t)+\phi_1(t)x(t)+\sum_{p=2}^N \phi_p(t)V_p(t)+\phi_{N+1}(t)\\
\dot{V_p}=(1-p)t^{p-2}x(t), \quad p=2,\ldots,N\\
\dot{\lambda_1}=-\phi_1(t)\lambda_1(t)+\sum_{p=2}^N(p-1)t^{p-2}\lambda_p\\
\dot{\lambda_p}=-\phi_p(t)\lambda_1(t), \quad p=2,\ldots,N,
\end{cases}
$$
subject to the boundary conditions
$$
\begin{cases}
x(0)=0\\
V_p(0)=0, \quad p=2,\ldots,N
\end{cases}
,\qquad
\begin{cases}
\displaystyle x(1)=\frac{2}{\Gamma(3+\a)}\\
\lambda_p(1)=0, \quad p=2,\ldots,N.
\end{cases}
$$
This two-point boundary value problem 
was solved using MATLAB$^\circledR$\index{MATLAB} \textsf{bvp4c}
built-in function for $N=2$ and $N=3$. 
The results are depicted in Figure~\ref{exm41Fig}.
\begin{figure}
\begin{center}
\subfigure[$x(t), N=2$]{\includegraphics[scale=0.54]{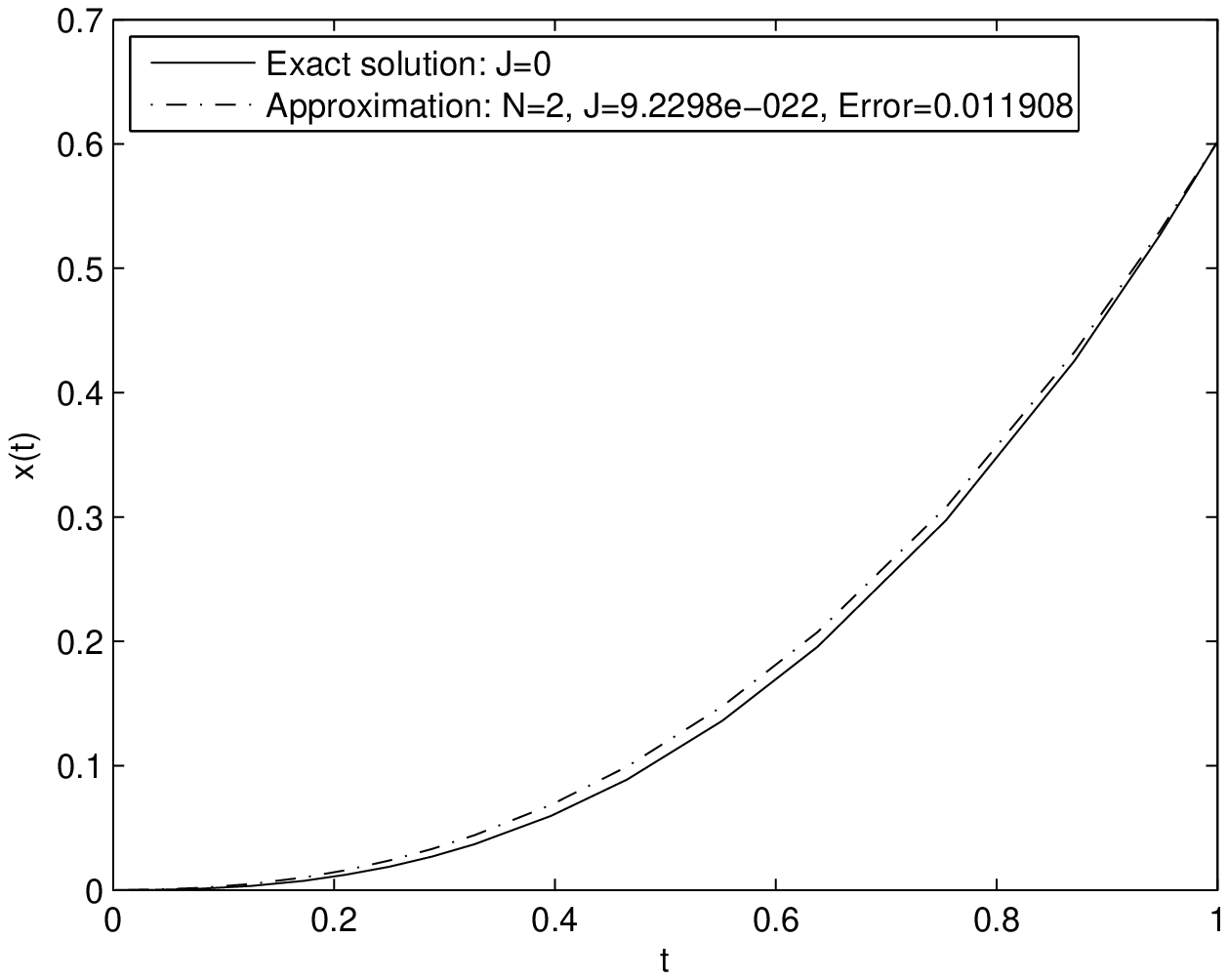}}
\subfigure[$u(t), N=2$]{\includegraphics[scale=0.54]{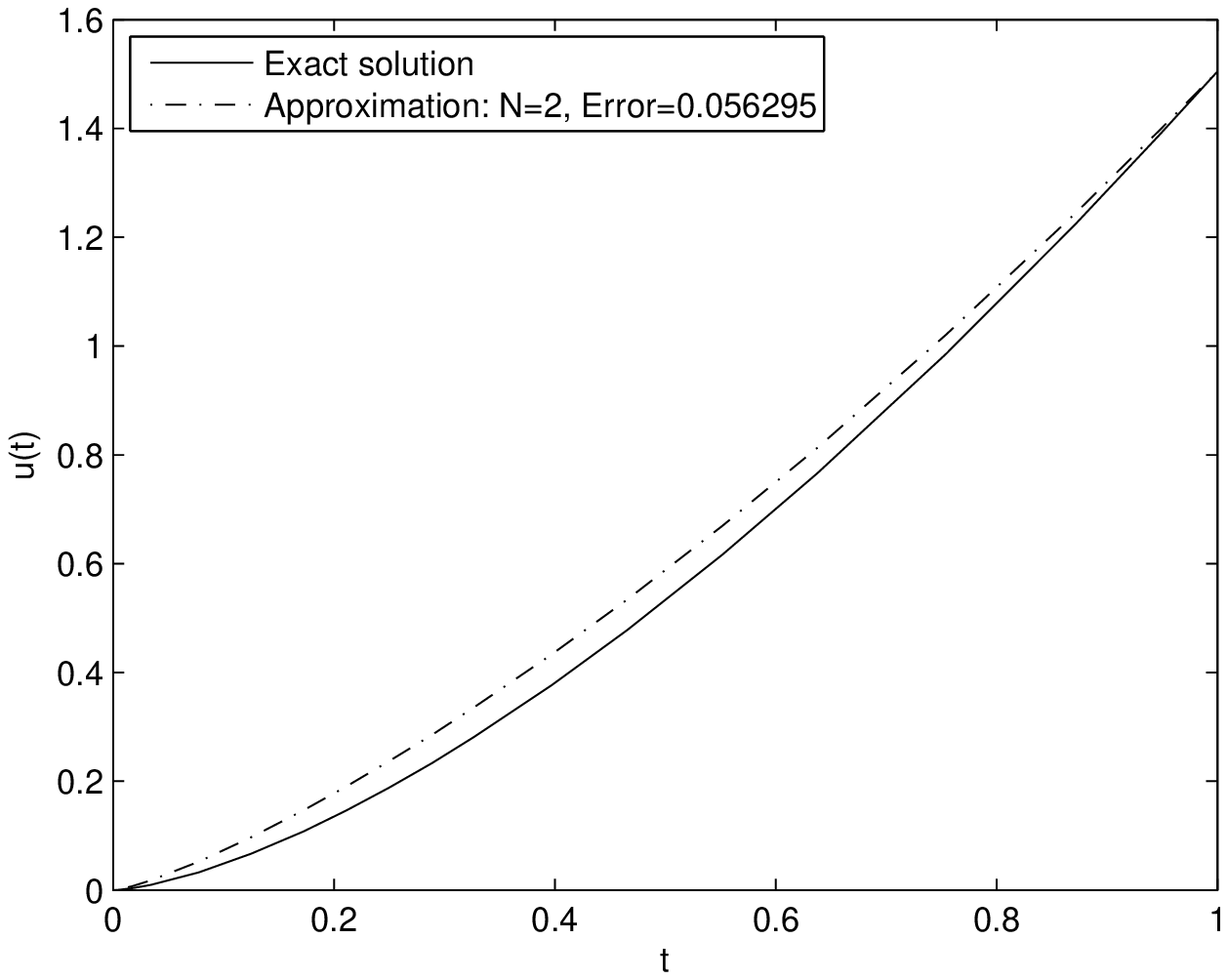}}
\subfigure[$x(t), N=3$]{\includegraphics[scale=0.54]{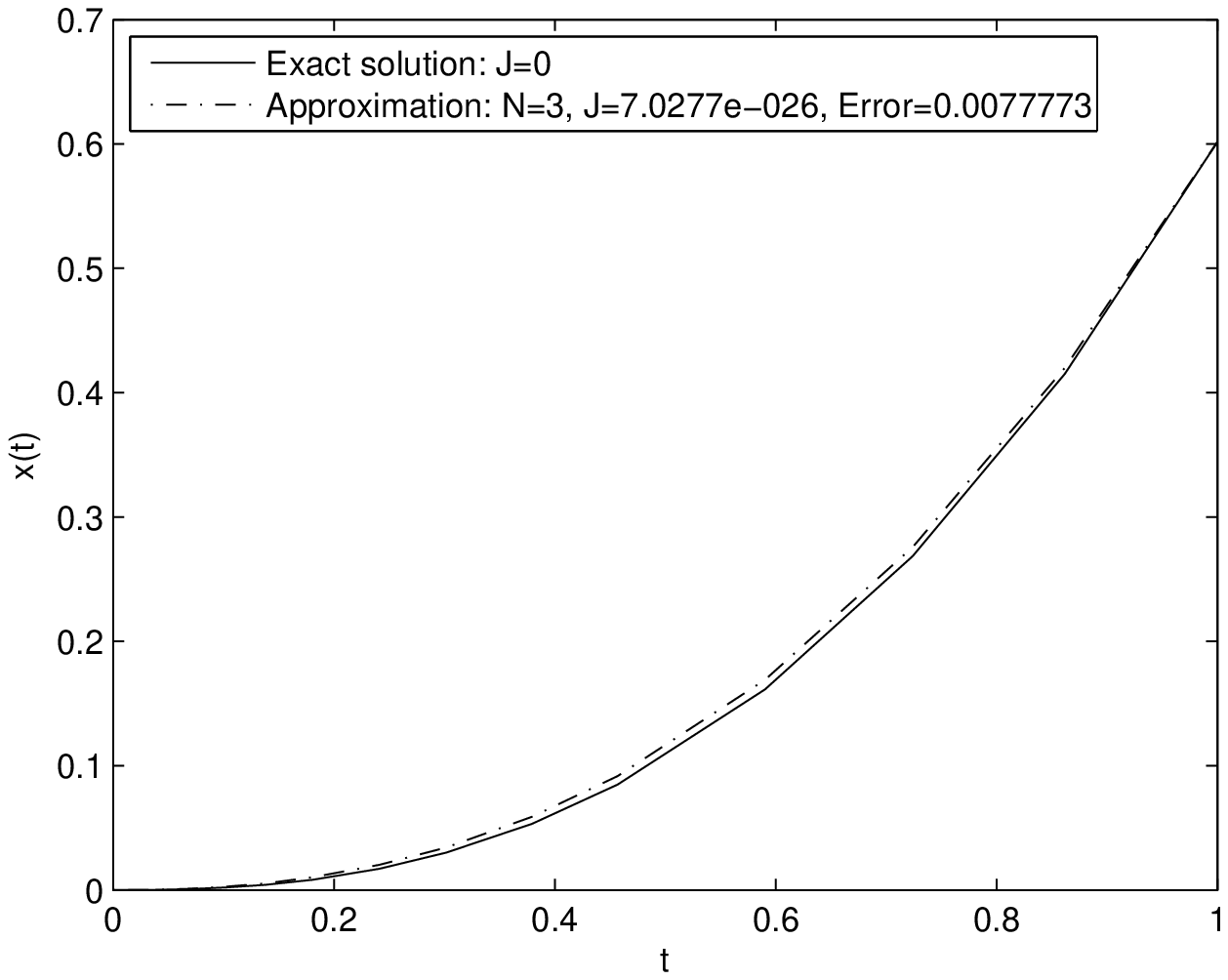}}
\subfigure[$u(t), N=3$]{\includegraphics[scale=0.54]{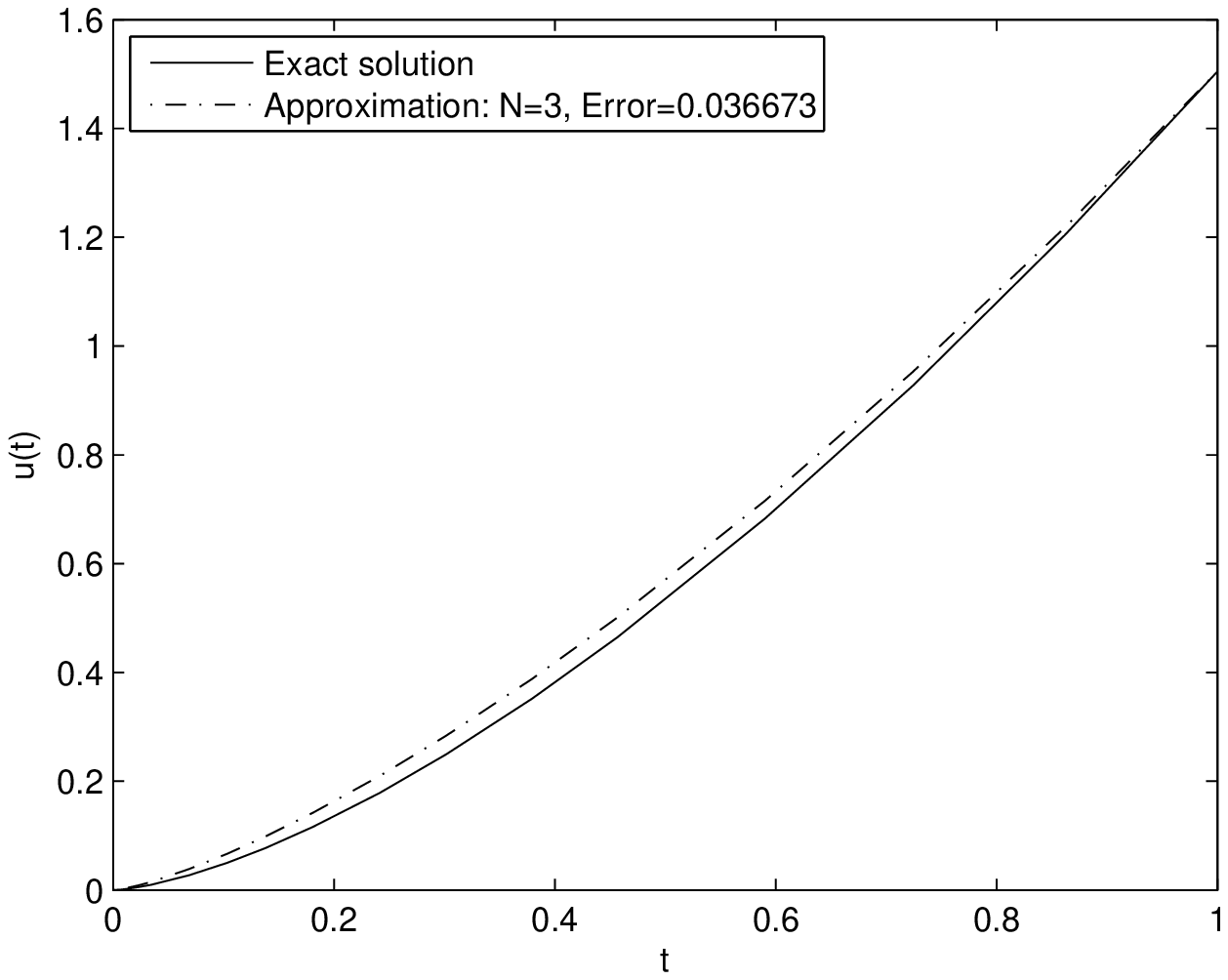}}
\end{center}
\caption{Exact solution (solid lines) for the problem in Example~\ref{exm41} with $\a = 1/2$
{\it versus} numerical solutions (dashed lines) obtained by approximating the fractional order
optimal control problem using \eqref{expanMom} up to order $N$ and then
solving the classical necessary optimality conditions with MATLAB$^\circledR$ \textsf{bvp4c}
built-in function.} \label{exm41Fig}
\end{figure}

% ------------------------------------------------------

\subsection{Free final time}

The two numerical methods discussed in Section~\ref{sub:sec:fft}
are now employed to solve a fractional order optimal control
problem with free final time $T$.

\begin{example}
\label{exm42}
Find an optimal triplet $(x(\cdot),u(\cdot),T)$ that minimizes
$$
J[x,u,T]=\int_0^T(tu-(\a+2)x)^2\,dt
$$
subject to the control system
$$
\dot{x}(t)+{^C_0D^\a_t} x(t)=u(t)+t^2
$$
and boundary conditions
$$
x(0)=0, \quad x(T)=1.
$$
An exact solution to this problem is not known
and we apply the two numerical procedures
already used with respect to the fixed final time
problem in Example~\ref{exm41}.
\end{example}

We begin by using the fractional necessary optimality conditions that,
after approximating the fractional terms, results in
$$
\begin{cases}
\displaystyle \dot{x}(t)=\left[\left(\frac{\a+2}{t}-At^{-\a}\right)x(t)
+\sum_{p=2}^N C_pt^{1-p-\a}V_p(t)
-\frac{\lambda(t)}{2t^2}+t^2\right]\frac{1}{1+Bt^{1-\a}}\\[5pt]
\dot{V}_p(t)=(1-p)t^{p-2}x(t), \quad p=2,\ldots,N\\[5pt]
\displaystyle\dot{\lambda}(t)=\left[\left(A(1-t)^{-\a}-\frac{\a+2}{t}\right)\lambda(t)
-\sum_{p=2}^N C_p(1-t)^{1-p-\a}W_p(t)\right]\frac{1}{1+B(1-t)^{1-\a}}\\
\dot{W}_p(t)=-(1-p)(1-t)^{p-2}\lambda(t), \quad p=2,\ldots,N,
\end{cases}
$$
subject to the boundary conditions
$$
\begin{cases}
x(0)=0,\quad x(T)=1,\\
V_p(0)=0, \quad p=2,\ldots,N,\\
W_p(T)=0, \quad p=2,\ldots,N.
\end{cases}
$$
The only difference here with respect to Example~\ref{exm41}
is that there is an extra unknown, the terminal time $T$.
The boundary condition for this new unknown is chosen appropriately from the
transversality conditions discussed in Corollary~\ref{OPT:MainCor}, i.e.,
$$
[H(t,x,u,\lambda)-\lambda(t)\LCD x(t)+\dot{x}(t)\RIT\lambda(t)]_{t=T}=0,
$$
where $H$ is given as in \eqref{emx1Ham}.
Since we require $\lambda$ to be continuous,
$\RIT\lambda(t)|_{t=T}=0$ (cf. \cite[pag.~46]{Miller}) and so  $\lambda(T)=0$.
One possible way to proceed consists in translating the problem into the interval
$[0,1]$ by the change of variable $t=Ts$ \cite{Avvakumov}. In this setting,
either we add $T$ to the problem as a new state variable with dynamics $\dot{T}(s)=0$,
or we treat it as a parameter. We use the latter, to get the following
parametric boundary value problem:
$$
\begin{cases}
\displaystyle\dot{x}(s)=\frac{\left[\left(\frac{\a+2}{Ts}-A(Ts)^{-\a}\right)x(s)
+\sum_{p=2}^N C_p(Ts)^{1-p-\a}V_p(s)
-\frac{\lambda(s)}{2(Ts)^2}+(Ts)^2\right] T}{1+B(Ts)^{1-\a}},\\[5pt]
\dot{V}_p(s)=T(1-p)(Ts)^{p-2}x(s), \quad p=2,\ldots,N,\\[5pt]
\displaystyle\dot{\lambda}(s)=\frac{\left[\left(A(1-Ts)^{-\a}-\frac{\a+2}{Ts}\right)\lambda(s)
-\sum_{p=2}^N C_p(1-Ts)^{1-p-\a}W_p(s)\right] T}{1+B(1-Ts)^{1-\a}},\\
\dot{W}_p(s)=-T(1-p)(1-Ts)^{p-2}\lambda(s), \quad p=2,\ldots,N,
\end{cases}
$$
subject to the boundary conditions
$$
\begin{cases}
x(0)=0\\
V_p(0)=0, \quad p=2,\ldots,N\\
W_p(1)=0, \quad p=2,\ldots,N
\end{cases}
,\qquad
\begin{cases}
x(1)=1\\
\lambda(1)=0.
\end{cases}
$$
This parametric boundary value problem is solved for $N=2$ and $\a=0.5$
with MATLAB$^\circledR$ \textsf{bvp4c} function. The result is shown
in Figure~\ref{exm42fig} (dashed lines).
% -------------------------------------------
\begin{figure}
\begin{center}
\subfigure[$x(t), N=2$]{\includegraphics[scale=0.405]{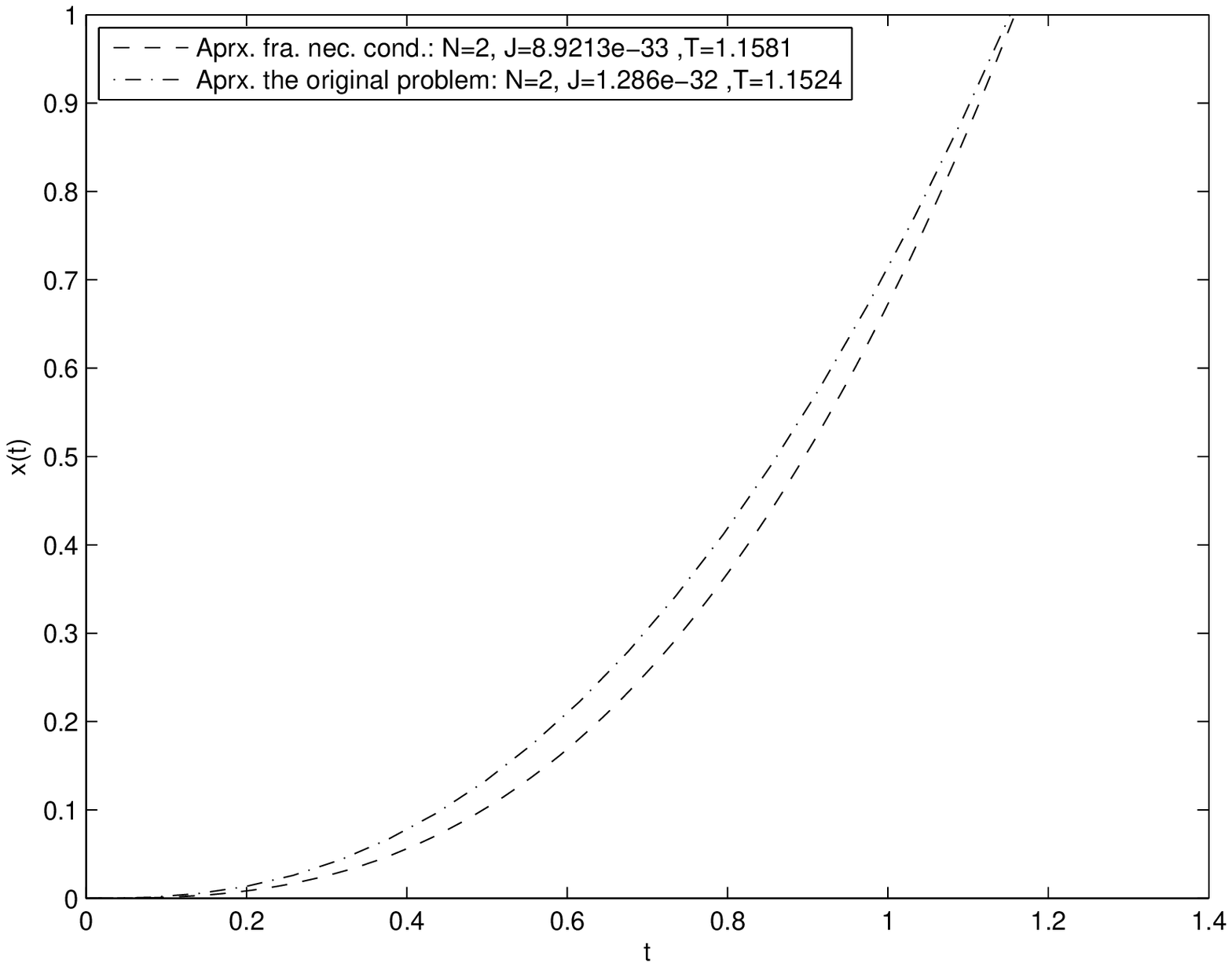}}
\subfigure[$u(t), N=2$]{\includegraphics[scale=0.405]{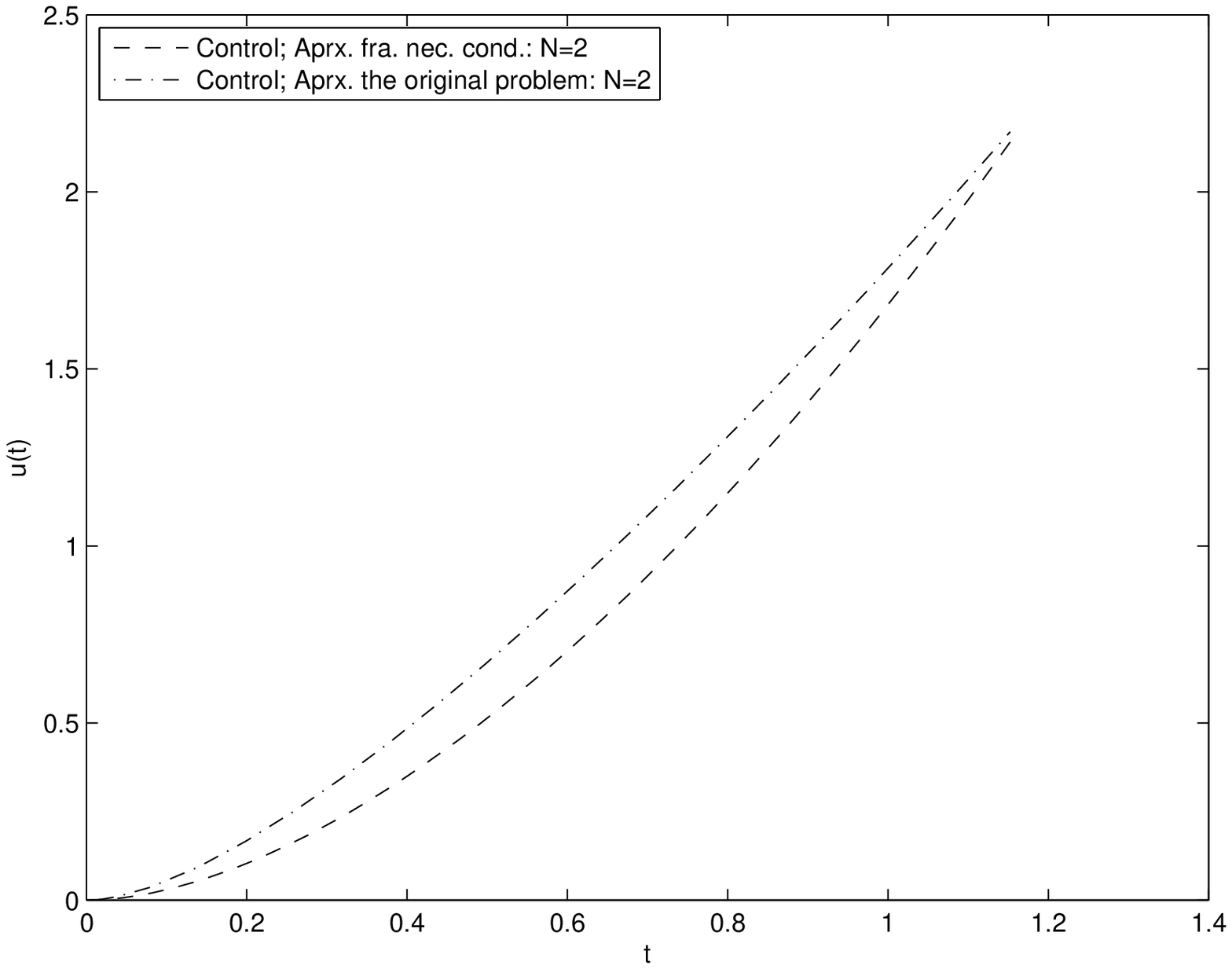}}
\end{center}
\caption{Numerical solutions to the free final time problem of Example~\ref{exm42}
with $\alpha = 1/2$, using fractional necessary optimality conditions (dashed lines) and
approximation of the problem to an integer-order optimal control problem (dash-dotted lines).}
\label{exm42fig}
\end{figure}
% -------------------------------------------

We also solve Example~\ref{exm42} with $\alpha = 1/2$ by directly transforming it
into an integer-order optimal control problem with free final time. As is well known
in the classical theory of optimal control, the Hamiltonian must vanish at the terminal point
when the final time is free, i.e., one has $H|_{t=T}=0$ with $H$ given
by \eqref{exmHamiltonian} \cite{Kirk}. For $N=2$, the necessary optimality conditions
give the following two point boundary value problem:
$$
\begin{cases}
\dot{x}(t)=2\phi_0(t)\lambda_1(t)+\phi_1(t)x(t)+\phi_2(t)V_2(t)+\phi_{3}(t)\\
\dot{V_2}=-x(t)\\
\dot{\lambda_1}=-\phi_1(t)\lambda_1(t)+x(t)\\
\dot{\lambda_2}=-\phi_2(t)\lambda_1(t),
\end{cases}
$$
where $\phi_0(t)$ and $\phi_1(t)$ are given by
\eqref{eq:phi0:phi1} and $\phi_2(t)$ and $\phi_{3}(t)$
by \eqref{eq:phi2:phi3} with $p = N = 2$.
The trajectory $x$ and corresponding $u$
are shown in Figure~\ref{exm42fig} (dash-dotted lines).

\CP
%================================================================
\chapter{Fractional variational problems depending on indefinite integrals}
\label{IndInt}

In this chapter we obtain necessary optimality conditions
for variational problems with a Lagrangian
depending on a Caputo fractional derivative, a fractional and
an indefinite integral. Main results give fractional
Euler--Lagrange type equations and natural boundary conditions,
which provide a generalization of previous results found in the literature.
Isoperimetric problems, problems with holonomic constraints and those
depending on higher-order Caputo derivatives,
as well as fractional Lagrange problems, are considered.
Our main contribution is an extension of the results presented
in \cite{AGRA1,Gregory} by considering Lagrangians containing an antiderivative,
that in turn depend on the unknown function, a fractional integral, and a Caputo
fractional derivative (Section~\ref{sec:Fundprob}).
Transversality conditions are studied in Section~\ref{sec:natbound},
where the variational functional $J$ depends also on the terminal time $T$,
$J[x,T]$, and where we obtain conditions for a pair $(x,T)$ to be an optimal
solution to the problem. In Section~\ref{sec:IsoProb}
we consider isoperimetric problems with integral constraints
of the same type as the cost functionals considered in Section~\ref{sec:Fundprob}.
Fractional problems with holonomic constraints are considered in Section~\ref{sec:Holonomic}.
The situation when the Lagrangian depends on higher
order Caputo derivatives, \textrm{i.e.}, it depends on
$^C_aD_t^{\alpha_k}x(t)$ for $\alpha_k\in(k-1,k)$, $k\in\{1,\ldots,n\}$,
is studied in Section~\ref{sec:Higher}, while Section~\ref{sec:FracOpt}
considers fractional Lagrange problems and the Hamiltonian
approach. In Section~\ref{sec:SufConditions}
we obtain sufficient conditions of optimization
under suitable convexity assumptions
on the Lagrangian \cite{APTIndInt}.

% ----------------------------------------

\section{The fundamental problem}
\label{sec:Fundprob}

Let $\alpha\in(0,1)$ and $\beta>0$.
The problem that we address is stated in the following way:
minimize the cost functional
\begin{equation}
\label{funct}
J[x]=\int_a^b L(t,x(t),{^C_aD_t^\alpha}x(t),{_aI_x^\beta}x(t),z(t))dt,
\end{equation}
where the variable $z$ is defined by
$$
z(t)=\int_a^t l(\t,x(\t),{^C_aD_\t^\alpha}x(\t),{_aI_\t^\beta}x(\t))d\t,
$$
subject to the boundary conditions
\begin{equation}
\label{bound}
x(a)=x_a \quad \mbox{and} \quad x(b)=x_b.
\end{equation}
We assume that the functions $(t,x,v,w,z)\to L(t,x,v,w,z)$
and  $(t,x,v,w)\to l(t,x,v,w)$ are of class $C^1$, and the trajectories
$x:[a,b]\to\mathbb{R}$ are absolute continuous functions, $x \in AC([a,b];\mathbb{R})$,
such that ${^C_aD_t^\alpha}x(t)$ and ${_aI_t^\beta}x(t)$ exist and are continuous on $[a,b]$.
We denote such class of functions by $\mathcal{F}([a,b];\mathbb{R})$.
Also, to simplify, by $[\cdot]$ and $\{\cdot\}$ we denote the operators
$$
[x](t)=(t,x(t),{^C_aD_t^\alpha}x(t),{_aI_t^\beta}x(t),z(t))
\quad \mbox{and}\quad \{x\}(t)=(t,x(t),{^C_aD_t^\alpha}x(t),{_aI_t^\beta}x(t)).
$$

\begin{theorem}
\label{ELTEo}
Let $x \in \mathcal{F}([a,b];\mathbb{R})$ be a minimizer
of $J$ as in \eqref{funct},
subject to the boundary conditions \eqref{bound}.
Then, for all $t\in[a,b]$, $x$ is a solution
of the fractional equation
\begin{multline}
\label{ELeq}
\frac{\partial L}{\partial x}[x](t)
+{_tD^\alpha_b}\left( \frac{\partial L}{\partial v}[x](t) \right)
+{_tI_b^\beta}\left(\frac{\partial L}{\partial w}[x](t)\right)
+\int_t^b \frac{\partial L}{\partial z}[x](\t)d\t\cdot \frac{\partial l}{\partial x}\{x\}(t)\\[5pt]
+{_tD^\alpha_b}\left( \int_t^b \frac{\partial L}{\partial z}[x](\t)d\t
\cdot \frac{\partial l}{\partial v}\{x\}(t)  \right)
+{_tI^\beta_b}\left( \int_t^b \frac{\partial L}{\partial z}[x](\t)d\t
\cdot \frac{\partial l}{\partial w}\{x\}(t)  \right)=0.
\end{multline}
\end{theorem}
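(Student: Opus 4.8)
The plan is to follow the classical variational method, adapted to Caputo derivatives and fractional integrals. First I would fix a variation $\eta\in\mathcal{F}([a,b];\mathbb{R})$ with $\eta(a)=\eta(b)=0$, so that $x+\epsilon\eta$ is admissible for small $|\epsilon|$ (the fractional operators are linear, hence ${^C_aD_t^\alpha}(x+\epsilon\eta)$ and ${_aI_t^\beta}(x+\epsilon\eta)$ remain continuous), and set $j(\epsilon)=J[x+\epsilon\eta]$. Since $x$ minimizes $J$, we have $j'(0)=0$. Differentiating under the integral sign (legitimate because $L$ and $l$ are $C^1$ and the operators are continuous on $[a,b]$), and denoting by $\bar z$ the variation of $z$, namely
\[
\bar z(t)=\int_a^t\left(\frac{\partial l}{\partial x}\{x\}(\t)\eta(\t)+\frac{\partial l}{\partial v}\{x\}(\t)\,{^C_aD_\t^\alpha}\eta(\t)+\frac{\partial l}{\partial w}\{x\}(\t)\,{_aI_\t^\beta}\eta(\t)\right)d\t,
\]
one arrives at
\[
0=\int_a^b\left[\frac{\partial L}{\partial x}[x]\,\eta+\frac{\partial L}{\partial v}[x]\,{^C_aD_t^\alpha}\eta+\frac{\partial L}{\partial w}[x]\,{_aI_t^\beta}\eta+\frac{\partial L}{\partial z}[x]\,\bar z\right]dt.
\]

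The crucial step is to remove the antiderivative $\bar z$ by an ordinary integration by parts. Put $A(t)=\int_t^b\frac{\partial L}{\partial z}[x](\t)\,d\t$, so that $A'(t)=-\frac{\partial L}{\partial z}[x](t)$ and $A(b)=0$; since also $\bar z(a)=0$, it follows that $\int_a^b\frac{\partial L}{\partial z}[x]\,\bar z\,dt=\int_a^bA(t)\,\bar z'(t)\,dt$, and $\bar z'(t)$ is exactly the integrand defining $\bar z$. Grouping the coefficients of $\eta$, of ${^C_aD_t^\alpha}\eta$ and of ${_aI_t^\beta}\eta$, the first variation takes the form
\[
0=\int_a^b\left[P(t)\,\eta+Q(t)\,{^C_aD_t^\alpha}\eta+R(t)\,{_aI_t^\beta}\eta\right]dt,
\]
with $P=\frac{\partial L}{\partial x}[x]+A\frac{\partial l}{\partial x}\{x\}$, $Q=\frac{\partial L}{\partial v}[x]+A\frac{\partial l}{\partial v}\{x\}$ and $R=\frac{\partial L}{\partial w}[x]+A\frac{\partial l}{\partial w}\{x\}$.

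Next I would apply the fractional integration by parts formulas to transfer the operators off $\eta$: formula \eqref{eq:frac:IBP} gives $\int_a^bQ\,{^C_aD_t^\alpha}\eta\,dt=\int_a^b\eta\,{_tD_b^\alpha}Q\,dt$, the boundary term $[{_tI_b^{1-\alpha}}Q\cdot\eta]_a^b$ vanishing because $\eta(a)=\eta(b)=0$; and part (i) of the integration-by-parts lemma (cf. \cite{Kilbas}) gives $\int_a^bR\,{_aI_t^\beta}\eta\,dt=\int_a^b\eta\,{_tI_b^\beta}R\,dt$ with no boundary contribution. Hence $\int_a^b\eta(t)\,\mathcal{E}(t)\,dt=0$, where $\mathcal{E}=P+{_tD_b^\alpha}Q+{_tI_b^\beta}R$ coincides, after expanding $P$, $Q$, $R$ and $A$, with the left-hand side of \eqref{ELeq}. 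Since $\eta$ is an arbitrary continuous function vanishing at the endpoints and $\mathcal{E}$ is continuous, the fundamental lemma of the calculus of variations yields $\mathcal{E}\equiv0$ on $[a,b]$, which is the asserted equation.

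The main obstacle is technical rather than conceptual: one must check that the fractional integration by parts formulas genuinely apply to the coefficients $Q$ and $R$, i.e. that $Q$ belongs to the class on which \eqref{eq:frac:IBP} is valid (a fractional integral of an $L_p$ function) and that ${_tI_b^\beta}R$ is well defined, so that the operators ${_tD_b^\alpha}$ and ${_tI_b^\beta}$ appearing in \eqref{ELeq} make sense. Under the standing hypotheses --- $L,l\in C^1$ and $x\in\mathcal{F}([a,b];\mathbb{R})$ --- these requirements are met, but they should be verified. A minor additional point is the differentiation of $j(\epsilon)$ under the integral sign, which again follows from the $C^1$ assumptions together with a dominated-convergence argument on the compact interval $[a,b]$.
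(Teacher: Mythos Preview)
Your proposal is correct and follows essentially the same approach as the paper: set up a one-parameter family $j(\epsilon)=J[x+\epsilon\eta]$, differentiate at $\epsilon=0$, remove the nested integral in the $z$-term by ordinary integration by parts against $A(t)=\int_t^b\partial_zL[x]\,d\tau$, then transfer ${^C_aD_t^\alpha}$ and ${_aI_t^\beta}$ off $\eta$ via the fractional integration-by-parts formulas, and conclude with the fundamental lemma. Your organization (collecting the coefficients $P,Q,R$ first and applying the fractional formulas once) is slightly tidier than the paper's term-by-term treatment, but the ideas are the same.
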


\begin{proof}
Let $h\in \mathcal{F}([a,b];\mathbb{R})$
be such that $h(a)=0=h(b)$, and $\epsilon$
be a real number with $|\epsilon| \ll 1$. If we define $j$ as
$j(\epsilon)=J(x+\epsilon h)$, then $j'(0)=0$.
Differentiating $j$ at $\epsilon=0$, we get
\begin{multline*}
\int_a^b \left[ \frac{\partial L}{\partial x}[x](t)h(t)
+ \frac{\partial L}{\partial v}[x](t){^C_aD^\alpha_t}h(t)
+ \frac{\partial L}{\partial w}[x](t){_aI^\beta_t}h(t)\right.\\[5pt]
\left.\;\;+\frac{\partial L}{\partial z}[x](t)\int_a^t\left(
\frac{\partial l}{\partial x}\{x\}(\t)h(\t)
+\frac{\partial l}{\partial v}\{x\}(\t){^C_aD^\alpha_\t}h(\t)
+\frac{\partial l}{\partial w}\{x\}(\t){_aI^\beta_\t}h(\t)\right)d\t\right]dt=0.
\end{multline*}
The necessary condition \eqref{ELeq} follows
from the next relations and the fundamental lemma
of the calculus of variations
(\textrm{cf.}, \textrm{e.g.}, \cite[p.~32]{Brunt}):
$$
\int_a^b \frac{\partial L}{\partial v}[x](t){^C_aD^\alpha_t}h(t) dt
=\int_a^b {_t D_b^\alpha} \left(\frac{\partial L}{\partial v}[x](t) \right)h(t)dt
+ \left[{_tI_b^{1-\alpha}}\left(\frac{\partial L}{\partial v}[x](t) \right) h(t)\right]_a^b,
$$
$$
\int_a^b \frac{\partial L}{\partial w}[x](t){_aI^\beta_t}h(t) dt
=\int_a^b {_t I_b^\beta} \left(\frac{\partial L}{\partial w}[x](t) \right)h(t)dt,
$$
\begin{align*}
\int_a^b & \frac{\partial L}{\partial z}[x](t)\left(\int_a^t
\frac{\partial l}{\partial x}\{x\}(\t)h(\t) d\t \right) dt\\
&=\int_a^b \left( -\frac{d}{dt}\int_t^b\frac{\partial L}{\partial z}[x](\t)d\t \right)
\left( \int_a^t \frac{\partial l}{\partial x}\{x\}(\t)h(\t) d\t \right) dt\\
&=\int_a^b \left(\int_t^b\frac{\partial L}{\partial z}[x](\t)d\t \right)
\frac{\partial l}{\partial x}\{x\}(t)h(t) \, dt
-\left[\int_t^b\frac{\partial L}{\partial z}[x](\t)d\t \int_a^t
\frac{\partial l}{\partial x}\{x\}(\t)h(\t) d\t  \right]_a^b\\
&= \int_a^b \left(\int_t^b\frac{\partial L}{\partial z}[x](\t)d\t \right)
\frac{\partial l}{\partial x}\{x\}(t)h(t) \, dt,
\end{align*}
\begin{align*}
\int_a^b & \frac{\partial L}{\partial z}[x](t)\left(\int_a^t
\frac{\partial l}{\partial v}\{x\}(\t){^C_aD^\alpha_\t}h(\t) d\t \right) dt\\
&\hspace{2cm}= \int_a^b \left( -\frac{d}{dt}\int_t^b\frac{\partial L}{\partial z}[x](\t)d\t \right)
\left( \int_a^t \frac{\partial l}{\partial v}\{x\}(\t){^C_aD^\alpha_\t}h(\t) d\t \right)  dt\\
&\hspace{2cm}= \left[- \left(\int_t^b\frac{\partial L}{\partial z}[x](\t)d\t \right)\left(\int_a^t
\frac{\partial l}{\partial v}\{x\}(\t){^C_aD^\alpha_\t}h(\t) d\t\right) \right]_a^b\\
&\hspace{3cm}+\int_a^b \left(\int_t^b\frac{\partial L}{\partial z}[x](\t)d\t \right)
\frac{\partial l}{\partial v}\{x\}(t){^C_aD^\alpha_t}h(t) \,  dt\\
&\hspace{2cm}= \int_a^b {_tD^\alpha_b}\left(\int_t^b\frac{\partial L}{\partial z}[x](\t)d\t
\frac{\partial l}{\partial v}\{x\}(t)\right)h(t) \, dt\\
&\hspace{3cm}+\left[ {_tI^{1-\alpha}_b}\left(\int_t^b\frac{\partial L}{\partial z}[x](\t)d\t
\frac{\partial l}{\partial v}\{x\}(t) \right) h(t) \right]_a^b,
\end{align*}
and
$$
\int_a^b \frac{\partial L}{\partial z}[x](t)\left(\int_a^t
\frac{\partial l}{\partial w}\{x\}(t){_aI^\beta_\t}h(\t) d\t \right) dt
=\int_a^b {_tI^\beta_b}\left(\int_t^b\frac{\partial L}{\partial z}[x](\t)d\t
\frac{\partial l}{\partial w}\{x\}(t)\right)h(t) \, dt.
$$
\end{proof}

The fractional Euler--Lagrange equation \eqref{ELeq} involves
not only fractional integrals and fractional derivatives,
but also indefinite integrals\index{Indefinite integrals}. 
Theorem~\ref{ELTEo} gives a necessary condition 
to determine the possible choices for extremizers.

\begin{example}
\label{example:bra}
Consider the functional
\begin{equation}
\label{example}
J[x]=\int_0^1 \left[({^C_0D_t^\alpha}x(t)
-\Gamma(\alpha+2)t)^2+z(t)\right]dt,
\end{equation}
where $\alpha \in (0,1)$ and
$$
z(t)=\int_0^t (x(\t)-\t^{\alpha+1})^2 \, d\t,
$$
defined on the set
$$
\left\{ x\in \mathcal{F}([0,1];\mathbb{R})
:\, x(0)=0 \, \mbox{ and }\,  x(1)=1\right\}.
$$
Let
\begin{equation}
\label{eq:ext:ne}
x_\alpha(t)=t^{\alpha+1},\quad t\in[0,1].
\end{equation}
Then,
$$
{^C_0D_t^\alpha}x_\alpha(t)=\Gamma(\alpha+2)t.
$$
Since $J(x)\geq0$ for all admissible functions $x$,
and $J(x_\alpha)=0$, we have that $x_\alpha$ is a minimizer of $J$.
The Euler--Lagrange equation\index{Euler--Lagrange equation} 
applied to \eqref{example} gives
\begin{equation}
\label{eq:fELeq:ne}
{_tD_1^\alpha}({^C_0D_t^\alpha}x(t)-\Gamma(\alpha+2)t)
+\int_t^1 1dt \, (x(t)-t^{\alpha+1})=0.
\end{equation}
Obviously, $x_\alpha$ is a solution of the fractional
differential equation \eqref{eq:fELeq:ne}.
\end{example}

The extremizer \eqref{eq:ext:ne}
of Example~\ref{example:bra} is smooth on the
closed interval $[0,1]$. This is not always
the case. As next example shows, minimizers
of \eqref{funct}--\eqref{bound} are not necessarily
$C^1$ functions.

\begin{example}
\label{example1}
Consider the following fractional variational 
problem\index{Variational problem}: to minimize the functional
\begin{equation}
\label{funcExample}
J[x]=\int_0^1\left[ \left({^C_0D_t^\alpha}x(t)-1\right)^2+z(t)\right]dt,
\end{equation}
on
$$
\left\{ x \in \mathcal{F}([0,1];\mathbb{R})
\, : \, x(0)=0 \quad \mbox{and}
\quad x(1)=\frac{1}{\Gamma(\alpha+1)}\right\},
$$
where $z$ is given by
$$
z(t)=\int_0^t \left( x(\t)-\frac{\t^\alpha}{\Gamma(\alpha+1)}\right)^2d\t.
$$
Since ${^C_0D_t^\alpha}t^\alpha=\Gamma(\alpha+1)$,
we deduce easily that function
\begin{equation}
\label{eq:gs:ex}
\overline{x}(t)=\frac{t^\alpha}{\Gamma(\alpha+1)}
\end{equation}
is the global minimizer to the problem. Indeed, $J(x)\geq 0$
for all $x$, and $J(\overline x)=0$. Let us see that
$\overline{x}$ is an extremal for $J$.
The fractional Euler--Lagrange equation \eqref{ELeq} becomes
\begin{equation}
\label{ELeqExample}
2 \, {_tD_1^\alpha} ({^C_0D_t^\alpha}x(t)-1) + \int_t^1 1\, d\t
\cdot  2 \left( x(t)-\frac{t^\alpha}{\Gamma(\alpha+1)}\right)=0.
\end{equation}
Obviously, $\overline{x}$ is a solution of equation \eqref{ELeqExample}.
\end{example}

\begin{remark}
The minimizer \eqref{eq:gs:ex} of Example~\ref{example1}
is not differentiable at 0, as $0<\alpha<1$. However,
$\overline{x}(0)=0$ and  ${^C_0D_t^\alpha}\overline{x}(t)
={_0D_t^\alpha}\overline{x}(t)=\Gamma(\alpha+1)$ for any
$t \in [0,1]$.
\end{remark}

\begin{corollary}[\textrm{cf.} equation (9) of \cite{AGRA1}]
If $x$ is a minimizer of
\begin{equation}
\label{funct7}
J[x]=\int_a^b L(t,x(t),{^C_aD_t^\alpha}x(t))dt,
\end{equation}
subject to the boundary conditions \eqref{bound},
then $x$ is a solution of the fractional equation
$$
\frac{\partial L}{\partial x}[x](t)
+{_tD^\alpha_b}\left( \frac{\partial L}{\partial v}[x](t) \right)=0.
$$
\end{corollary}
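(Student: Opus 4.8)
The plan is to deduce this statement as an immediate specialization of Theorem~\ref{ELTEo}. The functional \eqref{funct7} is exactly the particular case of \eqref{funct} in which the Lagrangian depends neither on the fractional integral variable $w$ nor on the indefinite--integral variable $z$; equivalently, one may take $l\equiv 0$, so that $z(t)\equiv 0$, and view $L$ as a function $(t,x,v)\mapsto L(t,x,v)$ that is constant in the remaining arguments. First I would observe that a minimizer $x$ of \eqref{funct7}, taken in the class of absolutely continuous functions for which ${^C_aD_t^\alpha}x$ is continuous, lies in $\mathcal{F}([a,b];\mathbb{R})$ (the condition on ${_aI_t^\beta}x$ being vacuous here), so Theorem~\ref{ELTEo} applies without modification.

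Next I would substitute $\frac{\partial L}{\partial w}\equiv 0$ and $\frac{\partial L}{\partial z}\equiv 0$ into the fractional Euler--Lagrange equation \eqref{ELeq}. The term carrying ${_tI_b^\beta}$ vanishes, and the last three terms of \eqref{ELeq}, each of which contains the factor $\int_t^b \frac{\partial L}{\partial z}[x](\tau)\,d\tau$, vanish as well. What remains is precisely
\[
\frac{\partial L}{\partial x}[x](t)+{_tD^\alpha_b}\left( \frac{\partial L}{\partial v}[x](t) \right)=0,
\]
which is the asserted identity. The only point deserving a word of care is purely notational: in the corollary the bracket $[x](t)$ abbreviates the shortened evaluation $(t,x(t),{^C_aD_t^\alpha}x(t))$ rather than the five--argument operator used in Theorem~\ref{ELTEo}, and one simply notes that the two coincide once the superfluous arguments are dropped. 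I do not expect any genuine obstacle in this argument.

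Alternatively, should a self--contained proof be preferred, the same conclusion follows by repeating the first--variation computation from the proof of Theorem~\ref{ELTEo} with only the two terms $\frac{\partial L}{\partial x}[x](t)h(t)$ and $\frac{\partial L}{\partial v}[x](t)\,{^C_aD^\alpha_t}h(t)$ present in the integrand. One then uses the fractional integration--by--parts formula
\[
\int_a^b \frac{\partial L}{\partial v}[x](t)\,{^C_aD^\alpha_t}h(t)\,dt
=\int_a^b {_tD^\alpha_b}\!\left(\frac{\partial L}{\partial v}[x](t)\right)h(t)\,dt
+\left[{_tI_b^{1-\alpha}}\!\left(\frac{\partial L}{\partial v}[x](t)\right)h(t)\right]_a^b,
\]
observes that the boundary term vanishes since $h(a)=h(b)=0$, and invokes the fundamental lemma of the calculus of variations, exactly as in the proof of Theorem~\ref{ELTEo}.
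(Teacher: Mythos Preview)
Your proposal is correct and follows exactly the paper's approach: the paper's proof is the single line ``Follows from Theorem~\ref{ELTEo} with an $L$ that does not depend on ${_aI_t^\beta}x$ and $z$,'' and your argument makes this specialization explicit by showing that the terms involving $\partial L/\partial w$ and $\partial L/\partial z$ in \eqref{ELeq} vanish. Your alternative self-contained derivation via the first variation is also valid and simply unwinds the proof of Theorem~\ref{ELTEo} in this simpler setting.
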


\begin{proof}
Follows from Theorem~\ref{ELTEo} with an $L$ that
does not depend on ${_aI_t^\beta}x$ and $z$.
\end{proof}

We now derive the Euler--Lagrange equations for functionals
containing several dependent variables,
\textrm{i.e.}, for functionals of type
\begin{equation}
\label{functMul}
J[x_1,\ldots,x_n]
=\int_a^b L(t,x_1,\ldots, x_n,
{^C_aD_t^\alpha}x_1,\ldots,{^C_aD_t^\alpha}x_n,
{_aI_t^\beta}x_1,\ldots,{_aI_t^\beta}x_n,z(t))dt,
\end{equation}
where $n\in \mathbb{N}$ and $z$ is defined by
$$
z(t)=\int_a^t l(\t,x_1(\t),\ldots,x_n(\t),{^C_aD_\t^\alpha}x_1(\t),
\ldots,{^C_aD_\t^\alpha}x_n(\t),{_aI_\t^\beta}x_1(\t),\ldots,{_aI_t^\beta}x_n(\t))d\t,
$$
subject to the boundary conditions
\begin{equation}
\label{boundMul}
x_k(a)=x_{a,k} \quad \mbox{and}
\quad x_k(b)=x_{b,k},\quad k\in\{1,\ldots,n\}.
\end{equation}
To simplify, we consider $x$ as the vector
$x=(x_1,\ldots,x_n)$.
Consider a family of variations $x+\epsilon h$, where
$|\epsilon| \ll 1$ and $h=(h_1,\ldots,h_n)$.
The boundary conditions \eqref{boundMul} imply that
$h_k(a)=0=h_k(b)$, for $k\in\{1,\ldots,n\}$.
The following theorem can be easily proved.

\begin{theorem}
\label{ELeqMult}
Let $x$ be a minimizer of $J$ as in \eqref{functMul},
subject to the boundary conditions \eqref{boundMul}.
Then, for all $k\in\{1,\ldots,n\}$ and for all $t\in[a,b]$,
$x$ is a solution of the fractional Euler--Lagrange equation
\begin{multline*}
\frac{\partial L}{\partial x_k}[x](t)+{_tD^\alpha_b}\left( 
\frac{\partial L}{\partial v_k}[x](t) \right)+
{_tI^\beta_b}\left( \frac{\partial L}{\partial w_k}[x](t) \right)
+\int_t^b \frac{\partial L}{\partial z}[x](\t)d\t\cdot
\frac{\partial l}{\partial x_k}\{x\}(t)\\
+{_tD^\alpha_b}\left( \int_t^b \frac{\partial L}{\partial z}[x](\t)d\t
\cdot \frac{\partial l}{\partial v_k}\{x\}(t)  \right)
+{_tI^\beta_b}\left( \int_t^b \frac{\partial L}{\partial z}[x](\t)d\t
\cdot \frac{\partial l}{\partial w_k}\{x\}(t) \right)=0.
\end{multline*}
\end{theorem}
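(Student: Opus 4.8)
The plan is to reduce Theorem~\ref{ELeqMult} to the one-variable situation of Theorem~\ref{ELTEo} by using the freedom to vary a single component at a time. Fix $k\in\{1,\ldots,n\}$ and let $h=(h_1,\ldots,h_n)$ be an admissible variation with $h_j\equiv 0$ for $j\neq k$ and $h_k\in\mathcal F([a,b];\mathbb R)$ satisfying $h_k(a)=0=h_k(b)$, as forced by the boundary conditions \eqref{boundMul}. Setting $j(\epsilon)=J(x+\epsilon h)$ for $|\epsilon|\ll 1$, the hypothesis that $x$ minimizes $J$ gives $j'(0)=0$. Differentiating under the integral sign at $\epsilon=0$ --- legitimate since $L$ and $l$ are $C^1$ and the fractional operators are linear and continuous --- produces an integral identity in which $h_k$ appears directly together with ${^C_aD_t^\alpha}h_k$, ${_aI_t^\beta}h_k$, and, inside the $z$-term, the expression $\int_a^t\bigl(\frac{\partial l}{\partial x_k}\{x\}(\tau)h_k(\tau)+\frac{\partial l}{\partial v_k}\{x\}(\tau){^C_aD_\tau^\alpha}h_k(\tau)+\frac{\partial l}{\partial w_k}\{x\}(\tau){_aI_\tau^\beta}h_k(\tau)\bigr)d\tau$.

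The core of the argument is then to transfer every operator acting on $h_k$ onto the corresponding coefficient function, exactly as in the proof of Theorem~\ref{ELTEo}. For the Caputo term I would use the fractional integration-by-parts formula \eqref{eq:frac:IBP}, which yields ${_tD_b^\alpha}$ applied to $\frac{\partial L}{\partial v_k}[x]$ plus a boundary term carrying the factor $h_k$, hence vanishing at $a$ and $b$. For the fractional-integral term I would invoke part (i) of the integration-by-parts lemma (cf.~\cite{Kilbas}), converting ${_aI_t^\beta}h_k$ into ${_tI_b^\beta}$ applied to $\frac{\partial L}{\partial w_k}[x]$ with no boundary contribution. For each of the three pieces of the $z$-term I would first write $\frac{\partial L}{\partial z}[x](t)=-\frac{d}{dt}\int_t^b\frac{\partial L}{\partial z}[x](\tau)d\tau$ and integrate by parts in the ordinary sense to eliminate the inner indefinite integral; then, on the $v_k$- and $w_k$-pieces, a further application of the fractional integration-by-parts formulas moves ${^C_aD_\tau^\alpha}$ and ${_aI_\tau^\beta}$ off $h_k$. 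All boundary terms generated along the way carry a factor $h_k(a)$ or $h_k(b)$, or an inner integral evaluated at $a$, and therefore vanish.

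Collecting terms, $j'(0)=0$ takes the form $\int_a^b E_k(t)\,h_k(t)\,dt=0$, where $E_k(t)$ is precisely the left-hand side of the asserted Euler--Lagrange equation with index $k$. Since $h_k$ ranges over all of $\mathcal F([a,b];\mathbb R)$ with $h_k(a)=h_k(b)=0$, the fundamental lemma of the calculus of variations (cf.~\cite[p.~32]{Brunt}) forces $E_k(t)=0$ for every $t\in[a,b]$; repeating this for each $k$ gives the full system. The main obstacle is purely the bookkeeping of the indefinite-integral contributions, where ordinary and fractional integration by parts must be chained together and all boundary terms shown to cancel; but since this is already carried out in the single-variable proof of Theorem~\ref{ELTEo}, the multivariable case is a verbatim repetition performed component by component, which is why the result "can be easily proved."
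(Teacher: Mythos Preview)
Your proposal is correct and mirrors the paper's intended argument: the paper does not actually write out a proof of Theorem~\ref{ELeqMult}, but merely sets up the family of variations $x+\epsilon h$ with $h=(h_1,\ldots,h_n)$, $h_k(a)=h_k(b)=0$, and states that the result ``can be easily proved,'' i.e., by repeating the computations of Theorem~\ref{ELTEo} componentwise. Your reduction to the single-variable case by taking $h_j\equiv 0$ for $j\neq k$ and then invoking the same chain of fractional and ordinary integrations by parts is exactly this.
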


% ----------------------------------------

\section{Natural boundary conditions}
\label{sec:natbound}

In this section we consider a more general question.
Not only the unknown function $x$
is a variable in the problem,
but also the terminal time $T$ is an unknown.
For $T\in[a,b]$, consider the functional
\begin{equation}
\label{funct4}
J[x,T]=\int_a^T L[x](t)dt,
\end{equation}
where
$$
[x](t)=(t,x(t),{^C_aD_t^\alpha}x(t),{_aI_t^\beta}x(t),z(t)).
$$
The problem consists in finding a pair
$(x,T)\in \mathcal{F}([a,b];\mathbb{R})\times [a,b]$
for which the functional $J$ attains a minimum value.
First we recall a property that will be used later
in the proof of Theorem~\ref{TeoNatural}.

\begin{remark}
\label{remarkIntegral}
If $\phi$ is a continuous function, then (\textrm{cf.} \cite[p.~46]{Miller})
$$
\lim_{t\to T}{_tI_T^{1-\alpha}}\phi(t)=0
$$
for any $\alpha \in (0,1)$.
\end{remark}

\begin{theorem}
\label{TeoNatural}
Let $(x,T)$ be a minimizer of $J$ as in \eqref{funct4}.
Then, for all $t\in[a,T]$, $(x,T)$ is a solution
of the fractional equation
\begin{multline*}
\frac{\partial L}{\partial x}[x](t)+{_tD^\alpha_T}\left(
\frac{\partial L}{\partial v}[x](t) \right)+{_tI_T^\beta}
\left(\frac{\partial L}{\partial w}[x](t)\right)
+\int_t^T \frac{\partial L}{\partial z}[x](t)dt
\cdot \frac{\partial l}{\partial x}\{x\}(t)\\
+{_tD^\alpha_T}\left( \int_t^T \frac{\partial L}{\partial z}[x](\t)d\t
\cdot \frac{\partial l}{\partial v}\{x\}(t)  \right)
+{_tI^\beta_T}\left( \int_t^T \frac{\partial L}{\partial z}[x](\t)d\t
\cdot \frac{\partial l}{\partial w}\{x\}(t)  \right)=0
\end{multline*}
and satisfies the transversality conditions
$$
 \left[{_tI_T^{1-\alpha}}\left(  \frac{\partial L}{\partial v}[x](t)
+ \int_t^T  \frac{\partial L}{\partial z}[x](\t)\, d\t
\cdot \frac{\partial l}{\partial v}\{x\}(t) \right)\right]_{t=a}=0
$$
and
$$
L[x](T)=0.
$$
\end{theorem}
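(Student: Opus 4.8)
The plan is to mimic the proof of Theorem~\ref{ELTEo}, but now allowing variations in both the function $x$ and the terminal time $T$. First I would take a minimizing pair $(x,T)$ and consider a two-parameter family of competitors. Fix a variation function $h \in \mathcal{F}([a,b];\mathbb{R})$ with $h(a)=0$ (but \emph{not} requiring $h$ to vanish at the upper end, since $x(T)$ is free), and fix a real number $\triangle T$. Form the perturbed pair $(x+\epsilon h,\, T+\epsilon\triangle T)$ and define $j(\epsilon) = J[x+\epsilon h,\, T+\epsilon\triangle T]$. Since $(x,T)$ is a minimizer, $j'(0)=0$. The differentiation splits into two contributions: one from differentiating the integrand (as in Theorem~\ref{ELTEo}, but now over $[a,T]$ instead of $[a,b]$), and one from the variable upper limit, which by the Leibniz rule contributes the boundary term $L[x](T)\cdot\triangle T$.

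Next I would carry out the integration-by-parts manipulations exactly as in the proof of Theorem~\ref{ELTEo}: for the $\partial L/\partial v$ term use the fractional integration-by-parts formula \eqref{eq:frac:IBP} on $[a,T]$; for the $\partial L/\partial w$ term use part (i) of the integration-by-parts lemma; and for the three terms coming from the indefinite integral $z$, rewrite $\int_a^t(\cdots) = -\frac{d}{dt}\int_t^T(\cdots)$ and integrate by parts (classically, then fractionally where needed). The key difference from Theorem~\ref{ELTEo} is that the boundary terms at the upper limit $t=T$ no longer vanish, because $h(T)$ is arbitrary. Collecting everything, $j'(0)=0$ becomes a sum of: (a) an integral $\int_a^T \{\,\cdots\,\} h(t)\,dt$ whose bracket is precisely the left-hand side of the stated Euler--Lagrange equation; (b) boundary terms evaluated at $t=T$ (and $t=a$, but $h(a)=0$ kills those) multiplied by $h(T)$, namely $\left[{_tI_T^{1-\alpha}}\bigl(\frac{\partial L}{\partial v}[x](t) + \int_t^T\frac{\partial L}{\partial z}[x](\tau)d\tau\cdot\frac{\partial l}{\partial v}\{x\}(t)\bigr)\right]_{t=T}\cdot h(T)$, which vanishes automatically by Remark~\ref{remarkIntegral}; and (c) the free-time term $L[x](T)\triangle T$.

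Then I would apply the usual localization argument. First choose $h$ with $h(T)=0$ and $\triangle T = 0$; by the fundamental lemma of the calculus of variations the bracket in (a) must vanish for all $t\in[a,T]$, giving the fractional Euler--Lagrange equation. Next, with the Euler--Lagrange equation in force, choose $\triangle T=0$ but $h(T)\neq 0$ arbitrary; since the $t=T$ boundary term already vanishes by Remark~\ref{remarkIntegral}, this forces the remaining boundary term evaluated at $t=a$ to vanish — wait, more carefully: the genuinely new transversality condition $\left[{_tI_T^{1-\alpha}}(\cdots)\right]_{t=a}=0$ comes from the lower-limit boundary terms of the fractional integration by parts (those are \emph{not} killed by $h(a)=0$ because the fractional IBP produces a term of the form $[{_tI_T^{1-\alpha}}(\cdots)\,h(t)]_a^T$ only when the relevant factor is $h$; here the relevant factor multiplying ${_tI_T^{1-\alpha}}$ at $t=a$ is actually $h(a)=0$, so I need to re-examine which term survives). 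The correct source of $\left[{_tI_T^{1-\alpha}}(\cdots)\right]_{t=a}=0$ is the \emph{free value} $h(T)$: regrouping, the coefficient of $h(T)$ is ${_tI_T^{1-\alpha}}(\cdots)$ evaluated appropriately, and setting it to zero — but by Remark~\ref{remarkIntegral} evaluation at $t=T$ gives $0$ identically, so the nontrivial statement must be read as the limit/evaluation at the lower endpoint surviving from a term where $h$ does not vanish. Finally, choosing $h\equiv 0$ and $\triangle T\neq 0$ arbitrary yields $L[x](T)=0$.

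The main obstacle I expect is bookkeeping of the boundary terms: carefully tracking, for each of the five constituent terms of the integrand, exactly which endpoint contributions ($t=a$ versus $t=T$) survive after integration by parts, which are annihilated by $h(a)=0$, which vanish identically via Remark~\ref{remarkIntegral}, and which genuinely yield the transversality condition involving ${_tI_T^{1-\alpha}}$ at $t=a$ — the subtlety being that the free endpoint is the \emph{upper} one $T$ while the surviving fractional-integral transversality term is written at the \emph{lower} one $a$, so one must be scrupulous about the orientation of the fractional operators and the precise form of the boundary term $[{_tI_T^{1-\alpha}}g(t)\,h(t)]_a^T$ produced by \eqref{eq:frac:IBP}. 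Everything else is a routine repetition of the computation already done for Theorem~\ref{ELTEo}, restricted to $[a,T]$, plus the elementary Leibniz-rule term for the variable upper limit.
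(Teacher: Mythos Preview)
Your overall strategy matches the paper's exactly: form $j(\epsilon)=J[x+\epsilon h,\,T+\epsilon\triangle T]$, differentiate at $\epsilon=0$, integrate by parts as in Theorem~\ref{ELTEo} over $[a,T]$, and then isolate the Euler--Lagrange equation, the transversality conditions, and the free-time condition $L[x](T)=0$ from the arbitrariness of $h$ and $\triangle T$.

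However, there is a genuine gap in your setup, and it is exactly what causes the confusion you yourself flag. You impose $h(a)=0$, presumably by analogy with Theorem~\ref{ELTEo}. But in the present problem there are \emph{no} boundary conditions on $x$ at all --- neither at $t=a$ nor at $t=T$ --- so the variation $h$ must be taken completely free at both endpoints. The paper's proof accordingly takes ``$h\in\mathcal{F}([a,b];\mathbb{R})$ a variation'' with no endpoint constraint whatsoever.

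This is precisely why you cannot locate the source of the transversality condition at $t=a$. The fractional integration-by-parts formula \eqref{eq:frac:IBP} on $[a,T]$ produces the boundary term $\bigl[{_tI_T^{1-\alpha}}(\cdots)\,h(t)\bigr]_a^T$. At $t=T$ this vanishes automatically by Remark~\ref{remarkIntegral}. At $t=a$ it contributes $-\bigl[{_tI_T^{1-\alpha}}(\cdots)\bigr]_{t=a}\,h(a)$, and since $h(a)$ is arbitrary this forces the stated condition $\bigl[{_tI_T^{1-\alpha}}(\cdots)\bigr]_{t=a}=0$. Your attempt to extract this from the freedom of $h(T)$ cannot succeed: the coefficient of $h(T)$ is identically zero by Remark~\ref{remarkIntegral}, so varying $h(T)$ yields no information at all. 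Once you drop the spurious constraint $h(a)=0$, the bookkeeping you worry about becomes routine and the argument goes through cleanly.
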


\begin{proof}
Let $h\in \mathcal{F}([a,b];\mathbb{R})$
be a variation, and let $\triangle T$
be a real number. Define the function
$$
j(\epsilon)=J[x+\epsilon h,T+\epsilon \triangle T]
$$
with $|\epsilon| \ll 1$.
Differentiating $j$ at $\epsilon=0$, and using the same procedure
as in Theorem~\ref{ELTEo}, we deduce that
\begin{align*}
0=&\triangle T \cdot L[x](T)+\int_a^T \left[\frac{\partial L}{\partial x}[x](t)
+{_tD^\alpha_T}\left( \frac{\partial L}{\partial v}[x](t) \right)
+{_tI_T^\beta}\left(\frac{\partial L}{\partial w}[x](t)\right)\right.\\
&\left.\hspace{35mm}+\int_t^T \frac{\partial L}{\partial z}[x](\t)d\t\cdot
\frac{\partial l}{\partial x}\{x\}(t)+{_tD^\alpha_T}\left( 
\int_t^T \frac{\partial L}{\partial z}[x](\t)d\t
\cdot \frac{\partial l}{\partial v}\{x\}(t)\right)\right.\\
&\left.\hspace{35mm}+{_tI^\beta_T}\left( \int_t^T \frac{\partial L}{\partial z}[x](\t)d\t
\cdot \frac{\partial l}{\partial w}\{x\}(t)  \right)\right]h(t)dt\\
&+ \left[ {_tI_T^{1-\alpha}}\left(\frac{\partial L}{\partial v}[x](t) \right)
h(t) \right]_a^T+\left[ {_tI^{1-\alpha}_T}
\left(\int_t^T\frac{\partial L}{\partial z}[x](\t)d\t
\cdot \frac{\partial l}{\partial v}\{x\}(t) \right) h(t)  \right]_a^T.
\end{align*}
The theorem follows from the arbitrariness of $h$ and $\triangle T$.
\end{proof}

\begin{remark}
If $T$ is fixed, say $T=b$, then $\triangle T=0$
and the transversality conditions reduce to
\begin{equation}
\label{NaturalBoundCond}
\left[{_tI_b^{1-\alpha}}\left( \frac{\partial L}{\partial v}[x](t)
+ \int_t^b \frac{\partial L}{\partial z}[x](\t)\, d\t \cdot
\frac{\partial l}{\partial v}\{x\}(t) \right)\right]_a=0.
\end{equation}
\end{remark}

\begin{example}
Consider the problem of minimizing the functional $J$ as in \eqref{funcExample},
but without given boundary conditions. Besides equation \eqref{ELeqExample},
extremals must also satisfy
\begin{equation}
\label{boundExample}
\left[{_tI_1^{1-\alpha}}\left( {^C_0D_t^\alpha}x(t)-1 \right)\right]_0=0.
\end{equation}
Again, $\overline x$ given by \eqref{eq:gs:ex} is a solution
of \eqref{ELeqExample} and \eqref{boundExample}.
\end{example}

As a particular case, the following result of
\cite{AGRA1} is deduced.

\begin{corollary}[\textrm{cf.} equations (9) and (12) of \cite{AGRA1}]
\label{Cor:AGRA1}
If $x$ is a minimizer of $J$ as in \eqref{funct7},
then $x$ is a solution of
$$
\frac{\partial L}{\partial x}[x](t)
+{_tD^\alpha_b}\left( \frac{\partial L}{\partial v}[x](t) \right)=0,
$$
and satisfies the transversality condition\index{Transversality conditions}
$$
\left[{_tI_b^{1-\alpha}}\left(\frac{\partial L}{\partial v}[x](t)\right)\right]_a=0.
$$
\end{corollary}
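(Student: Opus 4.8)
The plan is to deduce Corollary~\ref{Cor:AGRA1} as an immediate specialization of Theorem~\ref{TeoNatural}, so as to avoid repeating the first–variation computation. The functional \eqref{funct7} is exactly the functional \eqref{funct4} of Theorem~\ref{TeoNatural} in the degenerate situation where (i) the terminal time is prescribed, $T=b$, so that every admissible variation satisfies $\triangle T=0$; and (ii) the Lagrangian does not depend on the fractional–integral slot $w={_aI_t^\beta}x$, nor on the indefinite–integral slot $z$. Under (ii) one has $\partial L/\partial w\equiv 0$ and $\partial L/\partial z\equiv 0$ identically, and there is no running cost $l$ to carry along (equivalently, $l\equiv 0$); consequently the symbol $[x](t)$ appearing in the statement is to be read as $(t,x(t),{^C_aD_t^\alpha}x(t))$.

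First I would substitute these vanishing derivatives into the fractional Euler--Lagrange equation furnished by Theorem~\ref{TeoNatural}. Every term there carrying a factor $\partial L/\partial w$ or $\partial L/\partial z$ --- namely ${_tI_T^\beta}(\partial L/\partial w\,[x])$, $\int_t^T(\partial L/\partial z\,[x])(\tau)\,d\tau\cdot\partial l/\partial x\,\{x\}$, ${_tD^\alpha_T}\bigl(\int_t^T(\partial L/\partial z\,[x])(\tau)\,d\tau\cdot\partial l/\partial v\,\{x\}\bigr)$ and ${_tI^\beta_T}\bigl(\int_t^T(\partial L/\partial z\,[x])(\tau)\,d\tau\cdot\partial l/\partial w\,\{x\}\bigr)$ --- vanishes, and, since $T=b$, what remains is precisely
\[
\frac{\partial L}{\partial x}[x](t)+{_tD^\alpha_b}\left(\frac{\partial L}{\partial v}[x](t)\right)=0 .
\]

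Next I would treat the transversality conditions in the same manner. Because $T=b$ is fixed, $\triangle T=0$, so the scalar condition $L[x](T)=0$ coming from the free–endpoint term of Theorem~\ref{TeoNatural} is vacuous and is discarded. The surviving boundary condition of Theorem~\ref{TeoNatural},
\[
\left[{_tI_T^{1-\alpha}}\left(\frac{\partial L}{\partial v}[x](t)+\int_t^T\frac{\partial L}{\partial z}[x](\tau)\,d\tau\cdot\frac{\partial l}{\partial v}\{x\}(t)\right)\right]_{t=a}=0,
\]
collapses, upon deleting the $\partial L/\partial z$ summand, to $\bigl[{_tI_b^{1-\alpha}}\bigl(\partial L/\partial v\,[x](t)\bigr)\bigr]_a=0$, which is the asserted transversality condition. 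No condition at the right endpoint is produced: in the integration by parts underlying Theorem~\ref{TeoNatural} the boundary term at $t=b$ is ${_tI_b^{1-\alpha}}(\partial L/\partial v\,[x])(t)\,h(t)$ evaluated at $t=b$, and this is zero by Remark~\ref{remarkIntegral} whether or not $h(b)$ is free.

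There is essentially no obstacle here beyond careful bookkeeping: the content of the corollary is entirely contained in Theorem~\ref{TeoNatural}, so the only thing to verify is that each term flagged for deletion indeed carries a factor that vanishes under hypotheses (i)--(ii), and that the abbreviations $[\cdot]$ and $\{\cdot\}$ specialize consistently. As an alternative one could argue directly: set $j(\epsilon)=J[x+\epsilon h]$, differentiate at $\epsilon=0$, apply the Caputo integration–by–parts formula \eqref{eq:frac:IBP} to the term with ${^C_aD_t^\alpha}h$, and then invoke the fundamental lemma of the calculus of variations together with the arbitrariness of $h$ on $(a,b)$ and, separately, of $h(a)$; but routing through Theorem~\ref{TeoNatural} dispenses with this repetition.
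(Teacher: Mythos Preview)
Your proposal is correct and follows essentially the same approach as the paper: the paper's proof simply notes that the Lagrangian in \eqref{funct7} does not depend on ${_aI_t^\beta}x$ and $z$, and invokes Theorem~\ref{TeoNatural} (together with the preceding remark that fixing $T=b$ kills the $L[x](T)=0$ condition). Your write-up just spells out in more detail which terms vanish and why, and your observation about the right-endpoint term via Remark~\ref{remarkIntegral} matches the paper's Remark~\ref{new:rem:6}.
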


\begin{proof}
The Lagrangian $L$ in \eqref{funct7}
does not depend on ${_aI_t^\beta}x$ and $z$,
and the result follows from Theorem~\ref{TeoNatural}.
\end{proof}

\begin{remark}
\label{new:rem:6}
Observe that the condition
$$
\left[{_tI_b^{1-\alpha}}\left(\frac{\partial L}{\partial v}[x](t)\right)\right]_b=0
$$
is implicitly satisfied in Corollary~\ref{Cor:AGRA1}
(\textrm{cf.} Remark~\ref{remarkIntegral}).
\end{remark}

% ----------------------------------------

\section{Fractional isoperimetric problems}
\label{sec:IsoProb}

An isoperimetric\index{Isoperimetric problem} problem deals with the question
of optimizing a given functional
under the presence of an integral constraint.
This is a very old question, with its origins in the ancient Greece.
They where interested in determining the shape of a closed curve
with a fixed length and maximum area. This problem is known
as Dido's problem, and is an example of an isoperimetric
problem of the calculus of variations \cite{Brunt}.
For recent advancements on the subject we refer the reader to
\cite{Almeida2,Almeida3,iso:ts,MOMA09} and references therein.
In our case, within the fractional context, we state the isoperimetric
problem in the following way.
Determine the minimizers of a given functional
\begin{equation}
\label{funct2}
J[x]=\int_a^b L(t,x(t),{^C_aD_t^\alpha}x(t),{_aI_t^\beta}x(t),z(t))dt,
\end{equation}
subject to the boundary conditions
\begin{equation}
\label{bound2}
x(a)=x_a \quad \mbox{and} \quad x(b)=x_b,
\end{equation}
and the fractional integral constraint
\begin{equation}
\label{funct3}
\int_a^b G(t,x(t),{^C_aD_t^\alpha}x(t),{_aI_t^\beta}x(t),z(t))dt
=\gamma, \quad \gamma\in\mathbb{R},
\end{equation}
where $z$ is defined by
$$
z(t)=\int_a^t l(\t,x(\t),{^C_aD_\t^\alpha}x(\t),{_aI_\t^\beta}x(\t))d\t.
$$
As usual, we assume that all the functions $(t,x,v,w,z)\to L(t,x,v,w,z)$,
$(t,x,v,w)\to l(t,x,v,w)$, and $(t,x,v,w,z)\to G(t,x,v,w,z)$ are of class $C^1$.

\begin{theorem}
Let $x$ be a minimizer of $J$ as in \eqref{funct2},
under the boundary conditions \eqref{bound2} and isoperimetric constraint \eqref{funct3}.
Suppose that $x$ is not an extremal for $I$ in \eqref{funct3}. Then there exists
a constant $\lambda$ such that $x$ is a solution of the fractional equation
\begin{multline*}
\frac{\partial F}{\partial x}[x](t)
+{_tD^\alpha_b}\left( \frac{\partial F}{\partial v}[x](t) \right)
+{_tI^\beta_b}\left( \frac{\partial F}{\partial w}[x](t) \right)
+\int_t^b \frac{\partial F}{\partial z}[x](\t)d\t
\cdot \frac{\partial l}{\partial x}\{x\}(t)\\
+{_tD^\alpha_b}\left( \int_t^b \frac{\partial F}{\partial z}[x](\t)d\t
\cdot \frac{\partial l}{\partial v}\{x\}(t)\right)
+{_tI^\beta_b}\left( \int_t^b \frac{\partial F}{\partial z}[x](\t)d\t
\cdot \frac{\partial l}{\partial w}\{x\}(t)  \right)=0,
\end{multline*}
where $F=L-\lambda G$, for all $t\in[a,b]$.
\end{theorem}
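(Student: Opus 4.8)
The plan is to adapt the classical Lagrange multiplier technique to the present fractional setting, reusing the fractional integration-by-parts computations already carried out in the proof of Theorem~\ref{ELTEo}. First I would introduce a two-parameter family of admissible competitors $x + \epsilon_1 h_1 + \epsilon_2 h_2$, where $h_1, h_2 \in \mathcal{F}([a,b];\mathbb{R})$ satisfy $h_k(a) = h_k(b) = 0$ so that the boundary conditions \eqref{bound2} are preserved, and $|\epsilon_1|,|\epsilon_2| \ll 1$. Define
\[
j(\epsilon_1,\epsilon_2) = J[x + \epsilon_1 h_1 + \epsilon_2 h_2], \qquad
i(\epsilon_1,\epsilon_2) = \int_a^b G[x + \epsilon_1 h_1 + \epsilon_2 h_2](t)\,dt - \gamma ,
\]
so that $i(0,0) = 0$ by \eqref{funct3}. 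Note that the indefinite-integral variable $z$ occurring inside both $L$ and $G$ is built from the \emph{same} function $l$, so differentiating $i$ with respect to $\epsilon_2$ at the origin produces exactly the same type of expression as in Theorem~\ref{ELTEo}, with $L$ replaced by $G$.

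Since by hypothesis $x$ is not an extremal of the constraint functional $I$ in \eqref{funct3}, the Euler--Lagrange-type expression associated with $G$ does not vanish identically on $[a,b]$; hence, by the fundamental lemma of the calculus of variations applied contrapositively, there is a choice of $h_2$ for which $\partial i/\partial \epsilon_2|_{(0,0)} \neq 0$. The implicit function theorem then yields a $C^1$ function $\epsilon_2 = \epsilon_2(\epsilon_1)$, defined near $\epsilon_1 = 0$ with $\epsilon_2(0)=0$, such that $i(\epsilon_1,\epsilon_2(\epsilon_1)) = 0$; that is, the curve $\epsilon_1 \mapsto x + \epsilon_1 h_1 + \epsilon_2(\epsilon_1) h_2$ remains admissible. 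Because $x$ minimizes $J$ subject to the constraint, the one-variable function $\epsilon_1 \mapsto j(\epsilon_1,\epsilon_2(\epsilon_1))$ attains a minimum at $\epsilon_1 = 0$, so its derivative vanishes there. Combining this with the relation obtained by differentiating $i(\epsilon_1,\epsilon_2(\epsilon_1))=0$ to eliminate $\epsilon_2'(0)$, one gets a real constant $\lambda$ (depending only on the fixed $h_2$) such that
\[
\frac{\partial j}{\partial \epsilon_1}(0,0) - \lambda\,\frac{\partial i}{\partial \epsilon_1}(0,0) = 0 .
\]

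The final step is to expand this identity. Writing $F = L - \lambda G$ and differentiating under the integral sign, the left-hand side becomes precisely the first variation of $\int_a^b F[x](t)\,dt$ along $h_1$, which is the object computed in the proof of Theorem~\ref{ELTEo}. Carrying out the same fractional integration-by-parts formulas — for ${^C_aD_t^\alpha}$, for ${_aI_t^\beta}$, and the iterated-integral manipulations handling the $z$-term — together with $h_1(a)=h_1(b)=0$, which annihilates all boundary terms, reduces the condition to
\[
\int_a^b \big(\text{EL-expression for } F\big)\, h_1(t)\,dt = 0
\]
for every admissible $h_1$. The fundamental lemma of the calculus of variations (\textrm{cf.} \cite[p.~32]{Brunt}) then yields the stated fractional Euler--Lagrange equation with $F = L - \lambda G$. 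I expect the only genuinely delicate point to be the Lagrange multiplier step: verifying that non-extremality of $x$ for $I$ really supplies an admissible direction $h_2$ with $\partial i/\partial \epsilon_2(0,0)\neq 0$, and that the implicit function theorem applies here (one works with scalar functions of $\epsilon_1,\epsilon_2$, so this is standard once $h_2$ is fixed). The remaining computations are a mechanical repetition of those already established for the fundamental problem.
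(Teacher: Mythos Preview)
Your proposal is correct and follows essentially the same route as the paper: two-parameter variations with $h_1,h_2$ vanishing at the endpoints, the implicit function theorem applied to $i(\epsilon_1,\epsilon_2)$ using non-extremality of $x$ for $I$ to secure $\partial i/\partial\epsilon_2|_{(0,0)}\neq0$, and then the Lagrange multiplier combined with the fractional integration-by-parts computations of Theorem~\ref{ELTEo} and the fundamental lemma. The only cosmetic difference is that the paper invokes the Lagrange multiplier rule directly (citing \cite[p.~77]{Brunt}) to obtain $\nabla(j-\lambda i)(0,0)=0$, whereas you derive $\lambda$ by eliminating $\epsilon_2'(0)$ via the chain rule; these are equivalent, and your observation that $\lambda$ depends only on the fixed $h_2$ (hence not on $h_1$) is exactly what makes the final fundamental-lemma step go through.
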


\begin{proof}
Let $\epsilon_1,\epsilon_2\in \mathbb{R}$ be two real numbers such that
$|\epsilon_1|\ll1$ and $|\epsilon_2|\ll1$, with $\epsilon_1$ free
and $\epsilon_2$ to be determined later,
and let $h_1$ and $h_2$ be two functions satisfying
$$
h_1(a)=h_1(b)=h_2(a)=h_2(b)=0.
$$
Define functions $j$ and $i$ by
$$
j(\epsilon_1,\epsilon_2)=J[x+\epsilon_1h_1+\epsilon_2h_2]
$$
and
$$
i(\epsilon_1,\epsilon_2)=I(x+\epsilon_1h_1+\epsilon_2h_2)-\gamma.
$$
Doing analogous calculations as in the proof of Theorem~\ref{ELTEo}, one has
\begin{align*}
\left.\frac{\partial i}{\partial \epsilon_2} \right|_{(0,0)}
=& \int_a^b\left[\frac{\partial G}{\partial x}[x](t)
+\int_t^b \frac{\partial G}{\partial z}[x](\t)d\t
\cdot \frac{\partial l}{\partial x}\{x\}(t)\right.\\
&\hspace{1cm}\left.+{_tD^\alpha_b}\left( 
\frac{\partial G}{\partial v}[x](t) \right) + {_tD^\alpha_b}\left(
\int_t^b \frac{\partial G}{\partial z}[x](\t)d\t
\cdot \frac{\partial l}{\partial v}\{x\}(t)  \right)\right.\\
&\hspace{1cm}\left.+{_tI^\alpha_b}\left( \frac{\partial G}{\partial w}[x](t) \right)
+{_tI^\beta_b}\left( \int_t^b \frac{\partial G}{\partial z}[x](\t)d\t
\cdot \frac{\partial l}{\partial w}\{x\}(t)  \right)\right] h_2(t) \, dt.
\end{align*}
By hypothesis, $x$ is not an extremal for $I$
and therefore there must exist a function $h_2$ for which
$$
\left.\frac{\partial i}{\partial \epsilon_2} \right|_{(0,0)}\not=0.
$$
Since $i(0,0)=0$, by the implicit function theorem there exists
a function $\epsilon_2(\cdot)$, defined
in some neighborhood of zero, such that
\begin{equation}
\label{iso:const:pr}
i(\epsilon_1,\epsilon_2(\epsilon_1))=0.
\end{equation}
On the other hand, $j$ attains a minimum value at $(0,0)$ when subject
to the constraint \eqref{iso:const:pr}.
Because $\nabla i(0,0)\neq (0,0)$,
by the Lagrange multiplier\index{Lagrange multipliers} rule \cite[p.~77]{Brunt}
there exists a constant $\lambda$ such that
$$
\nabla(j(0,0)-\lambda i(0,0))=(0,0).
$$
So
$$
\left.\frac{\partial j}{\partial \epsilon_1} \right|_{(0,0)}
-\lambda \left.\frac{\partial i}{\partial \epsilon_1} \right|_{(0,0)}=0.
$$
Differentiating $j$ and $i$ at zero, and
doing the same calculations as before, we get the desired result.
\end{proof}

Using the abnormal Lagrange multiplier rule \cite[p.~82]{Brunt},
the previous result can be generalized to include
the case when the minimizer is an extremal of $I$.

\begin{theorem}
Let $x$ be a minimizer of $J$ as in \eqref{funct2},
subject to the constraints \eqref{bound2} and \eqref{funct3}.
Then there exist two constants $\lambda_0$ and $\lambda$,
not both zero, such that $x$ is a solution of equation
\begin{multline*}
\frac{\partial K}{\partial x}[x](t)+{_tD^\alpha_b}\left( 
\frac{\partial K}{\partial v}[x](t) \right)
+{_tI^\beta_b} \left( \frac{\partial K}{\partial w}[x](t) \right)
+\int_t^b \frac{\partial K}{\partial z}[x](t)dt
\cdot \frac{\partial l}{\partial x}\{x\}(t)\\
+{_tD^\alpha_b}\left( \int_t^b \frac{\partial K}{\partial z}[x](\t)d\t
\cdot \frac{\partial l}{\partial v}\{x\}(t)\right)
+{_tI^\beta_b}\left( \int_t^b \frac{\partial K}{\partial z}[x](\t)d\t
\cdot \frac{\partial l}{\partial w}\{x\}(t)  \right)=0
\end{multline*}
for all $t\in[a,b]$, where $K=\lambda_0 L-\lambda G$.
\end{theorem}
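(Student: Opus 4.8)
The plan is to follow the abnormal Lagrange multiplier rule of \cite[p.~82]{Brunt} in parallel with the proof of the previous (normal) theorem. Fix two admissible variations $h_1,h_2 \in \mathcal{F}([a,b];\mathbb{R})$ with $h_1(a)=h_1(b)=h_2(a)=h_2(b)=0$ and two small parameters $\epsilon_1,\epsilon_2$, and set $j(\epsilon_1,\epsilon_2)=J[x+\epsilon_1 h_1+\epsilon_2 h_2]$ and $i(\epsilon_1,\epsilon_2)=I(x+\epsilon_1 h_1+\epsilon_2 h_2)-\gamma$. Since $x$ is a minimizer of $J$ subject to $i=0$, the pair $(0,0)$ is a constrained extremum of $j$; by the abnormal multiplier rule there exist $(\lambda_0,\lambda)\neq(0,0)$ with $\lambda_0 \nabla j(0,0) = \lambda \nabla i(0,0)$, hence $\lambda_0 \partial_{\epsilon_1} j(0,0) - \lambda \partial_{\epsilon_1} i(0,0)=0$. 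The point of allowing $\lambda_0$ is precisely to drop the hypothesis ``$x$ is not an extremal for $I$'': the previous theorem needed $\nabla i(0,0)\neq(0,0)$ in order to invoke the implicit function theorem and then the normal multiplier rule, whereas the abnormal rule needs no such regularity.

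Next I would compute $\partial_{\epsilon_1} j(0,0)$ and $\partial_{\epsilon_1} i(0,0)$ exactly as in the proof of Theorem~\ref{ELTEo}. Differentiating under the integral sign, each of the five contributions (the direct $x$-dependence, the Caputo derivative term, the Riemann--Liouville integral term, the $x$-dependence through $l$ inside $z$, and the $\mathcal{^C_aD}$- and $\mathcal{_aI}$-dependence through $l$ inside $z$) is transformed using the five integration-by-parts identities displayed in the proof of Theorem~\ref{ELTEo}, together with the fractional integration by parts formula \eqref{eq:frac:IBP} and the lemma of \cite{Kilbas}. Because $h_1(a)=h_1(b)=0$, all boundary terms vanish, so $\partial_{\epsilon_1}j(0,0)=\int_a^b E_L(t)\,h_1(t)\,dt$ and $\partial_{\epsilon_1}i(0,0)=\int_a^b E_G(t)\,h_1(t)\,dt$, where $E_L$ and $E_G$ denote the left-hand sides of the Euler--Lagrange expression \eqref{ELeq} written for $L$ and for $G$ respectively. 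Substituting into $\lambda_0\partial_{\epsilon_1}j(0,0)-\lambda\partial_{\epsilon_1}i(0,0)=0$ gives $\int_a^b \big(\lambda_0 E_L(t)-\lambda E_G(t)\big)h_1(t)\,dt=0$ for every admissible $h_1$, and the fundamental lemma of the calculus of variations (\textrm{cf.} \cite[p.~32]{Brunt}) forces $\lambda_0 E_L(t)-\lambda E_G(t)=0$ for all $t\in[a,b]$. Setting $K=\lambda_0 L-\lambda G$ and noting that $E$ is linear in the Lagrangian, this is exactly the asserted equation, since $E_K = \lambda_0 E_L - \lambda E_G$.

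The only genuinely delicate point is the justification of the abnormal multiplier rule in this infinite-dimensional setting: strictly speaking one works on the finite-dimensional slice parametrized by $(\epsilon_1,\epsilon_2)$, where $j$ and $i$ are $C^1$ functions of two real variables, so the classical statement of \cite[p.~82]{Brunt} applies directly; one then has to observe that the resulting multipliers $(\lambda_0,\lambda)$ can be chosen independently of the particular pair $(h_1,h_2)$ — this follows by the standard argument that if the identity $\lambda_0 \partial_{\epsilon_1}j(0,0)=\lambda \partial_{\epsilon_1}i(0,0)$ failed to hold with a single fixed pair of constants for all $h_1$, one could produce a pair $(h_1,h_2)$ violating the rank condition, a contradiction. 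Everything else is the same bookkeeping of fractional integration by parts already carried out for Theorem~\ref{ELTEo}, so no new computation is required. I would close by remarking that taking $\lambda_0=1$ recovers the previous theorem whenever $x$ is not an extremal of $I$, and that, as in that case, the argument extends verbatim to several dependent variables and to the free terminal time version by combining it with Theorem~\ref{TeoNatural}.
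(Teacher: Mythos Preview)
Your proposal is correct and follows exactly the approach the paper indicates: the paper does not give a detailed proof of this theorem, merely stating (in the sentence preceding it) that ``using the abnormal Lagrange multiplier rule \cite[p.~82]{Brunt}, the previous result can be generalized to include the case when the minimizer is an extremal of $I$.'' You have supplied precisely those details, reusing the two-parameter variation setup and the integration-by-parts identities from the proofs of the normal isoperimetric theorem and of Theorem~\ref{ELTEo}.

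One minor remark: the ``delicate point'' you flag about the multipliers being independent of $(h_1,h_2)$ can be sidestepped entirely by a case split that is arguably cleaner than invoking a rank argument. If $x$ is an extremal of $I$, then $E_G\equiv 0$ and one may simply take $(\lambda_0,\lambda)=(0,1)$; otherwise the previous (normal) theorem applies and one takes $\lambda_0=1$. Either route is standard and both are consistent with the paper's one-line justification.
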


\begin{corollary}[\textrm{cf.} Theorem~3.4 of \cite{Almeida}]
Let $x$ be a minimizer of
$$
J[x]=\int_a^b L(t,x(t),{^C_aD_t^\alpha}x(t))dt,
$$
subject to the boundary conditions
$$
x(a)=x_a \quad \mbox{and} \quad x(b)=x_b,
$$
and the isoperimetric constraint
$$
\int_a^b G(t,x(t),{^C_aD_t^\alpha}x(t))dt
=\gamma, \quad \gamma\in\mathbb{R}.
$$
Then, there exist two constants $\lambda_0$ and $\lambda$,
not both zero, such that $x$ is a solution of equation
$$
\frac{\partial K}{\partial x}\left(t,x(t),{^C_aD_t^\alpha}x(t)\right)
+{_tD^\alpha_b}\left( \frac{\partial K}{\partial v}
\left(t,x(t),{^C_aD_t^\alpha}x(t)\right) \right) =0
$$
for all $t\in[a,b]$, where $K=\lambda_0 L-\lambda G$. Moreover, if $x$
is not an extremal for $I$, then we may take $\lambda_0=1$.
\end{corollary}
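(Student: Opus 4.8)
The plan is to obtain this corollary as a direct specialization of the two fractional isoperimetric theorems proved just above it, rather than redoing the variational calculation from scratch. The key observation is that the Lagrangians $L$ and $G$ in the corollary depend only on the triple $(t,x(t),{^C_aD_t^\alpha}x(t))$, i.e.\ they do not depend on the fractional integral variable $w={_aI_t^\beta}x$ nor on the indefinite-integral variable $z$. Hence the corollary sits inside the framework of \eqref{funct2}--\eqref{funct3} with a degenerate choice of data, and the whole machinery already developed applies verbatim.

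Concretely, first I would embed the problem: take $\beta$ arbitrary in $(0,1)$ (it will not matter), let $l\equiv 0$ so that $z\equiv 0$, and regard $L=L(t,x,v,w,z)$ and $G=G(t,x,v,w,z)$ as functions that happen to be constant in the last two slots. Then $\partial L/\partial w=\partial L/\partial z=0$ and $\partial G/\partial w=\partial G/\partial z=0$ identically, and likewise every partial of $l$ vanishes. Plugging these into the Euler--Lagrange-type equation of the general (abnormal) isoperimetric theorem, all the terms containing ${_tI_b^\beta}$, $\int_t^b(\partial K/\partial z)\,d\t$, and the derivatives of $l$ drop out, leaving precisely
$$
\frac{\partial K}{\partial x}\left(t,x(t),{^C_aD_t^\alpha}x(t)\right)
+{_tD^\alpha_b}\left( \frac{\partial K}{\partial v}\left(t,x(t),{^C_aD_t^\alpha}x(t)\right) \right)=0
$$
with $K=\lambda_0 L-\lambda G$ and $(\lambda_0,\lambda)\neq(0,0)$, which is the asserted equation. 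The regularity hypotheses ($L,G\in C^1$, $x\in\mathcal F([a,b];\mathbb R)$) are inherited unchanged, and the boundary conditions are the same.

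For the last sentence of the corollary, I would instead invoke the \emph{normal} isoperimetric theorem (the one proved via the implicit function theorem and the classical Lagrange multiplier rule, under the standing assumption that $x$ is not an extremal of $I$): that result gives a single multiplier $\lambda$ with $F=L-\lambda G$, i.e.\ it is exactly the case $\lambda_0=1$. So the ``moreover'' clause follows immediately by applying that theorem with the same degenerate data. I do not expect a genuine obstacle here; the only thing to be careful about is bookkeeping — verifying that setting $l\equiv 0$ (hence $z\equiv 0$) is legitimate within the class $\mathcal F$ and that the vanishing of the $w$- and $z$-dependent terms is complete — but this is routine. One might also remark, as the paper does in Remark~\ref{new:rem:6}, that the companion transversality condition at $t=b$ is automatically satisfied by Remark~\ref{remarkIntegral}, though it is not needed for the statement of this corollary.
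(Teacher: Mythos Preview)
Your approach is correct and matches the paper's: the corollary is stated without proof, being an immediate specialization of the two preceding isoperimetric theorems (normal and abnormal) once $L$ and $G$ are taken independent of $_aI_t^\beta x$ and $z$, exactly as you describe. Your final remark about the transversality conditions is extraneous here since both endpoints are fixed, but you already flag this yourself.
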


% ----------------------------------------

\section{Holonomic constraints}
\label{sec:Holonomic}

In this section we consider the following problem. Minimize the functional
\begin{equation}
\label{funct8}
J[x_1,x_2]=\int_a^b L(t,x_1(t),x_2(t),{^C_aD_t^\alpha}x_1(t),
{^C_aD_t^\alpha}x_2(t),{_aI_t^\beta}x_1(t),{_aI_t^\beta}x_2(t),z(t))dt,
\end{equation}
where $z$ is defined by
$$
z(t)=\int_a^t l(t,x_1(\t),x_2(\t),{^C_aD_\t^\alpha}x_1(\t),
{^C_aD_\t^\alpha}x_2(\t),{_aI_\t^\beta}x_1(\t),{_aI_\t^\beta}x_2(\t))d\t,
$$
when restricted to the boundary conditions\index{Boundary conditions}
\begin{equation}
\label{boundconst8}
(x_1(a),x_2(a))=(x_1^a,x_2^a)
\mbox{ and } (x_1(b),x_2(b))=(x_1^b,x_2^b),
\quad x_1^a,x_2^a,x_1^b,x_2^b\in\mathbb{R},
\end{equation}
and the holonomic constraint
\begin{equation}
\label{subsconst}
g(t,x_1(t),x_2(t))=0.
\end{equation}
As usual, here
$$
(t,x_1,x_2,v_1,v_2,w_1,w_2,z)\to L(t,x_1,x_2,v_1,v_2,w_1,w_2,z),
$$
$$
(t,x_1,x_2,v_1,v_2,w_1,w_2)\to l(t,x_1,x_2,v_1,v_2,w_1,w_2)
$$
and
$$
(t,x_1,x_2)\to g(t,x_1,x_2)
$$
are all smooth. In what follows we make use
of the operator $[\cdot,\cdot]$ given by
$$
[x_1,x_2](t)
= (t,x_1(t),x_2(t),{^C_aD_t^\alpha}x_1(t),{^C_aD_t^\alpha}x_2(t),
{_aI_t^\beta}x_1(t),{_aI_t^\beta}x_2(t),z(t))\, ,
$$
we denote
$(t,x_1(t),x_2(t))$ by $(t,\mathbf{x}(t))$,
and the Euler--Lagrange equation obtained in \eqref{ELeq}
with respect to $x_i$ by $(ELE_i)$, $i=1,2$.

\begin{remark}
For simplicity, we are considering functionals depending
only on two functions $x_1$ and $x_2$. Theorem~\ref{thm:hol}
is, however, easily generalized for $n$ variables $x_1,\ldots,x_n$.
\end{remark}

\begin{theorem}
\label{thm:hol}
Let the pair $(x_1,x_2)$ be a minimizer of $J$ as in \eqref{funct8},
subject to the constraints \eqref{boundconst8}--\eqref{subsconst}.
If $\frac{\partial g}{\partial x_2}\not=0$,
then there exists a continuous function $\lambda:[a,b]\to\mathbb{R}$
such that $(x_1,x_2)$ is a solution of
\begin{multline}\label{ELequation2}
\frac{\partial F}{\partial x_i}[x_1,x_2](t)
+\int_t^b \frac{\partial F}{\partial z}[x_1,x_2](\t)d\t\cdot
\frac{\partial l}{\partial x_i}\{x_1,x_2\}(t)\\
+{_tD^\alpha_b}\left( \frac{\partial F}{\partial v_i}[x_1,x_2](t) \right)
+{_tD^\alpha_b}\left( \int_t^b \frac{\partial F}{\partial z}[x_1,x_2](\t)d\t
\cdot \frac{\partial l}{\partial v_i}\{x_1,x_2\}(t)  \right)\\
~~+{_tI_b^\beta}\left(\frac{\partial F}{\partial w_i}[x_1,x_2](t)\right)
+{_tI^\beta_b}\left( \int_t^b \frac{\partial F}{\partial z}[x_1,x_2](\t)d\t
\cdot \frac{\partial l}{\partial w_i}\{x_1,x_2\}(t)\right)=0
\end{multline}
for all $t\in[a,b]$ and $i=1,2$,
where $F[x_1,x_2](t)=L[x_1,x_2](t)-\lambda (t) g(t,\mathbf{x}(t))$.
\end{theorem}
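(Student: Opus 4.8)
The plan is to mimic the proof of the fundamental Euler--Lagrange equation (Theorem~\ref{ELTEo}) but now carry along the holonomic constraint \eqref{subsconst} via a Lagrange-multiplier function $\lambda(t)$. First I would fix a minimizer $(x_1,x_2)$ and consider two-parameter variations $x_i + \epsilon h_i$, $i=1,2$, with $h_i(a)=h_i(b)=0$. The constraint \eqref{subsconst} means that the $h_i$ are not independent: differentiating $g(t,x_1+\epsilon h_1, x_2+\epsilon h_2)=0$ at $\epsilon=0$ gives
$$
\frac{\partial g}{\partial x_1}(t,\mathbf{x}(t)) h_1(t)
+\frac{\partial g}{\partial x_2}(t,\mathbf{x}(t)) h_2(t)=0,
$$
so, using the hypothesis $\partial g/\partial x_2\neq 0$, one can solve for $h_2$ in terms of $h_1$:
$$
h_2(t)=-\left(\frac{\partial g}{\partial x_2}(t,\mathbf{x}(t))\right)^{-1}
\frac{\partial g}{\partial x_1}(t,\mathbf{x}(t))\,h_1(t).
$$
Since $g$ is smooth, $h_2$ is then an admissible variation whenever $h_1$ is.

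Next I would define $\lambda(t)$ so as to absorb the $x_2$-equation. Concretely, set $F[x_1,x_2](t)=L[x_1,x_2](t)-\lambda(t)g(t,\mathbf{x}(t))$ and choose $\lambda(t)$ to be the continuous function for which the left-hand side of \eqref{ELequation2} with $i=2$ vanishes identically; this is possible because the only term in that expression involving $\lambda$ is $-\lambda(t)\,\partial g/\partial x_2(t,\mathbf{x}(t))$ appearing undifferentiated (the fractional operators ${_tD_b^\a}$ and ${_tI_b^\beta}$ act on $\partial F/\partial v_2$ and $\partial F/\partial w_2$, which do not involve $\lambda$ since $g$ depends only on $t,x_1,x_2$), and $\partial g/\partial x_2\neq 0$ lets us divide. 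With this choice, $(ELE_2)$ for $F$ holds by construction, and $\lambda$ inherits continuity from the continuity of the other terms (here one uses smoothness of $L$, $l$, $g$ and the regularity class $\mathcal{F}$).

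Then I would write out $j'(0)=0$ where $j(\epsilon)=J[x_1+\epsilon h_1, x_2+\epsilon h_2]$, exactly as in Theorem~\ref{ELTEo} but summed over $i=1,2$, obtaining
$$
\int_a^b\sum_{i=1}^2 \mathcal{E}_i[x_1,x_2](t)\,h_i(t)\,dt=0,
$$
where $\mathcal{E}_i$ denotes the full Euler--Lagrange expression in \eqref{ELequation2} but with $L$ in place of $F$ (the integration-by-parts manipulations for ${^C_aD_t^\a}h_i$, ${_aI_t^\beta}h_i$, and the indefinite-integral term are verbatim those already carried out in the proof of Theorem~\ref{ELTEo}, and the boundary terms vanish because $h_i(a)=h_i(b)=0$ together with Remark~\ref{remarkIntegral}). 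Since $g$ does not depend on the fractional derivatives or integrals, subtracting $\int_a^b \lambda(t)\frac{d}{d\epsilon}g(t,\mathbf{x}+\epsilon\mathbf{h})\big|_{\epsilon=0}\,dt = \int_a^b \sum_i \lambda(t)\frac{\partial g}{\partial x_i}h_i\,dt$, which is zero by the constraint-derivative identity above, lets me replace $\mathcal{E}_i$ by the corresponding expression with $F$, giving $\int_a^b\sum_i (\text{LHS of \eqref{ELequation2}})\,h_i\,dt=0$. By the choice of $\lambda$, the $i=2$ summand vanishes, so $\int_a^b (\text{LHS of \eqref{ELequation2}}, i=1)\,h_1(t)\,dt=0$ for all admissible $h_1$, and the fundamental lemma of the calculus of variations yields \eqref{ELequation2} for $i=1$; the case $i=2$ is true by construction of $\lambda$.

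The main obstacle — really the only delicate point — is justifying that $\lambda(t)$ defined in this way is a genuine continuous function on $[a,b]$ rather than merely a pointwise quotient: one must check that the non-$\lambda$ terms of the $i=2$ expression form a continuous function of $t$ (which follows since $x_i\in\mathcal{F}$ so that ${^C_aD_t^\a}x_i$, ${_aI_t^\beta}x_i$ and hence all partial derivatives of $L$ and $l$ evaluated along $[x_1,x_2]$ are continuous, and the fractional operators ${_tD_b^\a}$, ${_tI_b^\beta}$ preserve continuity under these hypotheses), and that $\partial g/\partial x_2(t,\mathbf{x}(t))$ is bounded away from zero on $[a,b]$ by continuity and compactness. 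A secondary routine point is verifying that the variation $h_2$ constructed from $h_1$ indeed lies in $\mathcal{F}([a,b];\mathbb{R})$ and satisfies the right endpoint conditions; this is immediate from smoothness of $g$ and $g(t,\mathbf{x}(t))=0$. Everything else is a direct transcription of the computations already performed for Theorem~\ref{ELTEo}, now done componentwise.
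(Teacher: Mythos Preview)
Your proposal is correct and follows essentially the same route as the paper: define $\lambda(t)$ as the quotient making the $i=2$ equation hold (the paper writes this explicitly as $\lambda(t)=(ELE_2)/\frac{\partial g}{\partial x_2}(t,\mathbf{x}(t))$), use the constraint-differentiation identity to eliminate $h_2$ in favour of $h_1$, and apply the fundamental lemma to the remaining integral in $h_1$. Your ``subtract zero'' step and the paper's direct substitution of $h_2=-(\partial g/\partial x_1)/(\partial g/\partial x_2)\,h_1$ are just two ways of writing the same cancellation; the paper does not dwell on the continuity of $\lambda$ or on $h_2\in\mathcal{F}$, so your additional remarks there are a welcome bit of extra care rather than a departure from the argument.
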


\begin{proof}
Consider a variation\index{Variation} of the optimal solution of type
$$
(\overline x_1,\overline x_2)= (x_1+\epsilon h_1,x_2+\epsilon h_2),
$$
where $h_1,h_2$ are two functions defined on $[a,b]$ satisfying
$$
h_1(a)=h_1(b)=h_2(a)=h_2(b)=0,
$$
and $\epsilon$ is a sufficiently small real parameter. Since
$\frac{\partial g}{\partial x_2}(t,\overline{x}_1(t),\overline{x}_2(t))\not=0$
for all $t\in[a,b]$, we can solve equation
$g(t,\overline x_1(t),\overline x_2(t))=0$
with respect to $h_2$, $h_2=h_2(\epsilon,h_1)$. Differentiating
$J(\overline x_1,\overline x_2)$ at $\epsilon=0$, and proceeding
similarly as done in the proof of Theorem~\ref{ELTEo}, we deduce that
\begin{equation}
\label{subsequation}
\int_a^b (ELE_1)h_1(t)+(ELE_2)h_2(t)\, dt=0.
\end{equation}
Besides, since $g(t,\overline x_1(t),\overline x_2(t))=0$,
differentiating at $\epsilon=0$ we get
\begin{equation}
\label{defh2}
h_2(t)=-\frac{\frac{\partial g}{\partial
x_1}(t,\mathbf{x}(t))}{\frac{\partial g}{\partial
x_2}(t,\mathbf{x}(t))}h_1(t).
\end{equation}
Define the function $\lambda$ on $[a,b]$ as
\begin{equation}
\label{subslambda}
\lambda(t)=\frac{(ELE_2)}{\frac{\partial g}{\partial x_2}(t,\mathbf{x}(t))}.
\end{equation}
Combining \eqref{defh2} and \eqref{subslambda},
equation \eqref{subsequation} can be written as
$$
\int_a^b \left[(ELE_1)-\lambda(t) \frac{\partial g}{\partial
x_1}(t,\mathbf{x}(t))\right] h_1(t)\, dt=0.
$$
By the arbitrariness of $h_1$, if follows that
$$
(ELE_1)-\lambda(t) \frac{\partial g}{\partial x_1}(t,\mathbf{x}(t))=0.
$$
Define $F$ as
$$
F[x_1,x_2](t)=L[x_1,x_2](t)-\lambda (t) g(t,\mathbf{x}(t)).
$$
Then, equations \eqref{ELequation2} follow.
\end{proof}

% ----------------------------------------

\section{Higher order Caputo derivatives}
\label{sec:Higher}

In this section we consider fractional variational 
problems in presence of higher order Caputo derivatives.
We will restrict ourselves to the case where the orders
are non-integer, since the integer case is already well studied
in the literature (for a modern account see \cite{MyID:194,ferreira,natorres}).

Let $n\in\mathbb{N}$, $\beta > 0$,
and $\alpha_k\in\mathbb{R}$ be such that
$\alpha_k\in(k-1,k)$ for $k\in\{1,\ldots,n\}$.
Admissible functions $x$ belong to $AC^n([a,b];\mathbb{R})$
and are such that ${^C_aD_t^{\alpha_k}}x$, $k = 1, \ldots, n$,
and ${_aI_t^\beta}x$ exist and are continuous on $[a,b]$.
We denote such class of functions by $\mathcal{F}^n([a,b];\mathbb{R})$.
For $\alpha=(\alpha_1,\ldots,\alpha_n)$, define the vector
\begin{equation}
\label{eq:y:ho:l}
{_a^C D _t^\alpha}x(t)
=({_a^C D _t^{\alpha_1}}x(t),\ldots,{_a^C D _t^{\alpha_n}}x(t)).
\end{equation}
The optimization problem is the following: to minimize or maximize
the functional
\begin{equation}
\label{funct5}
J[x]=\int_a^b L(t,x(t),{^C_aD_t^\alpha}x(t),{_aI_t^\beta}x(t),z(t))dt,
\end{equation}
$x \in \mathcal{F}^n([a,b];\mathbb{R})$,
subject to the boundary conditions
\begin{equation}
\label{bound5}
x^{(k)}(a)=x_{a,k} \quad \mbox{and}
\quad x^{(k)}(b)=x_{b,k}, \quad k\in\{0,\ldots,n-1\},
\end{equation}
where $z: [a,b] \to \mathbb{R}$ is defined by
$$
z(t)=\int_a^t l(\t,x(\t),{^C_aD_\t^\alpha}x(\t),{_aI_\t^\beta}x(\t))d\t.
$$

\begin{theorem}
\label{thm:16}
If $x \in \mathcal{F}^n([a,b];\mathbb{R})$
is a minimizer of $J$ as in \eqref{funct5},
subject to the boundary conditions \eqref{bound5},
then $x$ is a solution of the fractional equation
\begin{multline*}
\frac{\partial L}{\partial x}[x](t)
+\sum_{k=1}^n{_tD^{\alpha_k}_b}\left( \frac{\partial L}{\partial v_k}[x](t) \right)
+{_tI^\beta_b}\left( \frac{\partial L}{\partial w}[x](t) \right)
+\int_t^b \frac{\partial L}{\partial z}[x](t)dt
\cdot \frac{\partial l}{\partial x}\{x\}(t)\\
+\sum_{k=1}^n{_tD^{\alpha_k}_b}\left( \int_t^b \frac{\partial L}{\partial z}[x](\t)d\t
\cdot \frac{\partial l}{\partial v_k}\{x\}(t)\right)
+{_tI^\beta_b}\left( \int_t^b \frac{\partial L}{\partial z}[x](\t)d\t
\cdot \frac{\partial l}{\partial w}\{x\}(t)  \right)=0
\end{multline*}
for all $t\in[a,b]$, where
$[x](t) = \left(t,x(t),{^C_aD_t^\alpha}x(t),{_aI_t^\beta}x(t),z(t)\right)$
with ${^C_aD_t^\alpha}x(t)$ as in \eqref{eq:y:ho:l}.
\end{theorem}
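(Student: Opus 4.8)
The plan is to mirror the proof of Theorem~\ref{ELTEo}, which is the $n=1$, single-variable version of this statement, and to simply carry the extra integer-order derivatives $k=2,\ldots,n$ through the same variational argument. First I would take an admissible variation $x+\epsilon h$ with $h \in \mathcal{F}^n([a,b];\mathbb{R})$ satisfying $h^{(k)}(a)=h^{(k)}(b)=0$ for $k\in\{0,\ldots,n-1\}$, so that the perturbed curve still meets the boundary conditions \eqref{bound5}. Writing $j(\epsilon)=J[x+\epsilon h]$, the minimality of $x$ gives $j'(0)=0$. Differentiating under the integral sign (using that $L$ and $l$ are $C^1$ and that the fractional operators $^C_aD_t^{\alpha_k}$ and $_aI_t^\beta$ are linear) produces a sum of terms: one from $\partial L/\partial x$, one from each $\partial L/\partial v_k$ paired with $^C_aD_t^{\alpha_k}h$, one from $\partial L/\partial w$ paired with $_aI_t^\beta h$, and the $\partial L/\partial z$ term paired with the inner integral $\int_a^t(\cdots)d\tau$ that itself contains $h$, $^C_aD_\tau^{\alpha_k}h$, and $_aI_\tau^\beta h$.

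Next I would push every occurrence of $h$ and its fractional operators off of $h$ and onto the Lagrangian, exactly as in Theorem~\ref{ELTEo}. For the Caputo terms I would invoke the fractional integration-by-parts formula: for $\alpha_k\in(k-1,k)$ one has an identity of the form
$$
\int_a^b g(t)\,{^C_aD_t^{\alpha_k}}h(t)\,dt
=\int_a^b h(t)\,{_tD_b^{\alpha_k}}g(t)\,dt
+\sum_{j=0}^{k-1}\left[{_tD_b^{\alpha_k+j-k}}g(t)\cdot h^{(k-1-j)}(t)\right]_a^b,
$$
and all the boundary terms vanish because $h^{(k-1-j)}(a)=h^{(k-1-j)}(b)=0$ for $0\le j\le k-1$. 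For the $_aI_t^\beta$ terms I would use part (i) of the integration-by-parts lemma, which is boundary-term-free. For the $\partial L/\partial z$ block I would first rewrite $\int_a^t l\,d\tau$-weighted integrals by Fubini/Dirichlet reordering, replacing $\int_a^t(\cdots)$ against a $t$-integral of $\partial L/\partial z$ by $\int_t^b \partial L/\partial z[x](\tau)\,d\tau$ as the new coefficient (this is the $-\frac{d}{dt}\int_t^b$ trick used in Theorem~\ref{ELTEo}), and then apply the same two integration-by-parts tools to the resulting $^C_aD_t^{\alpha_k}h$ and $_aI_t^\beta h$ factors. After collecting, the first variation becomes $\int_a^b (\text{stuff})\,h(t)\,dt = 0$ for every admissible $h$, and the fundamental lemma of the calculus of variations (\cite[p.~32]{Brunt}) yields the stated Euler--Lagrange equation.

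The main obstacle is bookkeeping rather than conceptual: one must be careful that the higher-order Caputo integration-by-parts formula really does kill all its boundary terms under the hypotheses $x\in AC^n$ and $h^{(k)}(a)=h^{(k)}(b)=0$, $k=0,\ldots,n-1$ — the order $\alpha_k\in(k-1,k)$ is exactly what makes the formula have $k$ boundary terms involving $h,\dot h,\ldots,h^{(k-1)}$, all of which are prescribed to vanish. A secondary point requiring care is the legitimacy of differentiating $j(\epsilon)$ under the integral and of interchanging the order of integration in the $z$-terms; both follow from the standing continuity assumptions on $L$, $l$, and the fractional operators of $x$, just as in the $n=1$ case. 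Since no new idea beyond Theorem~\ref{ELTEo} is needed, I would present the argument concisely, stating "the proof is analogous to that of Theorem~\ref{ELTEo}, now applying fractional integration by parts to each of the $n$ Caputo terms," and exhibit only the integration-by-parts identity for $^C_aD_t^{\alpha_k}$ to make the vanishing of boundary terms explicit.
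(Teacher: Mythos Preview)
Your proposal is correct and follows essentially the same route as the paper's proof: take a variation $h\in\mathcal{F}^n([a,b];\mathbb{R})$ with $h^{(k)}(a)=h^{(k)}(b)=0$ for $k=0,\ldots,n-1$, differentiate $j(\epsilon)=J[x+\epsilon h]$ at $\epsilon=0$, apply the higher-order Caputo integration-by-parts identity to each ${^C_aD_t^{\alpha_k}}h$ term (with all boundary contributions vanishing), handle the $_aI_t^\beta$ and $z$-terms exactly as in Theorem~\ref{ELTEo}, and conclude via the fundamental lemma. The paper writes out the integration-by-parts computations explicitly but does nothing beyond what you have described.
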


\begin{proof}
Let $h \in \mathcal{F}^n([a,b];\mathbb{R})$
be such that $h^{(k)}(a)=h^{(k)}(b)=0$,
for $k\in\{0,\ldots,n-1\}$. Define the new function
$j$ as $j(\epsilon)=J(x+\epsilon h)$. Then
\begin{multline}
\label{eq1}
0=\int_a^b \left[ \frac{\partial L}{\partial x}[x](t)h(t)
+ \sum_{k=1}^n \frac{\partial L}{\partial v_k}[x](t){^C_aD^{\alpha_k}_t}h(t)
+\frac{\partial L}{\partial w}[x](t){_aI^\beta_t}h(t)\right.\\
\left.+\frac{\partial L}{\partial z}[x](t)\int_a^t
\left( \frac{\partial l}{\partial x}\{x\}(\t)h(\t)
+\sum_{k=1}^n\frac{\partial l}{\partial v_k}\{x\}(\t){^C_aD^{\alpha_k}_\t}h(\t)
+\frac{\partial l}{\partial w}\{x\}(\t){_aI^\beta_\t}h(\t) \right)d\t\right]dt.
\end{multline}
Integrating by parts, we get that
\begin{multline*}
\int_a^b \frac{\partial L}{\partial v_k}[x](t){^C_aD^{\alpha_k}_t}h(t) dt
=\int_a^b {_t D_b^{\alpha_k}}\left(\frac{\partial L}{\partial v_k}[x](t)\right)h(t)dt\\
+\sum_{m=0}^{k-1}\left[{_tD_b^{\alpha_k+m-k}}\left(\frac{\partial L}{\partial v_k}[x](t)\right)
h^{(k-1-m)}(t)\right]_a^b =\int_a^b {_t D_b^{\alpha_k}}\left(
\frac{\partial L}{\partial v_k}[x](t) \right)h(t)dt
\end{multline*}
for all $k\in\{1,\ldots,n\}$. Moreover, one has
$$
\int_a^b \frac{\partial L}{\partial w}[x](t){_aI^\beta_t}h(t) dt
=\int_a^b {_t I_b^\beta} \left(\frac{\partial L}{\partial w}[x](t) \right)h(t)dt,
$$
$$
\int_a^b \frac{\partial L}{\partial z}[x](t)\left(\int_a^t
\frac{\partial l}{\partial x}\{x\}(\t)h(\t) d\t \right)  dt
=\int_a^b \left(\int_t^b\frac{\partial L}{\partial z}[x](\t)d\t
\right) \frac{\partial l}{\partial x}\{x\}(t)h(t) \,  dt,
$$
\begin{align*}
\int_a^b & \frac{\partial L}{\partial z}[x]\left(\int_a^t
\frac{\partial l}{\partial v_k}\{x\}{^C_aD^{\alpha_k}_\t}h d\t \right)dt
=\int_a^b \left(\int_t^b\frac{\partial L}{\partial z}[x](\t)d\t \right)
\frac{\partial l}{\partial v_k}\{x\}(t){^C_aD^{\alpha_k}_t}h \,  dt\\
&\hspace{5mm}= \int_a^b {_tD^{\alpha_k}_b}
\left(\int_t^b\frac{\partial L}{\partial z}[x](\t)d\t
\frac{\partial l}{\partial v_k}\{x\}(t) \right)h(t) \, dt\\
&\hspace{2cm} +\sum_{m=0}^{k-1}\left[{_tD_b^{\alpha_k+m-k}}\left(
\int_t^b\frac{\partial L}{\partial z}[x](\t)d\t
\frac{\partial l}{\partial v_k}\{x\}(t)\right)
h^{(k-1-m)}(t)\right]_a^b\\
&\hspace{5cm}= \int_a^b {_tD^{\alpha_k}_b}\left(
\int_t^b\frac{\partial L}{\partial z}[x](\t)d\t
\frac{\partial l}{\partial v_k}\{x\}(t) \right)h(t) \, dt,
\end{align*}
and
$$
\int_a^b \frac{\partial L}{\partial z}[x](t)\left(\int_a^t
\frac{\partial l}{\partial w}\{x\}(t){_aI^\beta_\t}h(\t) d\t
\right)dt=\int_a^b {_tI^\beta_b}
\left(\int_t^b\frac{\partial L}{\partial z}[x](\t)d\t
\frac{\partial l}{\partial w}\{x\}(t) \right)h(t) \, dt.
$$
Replacing these last relations into equation \eqref{eq1},
and applying the fundamental lemma of the calculus of variations,
we obtain the intended necessary condition.
\end{proof}

We now consider the higher-order problem without
the presence of boundary conditions \eqref{bound5}.

\begin{theorem}
If $x \in \mathcal{F}^n([a,b];\mathbb{R})$
is a minimizer of $J$ as in \eqref{funct5},
then $x$ is a solution of the fractional equation
\begin{multline*}
\frac{\partial L}{\partial x}[x](t)
+\sum_{k=1}^n{_tD^{\alpha_k}_b}\left(
\frac{\partial L}{\partial v_k}[x](t) \right)+{_tI^\beta_b}
\left( \frac{\partial L}{\partial w}[x](t) \right)
+\int_t^b \frac{\partial L}{\partial z}[x](\t)d\t
\cdot \frac{\partial l}{\partial x}\{x\}(t)\\
+\sum_{k=1}^n{_tD^{\alpha_k}_b}\left( \int_t^b
\frac{\partial L}{\partial z}[x](\t)d\t
\cdot \frac{\partial l}{\partial v_k}\{x\}(t)  \right)
+{_tI^\beta_b}\left( \int_t^b \frac{\partial L}{\partial z}[x](\t)d\t
\cdot \frac{\partial l}{\partial w}\{x\}(t)  \right)=0
\end{multline*}
for all $t\in[a,b]$, and satisfies the natural boundary conditions
\begin{equation}
\label{eq:nbc:ho}
\sum_{m=k}^n \left[ {_tD_b^{\alpha_m-k}}\left(
\frac{\partial L}{\partial v_k}[x](t)
+ \int_t^b\frac{\partial L}{\partial z}[x](t)dt
\frac{\partial l}{\partial v_k}\{x\}(t)\right) \right]_a^b=0,
\quad \mbox{for all} \quad k\in\{1,\ldots,n\}.
\end{equation}
\end{theorem}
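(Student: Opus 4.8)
The strategy mirrors the proofs of Theorem~\ref{ELTEo} and Theorem~\ref{thm:16}; the only new ingredient is that the variation $h\in\mathcal{F}^n([a,b];\mathbb{R})$ is now \emph{not} required to vanish together with its derivatives at the endpoints, so the boundary contributions of the fractional integration by parts must be retained. First I would fix such an $h$, set $j(\epsilon)=J(x+\epsilon h)$, and use $j'(0)=0$ to arrive at the same integral identity as \eqref{eq1}, namely
\[
0=\int_a^b\left[\frac{\partial L}{\partial x}[x]\,h+\sum_{k=1}^n\frac{\partial L}{\partial v_k}[x]\,{^C_aD_t^{\alpha_k}}h+\frac{\partial L}{\partial w}[x]\,{_aI_t^\beta}h+\frac{\partial L}{\partial z}[x]\int_a^t\Big(\cdots\Big)d\t\right]dt .
\]
Then I would apply term by term the higher-order integration-by-parts formula for Caputo derivatives recalled in Chapter~\ref{FracCalculus}, i.e. for each $k$,
\[
\int_a^b g(t)\,{^C_aD_t^{\alpha_k}}h(t)\,dt=\int_a^b h(t)\,{_tD_b^{\alpha_k}}g(t)\,dt+\sum_{m=0}^{k-1}\Big[{_tD_b^{\alpha_k+m-k}}g(t)\cdot h^{(k-1-m)}(t)\Big]_a^b ,
\]
together with the integration-by-parts formula for the Riemann--Liouville fractional integral (which produces no boundary term) and the Fubini/Dirichlet interchange used in Theorem~\ref{ELTEo} to move the indefinite integral outward. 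The term carrying $\partial l/\partial x$ yields a boundary contribution that vanishes at both endpoints (an $\int_t^b$ paired with an $\int_a^t$), while each $\partial l/\partial v_k$ term yields boundary contributions of exactly the same shape as the $\partial L/\partial v_k$ ones, but multiplied by the extra factor $\int_t^b\frac{\partial L}{\partial z}[x](\t)\,d\t$.

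After this bookkeeping the first variation takes the form
\[
0=\int_a^b \mathcal{E}[x](t)\,h(t)\,dt+\sum_{k=1}^n\sum_{m=0}^{k-1}\Big[{_tD_b^{\alpha_k+m-k}}\big(G_k[x](t)\big)\,h^{(k-1-m)}(t)\Big]_a^b ,
\]
where $\mathcal{E}[x]$ is precisely the left-hand side of the asserted Euler--Lagrange equation and
\[
G_k[x](t)=\frac{\partial L}{\partial v_k}[x](t)+\int_t^b\frac{\partial L}{\partial z}[x](\t)\,d\t\cdot\frac{\partial l}{\partial v_k}\{x\}(t).
\]
Restricting first to variations $h$ with $h^{(i)}(a)=h^{(i)}(b)=0$ for $i=0,\dots,n-1$ annihilates every boundary term, and the fundamental lemma of the calculus of variations forces $\mathcal{E}[x]\equiv 0$ on $[a,b]$; this is the fractional Euler--Lagrange equation (and recovers Theorem~\ref{thm:16} as the particular case with fixed boundary data). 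With $\mathcal{E}[x]\equiv 0$ established, I would then return to the displayed identity with an \emph{arbitrary} $h\in\mathcal{F}^n([a,b];\mathbb{R})$: the integral drops out and only the boundary sum remains. Reindexing the double sum by the order $i=k-1-m$ of the derivative of $h$ and interchanging the order of summation turns it into $\sum_{i=0}^{n-1}h^{(i)}\big[\sum_{k=i+1}^{n}{_tD_b^{\alpha_k-1-i}}(G_k)\big]_a^b$; since the $2n$ numbers $h^{(i)}(a),h^{(i)}(b)$, $i=0,\dots,n-1$, may be prescribed independently, each bracket must vanish, which after relabelling yields exactly the natural boundary conditions \eqref{eq:nbc:ho}.

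The routine parts are the integrations by parts and the Fubini interchange, which are literally those of Theorems~\ref{ELTEo} and \ref{thm:16}. The delicate point, and the one I would take most care over, is the combinatorial reorganisation of the boundary terms: collecting, over all $k\in\{1,\dots,n\}$ and all $m\in\{0,\dots,k-1\}$, the contributions attached to a fixed derivative $h^{(i)}$ evaluated at $a$ and at $b$, tracking the fractional-order index $\alpha_k+m-k$ through the substitution, and verifying that the resulting coefficient is the stated sum $\sum_{m=k}^{n}{_tD_b^{\alpha_m-k}}(G_{k})$ for each $k$. One should also note, as in Remark~\ref{new:rem:6}, that several of these endpoint terms are automatically zero at $t=b$ by Remark~\ref{remarkIntegral}, so in those cases \eqref{eq:nbc:ho} carries effective information only at $t=a$.
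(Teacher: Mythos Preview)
Your proposal is correct and follows essentially the same route as the paper: compute the first variation, apply the higher-order Caputo integration-by-parts and the Fubini-type interchange from Theorems~\ref{ELTEo} and~\ref{thm:16} while retaining the boundary terms, first use compactly supported variations to obtain the Euler--Lagrange equation, and then exploit the freedom in the boundary values of $h,h',\dots,h^{(n-1)}$ to extract the natural boundary conditions. The only cosmetic difference is that the paper isolates each boundary condition by choosing variations with $h^{(k-1)}(a)\neq 0\neq h^{(k-1)}(b)$ and all other derivatives vanishing at the endpoints, whereas you reindex the double sum by the order of $h^{(i)}$ and invoke independence of the $2n$ endpoint values; these are equivalent ways of reading off the coefficients.
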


\begin{proof}
The proof follows the same pattern as the proof of Theorem~\ref{thm:16}.
Since admissible functions $x$ are not required to satisfy given boundary conditions,
the variation functions $h$ may take any value at the boundaries as well,
and thus the condition
\begin{equation}
\label{HigBoundCons}
h^{(k)}(a)=h^{(k)}(b)=0, \quad \mbox{for } k\in\{0,\ldots,n-1\},
\end{equation}
is no longer imposed \textit{a priori}. If we consider the first variation
of $J$ for variations $h$ satisfying condition \eqref{HigBoundCons},
we obtain the Euler--Lagrange equation. Replacing it on the expression
of the first variation, we conclude that
$$
\sum_{k=1}^n \sum_{m=0}^{k-1}\left[{_tD_b^{\alpha_k
+m-k}}\left(\frac{\partial L}{\partial v_k}[x](t)
+\int_t^b\frac{\partial L}{\partial z}[x](\t)d\t
\frac{\partial l}{\partial v_k}\{x\}(t)  \right)h^{(k-1-m)}(t)\right]_a^b=0.
$$
To obtain the transversality condition with respect to $k$,
for $k\in\{1,\ldots,n\}$, we consider variations satisfying the condition
$$
h^{(k-1)}(a)\not=0\not=h^{(k-1)}(b) \quad \mbox{and }
h^{(j-1)}(a)=0=h^{(j-1)}(b), \quad \mbox{for all }
j\in\{0,\ldots,n\}\setminus\{k\}.
$$
\end{proof}

\begin{remark}
Some of the terms that appear in the natural boundary
conditions \eqref{eq:nbc:ho} are equal to zero
(\textrm{cf.} Remark~\ref{remarkIntegral} and
Remark~\ref{new:rem:6}).
\end{remark}

% ----------------------------------------

\section{Fractional optimal control problems}
\label{sec:FracOpt}

We now prove a necessary optimality condition
for a fractional Lagrange problem, when the Lagrangian
depends again on an indefinite integral.
Consider the cost functional defined by
\begin{equation}
\label{funct6}
J[x,u]=\int_a^b L\left(t,x(t),u(t),{_aI_t^\beta}x(t),z(t)\right)dt,
\end{equation}
to be minimized or maximized
subject to the fractional dynamical system
\begin{equation}
\label{dynamic6}
{^C_aD_t^\alpha}x(t)=f(t,x(t),u(t),{_aI_t^\beta}x(t),z(t)),
\end{equation}
and the boundary conditions
\begin{equation}
\label{bound6}
x(a)=x_a \quad \mbox{and} \quad x(b)=x_b,
\end{equation}
where
$$
z(t)=\int_a^t l\left(\t,x(\t),
{^C_aD_\t^\alpha}x(\t),{_aI_\t^\beta}x(\t)\right)d\t.
$$
We assume the functions $(t,x,v,w,z)\to f(t,x,v,w,z)$,
$(t,x,v,w,z)\to L(t,x,v,w,z)$, and $(t,x,v,w)\to l(t,x,v,w)$,
to be of class $C^1$ with respect to all their arguments.

\begin{remark}
If $f(t,x(t),u(t),{_aI_t^\beta}x(t),z(t))=u(t)$,
the Lagrange problem \eqref{funct6}--\eqref{bound6}
reduces to the fractional variational problem
\eqref{funct}--\eqref{bound} studied
in Section~\ref{sec:Fundprob}.
\end{remark}

An optimal solution is a pair of functions $(x,u)$ that minimizes $J$
as in \eqref{funct6}, subject to the fractional
dynamic equation \eqref{dynamic6} and the boundary conditions \eqref{bound6}.

\begin{theorem}
If $(x,u)$ is an optimal solution to the fractional
Lagrange problem \eqref{funct6}--\eqref{bound6}, then there exists
a function $p$ for which the triplet $(x,u,p)$ satisfies the Hamiltonian system
$$
\left\{
\begin{array}{ll}
{^C_aD_t^\alpha}x&=\frac{\partial H}{\partial p}\lceil x,u,p \rceil,\\
{_tD_b^\alpha}p&=\frac{\partial H}{\partial x}\lceil x,u,p \rceil+{_tI_b^\beta}
\left(\frac{\partial H}{\partial w}
\lceil x,u,p \rceil\right)
+\int_t^b \frac{\partial H}{\partial z}\lceil x,u,p \rceil(\t)d\t\cdot
\frac{\partial l}{\partial x}\{x\}\\
&\quad +{_tD^{\alpha}_b}\left(
\int_t^b \frac{\partial H}{\partial z}\lceil x,u,p \rceil(\t)d\t\cdot
\frac{\partial l}{\partial v}\{x\}\right)+{_tI^{\beta}_b}\left(
\int_t^b \frac{\partial H}{\partial z}\lceil x,u,p \rceil(\t)d\t\cdot
\frac{\partial l}{\partial w}\{x\}\right)
\end{array}
\right.$$
and the stationary condition
$$
\frac{\partial H}{\partial u}\lceil x,u,p \rceil(t)=0,
$$
where the Hamiltonian $H$ is defined by
$$
H\lceil x,u,p \rceil(t)=L(t,x(t),u(t),{_aI_t^\beta}x(t),z(t))
+p(t)f(t,x(t),u(t),{_aI_t^\beta}x(t),z(t))
$$
and
$$
\lceil x,u,p \rceil(t)= (t,x(t),u(t),{_aI_t^\beta}x(t),z(t),p(t))\, ,
\quad \{x\}(t)=(t,x(t),{^C_aD_t^\alpha}x(t),{_aI_t^\beta}x(t)).
$$
\end{theorem}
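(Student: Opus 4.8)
The plan is to reduce the fractional Lagrange problem \eqref{funct6}--\eqref{bound6} to the fundamental problem already treated in Theorem~\ref{ELTEo}. First I would adjoin the dynamical constraint \eqref{dynamic6} to the cost functional \eqref{funct6} by means of a Lagrange multiplier function $p$, forming the augmented functional
$$
\mathcal{J}[x,u,p]=\int_a^b \left[L(t,x(t),u(t),{_aI_t^\beta}x(t),z(t))
+p(t)\Bigl(f(t,x(t),u(t),{_aI_t^\beta}x(t),z(t))-{^C_aD_t^\alpha}x(t)\Bigr)\right]dt.
$$
Recognizing $H\lceil x,u,p\rceil = L + p f$, this becomes $\mathcal{J}=\int_a^b\bigl[H\lceil x,u,p\rceil(t)-p(t){^C_aD_t^\alpha}x(t)\bigr]\,dt$. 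The indefinite integral variable $z(t)=\int_a^t l(\t,x(\t),{^C_aD_\t^\alpha}x(\t),{_aI_\t^\beta}x(\t))\,d\t$ is of exactly the form handled in Section~\ref{sec:Fundprob}, so the augmented Lagrangian $\widetilde L(t,x,u,{^C_aD_t^\alpha}x,{_aI_t^\beta}x,z,p)=H\lceil x,u,p\rceil-p\,{^C_aD_t^\alpha}x$ depends on $x$, on the Caputo derivative ${^C_aD_t^\alpha}x$, on the fractional integral ${_aI_t^\beta}x$, on $z$, and additionally on the independent functions $u$ and $p$.

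Next I would take variations $x+\epsilon h$, $u+\epsilon k$, $p+\epsilon q$ with $h(a)=h(b)=0$ (by \eqref{bound6}) and $k,q$ arbitrary, and set $\frac{d}{d\epsilon}\mathcal{J}\big|_{\epsilon=0}=0$. Three groups of terms arise. The variation in $q$ immediately recovers the state equation ${^C_aD_t^\alpha}x=\frac{\partial H}{\partial p}\lceil x,u,p\rceil=f$, which is the first line of the Hamiltonian system. The variation in $k$, since $u$ appears only pointwise inside $L$ and $f$, gives the stationary condition $\frac{\partial H}{\partial u}\lceil x,u,p\rceil(t)=0$ directly from the fundamental lemma. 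The variation in $h$ is where the fractional integration-by-parts machinery is needed: I would invoke exactly the same relations used in the proof of Theorem~\ref{ELTEo}, namely $\int_a^b g\,{^C_aD_t^\alpha}h\,dt=\int_a^b h\,{_tD_b^\alpha}g\,dt+[\,{_tI_b^{1-\alpha}}g\cdot h\,]_a^b$ (the boundary term vanishing because $h(a)=h(b)=0$), the self-adjointness-type identity $\int_a^b g\,{_aI_t^\beta}h\,dt=\int_a^b h\,{_tI_b^\beta}g\,dt$, and the ``indefinite integral'' trick $\int_a^b \psi(t)\bigl(\int_a^t \phi(\t)h(\t)\,d\t\bigr)dt=\int_a^b\bigl(\int_t^b\psi(\t)\,d\t\bigr)\phi(t)h(t)\,dt$, applied to $\psi=\partial L/\partial z$ composed with $\partial l/\partial x$, $\partial l/\partial v$, $\partial l/\partial w$. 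Collecting the coefficient of $h(t)$ and applying the fundamental lemma of the calculus of variations yields the adjoint equation; the term $-p\,{^C_aD_t^\alpha}x$ in $\widetilde L$ contributes $-{_tD_b^\alpha}p$ through the first identity, which I rearrange to the stated form ${_tD_b^\alpha}p=\frac{\partial H}{\partial x}\lceil x,u,p\rceil+{_tI_b^\beta}(\partial H/\partial w)+\cdots$.

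Finally I would remark that the multiplier $f(t,x,u,{_aI_t^\beta}x,z)=u$ recovers the fundamental problem of Section~\ref{sec:Fundprob} (with $p$ playing the role of the Euler--Lagrange costate), as already observed in the preceding remark, which serves as a consistency check. The main obstacle is purely bookkeeping: carefully separating the contributions of $\partial H/\partial x$, $\partial H/\partial w$ and the $z$-dependent chain-rule terms, and tracking which derivative operator (${_tD_b^\alpha}$, ${_tI_b^\beta}$, or the plain integral $\int_t^b$) each term acquires after transposition — but since every one of these manipulations is literally a special case of the identities already verified in the proof of Theorem~\ref{ELTEo}, no genuinely new analytic difficulty appears. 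One should also note, as in Remark~\ref{remarkIntegral}, that any boundary term of the form $[\,{_tI_b^{1-\alpha}}(\cdot)\,h\,]$ at $t=b$ would vanish even absent the boundary condition, but here $h(a)=h(b)=0$ makes all such endpoint contributions disappear anyway, so no transversality conditions are produced.
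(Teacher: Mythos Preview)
Your proposal is correct and follows essentially the same approach as the paper: form the augmented functional $J^*[x,u,p]=\int_a^b\bigl(H\lceil x,u,p\rceil(t)-p(t)\,{^C_aD_t^\alpha}x(t)\bigr)\,dt$ and compute the Euler--Lagrange equations in each of the three variables. The only difference is packaging: the paper dispatches the whole argument in one line by invoking the multivariate Euler--Lagrange result Theorem~\ref{ELeqMult} applied to $J^*$ with respect to $(x,u,p)$, whereas you spell out the three variations by hand and cite the integration-by-parts identities from the proof of Theorem~\ref{ELTEo} directly.
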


\begin{proof}
The result follows applying Theorem~\ref{ELeqMult} to
$$
{J^*}[x,u,p]=\int_a^b  H\lceil x,u,p \rceil(t)
-p(t) {^C_aD_t^\alpha}x(t) dt
$$
with respect to $x$, $u$ and $p$.
\end{proof}

In the particular case when $L$ does not depend
on ${_aI_t^\beta}x$ and $z$,
we obtain \cite[Theorem~3.5]{Gastao0}.

\begin{corollary}[Theorem~3.5 of \cite{Gastao0}]
Let $(x(t),u(t))$ be a solution of
$$
J[x,u]=\int_a^b L(t,x(t),u(t))dt \longrightarrow \min
$$
subject to the fractional control system
${^C_aD_t^\alpha}x(t)=f(t,x(t),u(t))$
and the boundary conditions $x(a)=x_a$ and $x(b)=x_b$.
Define the Hamiltonian by
$H(t,x,u,p)=L(t,x,u) + p f(t,x,u)$.
Then there exists a function $p$ for which
the triplet $(x,u,p)$ fulfill the Hamiltonian system
$$
\begin{cases}
\displaystyle{^C_aD_t^\alpha}x(t)
=\frac{\partial H}{\partial p}(t,x(t),u(t),p(t)),\\[8pt]
\displaystyle{_tD_b^\alpha}p(t)
=\frac{\partial H}{\partial x}(t,x(t),u(t),p(t)),
\end{cases}
$$
and the stationary condition
$\frac{\partial H}{\partial u}(t,x(t),u(t),p(t))=0$.
\end{corollary}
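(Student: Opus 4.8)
The plan is to obtain this corollary as a direct specialization of the Hamiltonian theorem just proved for the fractional Lagrange problem \eqref{funct6}--\eqref{bound6}. That general result is stated for a cost $L$ and dynamics $f$ that may also depend on the fractional integral ${_aI_t^\beta}x$ and on the indefinite-integral variable $z$; the present setting is exactly the case in which both $L$ and $f$ are independent of ${_aI_t^\beta}x$ and of $z$. First I would record that, under this restriction, the Hamiltonian $H\lceil x,u,p\rceil(t)=L+pf$ collapses to $H(t,x,u,p)=L(t,x,u)+p\,f(t,x,u)$, so that $\partial H/\partial w\equiv 0$ and $\partial H/\partial z\equiv 0$.

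Next I would substitute these vanishing derivatives into the general Hamiltonian system. Every term of the adjoint equation carrying a factor $\partial H/\partial w$, and every term built from the indefinite integral $\int_t^b (\partial H/\partial z)\,d\tau$ paired with $\partial l/\partial x$, $\partial l/\partial v$ or $\partial l/\partial w$, is then identically zero. What survives is precisely ${_tD_b^\alpha}p=\partial H/\partial x$; the state equation ${^C_aD_t^\alpha}x=\partial H/\partial p$, which merely reproduces the dynamics ${^C_aD_t^\alpha}x=f$; and the stationary condition $\partial H/\partial u=0$. This yields the claimed system and completes the specialization.

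For a self-contained route I would instead adjoin the dynamic constraint by a multiplier $p$ and work with the augmented functional $J^*[x,u,p]=\int_a^b\bigl[H(t,x,u,p)-p(t)\,{^C_aD_t^\alpha}x(t)\bigr]\,dt$, which agrees with $J$ on admissible triplets. Treating $J^*$ as an unconstrained multi-variable fractional variational problem in $(x,u,p)$, I would apply Theorem~\ref{ELeqMult}: variation in $p$ returns the fractional dynamics, variation in $u$ gives the pointwise stationary condition (since $u$ enters algebraically, with no fractional derivative), and variation in $x$ demands a fractional integration by parts on the term $p\,{^C_aD_t^\alpha}x$, converting the left Caputo derivative into the right Riemann--Liouville derivative ${_tD_b^\alpha}p$ and producing a boundary term $\bigl[{_tI_b^{1-\alpha}}p\cdot h\bigr]_a^b$.

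The one step needing genuine care is the disposal of this boundary term. Because both endpoints are prescribed, admissible variations satisfy $h(a)=h(b)=0$, so $\bigl[{_tI_b^{1-\alpha}}p\cdot h\bigr]_a^b$ vanishes and no transversality condition survives --- which is exactly why the corollary carries no natural boundary condition, in contrast with the free-endpoint results of Section~\ref{sec:natbound}. I would also note that the piecewise-continuous control enters without differentiation, so its variation produces the stationary condition $\partial H/\partial u=0$ rather than a differential equation; the fundamental lemma of the calculus of variations then delivers both the adjoint equation and the stationary condition.
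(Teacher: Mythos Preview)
Your proposal is correct and mirrors the paper's approach exactly. The paper gives no separate proof for this corollary: it is stated as the specialization of the preceding Hamiltonian theorem when $L$ and $f$ are independent of ${_aI_t^\beta}x$ and $z$, which is precisely your first reduction; your self-contained route via $J^*$ and Theorem~\ref{ELeqMult} is in fact the paper's proof of that preceding theorem, so both paths coincide with what the paper does.
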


% ----------------------------------------

\section{Sufficient conditions of optimality}
\label{sec:SufConditions}

Recall Definition \ref{ConvDef}, the notions
of convexity and concavity for $C^1$ functions of several variables.
\begin{theorem}
Consider the functional $J$ as in \eqref{funct},
and let $x \in \mathcal{F}([a,b];\mathbb{R})$
be a solution of the fractional
Euler--Lagrange equation \eqref{ELeq}
satisfying the boundary conditions \eqref{bound}.
Assume that $L$ is convex in $(x,v,w,z)$.
If one of the two following conditions is satisfied,
\begin{enumerate}
\item $l$ is convex in $(x,v,w)$ and
$\frac{\partial L}{\partial z}[x](t) \geq 0$ for all $t \in [a,b]$;
\item $l$ is concave in $(x,v,w)$ and
$\frac{\partial L}{\partial z}[x](t) \leq 0$ for all $t \in [a,b]$;
\end{enumerate}
then $x$ is a (global) minimizer of problem \eqref{funct}--\eqref{bound}.
\end{theorem}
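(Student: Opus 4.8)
The plan is to show that the increment $J[\widehat x]-J[x]$ is non-negative for any admissible competitor $\widehat x$, by writing $\widehat x = x + h$ with $h(a)=h(b)=0$ and $h \in \mathcal{F}([a,b];\mathbb{R})$. First I would use the convexity of $L$ in the variables $(x,v,w,z)$ to bound the increment from below by the linear terms of its Taylor expansion:
\begin{multline*}
J[\widehat x] - J[x] \geq \int_a^b \Biggl[\frac{\partial L}{\partial x}[x](t)\, h(t)
+\frac{\partial L}{\partial v}[x](t)\, {^C_aD_t^\alpha}h(t)
+\frac{\partial L}{\partial w}[x](t)\, {_aI_t^\beta}h(t)\\
+\frac{\partial L}{\partial z}[x](t)\, \Delta z(t)\Biggr]dt,
\end{multline*}
where $\Delta z(t) = \widehat z(t) - z(t)$ is the increment of the indefinite-integral variable. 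The key observation is that, because $\frac{\partial L}{\partial z}[x](t)$ has a fixed sign, the convexity (resp. concavity) of $l$ lets me bound $\frac{\partial L}{\partial z}[x](t)\,\Delta z(t)$ from below by $\frac{\partial L}{\partial z}[x](t)$ times the linearization of $\Delta z$, namely
$$
\int_a^t\left(\frac{\partial l}{\partial x}\{x\}(\t)h(\t)
+\frac{\partial l}{\partial v}\{x\}(\t)\,{^C_aD_\t^\alpha}h(\t)
+\frac{\partial l}{\partial w}\{x\}(\t)\,{_aI_\t^\beta}h(\t)\right)d\t.
$$
In case~1 both $\frac{\partial L}{\partial z}\geq 0$ and $l$ convex point the same way; in case~2 the two sign reversals cancel, so in either case the inequality goes in the direction we need.

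Next I would take the resulting lower bound — which is exactly the first variation $j'(0)$ computed in the proof of Theorem~\ref{ELTEo} — and apply the same sequence of manipulations used there: fractional integration by parts for the terms containing ${^C_aD_t^\alpha}h$ (using Lemma~\ref{eq:frac:IBP}-type identities), the identity $\int_a^b \frac{\partial L}{\partial w}[x]\,{_aI_t^\beta}h\,dt = \int_a^b {_tI_b^\beta}\!\left(\frac{\partial L}{\partial w}[x]\right)h\,dt$, and the Dirichlet-type interchange of the iterated integrals coming from the $z$-terms, together with the boundary conditions $h(a)=h(b)=0$ which kill all boundary contributions (the ${_tI_b^{1-\alpha}}$ terms vanish at $t=a$ because $h(a)=0$ and at $t=b$ by Remark~\ref{remarkIntegral}). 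This rewrites the lower bound as
$$
J[\widehat x]-J[x] \geq \int_a^b \big(\text{left-hand side of \eqref{ELeq}}\big)\, h(t)\, dt = 0,
$$
the last equality holding because $x$ is assumed to solve the Euler--Lagrange equation \eqref{ELeq}. This proves $x$ is a global minimizer.

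The main obstacle I anticipate is not any single estimate but the bookkeeping: making sure the sign of $\frac{\partial L}{\partial z}[x](t)$ is handled correctly when it multiplies the (signed) quantity $\Delta z(t) - (\text{linear part})$, and in particular justifying that the inner iterated integral inequality may be integrated against $\frac{\partial L}{\partial z}[x](t)$ over $[a,b]$ without spoiling the direction — this is where the hypothesis ``$\frac{\partial L}{\partial z}[x](t)\geq 0$ for \emph{all} $t$'' (rather than merely in sign on average) is essential. A secondary technical point is ensuring all the fractional integrations by parts are valid for $h \in \mathcal{F}([a,b];\mathbb{R})$, i.e. that the relevant functions lie in the function spaces required by the integration-by-parts lemma; this is the same regularity already implicitly used in Theorem~\ref{ELTEo}, so I would simply invoke it. Everything else is a direct transcription of the computation in the proof of Theorem~\ref{ELTEo} with ``$=$'' replaced by ``$\geq$'' at the convexity step.
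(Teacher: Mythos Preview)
Your proposal is correct and follows essentially the same route as the paper: bound $J[x+h]-J[x]$ from below using convexity of $L$ in $(x,v,w,z)$ together with the sign condition on $\partial L/\partial z$ and the convexity/concavity of $l$, then rewrite the resulting linear expression via the integration-by-parts identities from Theorem~\ref{ELTEo} so that it becomes $\int_a^b(\text{LHS of \eqref{ELeq}})\,h(t)\,dt=0$. One small remark: since $h(a)=h(b)=0$, the boundary terms from the fractional integration by parts vanish simply because of the factor $h$, so you do not actually need to invoke Remark~\ref{remarkIntegral}.
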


\begin{proof}
Consider $h$ of class $\mathcal{F}([a,b];\mathbb{R})$
such that $h(a)=h(b)=0$. Then,
\begin{align*}
&J[x+h]- J[x] = \int_a^b L\Biggl(t,x(t)+h(t),{^C_aD_t^\alpha}x(t)
+{^C_aD_t^\alpha}h(t),{_aI_t^\beta}x(t)+{_aI_t^\beta}h(t),\\
&\hspace{4cm}\int_a^t l(\t,x(\t)+h(\t),{^C_aD_\t^\alpha}x(\t)
+{^C_aD_t^\alpha}h(t),{_aI_t^\beta}x(t)+{_aI_t^\beta}h(t))dt \Biggr)dt\\
&\hspace{30mm} -\int_a^b L\left(t,x(t),{^C_aD_t^\alpha}x(t),
{_aI_t^\beta}x(t),\int_a^t l(\t,x(\t),
{^C_aD_\t^\alpha}x(\t),{_aI_\t^\beta}x(\t))d\t\right)dt\\
&\hspace{1cm}\geq \int_a^b \left[ \frac{\partial L}{\partial x}[x](t)h(t)
+\frac{\partial L}{\partial v}[x](t){^C_aD^\alpha_t}h(t)
+\frac{\partial L}{\partial w}[x](t){_aI^\beta_t}h(t)\right.\\
&\hspace{15mm} \left. +\frac{\partial L}{\partial z}[x](t)
\int_a^t\left( \frac{\partial l}{\partial x}\{x\}(\t)h(\t)
+\frac{\partial l}{\partial v}\{x\}(\t){^C_aD^\alpha_\t}h(\t)
+\frac{\partial l}{\partial w}\{x\}(\t){_aI^\beta_\t}h(\t)\right)d\t\right]dt\\
&\hspace{10mm}= \int_a^b \left[ \frac{\partial L}{\partial x}[x](t)
+{_tD^\alpha_b}\left( \frac{\partial L}{\partial v}[x](t) \right)
+{_tI_b^\beta}\left(\frac{\partial L}{\partial w}[x](t)\right)
+\int_t^b \frac{\partial L}{\partial z}[x](\t)d\t
\cdot \frac{\partial l}{\partial x}\{x\}(t)\right.\\
&\hspace{15mm}\left. +{_tD^\alpha_b}\left(
\int_t^b \frac{\partial L}{\partial z}[x](\t)d\t
\cdot \frac{\partial l}{\partial v}\{x\}(t)  \right)
+{_tI^\beta_b}\left( \int_t^b \frac{\partial L}{\partial z}[x](\t)d\t
\cdot \frac{\partial l}{\partial w}\{x\}(t)  \right)\right] h(t)dt\\
&\hspace{10mm}= 0.
\end{align*}
\end{proof}

One can easily include the case when the
boundary conditions \eqref{bound} are not given.

\begin{theorem}
Consider functional $J$ as in \eqref{funct}
and let $x \in \mathcal{F}([a,b];\mathbb{R})$
be a solution of the fractional Euler--Lagrange
equation \eqref{ELeq} and the fractional natural boundary condition
\eqref{NaturalBoundCond}. Assume
that $L$ is convex in $(x,v,w,z)$. If one
of the two next conditions is satisfied,
\begin{enumerate}
\item $l$ is convex in $(x,v,w)$ and
$\frac{\partial L}{\partial z}[x](t) \geq 0$ for all $t \in [a,b]$;
\item $l$ is concave in $(x,v,w)$ and
$\frac{\partial L}{\partial z}[x](t) \leq 0$ for all $t \in [a,b]$;
\end{enumerate}
then $x$ is a (global) minimizer of \eqref{funct}.
\end{theorem}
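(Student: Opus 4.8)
The plan is to mimic the proof of the preceding sufficient condition theorem, the only new ingredient being that the competing variations $h\in\mathcal{F}([a,b];\mathbb{R})$ are no longer assumed to vanish at the endpoints, so the boundary terms generated by the fractional integration by parts must now be absorbed by the natural boundary condition \eqref{NaturalBoundCond} rather than simply discarded. First I would fix an arbitrary such $h$ and, exploiting the convexity of $L$ in $(x,v,w,z)$ together with the $C^1$ hypothesis, bound $J[x+h]-J[x]$ from below by
\begin{equation*}
\int_a^b\left[\frac{\partial L}{\partial x}[x](t)\,h(t)+\frac{\partial L}{\partial v}[x](t)\,{^C_aD_t^\alpha}h(t)+\frac{\partial L}{\partial w}[x](t)\,{_aI_t^\beta}h(t)+\frac{\partial L}{\partial z}[x](t)\left(z[x+h](t)-z[x](t)\right)\right]dt.
\end{equation*}

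Next I would estimate the increment $z[x+h](t)-z[x](t)$ of the indefinite integral. If $l$ is convex in $(x,v,w)$, then this increment is bounded below by $\int_a^t\left[\frac{\partial l}{\partial x}\{x\}(\tau)h(\tau)+\frac{\partial l}{\partial v}\{x\}(\tau){^C_aD_\tau^\alpha}h(\tau)+\frac{\partial l}{\partial w}\{x\}(\tau){_aI_\tau^\beta}h(\tau)\right]d\tau$, and since $\frac{\partial L}{\partial z}[x](t)\ge0$, multiplying by this coefficient preserves the inequality; in the alternative case ($l$ concave and $\frac{\partial L}{\partial z}[x](t)\le0$) both inequalities reverse, with the same net effect. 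In either case the lower bound for $J[x+h]-J[x]$ coincides with the expression that appears when computing the first variation in Theorem~\ref{ELTEo}.

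Then I would apply the integration-by-parts formulas recalled earlier — the one for the left Caputo derivative (which contributes a ${_tI_b^{1-\alpha}}$ endpoint term), the fractional integration by parts for the left Riemann--Liouville integral (no endpoint term), and the Fubini-type interchange for the nested integrals — to transfer every operator off $h$. After collecting terms, the integrand becomes precisely the left-hand side of the fractional Euler--Lagrange equation \eqref{ELeq} multiplied by $h(t)$, which is zero by hypothesis, so that only the boundary contribution
\begin{equation*}
\left[{_tI_b^{1-\alpha}}\left(\frac{\partial L}{\partial v}[x](t)+\int_t^b\frac{\partial L}{\partial z}[x](\tau)\,d\tau\cdot\frac{\partial l}{\partial v}\{x\}(t)\right)h(t)\right]_a^b
\end{equation*}
survives (all other boundary terms vanish identically, since at each endpoint one of the two nested integrals appearing in them is empty, exactly as in the proof of Theorem~\ref{ELTEo}). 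Its value at $t=b$ is $0$ because ${_tI_b^{1-\alpha}}\phi(t)\to0$ as $t\to b$ for continuous $\phi$ (Remark~\ref{remarkIntegral}), and its value at $t=a$ is $0$ by the natural boundary condition \eqref{NaturalBoundCond}. Hence $J[x+h]-J[x]\ge0$ for every admissible $h$, i.e., $x$ is a global minimizer.

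The delicate point will be the bookkeeping in the third step: one must combine the endpoint term arising directly from $\frac{\partial L}{\partial v}$ with the one arising from the $\frac{\partial l}{\partial v}$ piece hidden inside the indefinite integral into the single bracket occurring in \eqref{NaturalBoundCond}, using linearity of ${_tI_b^{1-\alpha}}$, and check that each remaining integration-by-parts step (those involving ${_aI_t^\beta}$ and the $\frac{\partial l}{\partial x}$, $\frac{\partial l}{\partial w}$ contributions) truly produces no endpoint term — routine, but requiring care now that $h(a)$ and $h(b)$ need not vanish.
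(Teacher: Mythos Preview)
Your proposal is correct and is precisely the argument the paper has in mind: the paper gives no explicit proof for this theorem, merely stating it as an easy extension of the preceding sufficient-condition theorem (whose proof it does write out). Your handling of the two boundary contributions---killing the $t=b$ piece via Remark~\ref{remarkIntegral} and the $t=a$ piece via \eqref{NaturalBoundCond}, while noting that the remaining endpoint terms from the nested-integral manipulations vanish because one of the two integrals is empty at each endpoint---is exactly the additional bookkeeping the paper leaves to the reader.
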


% ----------------------------------------

\section{Examples}

We illustrate with Examples~\ref{example:bra} and \ref{example1}
how the approximation \eqref{expanMom} provides an accurate
and efficient numerical method to solve fractional variational problems
in the presence of special constraints.

\begin{example}
\label{ex:new:sec6}
We obtain an approximated solution to the problem
considered in Example~\ref{example:bra}.
Since $x(0)=0$, the Caputo derivative coincides
with the Riemann--Liouville derivative
and we can approximate the fractional
problem using \eqref{expanMom}.
We reformulate the problem using the Hamiltonian formalism by
letting $^C_0D_t^{\alpha}x(t)=u(t)$. Then,
\begin{equation}
\label{eq:h:1}
A(\alpha,N)t^{-\alpha}x(t)+B(\alpha,N)t^{1-\alpha}\dot{x}(t)
-\sum_{k=2}^{N}C(k,\alpha)t^{1-k-\alpha}v_k(t)=u(t).
\end{equation}
We also include the variable $z(t)$ with
$$
\dot{z}(t)=\left( x(t)-t^{\alpha+1}\right)^2.
$$
In summary, one has the following Lagrange problem:
\begin{equation}
\label{AppExample2}
\begin{gathered}
\tilde{J}[x] = \int_0^1 [(u(t)
-\Gamma(\alpha+2)t)^2+z(t)]dt \longrightarrow \min \\
\begin{cases}
\dot{x}(t) = -AB^{-1}t^{-1}x(t)
+\sum_{k=2}^{N}B^{-1}C_kt^{-k}v_k(t)+B^{-1}t^{\alpha-1}u(t)\\
\dot{v}_k(t) = (1-k)t^{k-2}x(t),\qquad k=1,2,\ldots\\
\dot{z}(t) = \left( x(t)-t^{\alpha+1}\right)^2,
\end{cases}
\end{gathered}
\end{equation}
subject to the boundary conditions $x(0)=0$, $z(0)=0$ and $v_k(0)=0$, $k=1,2,\ldots$
Setting $N=2$, the Hamiltonian is given by
\begin{multline*}
H=-[(u(t)-\Gamma(\alpha+2)t)^2+z(t)]+p_1(t)\left(-AB^{-1}t^{-1}x(t)\right.\\
\left.+B^{-1}C_2t^{-2}v_2(t)+B^{-1}t^{\alpha-1}u(t)\right)
-p_2(t)x(t)+p_3(t)\left( x(t)-t^{\alpha+1}\right)^2.
\end{multline*}
Using the classical necessary optimality condition for problem \eqref{AppExample2},
we end up with the following two point boundary value problem:
\begin{equation}
\label{sys2}
\left\{
\begin{array}{ll}
\dot{x}(t)   & =-AB^{-1}t^{-1}x(t)+ B^{-1}C_2t^{-2}v_2(t)
+\frac{1}{2}B^{-2}t^{2\alpha-2}p_1(t)+\Gamma(\alpha+2)B^{-1}t^{\alpha}\\
\dot{v}_2(t) & =-x(t)\\
\dot{z}(t)   & =(x(t)-t^{\alpha+1})^2\\
\dot{p}_1(t) & =AB^{-1}t^{-1}p_1(t)+p_2(t)-2p_3(t)(x(t)-t^{\alpha+1})\\
\dot{p}_2(t) & =-B^{-1}C_2t^{-2}p_1(t)\\
\dot{p}_3(t) & = 1,
\end{array}
\right.
\end{equation}
subject to the boundary conditions
\begin{equation}
\label{sysB2}
\begin{cases}
x(0)=0 \\
v_2(0)=0\\
z(0)=0
\end{cases}
,\qquad
\begin{cases}
x(1)=1\\
p_2(1)=0\\
p_3(1)=0.
\end{cases}
\end{equation}
We solved system \eqref{sys2} subject to \eqref{sysB2}
using the MATLAB$^\circledR$ built-in function \textsf{bvp4c}.
The resulting graph for $x(t)$, together with the corresponding
value of $J$, is given in Figure~\ref{Fig2}.
\begin{figure}[!ht]
\begin{center}
\includegraphics[scale=.8]{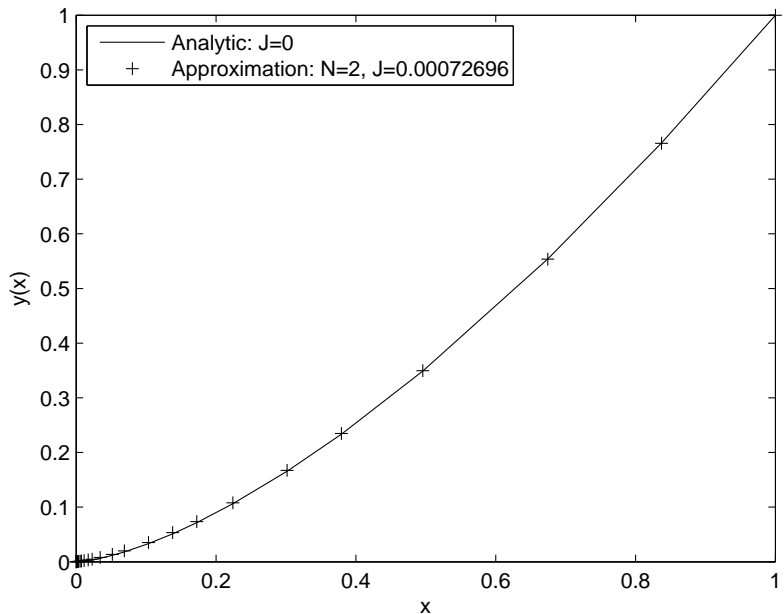}
\caption{Analytic {\it versus} numerical solution
to problem of Example~\ref{example:bra}.}\label{Fig2}
\end{center}
\end{figure}
\end{example}

This numerical method works well, even in the case
the minimizer is not a Lipschitz function.

\begin{example}
An approximated solution to the problem
considered in Example~\ref{example1} can be obtained
following exactly the same steps as in Example~\ref{ex:new:sec6}.
Recall that the minimizer \eqref{eq:gs:ex} to that
problem is not a Lipschitz function.
As before, one has $x(0)=0$ and the Caputo derivative coincides
with the Riemann--Liouville derivative. We approximate the fractional
problem using \eqref{expanMom}.
Let $^C_0D_t^{\alpha}x(t)=u(t)$. Then \eqref{eq:h:1} holds.
In this case the variable $z(t)$ satisfies
$$
\dot{z}(t)=\left( x(t)-\frac{t^\alpha}{\Gamma(\alpha+1)}\right)^2,
$$
and we approximate the fractional variational problem
with the following classical one:
\begin{equation*}
\begin{gathered}
\tilde{J}[x] = \int_0^1 \left[(u(t)-1)^2+z(t)\right]dt \longrightarrow \min \\
\begin{cases}
\dot{x}(t) = -AB^{-1}t^{-1}x(t)
+\sum_{k=2}^{N}B^{-1}C_kt^{-k}v_k(t)+B^{-1}t^{\alpha-1}u(t)\\
\dot{v}_k(t) = (1-k)t^{k-2}x(t),\qquad k=1,2,\ldots\\
\displaystyle\dot{z}(t) 
= \left( x(t)-\frac{t^\alpha}{\Gamma(\alpha+1)}\right)^2,
\end{cases}
\end{gathered}
\end{equation*}
subject to the boundary conditions $x(0)=0$, $z(0)=0$ and $v_k(0)=0$, $k=1,2,\ldots$
Setting $N=2$, the Hamiltonian is given by
\begin{multline*}
H=-[(u(t)-1)^2+z(t)]+p_1(t)\left(-AB^{-1}t^{-1}x(t)+B^{-1}C_2t^{-2}v_2(t)
+B^{-1}t^{\alpha-1}u(t)\right)\\
-p_2(t)x(t)+p_3(t)\left( x(t)-\frac{t^\alpha}{\Gamma(\alpha+1)}\right)^2.
\end{multline*}
The classical theory \cite{Pontryagin} tells us to solve the system
\begin{equation}
\label{sys}
\left\{
\begin{array}{ll}
\dot{x}(t)   & =-AB^{-1}t^{-1}x(t)+ B^{-1}C_2t^{-2}v_2(t)
+\displaystyle\frac{1}{2}B^{-2}t^{2\alpha-2}p_1(t)+B^{-1}t^{\alpha-1}\\[5pt]
\dot{v}_2(t) & =-x(t)\\[5pt]
\dot{z}(t)   & =\left(x(t)-\displaystyle\frac{t^\alpha}{\Gamma(\alpha+1)}\right)^2\\[5pt]
\dot{p}_1(t) & =AB^{-1}t^{-1}p_1(t)+p_2(t)-2p_3(t)\left(x(t)
-\displaystyle\frac{t^\alpha}{\Gamma(\alpha+1)}\right)\\
\dot{p}_2(t) & =-B^{-1}C_2t^{-2}p_1(t)\\
\dot{p}_3(t) & = 1,
\end{array}
\right.
\end{equation}
subject to boundary conditions
\begin{equation}
\label{sysB}
\begin{cases}
x(0)=0 \\
v_2(0)=0\\
z(0)=0
\end{cases}
,\qquad
\begin{cases}
\displaystyle x(1)=\frac{1}{\Gamma(\alpha+1)}\\
p_2(1)=0\\
p_3(1)=0.
\end{cases}
\end{equation}
As done in Example~\ref{ex:new:sec6}, we solved \eqref{sys}--\eqref{sysB}
using the MATLAB$^\circledR$ built-in function \textsf{bvp4c}.
The resulting graph for $x(t)$, together with the corresponding
value of $J$, is given in Figure~\ref{Fig} in contrast with
the exact minimizer \eqref{eq:gs:ex}.
\begin{figure}[!ht]
\begin{center}
\includegraphics[scale=.8]{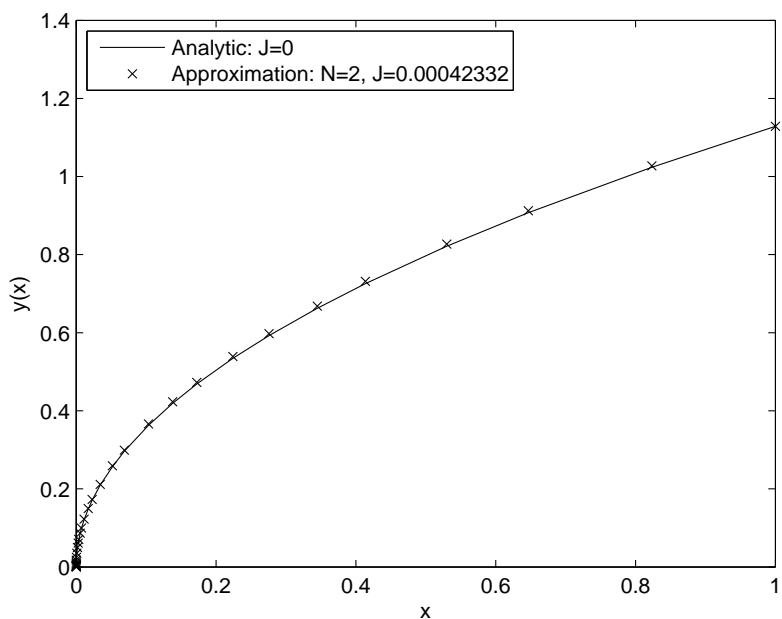}
\caption{Analytic {\it versus} numerical solution
to problem of Example~\ref{example1}.}\label{Fig}
\end{center}
\end{figure}
\end{example}

%================================================================
\chapter*{Conclusion and future work}
\markboth{\MakeUppercase{Conclusion and future work}}{}
\addcontentsline{toc}{part}{Conclusion and future work}

The realm of numerical methods in scientific fields is vastly growing due 
to the very fast progresses in computational sciences and technologies. 
Nevertheless, the intrinsic complexity of fractional calculus, caused 
partially by non-local properties of fractional derivatives and integrals, 
makes it rather difficult to find efficient numerical methods in this field. 
It seems enough to mention here that, up to the time of this thesis  
and to the best of our knowledge, there is no routine available for solving 
a fractional differential equation as Runge--Kutta for ordinary ones. Despite this fact, 
however, the literature exhibits a growing interest and  improving achievements 
in numerical methods for fractional calculus in general 
and fractional variational problems specifically.

This thesis is devoted to discussing some aspects of the very well-known methods 
for solving variational problems. Namely, we studied the notions of direct 
and indirect methods in the classical calculus of variation and also we mentioned 
some connections to optimal control. Consequently, we introduced the generalizations 
of these notions to the field of fractional calculus of variations 
and fractional optimal control.

The method of finite differences, as discussed here, seems to be a potential first candidate 
to solve fractional variational problems. Although a first order approximation 
was used for all examples, the results are satisfactory and even though 
it is more complicated than in the classical case, it still inherits 
some sort of simplicity and an ease of implementation.

The outcomes of our works related to direct methods are as follows:
\begin{itemize}
\item S. Pooseh, R. Almeida\ and\ D. F. M. Torres, Discrete Direct Methods 
in the Fractional Calculus of Variations, Proceedings of FDA'2012, May 14-17, 2012, 
Hohai University, Nanjing, China. Paper \#042, Winner of a best oral presentation award \cite{fda12};
\item S. Pooseh, R. Almeida and D.F.M. Torres, Discrete direct methods 
in the fractional calculus of variations, 
Comput. Math. Appl.,{\bf 66} (2013), no. 5, 668--676 \cite{PATDisDir};
\item S. Pooseh, R. Almeida\ and\ D. F. M. Torres, 
A discrete time method to the first variation of fractional order variational functionals, 
Cent. Eur. J. Phys, in press \cite{PATDisVar}.
\end{itemize}

Roughly speaking, an Euler-like direct method reduces a variational problem
to the solution of a system of algebraic equations. When the system is linear,
we can freely increase the number of mesh points, $n$, and obtain better solutions
as long as the resulted matrix of coefficients is invertible. The method
is very fast, in this case, and the execution time is of order $10^{-4}$
for Examples~\ref{Example1} and \ref{Example2}. It is worth, however, to keep in mind
that the  Gr\"{u}nwald--Letnikov approximation is of first order, $\mathcal{O}(h)$,
and even a large $n$ cannot result in a high precision. Actually, by increasing $n$,
the solution slowly converges and in Example~\ref{Example2}, a grid of $30$ points
has the same order of error, $10^{-3}$, as a $5$ points grid.
The situation is completely different when the problem ends with a nonlinear system.
In Example~\ref{Example3}, a small number of mesh points, $n=5$, results in a poor
solution with the error $E=1.4787$. The MATLAB$^\circledR$ built in function \textsf{fsolve}
takes $0.0126$ seconds to solve the problem. As one increases the number of mesh points,
the solution gets closer to the analytic solution and the required time increases drastically.
Finally, by $n=90$ we have $E=0.0618$ and the time is $T=26.355$ seconds.

% -----------------------------------------------------

In practice, we have no idea about the solution in advance and the worst case should
be taken into account. Comparing the results of the three examples considered,
reveals that for a typical fractional variational problem,
the Euler-like direct method needs a large number of mesh points
and most likely a long running time.

The lack of efficient numerical methods for fractional variational problems, 
is overcome partially by the indirect methods of this thesis. 
Once we transformed the fractional variational problem to an approximated classical one, 
the majority of classical methods can be applied to get an approximate solution. 
Nevertheless, the procedure is not completely straightforward. The singularity 
of fractional operators is still present in the approximating formulas 
and it makes the solution procedure more complicated.

During the last three decades, several numerical methods have been developed 
in the field of fractional calculus. Some of their advantages, disadvantages, 
and improvements, are given in \cite{sug:r3}. Based on two continuous expansion 
formulas \eqref{ExpIntInf} and \eqref{expanMomInf} for the left Riemann--Liouville 
fractional derivative, we studied two approximations \eqref{expanInt} and \eqref{expanMom} 
and their applications in the computation of fractional derivatives. Despite the fact 
that the approximation \eqref{expanInt} encounters some difficulties from the presence 
of higher-order derivatives, it exhibits better results at least for the evaluation 
of fractional derivatives. The same studies were carried out for fractional integrals 
as well as some other fractional operators, namely Hadamard derivatives and integrals, 
and Caputo derivatives.

The full details regarding these approximations and their advantages, 
disadvantages and applications can be found in the following papers:
\begin{itemize}
\item S. Pooseh, R. Almeida\ and\ D. F. M. Torres, 
Numerical approximations of fractional derivatives with applications, 
Asian Journal of Control {\bf 15} (2013), no.~3, 698--712 \cite{PATFracDer};
\item S. Pooseh, R. Almeida\ and\ D. F. M. Torres, 
Approximation of fractional integrals by means of derivatives, 
Comput. Math. Appl. {\bf 64} (2012), no.~10, 3090--3100 \cite{PATFracInt};
\item S. Pooseh, R. Almeida and D.F.M. Torres, 
Expansion formulas in terms of integer-order derivatives 
for the Hadamard fractional integral and derivative,  
Numerical Functional Analysis and Optimization 33 (2012) No 3, 301--319 \cite{PATHad}.
\end{itemize}

Approximation \eqref{expanMom} can also be generalized to include higher-order derivatives 
in the form of \eqref{Gen}. The possibility of using \eqref{expanMom} to compute fractional 
derivatives for a set of tabular data was discussed. Fractional differential equations 
are also treated successfully. In this case the lack of initial conditions makes \eqref{expanInt} 
less useful. In contrast, one can freely increase $N$, the order of approximation \eqref{expanMom}, 
and find better approximations. Comparing with \eqref{expanAtan}, our modification provides better results.

For fractional variational problems, the proposed expansions may be used at two different 
stages during the solution procedure. The first approach, the one considered 
in Chapter~\ref{Indirect}, consists in a direct approximation of the problem, 
and then treating it as a classical problem, using standard methods to solve it.
The second approach, Section~\ref{sub:sec:fft}, is to apply the fractional 
Euler--Lagrange equation and then to use the approximations 
in order to obtain a classical differential equation.

The results concerning the application 
of the approximations proposed in this work have been published as follows:
\begin{itemize}
\item R. Almeida, S. Pooseh\ and\ D. F. M. Torres, 
Fractional variational problems depending on indefinite integrals, 
Nonlinear Anal. {\bf 75} (2012), no.~3, 1009--1025 \cite{APTIndInt};
\item S. Pooseh, R. Almeida\ and\ D. F. M. Torres, 
Fractional order optimal control problems with free terminal time, 
J. Ind. Manag. Optim. 10 (2014), no. 2, 363--381 \cite{PATFree};
\item S. Pooseh, R. Almeida\ and\ D. F. M. Torres, 
Free fractional optimal control problems, 
2013 European Control Conference (ECC) July 17-19, 2013, 
Zurich, Switzerland \cite{PATZurich}.
\item S. Pooseh, R. Almeida\ and\ D. F. M. Torres, 
A numerical scheme to solve fractional optimal control problems, 
Conference Papers in Mathematics, vol. 2013, Article ID 165298, 10 pages, 2013 \cite{PATScheme}.
\end{itemize}

The direct methods for fractional variational problems presented in this thesis,
can be improved in some stages. One can try different approximations
for the fractional derivative that exhibit higher order precisions, 
e.g. Diethelm's backward finite differences \cite{Kai2}.
Better quadrature rules can be applied to discretize the functional and, finally,
we can apply more sophisticated algorithms for solving the resulting system of algebraic equations.
Further works are needed to cover different types of fractional variational problems.

Regarding indirect methods, the idea of transforming a fractional problem 
to a classic one seems a useful way of extending the available classic methods 
to the field of fractional variational problems. Nevertheless, improvements 
are needed to avoid the singularities of the approximations \eqref{expanMom} 
and \eqref{expanInt}. A more practical goal is to implement some software packages 
or tools to solve certain classes of fractional variational problems. 
Following this research direction may also end in some solvers 
for fractional differential equations.

In the course of this thesis we have also studied the use of fractional calculus
in epidemiology, that is not included in this thesis \cite{PoosehRod}. 
The proposed approach is illustrated
with an outbreak of dengue disease, which is motivated by
the first dengue epidemic ever recorded in the Cape Verde islands
off the coast of west Africa, in 2009.
Describing the reality through a mathematical model,
usually a system of differential equations,
is a hard task that has an inherent compromise
between simplicity and accuracy. In our work, we consider
a very basic model to dengue epidemics. It turns out that, in general,
this basic/classical model does not provide enough good results.
In order to have better results, that fit the reality,
more specific and complicated set of differential equations
have been investigated in the literature, see
\cite{Rodrigues2010a,Rodrigues2010b,Rodrigues2011} and references therein.
We have proposed a completely new approach to the subject.
We keep the simple model and substitute the usual (local)
derivatives by (non-local) fractional differentiation. The use
of fractional derivatives allow us to model memory effects,
and result in a more powerful approach to epidemiological models:
one can then design the order $\alpha$ of fractional differentiation
that best corresponds to reality. The classical case is recovered
by taking the limit when $\alpha$ goes to one. Our investigations
show that even a simple fractional model 
may give surprisingly good results \cite{PoosehRod}.
However, the transformation of a classical model into
a fractional one makes it very sensitive to the order of
differentiation $\a$: a small change in $\a$
may result in a big change in the final result.
 This work was presented at ICNAAM 2011,
Numerical Optimization and Applications Symposium:
\begin{itemize}
\item S. Pooseh, H. S. Rodrigues\ and\ D. F. M. Torres, 
Fractional Derivatives in Dengue Epidemics, 
Numerical Analysis and Applied Mathematics ICNAAM 2011, 
AIP Conf. Proc. 1389, 739--742 (2011) \cite{PoosehRod},
\end{itemize}
and a more sophisticated study has been reported in \cite{KaiDeng}.

Our work can be extended in several ways:
by fractionalizing more sophisticated models;
by considering different orders of fractional derivatives
for each one of the state variables, \textrm{i.e.},
models of non-commensurate order. Finally, we can combine
the results of this PhD thesis in the framework of fractional
optimal control of epidemic models.

\CP
%================================================================
{\small
}

\CP
%================================================================
\addcontentsline{toc}{part}{Index}
\printindex
\CP
% -----------------------------------------------------

\end{document}